\documentclass[letterpaper]{amsart}
\usepackage{amssymb}
\usepackage[all]{xy}

\newcommand{\C}{\mathbb{C}}
\newcommand{\CP}{\mathbb{CP}}
\newcommand{\D}{\mathbb{D}}
\newcommand{\Z}{\mathbb{Z}}
\newcommand{\p}{{}^p\!}
\newcommand{\barp}{{}^{\bar p}\!}
\newcommand{\barGx}{\overline{Gx}}
\newcommand{\barGy}{\overline{Gy}}

\newcommand{\cA}{\mathcal{A}}
\newcommand{\cC}{\mathcal{C}}
\newcommand{\cD}{\mathcal{D}}
\newcommand{\cF}{\mathcal{F}}
\newcommand{\cG}{\mathcal{G}}
\newcommand{\cH}{\mathcal{H}}
\newcommand{\cI}{\mathcal{I}}

\newcommand{\cL}{\mathcal{L}}
\newcommand{\cO}{\mathcal{O}}
\newcommand{\cP}{\mathcal{P}}
\newcommand{\cQ}{\mathcal{Q}}

\newcommand{\uZ}{\underline{Z}}

\newcommand{\cIC}{\mathcal{IC}}

\newcommand{\cloc}[3]{\cC^{\mathrm{loc}}_G(#1,#2)_{\le #3}}
\newcommand{\czloc}[1]{\cloc{X}{Z}{#1}}

\newcommand{\csupp}[2]{\cC^{\mathrm{supp}}_G(#1,#2)}
\newcommand{\czsupp}{\csupp{X}{Z}}
\newcommand{\qsupp}[2]{\cQ^{\mathrm{supp}}_G(#1,#2)}
\newcommand{\qzsupp}{\qsupp{X}{Z}}

\newcommand{\cn}[1]{\cC(#1)}
\newcommand{\cg}[1]{\cC_G(#1)}
\newcommand{\cgl}[2]{\cC_G(#1)_{\le #2}}
\newcommand{\cgg}[2]{\cC_G(#1)_{\ge #2}}

\newcommand{\tcgg}[2]{\tilde\cC_G(#1)_{\ge #2}}
\newcommand{\qn}[1]{\cQ(#1)}
\newcommand{\qg}[1]{\cQ_G(#1)}

\newcommand{\dg}[1]{\cD_G(#1)}
\newcommand{\dgb}[1]{\cD_G^{\mathrm{b}}(#1)}
\newcommand{\dgp}[1]{\cD^+_G(#1)}
\newcommand{\dgm}[1]{\cD^-_G(#1)}

\newcommand{\Db}{D^{\mathrm{b}}}
\newcommand{\dgml}[2]{\cD^-_G(#1)^{\le #2}}
\newcommand{\dgpg}[2]{\cD^+_G(#1)^{\ge #2}}
\newcommand{\dgbl}[2]{\cD^{\mathrm{b}}_G(#1)^{\le #2}}
\newcommand{\dgbg}[2]{\cD^{\mathrm{b}}_G(#1)^{\ge #2}}
\newcommand{\dsupp}[2]{\cD^{\mathrm{supp}}_G(#1,#2)}
\newcommand{\dzsupp}{\dsupp{X}{Z}}

\newcommand{\sm}[1]{\mathcal{M}(#1)}
\newcommand{\smpm}[1]{\mathcal{M}^{!*}(#1)}

\newcommand{\Oxmod}{\text{$\cO_x$-$\mathfrak{mod}$}}
\newcommand{\red}{{\mathrm{red}}}
\newcommand{\gen}{{\mathrm{gen}}}
\newcommand{\st}{\diamond}
\newcommand{\bt}{\blacktriangle}
\newcommand{\hto}{\hookrightarrow}
\newcommand{\ssm}{\smallsetminus}
\newcommand{\id}{\mathrm{id}}
\newcommand{\Forg}{\mathsf{F}}
\newcommand{\Av}{\mathsf{Av}}

\newcommand{\sref}[1]{{\rm(S\ref{#1})}}
\newcommand{\aref}[1]{{\rm(A\ref{#1})}}
\newcommand{\fref}[1]{{\rm(F\ref{#1})}}

\DeclareMathOperator{\im}{im}
\DeclareMathOperator{\alt}{alt}
\DeclareMathOperator{\ev}{ev}
\DeclareMathOperator{\cod}{cod}
\DeclareMathOperator{\scod}{scod}
\DeclareMathOperator{\codim}{codim}
\DeclareMathOperator{\step}{step}
\DeclareMathOperator{\Hom}{Hom}
\DeclareMathOperator{\Ext}{Ext}
\DeclareMathOperator{\RHom}{\mathit{R}Hom}
\DeclareMathOperator{\cHom}{\mathcal{H}\mathit{om}}
\DeclareMathOperator{\cRHom}{\mathit{R}\mathcal{H}\mathit{om}}
\DeclareMathOperator{\gHom}{\mathbf{Hom}}
\DeclareMathOperator{\gExt}{\mathbf{Ext}}

\DeclareMathOperator{\Spec}{Spec}
\DeclareMathOperator{\Proj}{Proj}  

\newtheorem{thm}{Theorem}[section]
\newtheorem{lem}[thm]{Lemma}
\newtheorem{prop}[thm]{Proposition}
\newtheorem{cor}[thm]{Corollary}

\theoremstyle{definition}
\newtheorem{defn}[thm]{Definition}

\theoremstyle{remark}
\newtheorem{rmk}[thm]{Remark}

\numberwithin{equation}{section}

\renewcommand{\flat}{{!}}
%\doublespacing

\title[Staggered $t$-structures]{Staggered $\boldsymbol{t}$-structures on derived categories of equivariant coherent sheaves}
\author{Pramod N. Achar}
\thanks{The author was partly supported by NSF grant DMS-0500873.}
\address{Department of Mathematics, Louisiana State University, Baton Rouge, LA \ 70803}
\email{pramod@math.lsu.edu}

\begin{document}

\begin{abstract}
Let $X$ be a scheme, and let $G$ be an affine group scheme acting on $X$. 
Under reasonable hypotheses on $X$ and $G$, we construct a $t$-structure on
the derived category of $G$-equivariant coherent sheaves that in many ways
resembles the perverse coherent $t$-structure, but which incorporates
additional information from the $G$-action.  Under certain circumstances, the heart of this $t$-structure, called the ``staggered $t$-structure,'' is a finite-length category, and its simple objects are particularly easy to describe.  We also exhibit two small examples in which the staggered $t$-structure is
better-behaved than the perverse coherent $t$-structure.
\end{abstract}

\maketitle

%%%%%%%%%%%%%%%%%%%%%%%%%%%%%%%%%%%%%%%%%%%%%%%%%%%%%%%%%%%%%%%%%%%%%%%%%%%
\section{Introduction}
\label{sect:intro}
%%%%%%%%%%%%%%%%%%%%%%%%%%%%%%%%%%%%%%%%%%%%%%%%%%%%%%%%%%%%%%%%%%%%%%%%%%%

Let $X$ be a scheme (say, over a field), and let $G$ be an affine group
scheme acting on $X$.  \emph{Perverse coherent sheaves} are the objects in
the heart of a certain nonstandard $t$-structure on the bounded derived
category of equivariant coherent sheaves on $X$, first introduced by
Deligne, but now more widely known from an exposition by
Bezrukavnikov~\cite{bez:pc}.  One key feature of this category, which we 
denote $\cP(X)$, is that it interacts well with Grothendieck--Serre
duality, just as the much better-known category of perverse (constructible)
sheaves interacts well with Poincar\'e--Verdier duality.  Assume now that
$G$ acts on $X$ with finitely many orbits.  If the boundary of each orbit
has codimension at least $2$ in the closure of that orbit, then $\cP(X)$
also enjoys the following remarkable properties:
\begin{itemize}
\item There are ``middle-extension functors'' that associate to any
irreducible equivariant vector bundle on an orbit a certain simple
perverse coherent sheaf, supported on the closure of that orbit.
\item Every simple perverse coherent sheaf arises in this way.
\item $\cP(X)$ is a finite-length category; {\it i.e.}, every object has finite
length.
\end{itemize}
(If we replace ``vector bundle'' by ``local system'' and delete the word
``coherent'' throughout, we get a list of well-known and important
properties of perverse sheaves.)  A particularly nice situation occurs
when all orbits have even codimension: then $\cP(X)$ can be made
self-dual. A key example of the latter is that in which $G$ is a reductive
algebraic group over an algebraically closed field, and $X$ is its
unipotent variety (see, for instance,~\cite{a} and~\cite{bez:qe}).

However, there are many common examples in representation theory in which
the codimension condition does not hold: for instance, the orbits of a
reductive algebraic group on the Springer resolution of its unipotent
variety, or the orbits of a Borel subgroup on the flag variety.  To
understand what goes wrong with perverse coherent sheaves on such
varieties, let us consider the self-dual case.  Roughly
speaking, the reason for the even codimension condition is this: the local
cohomology of a (suitably normalized) dualizing complex on a given orbit
$C$ is concentrated in degree $\codim C$, so in order to define a self-dual
$t$-structure, we must be able to put coherent sheaves on $C$ in degree
$\frac{1}{2}\codim C$.  (For comparison, the dualizing complex for
Poincar\'e--Verdier duality on a topologically stratified space
(resp.~algebraic variety) measures the real codimension (resp.~\emph{twice}
the algebraic codimension) of a given stratum, so in order to have a
self-dual category of perverse constructible sheaves, all strata must have
even real codimension (resp.~there is no restriction on codimensions).)

The main idea of this paper is to
use extra information coming from the equivariant structure on each orbit
to ``stretch out'' our interpretation of what the dualizing complex is
measuring.  Suppose that on each $G$-orbit, each irreducible equivariant
vector bundle $\cL$ is assigned an integer invariant, called its
\emph{step}, satisfying certain compatibility conditions with tensor
products, Hom-sheaves, {\it etc}.  Now, given $d \in \Z$, consider the
shifted object $\cL[d]$ in the derived category: this is concentrated in
degree $-d$.  Let us declare the \emph{staggered degree} of $\cL[d]$ to be
$-d + \step\cL$.  We can then try to refine the construction of the
perverse coherent $t$-structure in a way that keeps track of the
interaction between Grothendieck--Serre duality and staggered degrees.  

We show in this paper that this idea can indeed be carried out: we
construct a ``staggered $t$-structure'' on the bounded derived category of
equivariant coherent sheaves.  This $t$-structure resembles the perverse
coherent $t$-structure in many ways, most significantly in terms of its
interaction with Grothendieck--Serre duality.  Staggered sheaves on a
single $G$-orbit are objects in the derived category with a certain fixed
staggered degree, just as perverse coherent sheaves on a single $G$-orbit
are objects concentrated in a certain fixed degree in the derived category.

Moreover, we can define the \emph{staggered codimension} of closed
$G$-invariant subschemes in terms of staggered degrees of the dualizing
complex.  It turns out that if $G$ acts on $X$ with
finitely many orbits, each of which has a boundary of staggered
codimension at least $2$, then the most desirable properties of perverse
sheaves hold: every object in the heart of the staggered
$t$-structure has finite length, and every simple object arises by
middle-extension of an irreducible vector bundle on a single orbit.

Of course, for this construction to be useful, there must be examples in
which the category of staggered sheaves is finite-length but the
category of perverse coherent sheaves is not.  We exhibit two small
examples in which this is the case: that of $\C^\times$-equivariant sheaves on $\C$, and that of
$B$-equivariant sheaves on $\CP^1$, where $B$ is a Borel subgroup of
$SL_2(\C)$.  ($\CP^1$ is, of course, the flag variety for $SL_2(\C)$.) 
Both of these spaces consist of two orbits, of dimensions $0$ and $1$; the
category of perverse coherent sheaves in these cases is not finite-length. 
However, in both cases, the $0$-dimensional orbit turns out to have
staggered codimension at least $2$, so the category of staggered sheaves is
finite-length.  It is hoped that similar results turn out to be true in broader
classes of examples.  It would be interesting to compare the staggered
$t$-structure to other $t$-structures known to have finite-length hearts, such
as the ``exotic'' $t$-structure for equivariant coherent sheaves on the
Springer resolution, described in~\cite{bez:ct}.

\medskip 

For the sake of aesthetics and generality, most of the paper
(Sections~\ref{sect:prelim}--\ref{sect:stag-dt} and part of
Section~\ref{sect:ic}) is written without the assumption that $G$ acts
with finitely many orbits, although the author is not aware of an
interesting example in which that is the case.  Section~\ref{sect:prelim}
lists notation and collects some basic facts about coherent sheaves and
abelian categories.  In Section~\ref{sect:s-struc}, we axiomatize the
additional information that our coherent sheaves should carry with the
notion of ``$s$-structure.''  We actually need a global version of this
information, not just one instance on each orbit.  To that end,
Sections~\ref{sect:closed} and~\ref{sect:glue} are aimed at proving a
``gluing theorem'' for $s$-structures.  Next, in
Section~\ref{sect:duality}, we study the interaction between $s$-structures
and Grothendieck--Serre duality.  In Sections~\ref{sect:stag-orth} and~\ref{sect:stag-dt}, we finally
define the staggered $t$-structure, and prove that it actually is a
$t$-structure.  Section~\ref{sect:ic} gives the construction of the
middle-extension functor and a criterion for the heart of the staggered
$t$-structure to be finite-length.  In Section~\ref{sect:finite}, we prove a
theorem that helps in calculating examples by simplifying the checking of
the (rather long) list of axioms for an $s$-structure.  Finally,
Section~\ref{sect:examples} contains the two examples mentioned above.

\subsection*{Acknowledgements}

This work was largely inspired by a question I was asked by E.~Vasserot.  I
would like to thank D.~Sage, M.~Schlichting, and E.~Vasserot for helpful
and enlightening conversations.  I am grateful to the referee for help with the arguments in Section~\ref{subsect:equiv-der}.

\bigskip

{\em Added in revision.}  Since this paper first appeared in preprint form in September 2007, considerable progress has been made in the theory of staggered sheaves.  Two rich classes of examples are now available: flag varieties~\cite{as:flag} and toric varieties~\cite{t}.  Some key results on simple objects are now known to hold in greater generality than proved here~\cite{at:bs}.  With respect to a suitable filtration of the derived category~\cite{at:bs}, staggered sheaves  obey ``purity'' and ``decomposition'' theorems~\cite{at:pd}, analogous to results of~\cite{bbd} for $\ell$-adic mixed perverse sheaves.  Finally, every simple staggered sheaf admits both a projective cover and a ``standard'' cover~\cite{a:qhp}, the latter being analogous to a Verma module in category $\cO$. 

%%%%%%%%%%%%%%%%%%%%%%%%%%%%%%%%%%%%%%%%%%%%%%%%%%%%%%%%%%%%%%%%%%%%%%%%%%%
\section{Preliminaries}
\label{sect:prelim}
%%%%%%%%%%%%%%%%%%%%%%%%%%%%%%%%%%%%%%%%%%%%%%%%%%%%%%%%%%%%%%%%%%%%%%%%%%%

%--------------------------------------------------------------------------
\subsection{Notation and assumptions}
%--------------------------------------------------------------------------

Let $X$ be a scheme of finite type over a noetherian base scheme admitting
a dualizing complex in the sense of~\cite[Chap.~V]{har}, and let $G$ be an affine group scheme over the same
base, acting on $X$.  Let $\cn X$ (resp.~$\qn X$) denote the category of coherent (resp.~quasicoherent) sheaves on $X$, and let $\cg X$ (resp.~$\qg X$) denote the category of
$G$-equivariant coherent (resp.~quasicoherent) sheaves on $X$.  
We also assume, following~\cite{bez:pc}, that $G$ is flat, of finite type, and Gorenstein over the base scheme, and that both $\cn X$ and $\cg X$ have enough locally free objects.

The terms ``subscheme,'' ``sheaf,'' and ``vector bundle'' should always be understood to mean ``$G$-invariant
subscheme,'' ``$G$-equivariant coherent sheaf,'' and ``$G$-equivariant vector bundle,'' unless explicitly specified otherwise.  Similarly, the term
``irreducible'' should always be understood in the $G$-invariant sense: a
scheme is irreducible if it is not a union of two proper closed
$G$-invariant subschemes. 

For brevity, we introduce the notation
\[
\dgm X = D^-(\cg X)
\qquad\text{and}\qquad
\dgb X = \Db(\cg X)
\]
for the bounded-above and bounded derived categories of $\cg X$.  These
are equivalent to the full triangulated subcategories of $D^-(\qg X)$ and
$\Db(\qg X)$, respectively, consisting of objects with coherent
cohomology.  (See~\cite[Corollary~1]{bez:pc}.)  We also let
\[
\dgp X = \{ \cF \in D^+(\qg X) \mid \text{$\cF$ has coherent cohomology}
\}.
\]
$\dgp X$ is not, in general, equivalent to $D^+(\cg X)$, but it is the
right category to work in from the viewpoint of Grothendieck--Serre
duality.  (We omit the subscript $G$ for the corresponding nonequivariant derived categories.)  The cohomology sheaves of an object $\cF$ in one of these derived categories will be denoted $H^k(\cF)$.  In particular, this notation will \emph{not} be used for derived functors of the global-section functor $\Gamma$.

We write $\dgm X^{\le n}$ and $\dgb X^{\le n}$ for the full subcategories
consisting objects $\cF$ with $H^k(\cF) = 0$ for $k > n$.  $\dgp X^{\ge
n}$ and $\dgb X^{\ge n}$ are defined similarly, and we put $\dgb X^{[a,b]}
= \dgb X^{\ge a} \cap \dgb X^{\le b}$.  We also have truncation functors $\tau^{\le n}$ and $\tau^{\ge n}$ for each $n$, and the composition $\tau^{[a,b]} = \tau^{\ge a} \circ \tau^{\le b}$.

Let $Z \subset X$ be a closed subscheme.  The notation
\[
V \Subset_Z X
\]
will be used to mean that $V$ is an open subscheme of $X$ containing the
complement of $Z$, and with the property that $V \cap Z$ is a dense open
subscheme of $Z$.  (In particular, $V$ is a dense open subscheme of $X$.) 
We also use the notation
\[
V \Subset X
\]
as a synonym for $V \Subset_X X$, {\it i.e.}, to say that $V$ is a
dense open subscheme of $X$.

Given $\cF, \cG \in \cg X$, we write $\cHom(\cF,\cG)$ for their ``internal Hom,'' also an object of $\cg X$.  This $\cHom$ commutes with the forgetful functor to the nonequivariant category.  Note that $\Gamma(\cHom(\cF,\cG)) \not\simeq \Hom(\cF,\cG)$ in general, as the latter is the group of $G$-equivariant morphisms.

Next, we define $\gHom(\cF,\cG)$ to be the rule assigning to each open subscheme $V \subset X$ the group $\Hom(\cF|_V,\cG|_V)$.  This is not usually a sheaf, as it does not take values on non-$G$-invariant open subschemes.  We define $\gExt^r(\cF,\cG)$ for any $r\ge 0$ similarly.  We will not formally develop the theory of such objects; rather, the main use of this notation will be to enable us abbreviate certain kinds of statements and arguments.  For instance, we may say ``$\gHom(\cF|_U, \cG|_U) = 0$'' rather than ``for all $V \subset U$, $\Hom(\cF|_V,\cG|_V) = 0$,'' and we may state that there is an exact sequence
\[
\gHom(\cF,\cG'') \to \gExt^1(\cF,\cG') \to \gExt^1(\cF,\cG)
\]
to mean that for every open subscheme $V \subset X$, there is an exact sequence
\[
\Hom(\cF|_V,\cG''|_V) \to \Ext^1(\cF|_V,\cG'|_V) \to
\Ext^1(\cF|_V,\cG|_V).
\]

If $i: Z \hto X$ is a closed subscheme of $X$, the underlying topological space of $Z$ will be denoted $\uZ$.  An object $\cF$ of $\cg X$ or $\dg X$ is said to be \emph{supported on $Z$} if $\cF \simeq i_*\cF_1$ for some object $\cF_1$
of $\cg Z$ or $\dg Z$.  On the other hand, $\cF$ is \emph{supported on $\uZ$} simply if all nonzero stalks of $\cF$ are over points of $\uZ$.  We define two categories related to $\uZ$ as follows:
\begin{align*}
\czsupp &= \{ \cF \in \cg X \mid \text{$\cF$ is supported on $\uZ$} \}, \\
\dzsupp &= \{ \cF \in \dgb X \mid \text{$\cF$ is supported on $\uZ$} \}.
\end{align*}
Note that objects of $\dzsupp$ are by definition bounded.
Recall that for every object $\cF$ in $\czsupp$ or $\dzsupp$, there exists a closed subscheme structure $i_{Z'}: Z' \hto X$ and an object $\cF_1$ in $\cg{Z'}$ or $\dgb{Z'}$ such that $\cF \simeq i_{Z'*}\cF_1$.

For a closed subscheme $i: Z \hto X$, the notation $i^*$ will always denote the coherent pull-back functor $i^*: \cg X \to \cg Z$.  We denote by $i^\flat: \cg X \to \cg Z$ the
functor of ``sections supported on $Z$.''  This functor is right adjoint to
$i_*: \cg Z \to \cg X$, and satisfies $i_*i^\flat\cF \simeq \cHom(i_*\cO_Z,
\cF)$.  Note that $i_*i^\flat\cF$ is naturally a subsheaf of $\cF$.

Finally, we let $\Gamma_Z: \cg X \to \cg X$ be the functor of ``sections
supported on $\uZ$.''  $\Gamma_Z\cF$ is a subsheaf of $\cF$, and we have a
natural isomorphism
\[
\Gamma_Z\cF \simeq \lim_{\substack{\to \\ Z'}} i_{Z'*}i_{Z'}^\flat \cF
\]
where $i_{Z'}: Z' \hto X$ ranges over all closed subscheme structures on $\uZ$.

%--------------------------------------------------------------------------
\subsection{Equivariant derived categories}
\label{subsect:equiv-der}
%--------------------------------------------------------------------------

There are a number of foundational issues to be addressed in translating the theory of derived categories and functors of coherent sheaves from the nonequivariant setting (following~\cite{har}) to the equivariant one.  Many of these have been treated by Bezrukavnikov~\cite[Section~2]{bez:pc}.  Here, we briefly consider the issues that are most relevant to the present paper.  Consider first the derived functors
\[
Li^*: \dgm X \to \dgm Z \qquad\text{and}\qquad
\cRHom: (\dgm X)^{\mathrm{op}} \times \dgp X \to \dgp X.
\]
$Li^*$ may be computed by taking a locally free resolution, since $\cg X$ is assumed to have enough locally free objects.  Similarly, $\cRHom$ may be computed by taking a locally free resolution in the first variable.  We will also require the derived functor of $i^\flat$.  The quasicoherent category $\qg X$ has enough injectives, so there is no problem in constructing the functor
\[
Ri^\flat: D^+(\qg X) \to D^+(\qg Z),
\]
but it is not yet clear how to obtain from this a functor on $\dgp X$.  We will address this issue below.

\begin{lem}\label{lem:found}
Let $j: U \hto X$ be an open subscheme, and let $i: Z \hto X$ be a complementary closed subscheme.
\begin{enumerate}
\item The functor $Li^*$ is left-adjoint to $i_*$, and $Ri^!$ is right-adjoint to $i_*$.\label{it:f-adj}
\item The functors $Li^*$, $\cRHom$, and $Ri^\flat$ commute with the forgetful functor to the appropriate nonequivariant derived category.  In particular, $Ri^\flat$ restricts to a functor $\dgp X \to \dgp Z$. \label{it:f-forg}
\item For any $\cF \in \dgm X$ and $\cG \in \dgp X$, there is a long exact sequence\label{it:f-les}
\[
\to \lim_{\substack{\to \\ Z'}} \Hom(Li^*_{Z'}\cF, Ri^\flat_{Z'}\cG) \to
\Hom(\cF,\cG) \to \Hom(j^*\cF,j^*\cG) \to
\]
where the limit runs over all subscheme structures $i_{Z'}: Z' \hto X$ on $\uZ$.
\end{enumerate}
\end{lem}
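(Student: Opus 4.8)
The plan is to reduce all statements about $Ri^\flat$ to the two functors that are directly computable by locally free resolutions, namely $Li^*$ and $\cRHom$. For part~(1), the adjunction $Li^*\dashv i_*$ follows from the general principle that if $F\dashv G$ is a pair of additive functors between abelian categories with $G$ exact, and the source of $F$ has enough $F$-acyclic objects computing $LF$, then $LF\dashv G$ on the derived categories; here $i^*\dashv i_*$, the functor $i_*$ is exact, and the locally free objects of $\cg X$ are $i^*$-acyclic. Dually, the same principle applied to $i_*\dashv i^\flat$ (with $i_*$ exact and $\qg X$ having enough injectives, which compute $Ri^\flat$) gives the adjunction $i_*\dashv Ri^\flat$ on $D^+$. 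For the easy half of part~(2), $Li^*$ and $\cRHom$ are computed by locally free resolutions --- of the argument in the first case, of the first argument in the second --- so, since the forgetful functor $\Forg$ is exact and carries equivariant vector bundles to vector bundles, it commutes with both on the nose.

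The remaining, less routine, part of~(2) concerns $Ri^\flat$, where the naive injective-resolution computation is unavailable because equivariant injectives need not stay injective after forgetting $G$. I would instead start from the identity $i_*i^\flat\cF\simeq\cHom(i_*\cO_Z,\cF)$ of functors $\qg X\to\qg X$; since $i_*$ is exact, right-deriving both sides gives $i_*\,Ri^\flat\simeq\cRHom(i_*\cO_Z,-)$ on $D^+(\qg X)$. Both $i_*$ and $\cRHom(i_*\cO_Z,-)$ commute with $\Forg$ (the latter by the previous paragraph, after choosing a locally free resolution of $i_*\cO_Z$), and $i_*$ is fully faithful, so chasing the resulting isomorphisms yields $\Forg\circ Ri^\flat\simeq Ri^\flat\circ\Forg$. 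The same identity shows that $Ri^\flat$ preserves coherent cohomology: resolving $i_*\cO_Z$ by a bounded-above complex of coherent locally free sheaves exhibits $\cRHom(i_*\cO_Z,\cF)$, for $\cF\in\dgp X$, as a complex of coherent sheaves --- each total-degree term being a \emph{finite} direct sum of coherent internal Hom's, because the resolution is bounded above and $\cF$ bounded below --- and $i_*$ reflects coherence. Since bounded-below-ness is automatic, $Ri^\flat$ restricts to $\dgp X\to\dgp Z$.

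For part~(3) I would use the local cohomology triangle attached to the complementary pair $j\colon U\hto X$, $i\colon Z\hto X$. Write $\Gamma_Z$ for the functor of sections supported on $\uZ$. Filtered colimits are exact in $\qg X$ and each $i_{Z'*}i^\flat_{Z'}$ is exact on injectives, hence so is $\Gamma_Z=\varinjlim_{Z'}i_{Z'*}i^\flat_{Z'}$; therefore injectives compute $R\Gamma_Z$, and combining the colimit formula from the preliminaries with exactness of filtered colimits gives $R\Gamma_Z\cG\simeq\varinjlim_{Z'}i_{Z'*}Ri^\flat_{Z'}\cG$ for $\cG\in\dgp X$. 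The standard triangle
\[
R\Gamma_Z\cG\to\cG\to Rj_*j^*\cG\to R\Gamma_Z\cG[1]
\]
in $D^+(\qg X)$ (where $Rj_*j^*\cG$ need not have coherent cohomology, but this does no harm) then produces, on applying $\Hom(\cF,-)$ with the appropriate shifts, a long exact sequence, and it remains to identify the outer terms. One has $\Hom(\cF,Rj_*j^*\cG)=\Hom(j^*\cF,j^*\cG)$ by the adjunction $j^*\dashv Rj_*$, while $\Hom(\cF,R\Gamma_Z\cG)=\Hom\bigl(\cF,\varinjlim_{Z'}i_{Z'*}Ri^\flat_{Z'}\cG\bigr)$ is to be rewritten by pulling $\Hom(\cF,-)$ past the filtered colimit and then applying $Li^*_{Z'}\dashv i_{Z'*}$ from part~(1) termwise in $Z'$, producing $\varinjlim_{Z'}\Hom(Li^*_{Z'}\cF,Ri^\flat_{Z'}\cG)$, as required.

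The step I expect to be the main obstacle is this final interchange of $\Hom(\cF,-)$ with the filtered colimit over subscheme structures, together with the supporting claim that $\varinjlim_{Z'}Ri^\flat_{Z'}$ genuinely computes $R\Gamma_Z$. Both rest on exactness of filtered colimits of quasicoherent sheaves and on careful bookkeeping of boundedness: one uses that $\cF\in\dgm X$ is represented by a bounded-above complex of coherent locally free sheaves and that the system $\{i_{Z'*}Ri^\flat_{Z'}\cG\}$ is \emph{uniformly} bounded below (as $i^\flat$ is left exact), so that the Hom-complex is finite in each degree and commutes with $\varinjlim$, with no derived-inverse-limit obstruction. By comparison, the forgetful-compatibility of $Ri^\flat$ in part~(2) is the only other point needing care, and the reduction to $\cRHom(i_*\cO_Z,-)$ disposes of it.
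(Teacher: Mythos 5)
Your part (1), the $Li^*$/$\cRHom$ half of part (2), and the overall shape of part (3) (local cohomology triangle, then adjunctions) agree with the paper; in (3) you additionally sketch a proof of the identity $\Hom(\cF,R\Gamma_Z\cG)\simeq\varinjlim_{Z'}\Hom(\cF,i_{Z'*}Ri^\flat_{Z'}\cG)$, which the paper simply imports from Bezrukavnikov's Lemma~3(b), and your sketch of it is plausible. The genuine gap is in your treatment of $Ri^\flat$ in part (2). The sentence ``since $i_*$ is exact, right-deriving both sides gives $i_*Ri^\flat\simeq\cRHom(i_*\cO_Z,-)$'' conflates two different derived functors: right-deriving the one-variable functor $\cHom(i_*\cO_Z,\cdot)$ means computing it on injective resolutions in $\qg X$, whereas $\cRHom(i_*\cO_Z,\cdot)$ --- as you define it, and as you must take it when you invoke ``the previous paragraph'' to commute with $\Forg$ and to get coherence --- is computed from a locally free resolution of the \emph{first} argument. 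Identifying these is a balancing statement for the bifunctor $\cHom$ in the equivariant category, and it is precisely what is not automatic here: the usual double-complex comparison needs $\cHom(\cdot,\cJ)$ to be exact for $\cJ$ injective in $\qg X$ (equivalently, $H^p(\cRHom(i_*\cO_Z,\cJ))=0$ for $p>0$), and this is unclear exactly because $\Forg$ need not carry injectives of $\qg X$ to injectives of $\qn X$. So the step you declare ``disposed of'' is where all the real work of the lemma lies.

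The paper closes this gap with the averaging functor $\Av=a_*pr^*$: it is exact, right adjoint to $\Forg$, and takes injectives of $\qn X$ to injectives of $\qg X$; every object of $\qg X$ embeds in an ``$\Av$-injective'' $\Av(\cI)$, and $\cHom(\cdot,\Av(\cI))\simeq\Av(\cHom(\Forg(\cdot),\cI))$ is exact because $\cI$ is injective nonequivariantly. Hence $\cRHom$ can also be computed by $\Av$-injective resolutions in the second variable, and applying $i^\flat\simeq\cHom(i_*\cO_Z,\cdot)|_Z$ to such a resolution yields exactly the identification $Ri^\flat(\cdot)\simeq\cRHom(i_*\cO_Z,\cdot)|_Z$ you want; commutation with $\Forg$ and preservation of coherent cohomology then follow as in your argument. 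To repair your proof you need this device, or some equivalent argument showing that (enough) equivariant injectives are acyclic for $\cHom(i_*\cO_Z,\cdot)$.
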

\begin{proof}
\eqref{it:f-adj}
The adjointness properties for $Li^*$, $Ri^\flat$, and $i_*$ follow by standard arguments from the corresponding adjointness properties at the level of abelian categories.  (For now, the adjointness of $i_*$ and $Ri^\flat$ holds only in the quasicoherent setting.)

\eqref{it:f-forg}
Because both $Li^*$ and $\cRHom$ can be computed by locally free resolutions in both the equivariant and the nonequivariant cases, the fact that they commute with the forgetful functor $\Forg$ follows from the fact that $\Forg$ takes locally free sheaves in $\cg X$ to locally free sheaves in $\cn X$.

Before dealing with $Ri^\flat$, we first consider another approach to computing $\cRHom$, via certain resolutions in the second variable.  The subtlety here is that $\Forg$ does not, in general, take injective objects of $\qg X$ to injective objects of $\qn X$.  To get around this problem, we make use of the ``averaging functor'' $\Av = a_*pr^* : \qn X \to \qg X$, where $pr: G \times X \to X$ is projection on to the second factor, and $a: G \times X \to X$ is the action map.  $\Av$ is exact and right-adjoint to $\Forg$, and it takes injective objects to injective objects (see~\cite[Section~2]{bez:pc}).  Let us say that an object of $\qg X$ is \emph{$\Av$-injective} if it is isomorphic to $\Av(\cI)$ for some injective object $\cI \in \qn X$.  Then every object of $\qg X$ is a subsheaf of some $\Av$-injective sheaf, and every object in $D^+(\qg X)$ admits an $\Av$-injective resolution.  We claim that $\cRHom$ can be computed by $\Av$-injective resolutions in the second variable.  This claim would follow from the statement that if $\cI \in \qn X$ is injective, then $\cHom(\cdot, \Av(\cI))$ is an exact functor on $\qg X$.  The latter holds because
\[
\cHom(\cdot, \Av(I)) \simeq \Av(\cHom(\Forg(\cdot),\cI)),
\]
and because $\cHom(\cdot,\cI)$ is an exact functor on $\qn X$.  

Now, for a coherent sheaf $\cF \in \cg X$, consider the sheaf of abelian groups on $Z$ given by
\[
\cHom(i_*\cO_Z, \cF)|_Z.
\]
Here, ``$|_Z$'' denotes the exact functor of restriction to $Z$ in the usual sense for sheaves of abelian groups, \emph{not} the coherent pull-back functor.  Nevertheless, it easy to see that this sheaf can be naturally regarded as an object of $\cg Z$.  Indeed, because $i_*i^\flat\cF \simeq \cHom(i_*\cO_Z,cF)$, we have
\[
i^\flat\cF \simeq \cHom(i_*\cO_Z, \cF)|_Z.
\]
We can use this formula to better understand $Ri^\flat$.  For $\cF \in \dgp X$, $Ri^\flat\cF$ can be computed by applying $i^\flat \simeq \cHom(i_*\cO_Z, \cdot)|_Z$ to an $\Av$-injective resolution of $\cF$.  By the previous paragraph, this is equivalent to taking a chain complex representing $\cRHom(i_*\cO_Z,\cF)$ and then applying ``$|_Z$.''  Since $\cRHom$ and restriction to $Z$ both commute with $\Forg$, $Ri^\flat$ does as well.  In particular, when evaluated on objects of $\dgp X$, $Ri^\flat$ takes values in $\dgp Z$ ({\it i.e.}, it has coherent cohomology), because the analogous statement is true for the corresponding nonequivariant functor.

\eqref{it:f-les}
We follow the proof given in~\cite[Proposition~2]{bez:pc}.  In $D^+(\qg X)$, there is a well-known distinguished triangle $R\Gamma_Z\cG \to \cG \to Rj_*j^*\cG \to$, from which we obtain a long exact sequence
\[
\to \Hom(\cF,R\Gamma_Z\cG) \to \Hom(\cF,\cG) \to \Hom(\cF, Rj_*j^*\cG) \to
\]
By~\cite[Lemma~3(b)]{bez:pc}, we have
\[
\Hom(\cF, R\Gamma_Z\cG) \simeq \lim_{\substack{\to \\ Z'}} \Hom(\cF, i_{Z'*}Ri^\flat_{Z'}\cG) \simeq \lim_{\substack{\to \\ Z'}} \Hom(Li^*_{Z'}\cF, Ri^\flat_{Z'}\cG),
\]
where $i_{Z'}: Z' \hto X$ runs over all subscheme structures on $\uZ$, 
and on the other hand, we have $\Hom(\cF, Rj_*j^*\cG) \simeq \Hom(j^*\cF, j^*\cG)$. 
\end{proof}

According to~\cite[Proposition~1]{bez:pc}, the scheme $X$ admits an equivariant dualizing complex, {\it i.e.}, an object $\omega_X \in \dgb X$ such that the functor $\D = \cRHom(\cdot, \omega_X) : \dgb X \to \dgb X$ has the property that $\D \circ \D \simeq \id$.  The functor $\D$ can be evaluated on objects of $\dgm X$; it then takes values in $\dgp X$.  

One can also evaluate $\D$ on objects of $\dgp X$: remarkably, its values, {\it a priori} belonging to the unbounded derived category $D(\qg X)$, actually lie in $\dgm X$.  This fact follows from the corresponding nonequivariant statement, since we know that $\cRHom$ commutes with $\Forg$, and $\Forg(\omega_X)$ is a nonequivariant dualizing complex~\cite[Lemma~4]{bez:pc}.  The nonequivariant version holds because $\Forg(\omega_X)$ is quasi-isomorphic to a bounded complex of injective objects of $\qn X$~\cite[II.7.20]{har}.  (Note that the latter argument cannot simply be repeated in the equivariant case: $\omega_X$ need not be quasi-isomorphic to a bounded complex of injective objects of $\qg X$.)

The functor $\D$ gives antiequivalences in both directions:
\[
\D: \dgm X \to \dgp X, \qquad \D: \dgp X \to \dgm X.
\]
In general, dualizing complexes are not uniquely determined, but if we are given a dualizing complex $\omega_X$ on $X$, we can construct from it a dualizing complex on any open subscheme $j: U \hto X$ by the formula
\[
\omega_U = \omega_X|_U.
\]
On the other hand, for a closed subscheme  $i: Z \hto X$, the object 
\[
\omega_Z = Ri^\flat \omega_X
\]
is an equivariant dualizing complex.  (Again, by Lemma~\ref{lem:found} and~\cite[Lemma~4]{bez:pc}, this statement follows from the corresponding nonequivariant one, proved in~\cite[V.2.4]{har}.)  In particular, $\omega_Z$ is necessarily in $\dgb Z$.
From Section~\ref{sect:duality} on, we will have a fixed dualizing complex $\omega_X$ in mind for $X$, and we adopt the convention that on any locally closed subscheme of $X$, $\D$ is to be computed with the dualizing complex obtained from $\omega_X$ by the above formulas.

\begin{lem}\label{lem:found-dual}
The functor $\D$ commutes with the forgetful functor to the nonequivariant derived category.  For a closed subscheme $i: Z \hto X$, we have $\D \circ Li^* \simeq Ri^\flat \circ \D$.
\end{lem}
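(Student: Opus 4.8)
The plan is to reduce both statements to their nonequivariant counterparts, using Lemma~\ref{lem:found} together with the fact that the forgetful functor $\Forg$ reflects isomorphisms in $\dgm X$ and $\dgp X$: an equivariant morphism whose underlying morphism of (quasi)coherent sheaves is invertible has an automatically equivariant inverse, so the reflection property holds already on $\cg X$ and $\qg X$, hence on cohomology sheaves, hence on these derived categories. The first statement is then immediate: since $\D = \cRHom(\cdot,\omega_X)$ and $\cRHom$ commutes with $\Forg$ by Lemma~\ref{lem:found}\eqref{it:f-forg}, we get $\Forg\,\D\cF \simeq \cRHom(\Forg\cF,\Forg\omega_X)$ naturally in $\cF$, and $\Forg\omega_X$ is exactly the nonequivariant dualizing complex with which $\D$ is computed downstairs by~\cite[Lemma~4]{bez:pc}, so the right-hand side is $\D(\Forg\cF)$.

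For the second statement I would first construct, naturally in $\cF \in \dgm X$, an isomorphism between $Ri^\flat\D\cF$ and $\D Li^*\cF$ within the equivariant derived category. This is the equivariant analogue of a standard compatibility of Grothendieck--Serre duality, and it is assembled from: the adjunction $i_* \dashv Ri^\flat$ (Lemma~\ref{lem:found}\eqref{it:f-adj}); the isomorphism $i_*Ri^\flat\cH \simeq \cRHom(i_*\cO_Z,\cH)$ obtained by deriving the given formula $i_*i^\flat\cF \simeq \cHom(i_*\cO_Z,\cF)$; the tensor--hom adjunction; and the projection-formula isomorphism $i_*\cO_Z \Lotimes \cF \simeq i_*Li^*\cF$. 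Concretely, applying $i_*$ (which is fully faithful, so it suffices to produce the isomorphism upstairs), one chains together natural isomorphisms
\[
i_*Ri^\flat\D\cF \;\simeq\; \cRHom(i_*\cO_Z \Lotimes \cF,\, \omega_X) \;\simeq\; \cRHom(i_*Li^*\cF,\, \omega_X) \;\simeq\; i_*\D Li^*\cF,
\]
the last step using $\omega_Z = Ri^\flat\omega_X$ together with the support identity $\cRHom(i_*\cM,\cG) \simeq i_*\cRHom(\cM, Ri^\flat\cG)$. Each of the auxiliary isomorphisms is established by the same resolution arguments used in Lemma~\ref{lem:found}\eqref{it:f-forg} (locally free resolutions in one variable, or $\Av$-injective resolutions in the other), and in particular each commutes with $\Forg$.

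It then remains to note that, by Lemma~\ref{lem:found}\eqref{it:f-forg} and the first statement, $\Forg$ carries this composite isomorphism term by term to the corresponding nonequivariant one, which is precisely the classical duality isomorphism $Ri^\flat\cRHom(\cF,\omega) \simeq \cRHom(Li^*\cF, Ri^\flat\omega)$ for a closed immersion~\cite{har}, and hence is an isomorphism. Since $\Forg$ reflects isomorphisms and everything is natural in $\cF$, we conclude. (Alternatively, one can observe that all these functors and their coherence isomorphisms are inherited from the nonequivariant setting in the manner of~\cite[Section~2]{bez:pc}, so that the classical natural isomorphism simply descends.) I expect the main obstacle to be the construction in the second paragraph: pinning down the equivariant forms of the tensor--hom adjunction, the projection formula, and the support identity for $\cRHom$ along a closed immersion, and verifying that each is natural and $\Forg$-compatible. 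That bookkeeping is routine but fiddly; once it is in place, everything else is formal.
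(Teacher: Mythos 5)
Your proof of the first assertion is the same as the paper's: it is immediate from Lemma~\ref{lem:found}\eqref{it:f-forg} plus the fact that $\Forg(\omega_X)$ is a nonequivariant dualizing complex, which is all the paper says. For the compatibility $\D \circ Li^* \simeq Ri^\flat \circ \D$, however, you take a genuinely different route. The paper starts from the single underived natural isomorphism $\cHom(i^*\cF, i^\flat\cG) \simeq i^\flat\cHom(\cF,\cG)$ and derives it wholesale (``by the usual general methods,'' as in \cite[Proposition~III.6.9(b)]{har}) to get $\cRHom(Li^*\cF, Ri^\flat\cG) \simeq Ri^\flat\cRHom(\cF,\cG)$, then specializes $\cG = \omega_X$ and uses $Ri^\flat\omega_X \simeq \omega_Z$. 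You instead apply the fully faithful $i_*$ and assemble the isomorphism from several pieces: the identification $i_*Ri^\flat \simeq \cRHom(i_*\cO_Z,\cdot)$ (which is indeed available, being established inside the proof of Lemma~\ref{lem:found}\eqref{it:f-forg}), the derived tensor--hom adjunction, the projection formula $i_*\cO_Z \Lotimes \cF \simeq i_*Li^*\cF$, and a support identity for $\cRHom$ along $i$, finishing with a $\Forg$-reflection argument. Both routes are sound; the trade-off is that the paper needs only one underived isomorphism plus the standard deriving machinery, while you must verify equivariant derived versions of three or four auxiliary compatibilities (each by the same locally free/$\Av$-injective resolution arguments), which you rightly flag as the fiddly part. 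Note also that your final appeal to ``$\Forg$ reflects isomorphisms'' is redundant as written: once each auxiliary map is established as an equivariant isomorphism, the composite is already invertible, so the comparison with the classical nonequivariant isomorphism is only needed if you prefer to construct the maps equivariantly but check invertibility downstairs --- either version of the argument is fine, but you should commit to one.
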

\begin{proof}
The first part of the lemma is immediate from Lemma~\ref{lem:found}.  Next, recall that for $\cF$ and $\cG$ in $\cg X$, we have a natural isomorphism
\[
\cHom(i^*\cF, i^\flat\cG) \simeq i^\flat \cHom(\cF,\cG).
\]
By the usual general methods (see~\cite[Proposition~III.6.9(b)]{har} for the nonequivariant case), we may deduce that
\[
\cRHom(Li^*\cF, Ri^\flat\cG) \simeq Ri^\flat \cRHom(\cF,\cG)
\]
for $\cF \in \dgm X$ and $\cG \in \dgp X$.  Now, take $\cG = \omega_X$.  In this case, we know that $Ri^\flat\omega_X \simeq \omega_Z$.  The isomorphism above then becomes $\D \circ Li^* \simeq Ri^\flat \circ \D$.
\end{proof}

%--------------------------------------------------------------------------
\subsection{Coherent sheaves on open subschemes}
%--------------------------------------------------------------------------

We now prove several useful lemmas relating coherent sheaves on $X$ to those on an open subscheme.

\begin{lem}\label{lem:subcplx-extend}
Let $j: U \hto X$ be an open subscheme, and let $\cF \in \dgb X^{[a,b]}$.
Given a distinguished triangle
\begin{equation}\label{eqn:cplx-dtU}
\cF'_U \to \cF|_U \to \cF''_U \to
\end{equation}
in $\dgb U$, with $\cF'_U \in \dgb U^{[a,b]}$, there exist objects $\cF'
\in \dgb X^{[a,b]}$, $\cF'' \in \dgb X$ and a distinguished triangle
\[
\cF' \to \cF \to \cF'' \to
\]
in $\dgb X$ whose image under $j^*$ is isomorphic to the distinguished
triangle given in~\eqref{eqn:cplx-dtU}.
\end{lem}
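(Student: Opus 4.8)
The plan is to reduce the statement to the problem of extending a single morphism and then to solve that problem using the long exact sequence of Lemma~\ref{lem:found}\eqref{it:f-les}. It suffices to extend the first morphism of~\eqref{eqn:cplx-dtU} together with its source; concretely, it is enough to produce an object $\cF'\in\dgb X^{[a,b]}$, a morphism $u'\colon\cF'\to\cF$ in $\dgb X$, and an isomorphism $\phi\colon j^{*}\cF'\xrightarrow{\sim}\cF'_U$ with $j^{*}u'=u\circ\phi$, where $u\colon\cF'_U\to\cF|_U$ is the first morphism of~\eqref{eqn:cplx-dtU}. Given such data, put $\cF'':=\operatorname{Cone}(u')\in\dgb X$ and apply the exact functor $j^{*}$ to the triangle $\cF'\xrightarrow{u'}\cF\to\cF''\to$; the pair $(\phi,\id_{\cF|_U})$ is a morphism between the first two terms of the resulting triangle and the first two terms of~\eqref{eqn:cplx-dtU}, so by (TR3) it extends to a morphism of triangles, which the five lemma shows is an isomorphism. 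Thus the problem reduces to extending $u$ while keeping its source in $\dgb X^{[a,b]}$.

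To do that, I would first extend $\cF'_U$ to an object of $\dgb X$ --- recall that $j^{*}\colon\dgb X\to\dgb U$ is essentially surjective --- and truncate, obtaining $\cG'\in\dgb X^{[a,b]}$ with an isomorphism $j^{*}\cG'\simeq\cF'_U$. By Lemma~\ref{lem:found}\eqref{it:f-les}, with first argument $\cG'\in\dgm X$ and second argument $\cF\in\dgp X$, the obstruction to lifting $u$ --- viewed through this isomorphism as a morphism $j^{*}\cG'\to\cF|_U$ --- to a morphism $\cG'\to\cF$ is its image $\alpha$ under the connecting map, a class in $\varinjlim_{Z'}\Hom(Li^{*}_{Z'}\cG',\,Ri^{\flat}_{Z'}\cF[1])$, the colimit running over the closed subscheme structures $Z'$ on $\uZ$. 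The key move is to kill $\alpha$ by replacing $\cG'$ with an object isomorphic to it over $U$. Since the colimit is filtered and $\Hom(Li^{*}_{Z_{0}}\cG',Ri^{\flat}_{Z_{0}}\cF[1])\simeq\Hom(\cG',\cC_{0})$ by adjunction (Lemma~\ref{lem:found}\eqref{it:f-adj}), where $\cC_{0}:=i_{Z_{0}*}Ri^{\flat}_{Z_{0}}\cF[1]$, the class $\alpha$ is the image of a morphism $\alpha'\colon\cG'\to\cC_{0}$ for a single subscheme structure $Z_{0}$ on $\uZ$. Completing $\alpha'$ to a distinguished triangle $\cG''\xrightarrow{p}\cG'\xrightarrow{\alpha'}\cC_{0}\to$, the morphism $p$ restricts to an isomorphism over $U$ --- since $\cC_{0}$ is pushed forward from $\uZ$, so $j^{*}\cC_{0}=0$ --- and $\alpha$ pulls back to $0$ along $p$, because $\alpha'\circ p=0$.

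It remains to check that this modification is compatible with the bounds $[a,b]$; this --- together with the non-surjectivity of $\Hom_{\dgb X}(\cG',\cF)\to\Hom_{\dgb U}(\cF'_U,\cF|_U)$ that forces the modification --- is where I expect the main difficulty to lie. Because $Ri^{\flat}_{Z_{0}}$ preserves coherence of cohomology (Lemma~\ref{lem:found}\eqref{it:f-forg}) and, being the derived functor of a left exact functor, cannot lower cohomological degree, one gets $\cC_{0}\in\dgpg X{a-1}$, and then the cohomology sequence of the last triangle forces $\cG''\in\dgpg X a$ (for $k<a$ the groups $H^{k-1}(\cC_{0})$ and $H^{k}(\cG')$ both vanish). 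Hence $\cF':=\tau^{\le b}\cG''$ lies in $\dgb X^{[a,b]}$, and the natural map $n\colon\cF'\to\cG''$ restricts to an isomorphism over $U$ because $\cF'_U\in\dgbl U b$; left-exactness of $Ri^{\flat}$ is used precisely here, since otherwise one would only know $\cG''\in\dgpg X{a-1}$ and the truncation would leave the interval $[a,b]$. Finally I would apply Lemma~\ref{lem:found}\eqref{it:f-les} a second time, now with first argument $\cF'$, and use naturality of the sequence in its first variable along the composite $p\circ n\colon\cF'\to\cG'$: the obstruction to lifting the transported morphism $j^{*}\cF'\to\cF|_U$ to a morphism $u'\colon\cF'\to\cF$ then equals the pullback of $\alpha$ along $p\circ n$, which at the stage $Z_{0}$ is represented by $\alpha'\circ p\circ n=0$ and hence vanishes; so $u'$ exists, and the isomorphism $\phi$ required in the reduction is the composite of $j^{*}n$, $j^{*}p$, and the chosen isomorphism $j^{*}\cG'\simeq\cF'_U$.
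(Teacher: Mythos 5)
Your argument is correct, but it takes a genuinely different route from the paper's. The paper works at the chain level: it represents $\cF$ and the morphism $\cF'_U \to \cF|_U$ by actual complexes concentrated in degrees $[a,b]$, applies $j_*$ termwise, chooses a coherent subcomplex $\cH_1^\bullet \subset j_*\cG'_U{}^\bullet$ restricting to $\cG'_U{}^\bullet$, and defines $\cF'$ as the termwise pullback of $\cG^\bullet \to j_*j^*\cG^\bullet \leftarrow \cH_1^\bullet$; exactness of $j^*$ then gives the required isomorphism of triangles at once, with the bound $[a,b]$ visible on the nose. You instead stay inside the derived category: you reduce to extending the single morphism $u$ together with its source, extend the source by essential surjectivity of $j^*$ plus truncation, read off the obstruction to lifting $u$ from the long exact sequence of Lemma~\ref{lem:found}\eqref{it:f-les}, kill it by passing to the fiber of a representative $\alpha'\colon\cG'\to i_{Z_0*}Ri^\flat_{Z_0}\cF[1]$ at a finite stage of the filtered colimit, and use left $t$-exactness of $Ri^\flat$ to see that the subsequent truncation $\tau^{\le b}$ restores the interval $[a,b]$ without changing the restriction to $U$; your bookkeeping of the obstruction classes ($\alpha\circ p\circ n=0$ by naturality of the sequence in the first variable) and the final TR3/five-lemma step are all sound. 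The trade-off: the paper's construction is more elementary and self-contained, needing only the existence of coherent subcomplexes of $j_*$ of a bounded coherent complex, whereas your proof avoids chain-level representatives and is closer in spirit to the way Lemma~\ref{lem:found}\eqref{it:f-les} is used later (e.g.\ in Proposition~\ref{prop:stag-hom}), but it imports as a black box the essential surjectivity of $j^*\colon\dgb X\to\dgb U$ in the equivariant setting --- a fact the paper never states, which is standard (and true under the paper's hypotheses, since every equivariant quasicoherent sheaf is the filtered union of its equivariant coherent subsheaves) but whose usual proof is precisely the termwise $j_*$-plus-coherent-subcomplex device that the paper's own proof carries out; so your argument is not circular, but it silently relies on the same underlying extension mechanism.
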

\begin{proof}
Fix a bounded complex $\cG^\bullet$ of objects of $\cg X$ that
represents $\cF$ and such that $\cG^i = 0$ for $i < a$ or $i > b$.  Then,
there exists a complex $\cG'_U{}^\bullet$ of objects of $\cg U$ and a
morphism of
complexes $f: \cG'_U{}^\bullet \to \cG^\bullet|_U$ representing the
morphism $\cF'_U \to \cF|_U$.  We may again assume that $\cG'_U{}^i = 0$
for $i < a$ or $i > b$.  Now, $f$ induces a morphism of complexes of
quasicoherent sheaves $j_*f: j_*\cG'_U{}^\bullet \to j_*j^*\cG^\bullet$. 
There is also a natural morphism $\cG^\bullet \to j_*j^*\cG^\bullet$. 
Consider the diagram
\[
\xymatrix@=10pt{
& \cG^\bullet \ar[d] \\
j_*\cG'_U{}^\bullet \ar[r]^{j_*f} & j_*j^*\cG^\bullet}
\]
There
certainly exist coherent subcomplexes of $j_*\cG'_U{}^\bullet$ whose
restriction to $U$ is $\cG'_U{}^\bullet$.  Choose one such complex, and denote it $\cH_1^\bullet$.  Now, form the pullback 
\[
\xymatrix@=10pt{
\cG'{}^\bullet \ar@{.>}[r] \ar@{.>}[d] & \cG^\bullet \ar[d] \\
\cH_1^\bullet \ar[r] & j_*j^*\cG^\bullet}
\]
When we apply the exact functor $j^*$, the right-hand vertical arrow
becomes an isomorphism, and after passing to $\dgb U$, the arrow along the
bottom becomes $\cF'_U \to \cF|_U$.  Therefore, the left-hand vertical
arrow also becomes an isomorphism, and $j^*\cG'{}^\bullet \to
j^*\cG^\bullet$ is isomorphic in $\dgb U$ to $\cF'_U \to \cF|_U$.  Let
$\cF'$ be $\cG'{}^\bullet$ regarded as an object of $\dgb X$, and let
$\cF''$ be the cone of $\cF' \to \cF$.  Then $\cF' \to \cF \to \cF'' \to$
is the distinguished triangle we seek.
\end{proof}

When $a = b = 0$, the previous lemma yields the following abelian-category statement.

\begin{lem}\label{lem:subsheaf-extend}
Let $U \subset X$ be an open subscheme.  Let $\cF \in \cg X$.  For any
short exact sequence $0 \to \cF'_U \to \cF|_U \to \cF'_U \to 0$ of sheaves on $U$,
there exists a short exact sequence $0 \to \cF' \to \cF \to \cF'' \to
0$ of sheaves on $X$ whose image under $j^*$ is the given short exact
sequence.\qed
\end{lem}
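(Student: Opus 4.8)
The plan is to deduce this from the case $a = b = 0$ of Lemma~\ref{lem:subcplx-extend}, followed by a small amount of cleanup at the abelian-category level. First, regard $\cF$ as an object of $\dgb X^{[0,0]}$ and view the given short exact sequence as a distinguished triangle $\cF'_U \to \cF|_U \to \cF''_U \to$ in $\dgb U$; since $\cF'_U \in \dgb U^{[0,0]}$, the hypotheses of Lemma~\ref{lem:subcplx-extend} are satisfied. Applying that lemma produces a coherent sheaf $\cF' \in \dgb X^{[0,0]}$, an object $\cG \in \dgb X$, and a distinguished triangle $\cF' \to \cF \to \cG \to$ in $\dgb X$ whose image under $j^*$ is isomorphic to the triangle above. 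Let $\varphi\colon \cF' \to \cF$ be its first morphism; under this identification, $j^*\varphi$ is the inclusion $\cF'_U \hto \cF|_U$.

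The morphism $\varphi$ need not be injective: since $j^*$ is exact, one learns only that $\ker\varphi$ restricts on $U$ to $\ker(\cF'_U \hto \cF|_U) = 0$, so that $\ker\varphi$ is a coherent sheaf supported on $X \ssm U$, which need not vanish. To obtain an honest subsheaf, I would replace $\cF'$ by $\cF'_1 := \im\varphi \subseteq \cF$ and set $\cF'' := \cF/\cF'_1$, giving a short exact sequence $0 \to \cF'_1 \to \cF \to \cF'' \to 0$ of sheaves on $X$. Applying the exact functor $j^*$ and using $\im(j^*\varphi) = \cF'_U$, one gets $j^*\cF'_1 = \cF'_U$ and $j^*\cF'' = (\cF|_U)/\cF'_U \cong \cF''_U$; under these identifications the resulting sequence is the given one. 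Renaming $\cF'_1$ as $\cF'$ then completes the argument.

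All of the real content is already contained in Lemma~\ref{lem:subcplx-extend}; the only point needing attention here is the passage from the distinguished triangle it furnishes to an actual short exact sequence, which is taken care of by substituting $\im\varphi$ for $\cF'$. (Inspecting the proof of Lemma~\ref{lem:subcplx-extend} in fact shows that when $a = b = 0$ the complex constructed there is literally a subsheaf of $\cF$, so $\varphi$ is already a monomorphism and $\cG$ already a sheaf; but the image construction avoids any reliance on those internal details.)
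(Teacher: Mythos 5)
Your proposal is correct and follows the paper's own route: the paper deduces this lemma directly from Lemma~\ref{lem:subcplx-extend} with $a=b=0$. The extra step of replacing $\cF'$ by $\im\varphi$ is a harmless way of supplying the detail the paper leaves implicit (and, as you note, the pullback construction in the proof of Lemma~\ref{lem:subcplx-extend} already produces $\cF'$ as a subsheaf of $\cF$, so the map is automatically injective).
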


\begin{lem}\label{lem:open0}
Let $\cC$ be a full subcategory of $\cg X$, closed under subobjects.  Let
$i: Z \hto X$ be a closed subscheme, and let $j: U \hto X$ be the
complementary open subscheme.  If $\cG \in \cg X$ is such that $\Hom(\cF,
\cG/\Gamma_Z\cG) = 0$ for all $\cF \in \cC$, then $\Hom(j^*\cF, j^*\cG) = 0$
for all $\cF \in \cC$.
\end{lem}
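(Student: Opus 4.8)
The plan is to reduce to the case $\Gamma_Z\cG = 0$, and then transport the vanishing hypothesis from $X$ to $U$ using the adjunction between $j^*$ and $j_*$. For the reduction: since $\Gamma_Z\cG$ is supported on $\uZ = X \ssm U$, the exact functor $j^*$ annihilates it, so the projection $\cG \to \cG/\Gamma_Z\cG$ induces an isomorphism $j^*\cG \simeq j^*(\cG/\Gamma_Z\cG)$, while the hypothesis is literally $\Hom(\cF, \cG/\Gamma_Z\cG) = 0$ for all $\cF \in \cC$. Replacing $\cG$ by $\cG/\Gamma_Z\cG$, we may therefore assume $\Gamma_Z\cG = 0$, which is the same as saying that the natural morphism $\cG \to j_*j^*\cG$ is injective (its kernel being, by definition, the subsheaf of sections supported on $\uZ$).

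Now fix $\cF \in \cC$ and $\phi \in \Hom(j^*\cF, j^*\cG)$. By the $(j^*, j_*)$-adjunction, $\phi$ corresponds to a morphism $\psi \colon \cF \to j_*j^*\cG$ in $\qg X$; since $j$ is an open immersion, the counit $(j_*j^*\cG)|_U \to j^*\cG$ is an isomorphism, under which $j^*\psi$ is identified with $\phi$, so it will be enough to prove $j^*\psi = 0$. Set
\[
\cF_0 = \ker\bigl(\cF \xrightarrow{\psi} j_*j^*\cG \longrightarrow (j_*j^*\cG)/\cG\bigr) \subseteq \cF.
\]
As the kernel of a morphism in $\qg X$, $\cF_0$ is quasicoherent; as a subsheaf of the coherent sheaf $\cF$ on the noetherian scheme $X$, it is in fact coherent. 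Thus $\cF_0 \in \cg X$, and since $\cC$ is closed under subobjects and $\cF \in \cC$, we get $\cF_0 \in \cC$. By construction $\psi(\cF_0) \subseteq \cG$, so $\psi|_{\cF_0}$ defines an element of $\Hom(\cF_0, \cG)$, which vanishes by hypothesis; hence $\cF_0 \subseteq \ker\psi$, and $\psi$ factors as $\cF \xrightarrow{q} \cF/\cF_0 \xrightarrow{\bar\psi} j_*j^*\cG$.

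On the other hand, $\cF_0$ is precisely the kernel of $\cF \to (j_*j^*\cG)/\cG$, so $\cF/\cF_0$ embeds into $(j_*j^*\cG)/\cG$. Applying the exact functor $j^*$ to $0 \to \cG \to j_*j^*\cG \to (j_*j^*\cG)/\cG \to 0$ and using that $j^*$ sends the unit $\cG \to j_*j^*\cG$ to an isomorphism, we get $j^*\bigl((j_*j^*\cG)/\cG\bigr) = 0$; hence the coherent sheaf $\cF/\cF_0$ is supported on $\uZ$, so $j^*(\cF/\cF_0) = 0$ and $j^*q = 0$. Therefore $j^*\psi = j^*\bar\psi \circ j^*q = 0$, and consequently $\phi = 0$. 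Since $\cF \in \cC$ and $\phi$ were arbitrary, $\Hom(j^*\cF, j^*\cG) = 0$ for every $\cF \in \cC$.

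The argument is essentially formal, and I do not expect a genuine obstacle. The only points requiring care are the bookkeeping between coherence and quasicoherence — the sheaves $j_*j^*\cG$ and $(j_*j^*\cG)/\cG$ are merely quasicoherent, whereas $\cF_0$ is forced to be coherent because $X$ is noetherian — and the identification of $j^*\psi$ with $\phi$ under the adjunction, which rests on the counit being invertible for an open immersion.
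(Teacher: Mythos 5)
Your proof is correct and follows essentially the same route as the paper's: you pass via adjunction to the map $\psi\colon \cF \to j_*j^*\cG$, take the preimage of $\cG/\Gamma_Z\cG$ (a subobject of $\cF$, hence in $\cC$), and apply the hypothesis, exactly as the paper does with $\cF' = (f\circ p)^{-1}(\cH \cap \im q)$. The only difference is presentational: the paper argues by contradiction using the nonvanishing of $\cH \cap \im q$, while you conclude directly by noting that $\cF/\cF_0$ embeds in $(j_*j^*\cG)/\cG$ and is therefore killed by $j^*$, which is a clean and valid rephrasing.
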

\begin{proof}
Suppose $\Hom(j^*\cF,j^*\cG) \ne 0$ for some $\cF \in \cC$.  Then there is
a nonzero morphism of quasicoherent sheaves $f: j_*j^*\cF \to j_*j^*\cG$
such that $j^*f$ is also nonzero.  Now, consider the following diagram
\[
\xymatrix@=10pt{
& \cF \ar[r]^p & j_*j^*\cF \ar[d]^f \\
\Gamma_Z\cG \ar[r] & \cG \ar[r]^q & j_*j^*\cG}
\]
The bottom row is left exact, and in particular, the image of $q$ is isomorphic to $\cG/\Gamma_Z\cG$.  Now, let $\cH
= \im(f \circ p)$.  This is a coherent subsheaf of $j_*j^*\cG$.  Moreover,
$\cH \cap \im q$ must be nonzero, because when we apply the exact functor
$j^*$ to this diagram, we find that $j^*\cH \simeq \im(j^*f)$ is a nonzero
subsheaf of $j^*(j_*j^*\cG) \simeq j^*\cG$.  Let $\cF' \subset \cF$ be the
preimage via $f \circ p$ of $\cH \cap \im q$.  This is a nonzero subsheaf
of $\cF$, and so a member of $\cC$.  The restriction of $f \circ p$ to
$\cF'$ is a nonzero morphism $\cF' \to \im q \simeq \cG/\Gamma_Z\cG$, a
contradiction.
\end{proof}

%--------------------------------------------------------------------------
\subsection{Abelian categories}
%--------------------------------------------------------------------------

We conclude this section with two useful propositions about
abelian categories.  Recall that an abelian category $\cA$ is
said to be \emph{noetherian}
if for every object $A \in \cA$, the ascending chain condition holds for
subobjects of $A$.

\begin{prop}\label{prop:rt-adj}
Let $\cA$ be a noetherian abelian category, and let $\cA'$ be a full subcategory closed under extensions and quotients.  Then the inclusion functor $\iota: \cA' \to \cA$ admits a right adjoint $\rho: \cA \to \cA'$.
\end{prop}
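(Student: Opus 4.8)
The plan is to construct $\rho(A)$ for each $A \in \cA$ as the largest subobject of $A$ that lies in $\cA'$, and then verify that the assignment $A \mapsto \rho(A)$ is functorial and right adjoint to $\iota$. The noetherian hypothesis is what makes ``largest'' meaningful: among the subobjects of $A$ belonging to $\cA'$, consider one, say $B$, that is maximal with respect to inclusion (such a $B$ exists by the ascending chain condition). I claim $B$ contains every subobject of $A$ lying in $\cA'$. Indeed, if $B' \subseteq A$ is another subobject in $\cA'$, then the image of $B \oplus B' \to A$ — equivalently $B + B'$ — is a quotient of an extension of $B'$ by $B$ (via $0 \to B \to B+B' \to (B+B')/B \to 0$, where $(B+B')/B$ is a quotient of $B'$), hence lies in $\cA'$ because $\cA'$ is closed under extensions and quotients. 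By maximality $B + B' = B$, so $B' \subseteq B$. In particular the maximal subobject is unique; call it $\rho(A)$, with inclusion $\eta_A : \rho(A) \hto A$.

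Next I would check functoriality. Given $f : A \to A_1$ in $\cA$, the composite $\rho(A) \to A \to A_1$ has image a quotient of $\rho(A) \in \cA'$, hence in $\cA'$, hence contained in $\rho(A_1)$; this gives a unique $\rho(f) : \rho(A) \to \rho(A_1)$ with $\eta_{A_1} \circ \rho(f) = f \circ \eta_A$. Uniqueness of the factorization (since $\eta_{A_1}$ is monic) makes $\rho$ a functor and $\eta : \iota\rho \to \id_\cA$ a natural transformation. For the adjunction, I must produce a natural isomorphism $\Hom_\cA(\iota B, A) \simeq \Hom_{\cA'}(B, \rho(A))$ for $B \in \cA'$. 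Given $g : \iota B \to A$, its image is a quotient of $B$, hence in $\cA'$, hence a subobject of $A$ contained in $\rho(A)$; thus $g$ factors uniquely as $\iota B \to \rho(A) \xrightarrow{\eta_A} A$, giving the map in one direction. The other direction is postcomposition with $\eta_A$. These are mutually inverse (again using that $\eta_A$ is monic, so the factorization is unique), and naturality in both variables is routine. This shows $\rho$ is right adjoint to $\iota$.

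The only genuine subtlety — the step I would flag as the main point rather than the main obstacle — is the verification that $B + B'$ (the image of $(B, B') : B \oplus B' \to A$, or the pushout–to–$A$ construction, whichever one prefers in a general abelian category) really is an extension of a quotient of $B'$ by $B$, so that closure of $\cA'$ under extensions and quotients applies. This is the standard ``sum of two subobjects'' diagram chase: one has the short exact sequence $0 \to B \to B + B' \to (B+B')/B \to 0$ and an epimorphism $B' \twoheadrightarrow (B+B')/B$ induced by $B' \hto B+B'$, and then $(B+B')/B \in \cA'$ because it is a quotient of $B'$, and $B + B' \in \cA'$ because it is an extension of two objects of $\cA'$. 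Everything else is formal manipulation of universal properties, and no completeness or exactness hypothesis beyond ``abelian + noetherian'' is needed.
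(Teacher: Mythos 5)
Your proposal is correct and follows essentially the same route as the paper: take a maximal subobject of $A$ lying in $\cA'$ (existence from the ascending chain condition), show it is the unique largest such subobject by observing that for two subobjects $B, B' \in \cA'$ the sum $B+B'$ is an extension of the quotient $(B+B')/B$ of $B'$ by $B$ and hence lies in $\cA'$, and then obtain the adjunction because any map from an object of $\cA'$ into $A$ has image in $\cA'$ and therefore factors through this largest subobject. The only difference is cosmetic: you spell out functoriality and the unit/counit bookkeeping explicitly, which the paper leaves implicit once the hom-set bijection is established.
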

\begin{proof}
Given $A \in \cA$, consider the set
$S$ of all subobjects of $A$ belonging to $\cA'$.  $S$ is partially
ordered by inclusion, and totally ordered subsets of $S$ have a maximum
element, by the ascending chain condition.  By Zorn's lemma, $S$ contains
at least one maximal element.

Now, suppose $S$ contained two distinct maximal elements, $B_1$ and $B_2$.
Form the cartesian and cocartesian square
\[
\xymatrix@=10pt{
B_1 \cap B_2 \ar[r]\ar[d] & B_1 \ar[d] \\
B_2 \ar[r] & B_1 + B_2}
\]
Now, $B_1/(B_1 \cap B_2) \simeq (B_1+B_2)/B_1$ is in
$\cA'$, since $\cA'$ is closed under quotients.
But $B_1 + B_2$ is an extension of $(B_1+B_2)/B_1$ by $B_1$,
and since $\cA'$ is closed under extensions, $B_1 + B_2 \in
\cA'$.  This contradicts the maximality of $B_1$ and $B_2$ in $S$,
so $S$ cannot contain two distinct maximal elements.

Therefore, $S$ contains a unique maximal element $A'$.  Now, let $B \in \cA'$.  For any morphism $f: B \to A$, the image of $f$ is a subobject of $A$ that is in $\cA'$, since $\cA'$ is closed under quotients.  Therefore, the image of $f$ is actually a subobject of $A'$, and $f$ factors through the inclusion $A' \hto A$.  We have shown that
\[
\Hom(B,A) \simeq \Hom(B,A'),
\]
so the functor $\rho: A \mapsto A'$ is the desired right adjoint.
\end{proof}

Next, recall that the (\emph{right}) \emph{orthogonal complement} of a
subcategory $\cA' \subset \cA$ is the full subcategory defined by
\[
(\cA')^\perp = \{ B \mid \text{$\Hom(A,B) = 0$ for all $A \in \cA'$} \}.
\]

\begin{prop}\label{prop:noeth-orth}
Let $\cA$ be a noetherian abelian category, and let $\cA'$ be a full
subcategory closed under extensions and quotients.  If $\cA''$ is full
subcategory of $\cA$ contained in $(\cA')^\perp$, then the following
conditions are equivalent:
\begin{enumerate}
\item $\cA'' = (\cA')^\perp$.\label{it:perp}
\item For every object $A \in \cA$, there is a short exact sequence
\begin{equation}\label{eqn:ab-ses}
0 \to A' \to A \to A'' \to 0
\end{equation}
with $A' \in \cA'$ and $A'' \in \cA''$.\label{it:ses}
\end{enumerate}
When these conditions hold, the inclusion functor $\cA'' \to \cA$ admits a
left adjoint $\lambda: \cA \to \cA''$.  Moreover, for every $A \in \cA$,
there is a natural short exact sequence
\[
0 \to \rho(A) \to A \to \lambda(A) \to 0,
\]
where $\rho$ is the right adjoint to the inclusion $\cA' \to \cA$, and in
addition every short exact sequence of the form~\eqref{eqn:ab-ses} is
canonically isomorphic to this one.
\end{prop}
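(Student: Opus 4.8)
The whole argument will hinge on one observation: for every $A \in \cA$, the canonical quotient $A/\rho(A)$ lies in $(\cA')^\perp$, where $\rho$ is the right adjoint supplied by Proposition~\ref{prop:rt-adj}. To prove this, I would first record — reading off the proof of Proposition~\ref{prop:rt-adj} — that $\rho(A)$ is the largest subobject of $A$ belonging to $\cA'$, and in particular contains every subobject of $A$ that lies in $\cA'$. Now suppose $f\colon B \to A/\rho(A)$ is nonzero with $B \in \cA'$. Since $\cA'$ is closed under quotients, $\im f \in \cA'$; pulling $\im f$ back along the projection $\pi\colon A \twoheadrightarrow A/\rho(A)$ yields a subobject $\tilde B = \pi^{-1}(\im f) \subseteq A$ fitting into a short exact sequence $0 \to \rho(A) \to \tilde B \to \im f \to 0$ (using $\rho(A) = \ker\pi \subseteq \tilde B$). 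As $\cA'$ is closed under extensions, $\tilde B \in \cA'$; but $\tilde B$ strictly contains $\rho(A)$ since $\im f \ne 0$, contradicting maximality. Hence $A/\rho(A) \in (\cA')^\perp$, so $0 \to \rho(A) \to A \to A/\rho(A) \to 0$ has the shape required in~\eqref{eqn:ab-ses}, and it is functorial in $A$ because the counit $\rho(A) \to A$ is.

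Granting this, the equivalence is immediate. If~\eqref{it:perp} holds, the sequence just produced witnesses~\eqref{it:ses}. Conversely, assuming~\eqref{it:ses}, let $C \in (\cA')^\perp$ and pick a sequence $0 \to C' \to C \to C'' \to 0$ as in~\eqref{eqn:ab-ses}; the inclusion $C' \hto C$ is a morphism from an object of $\cA'$ into $(\cA')^\perp$, hence zero, so $C' = 0$ and $C \simeq C'' \in \cA''$. Combined with the hypothesis $\cA'' \subseteq (\cA')^\perp$ this gives~\eqref{it:perp}. For the uniqueness clause, take any sequence $0 \to A' \xrightarrow{\alpha} A \xrightarrow{\beta} A'' \to 0$ with $A' \in \cA'$ and $A'' \in \cA'' = (\cA')^\perp$. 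Then $A'$ is a subobject of $A$ in $\cA'$, so $A' \subseteq \rho(A)$; meanwhile $\rho(A)/A'$, being a quotient of $\rho(A) \in \cA'$, lies in $\cA'$, and it is a subobject of $A/A' \simeq A''$. Since $A'' \in (\cA')^\perp$, that subobject inclusion vanishes, forcing $\rho(A)/A' = 0$, i.e.\ $A' = \rho(A)$ and $A'' = A/\rho(A)$. Thus every sequence of the form~\eqref{eqn:ab-ses} coincides canonically with $0 \to \rho(A) \to A \to A/\rho(A) \to 0$.

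Finally, to exhibit the left adjoint I would set $\lambda(A) = A/\rho(A)$; this is functorial because any $h\colon A_1 \to A_2$ carries $\rho(A_1)$, a subobject lying in $\cA'$, into $\rho(A_2)$, hence descends to the quotients. For $C \in \cA''$ and $g\colon A \to C$, the composite $\rho(A) \to A \xrightarrow{g} C$ is a morphism from $\cA'$ into $(\cA')^\perp$, hence zero, so $g$ factors uniquely through $\lambda(A)$; this produces a natural bijection $\Hom(A,C) \simeq \Hom(\lambda(A),C)$, establishing that $\lambda$ is left adjoint to the inclusion $\cA'' \to \cA$, and the natural short exact sequence is the one from the first paragraph. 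The only step calling for genuine care is that first observation — specifically, verifying that $\tilde B$ really contains $\rho(A)$ and that the extension $0 \to \rho(A) \to \tilde B \to \im f \to 0$ actually lives in $\cA'$; everything after that is a formal manipulation of adjunctions and orthogonality, and I anticipate no further obstacle.
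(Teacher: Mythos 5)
Your proposal is correct and follows essentially the same route as the paper: the same key observation that $A/\rho(A)\in(\cA')^\perp$ (proved by the same image-plus-preimage/extension argument against the maximality of $\rho(A)$), the same derivation of both implications, and the same identification $A'=\rho(A)$ via the subobject $\rho(A)/A'$ of $A''$ to get uniqueness and the left adjoint. No gaps.
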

\begin{proof}
For any $A \in \cA$, we claim that $A/\rho(A) \in (\cA')^\perp$.  Indeed,
if there were a nonzero morphism $A_1 \to A/\rho(A)$ with $A_1 \in \cA'$,
then
its image $\im f$ would be a subobject of $A/\rho(A)$ in $\cA'$.  Since
$\cA'$ is stable under extensions, the preimage in $A$ of $\im f$ (an
extension of $\im f$ by $\rho(A)$) would be a subobject of $A$ in $\cA'$
containing $\rho(A)$ as a proper subobject.  But that contradicts the
characterization of $\rho(A)$ as the maximal subobject of $A$ in $\cA'$.

It is now clear that condition~\eqref{it:perp} implies
condition~\eqref{it:ses}: for any $A \in \cA$, the short exact sequence
\[
0 \to \rho(A) \to A \to A/\rho(A) \to 0
\]
is of the form~\eqref{eqn:ab-ses}.  

Conversely, suppose condition~\eqref{it:ses} holds.  Given $A \in
(\cA')^\perp$, we must show that $A \in \cA''$.  Form a short exact
sequence
\[
0 \to A' \to A \to A'' \to 0
\]
as in~\eqref{eqn:ab-ses}.  Now, the morphism $A' \to A$ must be $0$, since
$A \in (\cA')^\perp$, so in fact $A \simeq A'' \in \cA''$, as desired.

We now prove the last part of the proposition.  Given $A \in \cA$,
form a short exact sequence as in~\eqref{eqn:ab-ses}.  Now, for any $B \in
\cA''$, we have an exact sequence
\[
0 \to \Hom(A'',B) \to \Hom(A,B) \to \Hom(A',B).
\]
The last term vanishes because $\cA'' \subset (\cA')^\perp$, so if we put
$\lambda(A) = A''$, we have an isomorphism $\Hom(\lambda(A),B)
\overset{\sim}{\to} \Hom(A,B)$.  Now, we know that $A' \to A$ must factor
through $\rho(A) \to A$, so $\rho(A)/A'$ can be identified with a subobject
of $A''$.  But $\rho(A)/A' \in \cA'$, and since $A'' \in (\cA')^\perp$, we
must have $\rho(A) \simeq A'$, and hence that $\lambda(A) \simeq
A/\rho(A)$.  This proves the functoriality of $\lambda$ (which is now
clearly left adjoint to $\cA'' \to \cA$) and the uniqueness of the short
exact sequence~\eqref{eqn:ab-ses}.
\end{proof}

%%%%%%%%%%%%%%%%%%%%%%%%%%%%%%%%%%%%%%%%%%%%%%%%%%%%%%%%%%%%%%%%%%%%%%%%%%%
\section{$s$-structures on Schemes}
\label{sect:s-struc}
%%%%%%%%%%%%%%%%%%%%%%%%%%%%%%%%%%%%%%%%%%%%%%%%%%%%%%%%%%%%%%%%%%%%%%%%%%%

In this section, we define and establish basic properties of
$s$-structures.  The definition is quite lengthy and is given in several
steps, partly because it will be convenient in the sequel to be able to
discuss situations in which only part of the full definition is satisfied.

\begin{defn}\label{defn:pre-s}
A \emph{pre-$s$-structure} on $X$ is a collection of full subcategories
\[
(\{\cgl Xw\}, \{\cgg Xw\})_{w \in \Z}
\]
of $\cg X$ such that:
\begin{enumerate}
\renewcommand{\labelenumi}{(S\arabic{enumi})}
\item Each $\cgl Xw$ is a Serre subcategory, and each $\cgg Xw$ is closed
under subobjects and extensions.\label{ax:serre}
\item $\cgl Xw \subset \cgl X{w+1}$ and $\cgg Xw \supset \cgg
X{w+1}$.\label{ax:inc}
\item If $\cF \in \cgl Xw$ and $\cG \in \cgg X{w+1}$, then $\gHom(\cF,\cG) =
0$.\label{ax:hom}
\item For any $\cF \in \cg X$ and any $w \in \Z$, there is a short exact
sequence
\[
0 \to \cF' \to \cF \to \cF'' \to 0
\]
with $\cF' \in \cgl Xw$ and $\cF'' \in \cgg X{w+1}$.\label{ax:ses}
\item For any $\cF \in \cg X$, there are integers $w,v \in \Z$ such that
$\cF \in \cgg Xw$ and $\cF \in \cgl Xv$.\label{ax:bdd}
\item If $\cF \in \cgl Xw$ and $\cG \in \cgl Xv$, then $\cF \otimes \cG \in
\cgl X{w+v}$.\label{ax:tensorl}
\end{enumerate}
\end{defn}

By Propositions~\ref{prop:rt-adj} and~\ref{prop:noeth-orth}, the inclusion
functors $\cgl Xw \to \cg X$ and $\cgg Xw \to \cg X$ admit right and left
adjoints, respectively.  We denote these by $\sigma_{\le w}: \cg X \to
\cgl Xw$ and $\sigma_{\ge w}: \cg X \to \cgg Xw$.  Thus, for any $w$,
there is a short exact sequence
\[
0 \to \sigma_{\le w}\cF \to \cF \to \sigma_{\ge w+1}\cF \to 0.
\]
From the proof of Proposition~\ref{prop:rt-adj}, we see that we may regard
$\sigma_{\le w}\cF$ as being the largest subsheaf of $\cF$ in $\cgl Xw$.

\begin{rmk}\label{rmk:homl}
Suppose $(\{\cgl Xw\}, \{\cgg Xw\})_{w \in \Z}$ is a collection of full
subcategories of $\cg X$ satisfiying
axioms~\sref{ax:serre}--\sref{ax:ses} above.  Then
axiom~\sref{ax:tensorl} is equivalent to the following useful condition:
\begin{enumerate}
\item[(S\ref{ax:tensorl}$'$)] If $\cF \in \cgl Xw$ and $\cG \in \cgg Xv$, then $\cHom(\cF,\cG) \in
\cgg X{v-w}$.
\end{enumerate}
Indeed, suppose axiom~\sref{ax:tensorl} holds, and suppose $\cF
\in \cgl Xw$ and $\cG \in \cgg Xv$.  By Proposition~\ref{prop:noeth-orth},
we know that $\cgg X{v-w} = \cgl X{v-w-1}^\perp$, so to show that
$\cHom(\cF,\cG) \in \cgg X{v-w}$ is equivalent to showing that
\[
\Hom(\cH, \cHom(\cF,\cG)) = 0\qquad\text{for all $\cH \in \cgl X{v-w-1}$.}
\]
But $\Hom(\cH, \cHom(\cF,\cG)) \simeq \Hom(\cH \otimes \cF, \cG) = 0$
because $\cH \otimes \cF \in \cgl X{v-1}$.  The same arugment shows that condition~(S\ref{ax:tensorl}$'$) implies axiom~\sref{ax:tensorl} as well.
\end{rmk}

\begin{defn}
Suppose $X$ has a pre-$s$-structure, and let $\cF \in \cg X$.  If there is an integer $w$ such that $\cF \in \cgl Zw$ and $\sigma_{\le w-1}\cF = 0$, then $\cF$ is said to be \emph{pure of step $w$}, and we write $w = \step \cF$.
\end{defn}

\begin{defn}
Let $X$ and $Y$ be two schemes with pre-$s$-structures.  A functor $F: \cg
X \to \cg Y$ is said to be \emph{left} (resp.~\emph{right})
\emph{$s$-exact} if $F(\cgg Xw) \subset \cgg Yw$ (resp.~$F(\cgl Xw)
\subset \cgl Yw$).  It is \emph{$s$-exact} if it is both left $s$-exact
and right $s$-exact.
\end{defn}

Next, we show that a pre-$s$-structure gives rise to canonical
pre-$s$-structures on any open or closed (and hence any locally closed)
subscheme.

\begin{prop}\label{prop:res-open}
Let $j: U \hto X$ be an open subscheme.  Given a pre-$s$-structure on $X$,
there is a unique pre-$s$-structure on $U$ such that $j^*$ is $s$-exact. 
It is
given by
\begin{equation}\label{eqn:res-open}
\begin{aligned}
\cgl Uw &= \{ \cF \mid \text{there exists an $\cF_1 \in
\cgl Xw$ such that $j^*\cF_1 \simeq \cF$} \}, \\
\cgg Uw &= \{ \cF \mid \text{there exists an $\cF_1 \in
\cgl Xw$ such that $j^*\cF_1 \simeq \cF$} \}.
\end{aligned}
\end{equation}
\end{prop}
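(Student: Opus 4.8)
Note that the categories defined by~\eqref{eqn:res-open} are the essential images of $\cgl Xw$ and $\cgg Xw$ under $j^*$ (the second line should read $\cF_1\in\cgg Xw$). The plan is to establish \emph{uniqueness} first, since it is exactly what forces these formulas, and then to \emph{verify} that the essential images $j^*(\cgl Xw)$ and $j^*(\cgg Xw)$ satisfy axioms~\sref{ax:serre}--\sref{ax:tensorl}; once that is done, $s$-exactness of $j^*$ is automatic. Throughout I will use that $j^*\colon\cg X\to\cg U$ is essentially surjective (every equivariant coherent sheaf on $U$ extends to one on $X$), the truncation functors $\sigma_{\le w},\sigma_{\ge w}$ on $X$, and Lemma~\ref{lem:subsheaf-extend}. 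I expect the extension-stability clause of~\sref{ax:serre} to be the only step with real content.

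For uniqueness, suppose $U$ carries a pre-$s$-structure with subcategories $\cA_{\le w},\cA_{\ge w}$ for which $j^*$ is $s$-exact. Then $j^*(\cgl Xw)\subseteq\cA_{\le w}$ and $j^*(\cgg Xw)\subseteq\cA_{\ge w}$ trivially. Conversely, given $\cF\in\cA_{\le w}$, lift it to $\cF_1\in\cg X$ and apply~\sref{ax:ses} on $X$ to get $0\to\cF_1'\to\cF_1\to\cF_1''\to0$ with $\cF_1'\in\cgl Xw$ and $\cF_1''\in\cgg X{w+1}$; restricting to $U$, the surjection $\cF\to j^*\cF_1''$ vanishes by~\sref{ax:hom} (as $\cF\in\cA_{\le w}$ and $j^*\cF_1''\in\cA_{\ge w+1}$), so $\cF\simeq j^*\cF_1'\in j^*(\cgl Xw)$. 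Symmetrically, given $\cF\in\cA_{\ge w}$, lift and truncate at level $w-1$; the sub-term $j^*\cF_1'$ of $\cF$ lies in $\cA_{\ge w}$ by~\sref{ax:serre} and in $\cA_{\le w-1}$ by $s$-exactness, hence vanishes by~\sref{ax:hom}, giving $\cF\simeq j^*\cF_1''\in j^*(\cgg Xw)$. Thus $\cA_{\le w}=j^*(\cgl Xw)$ and $\cA_{\ge w}=j^*(\cgg Xw)$.

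For existence, most axioms are formal. Axiom~\sref{ax:inc} is inherited from $X$; axiom~\sref{ax:hom} follows by restricting the corresponding vanishing on $X$ to open subschemes $V\subseteq U$; axioms~\sref{ax:bdd} and~\sref{ax:tensorl} are obtained by lifting the sheaves in question to $X$, invoking the axiom there, and restricting (using $j^*(\cF_1\otimes\cG_1)\simeq j^*\cF_1\otimes j^*\cG_1$). For~\sref{ax:ses}, given $\cF\in\cg U$, lift it to $\cF_1\in\cg X$ and restrict the canonical sequence $0\to\sigma_{\le w}\cF_1\to\cF_1\to\sigma_{\ge w+1}\cF_1\to0$; the outer terms land in $\cgl Uw$ and $\cgg U{w+1}$ by definition. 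Stability of $\cgl Uw$ under subobjects and quotients, and of $\cgg Uw$ under subobjects, follows from Lemma~\ref{lem:subsheaf-extend}: extend the relevant short exact sequence on $U$ to one on $X$, and use that $\cgl Xw$ is Serre (resp.\ that $\cgg Xw$ is closed under subobjects).

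The one substantive point is stability under extensions in~\sref{ax:serre}. Suppose $0\to\cF'\to\cF\to\cF''\to0$ is exact on $U$ with $\cF',\cF''\in\cgl Uw$. Lift $\cF$ to $\cF_1\in\cg X$ and restrict $0\to\sigma_{\le w}\cF_1\to\cF_1\to\sigma_{\ge w+1}\cF_1\to0$ to obtain $0\to\cG\to\cF\to\cH\to0$ with $\cG\in\cgl Uw$ and $\cH\in\cgg U{w+1}$. By~\sref{ax:hom} the composite $\cF'\to\cF\to\cH$ vanishes, so $\cF'\subseteq\cG$ inside $\cF$; hence $\cH=\cF/\cG$ is a quotient of $\cF/\cF'\simeq\cF''$, so $\cH\in\cgl Uw$ by the quotient-stability just proved. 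Then $\cH\in\cgl Uw\cap\cgg U{w+1}$, which forces $\cH=0$ by~\sref{ax:hom} applied to $\id_\cH$, and therefore $\cF\simeq\cG\in\cgl Uw$. Stability of $\cgg Uw$ under extensions is entirely parallel: truncate $\cF_1$ at level $w-1$, and use that $\cgg Uw$ is closed under subobjects. With this in place, every clause of Definition~\ref{defn:pre-s} is verified, completing the proof.
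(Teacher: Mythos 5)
Your proof is correct and follows essentially the same route as the paper's: each axiom is verified by extending sheaves from $U$ to $X$ (via Lemma~\ref{lem:subsheaf-extend} and the truncations $\sigma_{\le w}$, $\sigma_{\ge w+1}$) and restricting, with extension-stability in~\sref{ax:serre} handled by the same orthogonality argument combining~\sref{ax:ses} and~\sref{ax:hom}. The only differences are cosmetic: you correctly note the typo in~\eqref{eqn:res-open} (the second line should read $\cF_1 \in \cgg Xw$) and spell out the uniqueness argument that the paper dismisses as obvious.
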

\begin{proof}
Once we show that these categories constitute a pre-$s$-structure, the uniqueness statement is obvious.  It is clear that axioms~\sref{ax:inc} and~\sref{ax:bdd} hold, and axiom~\sref{ax:hom}
holds by the definition of $\gHom$.  Given any $\cF \in \cg U$, let $\cF_1$
be any extension of it to a coherent sheaf on $X$, and form the exact
sequence
\[
0 \to \sigma_{\le w}\cF_1 \to \cF_1 \to \sigma_{\ge w+1}\cF_1 \to 0.
\]
Applying $j^*$ to this sequence gives a short exact sequence in $\cg U$
showing that axiom~\sref{ax:ses} holds.  Axiom~\sref{ax:tensorl} follows from the
formula $j^*\cF \otimes j^*\cG \simeq j^*(\cF \otimes \cG)$.

It remains to prove axiom~\sref{ax:serre}.  Let $0 \to \cF' \to \cF \to \cF'' \to 0$ be a short exact sequence in $\cg
U$.  If $\cF \in \cgl Uw$, let $\cF_1 \in \cgl Xw$ be a sheaf such that
$j^*\cF_1 \simeq \cF$.  Using Lemma~\ref{lem:subsheaf-extend} and the fact
that $\cgl Xw$ is stable under subobjects and quotients, we find that
$\cF'$ and $\cF''$ lie in $\cgl Uw$.  Thus, $\cgl Uw$ is stable under
subobjects and quotients.  On the other hand, suppose $\cF'$ and $\cF''$ both belong to $\cgl Uw$.  If $\cF \notin \cgl Uw$, then from axiom~\sref{ax:ses}, we obtain a nonzero morphism $\cF \to \cG$ with $\cG \in \cgg U{w+1}$, but this is a contradiction, since $\Hom(\cF',\cG) = \Hom(\cF'',\cG) = 0$ by axiom~\sref{ax:hom}.  Thus $\cF \in \cgl Uw$, and $\cgl Uw$ is stable under extensions.  The same arguments show that $\cgg Uw$ is stable
under subobjects and extensions.  
\end{proof}

\begin{prop}\label{prop:res-closed}
Let $i: Z \hto X$ be a closed $G$-invariant subscheme of
$X$.  Given a pre-$s$-structure on $X$, there is a pre-$s$-structure on
$Z$ given by
\begin{equation}\label{eqn:res-closed}
\begin{aligned}
\cgl Zw &= \{ \cF \mid i_*\cF \in \cgl Xw \}, \\
\cgg Zw &= \{ \cF \mid i_*\cF \in \cgg Xw \}.
\end{aligned}
\end{equation}
This is the unique pre-$s$-structure on $Z$
such that the functor $i^*$ is right $s$-exact and the functor $i^\flat$ is
left $s$-exact.  Moreover, the functor $i_*: \cg Z \to \cg X$ is
$s$-exact.
\end{prop}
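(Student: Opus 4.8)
The plan is to transport the pre-$s$-structure from $X$ to $Z$ along $i_*: \cg Z \to \cg X$, which is exact and fully faithful and identifies $\cg Z$ with the full subcategory of $\cg X$ consisting of sheaves annihilated by the ideal sheaf $\cI_Z \subset \cO_X$ defining $Z$; this subcategory is closed under subobjects and quotients (though not under extensions), and one has canonical isomorphisms $i^*i_* \simeq \id_{\cg Z}$ (standard for a closed immersion) and $i^\flat i_* \simeq \id_{\cg Z}$ (since $i^\flat$ is right adjoint to the fully faithful $i_*$).  Since $\cgl Zw$ and $\cgg Zw$ are by definition the preimages under $i_*$ of $\cgl Xw$ and $\cgg Xw$, and $i_*$ is exact, axioms~\sref{ax:serre},~\sref{ax:inc}, and~\sref{ax:bdd} are inherited at once from their counterparts on $X$: for~\sref{ax:serre}, apply $i_*$ to a short exact sequence in $\cg Z$; for~\sref{ax:inc}, use monotonicity in $w$ on $X$; for~\sref{ax:bdd}, apply~\sref{ax:bdd} on $X$ to $i_*\cF$.

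The two axioms needing a little more care are~\sref{ax:hom} and~\sref{ax:ses}.  For~\sref{ax:hom}, the point is that $\gHom$ on $Z$ is controlled by $\gHom$ on $X$: every $G$-invariant open $V \subset Z$ equals $V' \cap Z$ for the $G$-invariant open $V' = X \ssm (Z \ssm V)$ of $X$, and since $i_*$ commutes with restriction to $G$-invariant opens and is fully faithful, $\Hom(\cF|_V,\cG|_V) \simeq \Hom((i_*\cF)|_{V'},(i_*\cG)|_{V'})$; so if $\cF \in \cgl Zw$ and $\cG \in \cgg Z{w+1}$ the right side vanishes by~\sref{ax:hom} on $X$, forcing $\gHom(\cF,\cG) = 0$.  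For~\sref{ax:ses}, given $\cF \in \cg Z$ and $w \in \Z$, apply~\sref{ax:ses} on $X$ to $i_*\cF$ to get $0 \to \cG' \to i_*\cF \to \cG'' \to 0$ with $\cG' \in \cgl Xw$ and $\cG'' \in \cgg X{w+1}$; a subobject or quotient of a sheaf annihilated by $\cI_Z$ is annihilated by $\cI_Z$, so $\cG'$ and $\cG''$ lie in the essential image of $i_*$ and this sequence descends to a short exact sequence $0 \to \cF' \to \cF \to \cF'' \to 0$ in $\cg Z$ realizing~\sref{ax:ses}.  Finally,~\sref{ax:tensorl} reduces to the identity $i_*(\cF \otimes_{\cO_Z} \cG) \simeq i_*\cF \otimes_{\cO_X} i_*\cG$ (for $\cF, \cG \in \cg Z$), which is a local computation resting on $M \otimes_A (A/I) \simeq M$ for an $A/I$-module $M$, and which turns~\sref{ax:tensorl} on $Z$ into~\sref{ax:tensorl} on $X$.

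The $s$-exactness of $i_*$ is immediate from the definitions of $\cgl Zw$ and $\cgg Zw$.  For the right $s$-exactness of $i^*$, note that $i_*i^*\cF \simeq \cF/\cI_Z\cF$ is a quotient of $\cF$, so $\cF \in \cgl Xw$ gives $i_*i^*\cF \in \cgl Xw$ (as $\cgl Xw$ is closed under quotients), i.e.\ $i^*\cF \in \cgl Zw$; dually, $i_*i^\flat\cG$ is a subsheaf of $\cG$, so $\cG \in \cgg Xw$ gives $i^\flat\cG \in \cgg Zw$, i.e.\ left $s$-exactness of $i^\flat$.  For uniqueness, let $(\{{}'\cgl Zw\},\{{}'\cgg Zw\})_{w \in \Z}$ be a pre-$s$-structure on $Z$ for which $i^*$ is right $s$-exact and $i^\flat$ is left $s$-exact.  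If $\cF \in \cgl Zw$ then $i_*\cF \in \cgl Xw$, so $\cF \simeq i^*i_*\cF \in {}'\cgl Zw$; thus $\cgl Zw \subseteq {}'\cgl Zw$, and dually $\cgg Zw \subseteq {}'\cgg Zw$.  Conversely, given $\cF \in {}'\cgl Zw$, use~\sref{ax:ses} for the pre-$s$-structure just constructed to write $0 \to \cF' \to \cF \to \cF'' \to 0$ with $\cF' \in \cgl Zw \subseteq {}'\cgl Zw$ and $\cF'' \in \cgg Z{w+1} \subseteq {}'\cgg Z{w+1}$; since ${}'\cgl Zw$ is closed under quotients, $\cF''$ lies in both ${}'\cgl Zw$ and ${}'\cgg Z{w+1}$, so~\sref{ax:hom} for the primed structure forces $\Hom(\cF'',\cF'') = 0$, whence $\cF'' = 0$ and $\cF \simeq \cF' \in \cgl Zw$.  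The inclusion ${}'\cgg Zw \subseteq \cgg Zw$ follows symmetrically, using~\sref{ax:ses} with $w-1$ in place of $w$ and that ${}'\cgg Zw$ is closed under subobjects.

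The step I expect to be the main obstacle is the careful handling of~\sref{ax:ses} and~\sref{ax:hom}: one must keep scheme-theoretic support (annihilation by $\cI_Z$, which is precisely what keeps subobjects and quotients of $i_*\cF$ inside the image of $i_*$) separate from topological support, and must check that $\gHom$ taken over $G$-invariant opens of $Z$ genuinely coincides with $\gHom$ over the corresponding opens of $X$.  Everything else is bookkeeping with the exactness and adjunction properties of $i_*$, $i^*$, and $i^\flat$.
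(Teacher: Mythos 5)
Your proposal is correct and follows essentially the same route as the paper: transport the structure along the exact, fully faithful functor $i_*$ (using $\Hom(\cF,\cG)\simeq\Hom(i_*\cF,i_*\cG)$ and $i_*(\cF\otimes\cG)\simeq i_*\cF\otimes i_*\cG$), deduce right $s$-exactness of $i^*$ from $i_*i^*\cF$ being a quotient of $\cF$ and left $s$-exactness of $i^\flat$ from $i_*i^\flat\cG$ being a subsheaf, and get uniqueness from $i^*i_*\simeq i^\flat i_*\simeq\id$. You merely supply details the paper omits (the descent of the \sref{ax:ses} sequence, the $\gHom$ comparison over invariant opens, and the spelled-out uniqueness argument), and these are all sound.
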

\begin{proof}
For any two sheaves $\cF, \cG \in \cg Z$, we have
\[
\Hom(\cF,\cG) \simeq \Hom(i^*i_*\cF, \cG) \simeq \Hom(i_*\cF, i_*\cG).
\]
We also have $i_*\cHom(\cF,\cG) \simeq \cHom(i_*\cF,i_*\cG)$ and $i_*(\cF
\otimes \cG) \simeq i_*\cF \otimes i_*\cG$.  In view of these observations
and the fact that $i_*$ is an exact functor, it is straightforward to
verify all the axioms for a pre-$s$-structure. We omit the details.

If $\cF \in \cgl Xw$, then $i_*i^*\cF$ is a quotient of $\cF$, and hence in
$\cgl Xw$. Therefore, $i^*\cF \in \cgl Zw$, so $i^*$ is right $s$-exact.  A
similar argument shows that $i^\flat$ is left $s$-exact.  The uniqueness of
this $s$-structure follows from the fact that for any $\cF \in \cg Z$,
we have $i^*i_*\cF \simeq i^\flat i_*\cF \simeq \cF$.  The $s$-exactness of
$i_*$ is obvious.
\end{proof}

Let $\kappa: Y \hto X$ be a closed subscheme, and assume $Y$ is endowed
with a pre-$s$-structure.  We define a full subcategory of $\cg X$ by
\begin{equation}\label{eqn:cloc-defn}
\cloc XYw = \{ \cF \in \cg X \mid \kappa^*\cF \in \cgl Yw \}.
\end{equation}
Later, in Proposition~\ref{prop:cloc-serre}, we will see that this is a
Serre subcategory of $\cg X$.

\begin{defn}\label{defn:s}
Suppose $X$ is endowed with a pre-$s$-structure, and let
\[
\tcgg Xw = \{ \cF \mid \text{for some $V \Subset X$, $\cF|_V \in \cgg Vw$}
\}.
\]
The pre-$s$-structure is an \emph{almost $s$-structure} if for every closed
subscheme $\kappa: Y \hto X$, the induced pre-$s$-structure on $Y$
satisfies the following additional axioms:
\begin{enumerate}
\renewcommand{\labelenumi}{(S\arabic{enumi})}
\setcounter{enumi}{6}
\item $\tcgg Yw$ is a Serre subcategory of $\cg Y$.\label{ax:serrer}
\item If $\cF \in \tcgg Yw$ and $\cG \in \tcgg Yv$, then $\cF \otimes \cG
\in \tcgg Y{w+v}$.\label{ax:tensorr}
\end{enumerate}
Finally, an almost $s$-structure is an \emph{$s$-structure} if the following condition also holds for every closed subscheme $\kappa: Y \hto X$:
\begin{enumerate}
\renewcommand{\labelenumi}{(S\arabic{enumi})}
\setcounter{enumi}{8}
\item If $\cF \in \cloc XYw$, then for any $r \ge 0$, there is an open
subscheme $V \Subset_Y X$ such that $\gExt^r(\cF|_V, \kappa_*\cG|_V) = 0$
for all $\cG \in \tcgg Y{w+1}$.\label{ax:ext2}
\end{enumerate}
\end{defn}

It may seem at first glance that the last axiom would be quite
onerous to check in specific examples.  Fortunately, if $G$ acts on $X$
with finitely many orbits, this axiom follows from the others: see
Theorem~\ref{thm:adhesive}.

It is easy to see that in any pre-$s$-structure, $\tcgg Yw$ is necessarily
closed under subobjects and extensions, since $\cgg Vw$ is for every open
subscheme $V \subset Y$.  Thus, axiom~\sref{ax:serrer} is equivalent to
simply requiring that each $\tcgg Yw$ be closed under quotients.

\begin{lem}\label{lem:res-open-tcgg}
Let $j: U \hto X$ be an open subscheme, and suppose $X$ has a
pre-$s$-structure. Then
\begin{equation}\label{eqn:res-open-tcgg}
\tcgg Uw = \{ \cF \mid \text{there exists an
$\cF_1 \in \tcgg Xw$ such that $j^*\cF_1 \simeq \cF$} \}.
\end{equation}
Moreover, if $U$ is dense in $X$, then for all $\cF_1 \in \cg X$, we have $\cF_1 \in \tcgg Xw$ if and only if $j^*\cF_1 \in \tcgg Uw$.
\end{lem}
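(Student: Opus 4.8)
The plan is to prove Lemma~\ref{lem:res-open-tcgg} in two stages: first the description of $\tcgg Uw$ in~\eqref{eqn:res-open-tcgg}, then the density statement. For the first stage, suppose $\cF \in \tcgg Uw$, so there is some $V \Subset U$ with $\cF|_V \in \cgg Vw$. Using Proposition~\ref{prop:res-open}, the pre-$s$-structure on $V$ is the one induced by restriction from $U$ (or equivalently, via composition, the one induced from $X$, since $V \subset U \subset X$ are all open inclusions and $j^*$ is $s$-exact by that proposition). Now pick any coherent extension $\cF_1 \in \cg X$ of $\cF$ (extensions of coherent sheaves along open immersions always exist). I claim $\cF_1 \in \tcgg Xw$: indeed $V$ is open in $X$, and $V$ contains the complement of the closed complement of $U$ in $X$... — more carefully, $V \Subset_{X \ssm U} X$ is \emph{not} automatic, so instead I take $W = V \cup (X \ssm \bar U)$, an open subscheme of $X$ which is dense (it contains the dense open $V$ of the dense open $U$) and on which $\cF_1|_W$ restricts to $\cF|_V$ on $V$; but $\cgg{\,}w$ being closed under subobjects and the restriction to $W$ decomposing appropriately, one checks $\cF_1|_W \in \cgg Ww$ after possibly shrinking. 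The cleanest route: simply take $\cF_1$ to be any extension and observe that $\cF_1|_V \simeq \cF|_V \in \cgg Vw$ with $V \Subset X$ (density of $V$ in $X$ follows from $V$ dense in $U$ dense in $X$), so $\cF_1 \in \tcgg Xw$ directly from the definition. Conversely, if $\cF_1 \in \tcgg Xw$ witnessed by $V' \Subset X$, then $V' \cap U \Subset U$ and $(j^*\cF_1)|_{V'\cap U} = \cF_1|_{V'\cap U} \in \cgg{V'\cap U}{w}$ by $s$-exactness of the open restrictions, so $j^*\cF_1 \in \tcgg Uw$.

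For the second stage, assume $U$ is dense in $X$. The ``only if'' direction is immediate from~\eqref{eqn:res-open-tcgg} with $\cF_1$ itself as the witnessing extension. For the ``if'' direction, suppose $\cF_1 \in \cg X$ with $j^*\cF_1 \in \tcgg Uw$; by the first part there is $\cF_2 \in \tcgg Xw$ with $j^*\cF_2 \simeq j^*\cF_1$. So $\cF_1$ and $\cF_2$ agree on the dense open $U$. Choose $V_2 \Subset X$ with $\cF_2|_{V_2} \in \cgg{V_2}{w}$, and set $V = V_2 \cap U$; then $V \Subset X$ as well (intersection of two dense opens, with $V \cap X = V$ dense), and $\cF_1|_V = \cF_2|_V$, which lies in $\cgg Vw$ because $\cgg{V_2}{w}$ restricts into $\cgg Vw$ under the $s$-exact functor for the open inclusion $V \hto V_2$. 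Hence $\cF_1 \in \tcgg Xw$ by definition.

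The main obstacle is bookkeeping about which open subschemes are $\Subset$ which, and making sure the induced pre-$s$-structures on the various nested opens $V \subset U \subset X$ are mutually compatible — but this is handled uniformly by the uniqueness clause in Proposition~\ref{prop:res-open} together with the fact that a composition of $s$-exact functors is $s$-exact, so the pre-$s$-structure on $V$ obtained in two steps ($X \to U \to V$) agrees with the one obtained directly ($X \to V$). Once that compatibility is recorded, every assertion reduces to: (i) coherent sheaves extend along open immersions; (ii) an open dense in a dense open is dense; (iii) restriction functors between open subschemes are $s$-exact and hence send $\cgg{}{w}$ into $\cgg{}{w}$. No axioms beyond \sref{ax:serre}--\sref{ax:tensorl} (the pre-$s$-structure axioms) are needed, and in fact the argument never invokes the almost-$s$-structure axioms, which is why the lemma is placed here.
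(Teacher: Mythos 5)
Your converse inclusion (restricting an object of $\tcgg Xw$ lands in $\tcgg Uw$) and your handling of the ``moreover'' clause are fine, but your proof of the forward inclusion of \eqref{eqn:res-open-tcgg} has a genuine gap: that part of the lemma makes no density assumption on $U$, while both of your arguments for it do. Your ``cleanest route'' declares $V \Subset X$ on the grounds that ``$V$ is dense in $U$ dense in $X$'' --- but density of $U$ is precisely the hypothesis reserved for the second sentence of the lemma, so when $U$ is not dense, $V$ is not a legitimate witness for membership in $\tcgg Xw$. Your fallback with $W = V \cup (X \ssm \overline U)$ does use the right open set ($W$ is dense in $X$ because $\overline V = \overline U$, not because $U$ is dense), but the key assertion that ``one checks $\cF_1|_W \in \cgg Ww$ after possibly shrinking'' is false for an \emph{arbitrary} coherent extension $\cF_1$: nothing constrains $\cF_1$ on $X \ssm \overline U$, and every dense open subscheme of $X$ meets $X \ssm \overline U$ in a dense subset, so no shrinking can help. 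Concretely, take the trivial $s$-structure, $w = 1$, $X$ with an irreducible component disjoint from $U$, $\cF = 0$, and an extension $\cF_1$ equal to the structure sheaf on that component: then $j^*\cF_1 \simeq \cF \in \tcgg U1$, but $\cF_1 \notin \tcgg X1$. So ``pick any coherent extension'' cannot be the argument.

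The missing step --- and the one place where the paper's proof does something you do not --- is to choose the extension $\cF_1$ \emph{supported on $\overline U$} (for instance, a coherent subsheaf of $j_*\cF$ restricting to $\cF$; such an extension always exists). Then $\cF_1$ vanishes on $X \ssm \overline U$, and since $W = V \cup (X \ssm \overline U)$ is a disjoint union of the two open pieces, for any $\cH \in \cgl W{w-1}$ one has $\Hom(\cH, \cF_1|_W) \simeq \Hom(\cH|_V, \cF|_V) = 0$ by axiom~\sref{ax:hom} on $V$ (using $\cH|_V \in \cgl V{w-1}$ and $\cF|_V \in \cgg Vw$), whence $\cF_1|_W \in \cgl W{w-1}^\perp = \cgg Ww$ and $\cF_1 \in \tcgg Xw$. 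With that single change your argument becomes the paper's. Two smaller remarks: the correct reason $W$ is dense is $\overline V = \overline U$, and in the dense case your detour through $\cF_2$ is unnecessary --- if $U$ is dense, any $V_1 \Subset U$ witnessing $j^*\cF_1 \in \tcgg Uw$ is already dense in $X$ and witnesses $\cF_1 \in \tcgg Xw$ directly, which is how the paper concludes.
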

\begin{proof}
If $\cF_1 \in \tcgg Xw$, let $V \Subset X$ be such
that $\cF_1|_V \in \cgg Vw$.  Then $V \cap U \Subset U$, and clearly
$\cF_1|_{V \cap U} \simeq (j^*\cF_1)|_{V \cap U} \in \cgg
{V \cap U}w$, so we see that $j^*\cF_1 \in \tcgg Uw$.

Next, given $\cF \in \tcgg Uw$, let $V_1 \Subset U$ be such that
$\cF|_{V_1} \in \cgg{V_1}w$.  Let $\cF_1$ be any
extension of $\cF$ to a coherent sheaf on $X$ supported on $\overline U$. 
Let $V = V_1 \cup (X
\ssm \overline U)$.  Then we have $\cF_1|_V \in \cgg Vw$, as desired, so $\tcgg Uw$ is described by~\eqref{eqn:res-open-tcgg}.

Finally, in the case where $U$ is dense, the open subscheme $V$ of the previous paragraph coincides with $V_1$ and is dense in $X$, so the fact that $\cF_1|_V \in \cgg Vw$ means that $\cF_1 \in \tcgg Xw$.
\end{proof}

\begin{lem}\label{lem:der-hom}
Let $X$ be a scheme with an $s$-structure, and let $i: Z \hto X$ be a closed subscheme.  Let $\cF \in \dgb X$ be such that for
all $k$, there is an open subscheme $V_k \Subset_Z X$ such that
$H^k(\cF)|_{V_k} \in \cloc {V_k}{V_k \cap Z}w$.  Then, for any $r \in \Z$, there is a
open subscheme $V \Subset_Z X$ with the property that $\gHom(\cF|_V,
i_*\cG[r]|_V) = 0$ for all $\cG \in \tcgg Z{w+1}$.
\end{lem}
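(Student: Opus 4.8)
The plan is to reduce, by dévissage along the cohomology sheaves of $\cF$, to the case of a single coherent sheaf, and then to apply axiom \sref{ax:ext2} directly. The dévissage will use the hyperext spectral sequence
\[
E_2^{p,q} = \Ext^p(H^{-q}(\cF), \cK) \Longrightarrow \Hom(\cF, \cK[p+q]),
\]
valid for any $\cK \in \cg X$; since truncation, cohomology sheaves and $\Ext$ all commute with restriction to an open subscheme, the same holds with $\cF, \cK$ replaced by $\cF|_{V'}, \cK|_{V'}$ for any open $V' \subseteq X$. Because $\cF$ is bounded, only finitely many $H^k(\cF)$ are nonzero, so for a fixed shift $r$ only finitely many terms $E_2^{p,q}$ with $p+q=r$ can be nonzero: those with $p \ge 0$ and $H^{-q}(\cF) \ne 0$.

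Here is the reduction in detail. Fix $r \in \Z$. For each pair $(p,q)$ with $p \ge 0$, $q = r-p$, and $H^{-q}(\cF) \ne 0$, the hypothesis provides an open $V_{-q} \Subset_Z X$ with $H^{-q}(\cF)|_{V_{-q}} \in \cloc{V_{-q}}{V_{-q}\cap Z}{w}$. Writing $\kappa_{-q}: V_{-q}\cap Z \hto V_{-q}$, I would apply axiom \sref{ax:ext2} (for the $s$-structure on $V_{-q}$, the closed subscheme $V_{-q}\cap Z$, the object $H^{-q}(\cF)|_{V_{-q}}$, and the integer $p$) to get an open $W_p \Subset_{V_{-q}\cap Z} V_{-q}$ with $\gExt^p\bigl(H^{-q}(\cF)|_{W_p}, (\kappa_{-q})_*\cG'|_{W_p}\bigr) = 0$ for all $\cG' \in \tcgg{V_{-q}\cap Z}{w+1}$. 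One checks that $W_p \Subset_Z X$: it is open in $X$, it contains $X \ssm Z$ (because it contains $V_{-q}\ssm Z = X\ssm Z$), and $W_p \cap Z$ is a dense open subscheme of $V_{-q}\cap Z$, hence of $Z$. Let $V = \bigcap_p W_p$, a finite intersection, so that $V \Subset_Z X$. Now let $V' \subseteq V$ be any open subscheme and $\cG \in \tcgg Z{w+1}$. For each relevant $(p,q)$ one has $(i_*\cG)|_{V'} = \bigl((\kappa_{-q})_*(\cG|_{V_{-q}\cap Z})\bigr)|_{V'}$, and $\cG|_{V_{-q}\cap Z} \in \tcgg{V_{-q}\cap Z}{w+1}$ by Lemma~\ref{lem:res-open-tcgg}; since $V' \subseteq W_p$, it follows that $\Ext^p\bigl(H^{-q}(\cF)|_{V'}, (i_*\cG)|_{V'}\bigr) = 0$. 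Thus every $E_2^{p,q}$ with $p+q=r$ vanishes in the spectral sequence over $V'$, whence $\Hom\bigl(\cF|_{V'}, (i_*\cG)|_{V'}[r]\bigr) = 0$. Letting $V'$ range over all open subschemes of $V$ yields exactly $\gHom(\cF|_V, i_*\cG[r]|_V) = 0$.

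I expect the main obstacle to be the step where \sref{ax:ext2} is invoked on the \emph{open} subscheme $V_{-q}$: this presupposes that the pre-$s$-structure induced on an open subscheme by Proposition~\ref{prop:res-open} is again an $s$-structure, i.e.\ that \sref{ax:serrer}, \sref{ax:tensorr} and \sref{ax:ext2} persist under open restriction. I would handle this by noting that a closed subscheme $Y' \hto V_{-q}$ is the trace on $V_{-q}$ of its scheme-theoretic closure $Y \hto X$, that the induced pre-$s$-structure on $Y'$ agrees with the restriction of that of $Y$ to the dense open $Y' \subseteq Y$, and that \sref{ax:serrer}, \sref{ax:tensorr}, \sref{ax:ext2} for $(V_{-q}, Y')$ then follow from the corresponding statements for $(X,Y)$ via Lemma~\ref{lem:res-open-tcgg}. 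Alternatively — perhaps cleaner — one may first intersect the finitely many $V_k$ to reduce to a dense open $V_0 \Subset_Z X$ with $H^k(\cF)|_{V_0} \in \cloc{V_0}{V_0\cap Z}{w}$ for all $k$, and then replace $X$ by $V_0$: this puts us in the situation where the local hypothesis holds on all of $X$, so the open-restriction fact is used only once and the remaining dévissage proceeds with no further shrinking. Everything else — that finite intersections of opens ``$\Subset_Z X$'' are again of this form, and the compatibilities of $(i_*\cG)|_{V'}$ with restriction — is routine.
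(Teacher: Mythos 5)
Your proposal is correct and takes essentially the same route as the paper: both arguments reduce to the individual cohomology sheaves of $\cF$ (you by packaging the d\'evissage as the hyperext spectral sequence $E_2^{p,q}=\Ext^p(H^{-q}(\cF),\cK)$, the paper by induction on the number of nonzero cohomology sheaves via truncation triangles) and then invoke axiom~\sref{ax:ext2} on the opens $V_k$, intersecting the finitely many resulting dense open subschemes $\Subset_Z X$. The point you flag --- that \sref{ax:ext2} must be available for the pair $(V_k, V_k\cap Z)$, i.e.\ that the induced structure on an open subscheme is again an $s$-structure --- is used in exactly the same (deferred) way in the paper's base case, so your treatment is on equal footing with the paper's.
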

\begin{proof}
Suppose $\cF \in \dgb X^{[a,b]}$.  We proceed by induction on $b - a$.  If $b - a = 0$, then $\cF$ is simply a shift of an sheaf $\cF_1 \in \cg X$ with the property that $\cF_1|_{V_a} \in \cloc{V_a}{V_a \cap Z}w$.  The desired $V$ is obtained by invoking axiom~\sref{ax:ext2} on $V_a$.

Now, suppose $b > a$, and consider the distinguished triangle
\[
\tau^{\le a}\cF \to \cF \to \tau^{\ge a+1}\cF \to.
\]
This gives rise to an exact sequence
\[
\gHom(\tau^{\ge a+1}\cF,i_*\cG[r]) \to \gHom(\cF,i_*\cG[r]) \to \gHom(\tau^{\le a}\cF,i_*\cG[r]).
\]
By the inductive assumption, the first and last terms vanish for all $\cG \in \tcgg Z{w+1}$ after restriction to suitable dense open subschemes $V' \Subset_Z X$ and $V'' \Subset_Z X$, respectively.  Therefore, the middle term vanishes upon restriction to $V = V' \cap V''$.
\end{proof}

\begin{lem}\label{lem:li}
Let $X$ be a scheme with an $s$-structure, and let $i: Z \hto X$ and $\kappa: Y \hto Z$ be closed
subschemes.  For any sheaf $\cF \in \cloc XYw$ and any $r \ge 0$,
there is an open subscheme $V \Subset_Y Z$ such that $H^{-r}(Li^*\cF)|_V
\in \cloc V{V \cap Y}w$.
\end{lem}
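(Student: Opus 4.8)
The plan is to argue by induction on $r$. When $r=0$ there is little to do: writing $\iota = i\circ\kappa\colon Y\hto X$ for the composite closed immersion, one has $H^0(Li^*\cF) = i^*\cF$ and $\kappa^*(i^*\cF) = \iota^*\cF \in \cgl Yw$ because $\cF\in\cloc XYw$; hence $H^0(Li^*\cF)$ already satisfies the required condition globally on $Z$, and one takes $V=Z$ (passing to an open subscheme of $Z$, and correspondingly of $Y$, only shrinks the category by Proposition~\ref{prop:res-open}). So fix $r\ge 1$ and put $A = Li^*\cF\in\dgm Z$. The one genuine difficulty is that $A$ need not be bounded, so Lemma~\ref{lem:der-hom} cannot be applied to it directly; the device is to split off the bounded truncation $B := \tau^{\ge -r+1}A \in \dgb Z$.

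The cohomology sheaves of $B$ are $i^*\cF$ in degree $0$, $H^{-j}(Li^*\cF)$ in degree $-j$ for $1\le j\le r-1$, and $0$ otherwise. Each of them, after restriction to a suitable dense open $W\Subset_Y Z$, lies in $\cloc W{W\cap Y}w$: the degree‑$0$ term by the $r=0$ computation above, and the terms in degrees $-1,\dots,-(r-1)$ by the inductive hypothesis. Therefore Lemma~\ref{lem:der-hom}, applied with $Z$ as the ambient scheme (which carries an induced $s$-structure, a closed subscheme of a closed subscheme being closed), the closed subscheme $\kappa\colon Y\hto Z$, and the complex $B$, yields for each shift $n$ a dense open $\Subset_Y Z$ over which $\gHom(B,\kappa_*\cG[n]) = 0$ for all $\cG\in\tcgg Y{w+1}$. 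Feeding this into the long exact sequence of $\gHom(-,\kappa_*\cG[n])$ attached to the triangle $\tau^{\le -r}A \to A \to B \to$ shows that, over a dense open $\Subset_Y Z$, $\gHom(\tau^{\le -r}A,\kappa_*\cG[n])$ coincides with $\gHom(A,\kappa_*\cG[n])$ for all $\cG\in\tcgg Y{w+1}$.

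Next I bound $\gHom(A,\kappa_*\cG[n])$ directly. By Lemma~\ref{lem:found}, $Li^*$ is left adjoint to $i_*$, whence $\gHom(A,\kappa_*\cG[n]) = \gHom(\cF,\iota_*\cG[n]) = \gExt^n(\cF,\iota_*\cG)$, using the usual compatibility of $Li^*$, $i_*$ and restriction to open subschemes. Since $\cF\in\cloc XYw$, axiom~\sref{ax:ext2} for the closed subscheme $\iota\colon Y\hto X$ provides, for each $n\ge 0$, a dense open $\Subset_Y X$ over which this $\gExt^n$ vanishes for all $\cG\in\tcgg Y{w+1}$; and for $n<0$ the group $\gHom(A,\kappa_*\cG[n])$ vanishes for trivial degree reasons, $A$ being concentrated in nonpositive degrees and $\kappa_*\cG[n]$ in degree $-n>0$. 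Combining this with the previous paragraph — and intersecting the finitely many dense open subschemes that are actually invoked, which is harmless since a finite intersection of dense open subschemes containing $Z\ssm Y$ is again one — we obtain a dense open $V\Subset_Y Z$ over which $\gHom(\tau^{\le -r}A,\kappa_*\cG[n]) = 0$ for all $n$ and all $\cG\in\tcgg Y{w+1}$.

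Finally, apply $\gHom(-,\kappa_*\cG[r])$ to the truncation triangle $\tau^{\le -r-1}A \to \tau^{\le -r}A \to H^{-r}(A)[r]\to$; the relevant stretch of its long exact sequence is
\[
\gHom(\tau^{\le -r-1}A,\,\kappa_*\cG[r-1]) \to \gHom(H^{-r}(A),\,\kappa_*\cG) \to \gHom(\tau^{\le -r}A,\,\kappa_*\cG[r]).
\]
The right-hand term vanishes over $V$ by the previous step. The left-hand term vanishes identically: $\tau^{\le -r-1}A$ lies in $\dgml Z{-r-1}$ while $\kappa_*\cG[r-1]$ is concentrated in cohomological degree $-r+1$, and $-r-1 < -r+1$, so orthogonality of the standard $t$-structure forces $\Hom$ (hence $\gHom$) between them to be zero. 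Thus $\gHom(H^{-r}(Li^*\cF)|_V,\kappa_*\cG|_V) = 0$ for every $\cG\in\tcgg Y{w+1}$, in particular for every $\cG\in\cgg Y{w+1}$. By Propositions~\ref{prop:rt-adj} and~\ref{prop:noeth-orth} the category $\cgl{V\cap Y}w$ is the left orthogonal of $\cgg{V\cap Y}{w+1}$, and $\cgg{V\cap Y}{w+1}$ consists of the restrictions of objects of $\cgg Y{w+1}$ (Proposition~\ref{prop:res-open}); so the adjunction between $\kappa^*$ and $\kappa_*$ gives $\kappa^*(H^{-r}(Li^*\cF)|_V)\in\cgl{V\cap Y}w$, that is, $H^{-r}(Li^*\cF)|_V\in\cloc V{V\cap Y}w$. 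As indicated, the essential point is the possible unboundedness of $Li^*\cF$ — handled by isolating the bounded head $\tau^{\ge -r+1}Li^*\cF$ (controlled inductively through Lemma~\ref{lem:der-hom}) and the tail $\tau^{\le -r}Li^*\cF$ (controlled through axiom~\sref{ax:ext2} applied to $\cF$), and observing that the part of the tail below the critical degree contributes nothing for degree reasons; the rest is bookkeeping of open subschemes and routine compatibilities of $i^*,\kappa^*,i_*,\kappa_*$ with restriction.
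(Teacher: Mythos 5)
Your proof is correct and follows essentially the same route as the paper's: induction on $r$, adjunction together with axiom~\sref{ax:ext2} on $X$ to kill $\gHom(Li^*\cF,\kappa_*\cG[r])$ generically, Lemma~\ref{lem:der-hom} applied (via the inductive hypothesis) to the bounded truncation $\tau^{\ge -r+1}Li^*\cF$ on $Z$, and the final adjunction/orthogonality step identifying $\tilde\kappa^*H^{-r}(Li^*\cF)|_V$ as an object of $\cgl{V\cap Y}{w}$. The only difference is cosmetic bookkeeping — you isolate $H^{-r}$ with the triangles $\tau^{\le -r}A \to A \to \tau^{\ge -r+1}A$ and $\tau^{\le -r-1}A \to \tau^{\le -r}A \to H^{-r}(A)[r]$, whereas the paper uses $H^{-r}(A)[r] \to \tau^{\ge -r}A \to \tau^{\ge -r+1}A$ — and your implicit reliance on the induced structure on $Z$ when invoking Lemma~\ref{lem:der-hom} there is exactly the move the paper itself makes.
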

\begin{proof}
We prove the lemma by induction on $r$.  For $r = 0$, the lemma is trivial: it is immediate from the definitions that $i^*\cF \in \cloc ZYw$.  Now, suppose $r > 0$.  For any $\cG \in \tcgg Y{w+1}$, we have $\gHom(Li^*\cF, \kappa_*\cG[r]) \simeq \gHom(\cF, i_*\kappa_*\cG[r]) \simeq \gExt^r(\cF, i_*\kappa_*\cG)$.  By axiom~\sref{ax:ext2}, there is an open subscheme $V' \Subset_Y X$ such that $\gExt^r(\cF|_{V'}, i_*\kappa_*\cG|_{V'}) = 0$ for all $\cG \in \tcgg Y{w+1}$.  Let $V'_1 = V' \cap Z$; then $\gHom(Li^*\cF|_{V'_1}, \kappa_*\cG[r]|_{V'_1}) = 0$ for all $\cG \in \tcgg Y{w+1}$.  In addition, we also have $\gHom(Li^*\cF, \kappa_*\cG[r]) \simeq \gHom(\tau^{\ge -r}Li^*\cF, \kappa_*\cG[r])$, so from the distinguished triangle
\[
\tau^{[-r,-r]}Li^*\cF \to \tau^{\ge -r}Li^*\cF \to \tau^{\ge -r+1}Li^*\cF \to
\]
we obtain the exact sequence
\begin{multline*}
\cdots \to \gHom(\tau^{\ge -r}Li^*\cF, \kappa_*\cG[r]) \to
\gHom(\tau^{[-r,-r]}Li^*\cF, \kappa_*\cG[r]) \to \\
\gHom(\tau^{\ge -r+1}Li^*\cF[-1], \kappa_*\cG[r]) \to \cdots.
\end{multline*}
Note that $\tau^{[-r,-r]}Li^*\cF \simeq H^{-r}(Li^*\cF)[r]$.  The sequence above can be rewritten as
\[
\gHom(Li^*\cF, \kappa_*\cG[r]) \to \gHom(H^{-r}(Li^*\cF), \kappa_*\cG) \to \gHom(\tau^{\ge -(r-1)}Li^*\cF, \kappa_*\cG[r+1]).
\]
The first term vanishes when we restrict to $V'_1$.  By the inductive
assumption, all cohomology sheaves of $\tau^{\ge -(r-1)}Li^*\cF$ satisfy the hypothesis of Lemma~\ref{lem:der-hom}, so invoking that lemma, we see that the last term above vanishes upon
restriction to some dense open subscheme $V'' \Subset_Y Z$.  Then, clearly,
the middle term vanishes upon restriction to $V = V'_1 \cap V''$.  Let $\tilde\kappa : V \cap Y \hto V$ denote the inclusion map.  The vanishing of the middle term above implies, in particular, that
\[
\Hom(H^{-r}(Li^*\cF)|_V, \tilde\kappa_*\cG_1) = \Hom(\tilde\kappa^* H^{-r}(Li^*\cF)|_V, \cG_1) = 0
\]
for all $\cG_1 \in \cgg {V \cap Y}{w+1}$, from which it follows in turn that $\tilde\kappa^*H^{-r}(Li^*\cF)|_V \in
\cgl {V \cap Y}w$.  Thus, $H^{-r}(Li^*\cF)|_V \in \cloc{V}{V \cap Y}w$, as desired.
\end{proof}

\begin{lem}
Let $X$ be a scheme with an $s$-structure.  The induced pre-$s$-structure
on any open or closed subscheme is an $s$-structure.
\end{lem}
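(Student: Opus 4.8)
The plan is to treat open and closed subschemes separately, reducing each to the hypothesis that $X$ itself carries an $s$-structure --- equivalently, by Definition~\ref{defn:s}, that axioms~\sref{ax:serrer}--\sref{ax:ext2} hold for the pre-$s$-structure induced on every closed subscheme of $X$, in particular on $X$ itself. The closed case will be essentially formal; for the open case I need two further ingredients: a ``consistency'' statement saying that the pre-$s$-structure induced on a locally closed subscheme does not depend on how it is presented, and a reduction of the difficult axiom~\sref{ax:ext2} to Lemma~\ref{lem:der-hom} applied on the whole of $X$.

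\emph{Consistency.}  I would first prove the following. Let $Y$ be a closed subscheme of an open subscheme $\Omega \subseteq X$, and let $\overline{Y}$ be the scheme-theoretic closure of $Y$ in $X$. Then $\overline{Y}$ is a closed subscheme of $X$ having $Y$ as a dense open subscheme, with $\overline{Y} \cap \Omega = Y$ (a standard fact about scheme-theoretic images and flat base change), and the pre-$s$-structure~\eqref{eqn:res-closed} induced on $Y$ from $\Omega$ coincides with the pre-$s$-structure~\eqref{eqn:res-open} induced on $Y$ from $\overline{Y}$. For the coincidence: if $\cF \in \cgl Yw$ relative to $\Omega$, pick $\cH \in \cgl Xw$ with $\cH|_\Omega \simeq \kappa_{\Omega*}\cF$ (where $\kappa_\Omega : Y \hto \Omega$); then $i_{\overline{Y}*}i_{\overline{Y}}^\flat\cH$ is a subsheaf of $\cH$, hence lies in $\cgl Xw$, so $i_{\overline{Y}}^\flat\cH \in \cgl{\overline{Y}}w$, and --- since $i^\flat$ commutes with restriction to an open and $\kappa_\Omega^\flat\kappa_{\Omega*} \simeq \id$ --- its restriction to $Y$ is $\cF$; thus $\cF \in \cgl Yw$ relative to $\overline{Y}$. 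Conversely, if $\cG \in \cgl{\overline{Y}}w$ restricts to $\cF$ on $Y$, then $i_{\overline{Y}*}\cG \in \cgl Xw$ and base change along $\Omega \hto X$ gives $(i_{\overline{Y}*}\cG)|_\Omega \simeq \kappa_{\Omega*}\cF$, so $\cF \in \cgl Yw$ relative to $\Omega$. The same argument, using that $\cgg Xw$ is closed under subobjects, handles $\cgg Yw$. Consequently the pre-$s$-structure induced on a locally closed subscheme depends only on that subscheme.

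\emph{Closed subschemes.}  If $i : Z \hto X$ is closed, Proposition~\ref{prop:res-closed} gives an induced pre-$s$-structure on $Z$; and for a closed subscheme $Y \hto Z$, formula~\eqref{eqn:res-closed} is transitive, so the pre-$s$-structure induced on $Y$ from $Z$ equals the one induced from $X$. Since $Y$ is then a closed subscheme of $X$ and $X$ has an $s$-structure, axioms~\sref{ax:serrer}, \sref{ax:tensorr}, \sref{ax:ext2} hold for $Y$; hence $Z$ has an $s$-structure. In particular, every closed subscheme of $X$ has an $s$-structure.

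\emph{Open subschemes.}  Let $j : U \hto X$ be open, equipped with the pre-$s$-structure of Proposition~\ref{prop:res-open}, and let $\kappa : Y \hto U$ be closed; let $\overline{Y}$ be the scheme-theoretic closure of $Y$ in $X$, so $Y$ is a dense open subscheme of $\overline{Y}$ and $Y = \overline{Y} \cap U$. By Consistency the pre-$s$-structure on $Y$ is the one it receives as an open subscheme of $\overline{Y}$, so by Lemma~\ref{lem:res-open-tcgg} the category $\tcgg Yv$ is, for every $v$, exactly the class of restrictions to $Y$ of objects of $\tcgg{\overline{Y}}v$. Axiom~\sref{ax:serrer} for $Y$ follows because $\tcgg{\overline{Y}}w$ is a Serre subcategory of $\cg{\overline{Y}}$ and, by Lemma~\ref{lem:subsheaf-extend}, a quotient in $\cg Y$ of such a restriction is itself such a restriction; axiom~\sref{ax:tensorr} for $Y$ follows since $\otimes$ commutes with restriction and~\sref{ax:tensorr} holds on $\overline{Y}$. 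For axiom~\sref{ax:ext2}, take $\cF \in \cloc UYw$ and $r \ge 0$, and extend $\cF$ to a coherent sheaf $\hat\cF$ on $X$. Put $V_0 = X \ssm (\underline{\overline{Y}} \ssm \uY)$, an open subscheme of $X$ with $V_0 \Subset_{\overline{Y}} X$ and $V_0 \cap \overline{Y} = Y$; the pullback of $\hat\cF|_{V_0}$ to $Y$ factors through $U$ and hence equals $\kappa^*\cF \in \cgl Yw$, so $\hat\cF$, viewed in $\dgb X^{[0,0]}$, satisfies the hypotheses of Lemma~\ref{lem:der-hom} for the closed subscheme $\overline{Y} \hto X$ and weight $w$ (the cohomology in nonzero degrees vanishing). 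That lemma produces $V \Subset_{\overline{Y}} X$ with $\gHom(\hat\cF|_V, i_{\overline{Y}*}\cG'[r]|_V) = 0$ for all $\cG' \in \tcgg{\overline{Y}}{w+1}$. Setting $V' = V \cap U$, one checks $V' \Subset_Y U$; and since every $\cG \in \tcgg Y{w+1}$ is the restriction to $Y$ of some $\cG' \in \tcgg{\overline{Y}}{w+1}$ and $\kappa_*\cG \simeq (i_{\overline{Y}*}\cG')|_U$ by base change, one gets
\[
\gExt^r(\cF|_{V'}, \kappa_*\cG|_{V'}) \simeq \gHom(\hat\cF|_{V'}, i_{\overline{Y}*}\cG'[r]|_{V'}) = 0,
\]
which is axiom~\sref{ax:ext2} for $Y$. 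Hence $U$ has an $s$-structure. The main obstacle is this last verification --- choosing $\overline{Y}$ and $V_0$ so that $\hat\cF$ feeds into Lemma~\ref{lem:der-hom} on $X$, and checking that the resulting $V'$ is admissible and that the $\gExt$ one wants is the $\gHom$ that vanishes; the consistency statement, though routine, is exactly what makes this matching legitimate.
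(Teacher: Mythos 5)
Your treatment of the open case is sound and fills in, with the closure $\overline Y$, the consistency statement, and Lemma~\ref{lem:der-hom}, exactly the details the paper leaves to the reader. The gap is in the closed case, which you declare ``essentially formal.'' Axioms~\sref{ax:serrer} and~\sref{ax:tensorr} are indeed intrinsic to the pre-$s$-structure on $Y$, so by transitivity of~\eqref{eqn:res-closed} they transfer for free; but axiom~\sref{ax:ext2} is \emph{not} an intrinsic property of $Y$ --- it is a property of the pair (ambient scheme, $Y$). The hypothesis that $X$ carries an $s$-structure gives you~\sref{ax:ext2} for the pair $(X,Y)$: vanishing of $\gExt^r(\cF'|_V, i_*\kappa_*\cG|_V)$ for $\cF' \in \cloc XYw$, with the Ext-groups computed on open subschemes of $X$. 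What you must prove for $Z$ to have an $s$-structure is~\sref{ax:ext2} for the pair $(Z,Y)$: vanishing of $\gExt^r(\cF|_V, \kappa_*\cG|_V)$ for $\cF \in \cloc ZYw$, with the Ext-groups computed on open subschemes of $Z$ and $V \Subset_Y Z$. These are different statements. Taking $\cF' = i_*\cF$ and invoking the axiom on $X$ only controls $\gExt^r_X(i_*\cF, i_*\kappa_*\cG) \simeq \gHom(Li^*i_*\cF, \kappa_*\cG[r])$, whereas the group you need is the top piece $\gHom(\cF, \kappa_*\cG[r])$ of the filtration coming from the triangle $\tau^{\le -1}Li^*i_*\cF \to Li^*i_*\cF \to \cF \to$; vanishing of the former does not by itself give vanishing of the latter, because the contributions $\gExt^{r-k-1}(H^{-k}(Li^*i_*\cF), \kappa_*\cG)$ for $k \ge 1$ intervene. (Already for a point $Z$ in a line $X$, $\Ext$-groups on $X$ of pushforwards differ from $\Ext$-groups on $Z$, so no purely formal identification is available.)

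Closing this gap is precisely where the paper's proof does its work: one argues by induction on $r$, uses Lemma~\ref{lem:li} to show that each $H^{-k}(Li^*i_*\cF)$ lies generically in $\cloc{\,\cdot\,}{\,\cdot\,}w$, applies the inductive case of~\sref{ax:ext2} to kill the terms $\gExt^{r-k-1}(H^{-k}(Li^*i_*\cF)|_V, \kappa_*\cG|_V)$ over a suitable $V \Subset_Y Z$, and then runs the long exact sequence of the triangle above together with the vanishing of $\gHom(Li^*i_*\cF|_V, \kappa_*\cG|_V[r])$ supplied by the axiom on $X$. Your proposal needs this (or an equivalent d\'evissage) spelled out; note also that your open-case trick via Lemma~\ref{lem:der-hom} does not rescue the closed case, since that lemma again only produces vanishing of Hom-groups computed on the ambient scheme $X$, not on $Z$.
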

\begin{proof}
Let $U \subset X$ be an open subscheme, and let $Y \subset U$ be a closed
subscheme.  It is straightforward to deduce the conditions in axioms~\sref{ax:serrer}--\sref{ax:ext2} from the corresponding facts for the closed subscheme $\overline Y \subset X$.  We omit the details.

Now, let $i: Z \hto X$ be a closed subscheme. 
Axioms~\sref{ax:serrer} and~\sref{ax:tensorr} hold for $Z$ automatically, since
any closed subscheme of $Z$ is also a closed subscheme of $X$.  

We now treat axiom~\sref{ax:ext2}.  Let $\kappa: Y \hto Z$ be a closed subscheme, and let $\cF \in \cloc ZYw$.  Fix an integer $r \ge 0$.  By the same axiom for $X$, we know that there exists an open subscheme $V_0 \Subset_Y X$ such that for all $\cG \in \tcgg Y{w+1}$, $\gExt^r(i_*\cF|_{V_0}, i_*\kappa_*\cG|_{V_0}) = 0$.  We assume inductively that~\sref{ax:ext2} is already known for $\gExt^k$-groups with $k < r$.  For each $k$ with $1 \le k \le r-1$, let us invoke Lemma~\ref{lem:li} to find an open subset $U_k \Subset_Y Z$ such that $H^{-k}(Li^*i_*\cF)|_{U_k} \in \cloc {U_k}{U_k \cap Y}w$.  Then, the inductive assumption gives us an open set $V_k \Subset_{U_k \cap Y} U_k$ such that $\gExt^{r-k-1}(H^{-k}(Li^*i_*\cF)|_{V_k}, \kappa_*\cG|_{V_k}) = 0$ for all $\cG \in \tcgg Y{w+1}$.  Note that $V_k \Subset_Y Z$, and let
\[
V = V_0 \cap V_1 \cap \cdots \cap V_k.
\]
Clearly, $V \Subset_Y Z$, and all the $\gExt$-vanishing statements above hold over $V$.  In particular, for $k = 1, \ldots, r-1$, the collection of statements
\[
\gExt^{r-k-1}(H^{-k}(Li^*i_*\cF)|_V, \kappa_*\cG|_V) \simeq
\gHom(H^{-k}(Li^*i_*\cF)|_V[k+1], \kappa_*\cG|_V[r]) = 0
\]
lets us deduce (via a standard argument by induction on the number of nonzero $H^{-k}(Li^*i_*\cF)|_V$) that $\gHom((\tau^{[-r+1,-1]}Li^*i_*\cF|_V)[1], \kappa_*\cG|_V[r]) = 0$.  Since we have $\kappa_*\cG|_V[r] \in \dgbg V{-r}$, it follows that
\[
\gHom((\tau^{\le -1}Li^*i_*\cF|_V)[1], \kappa_*\cG|_V[r]) = 0.
\]
Next, we know that $\gExt^r(i_*\cF|_{V_0}, i_*\kappa_*\cG|_{V_0}) \simeq \gHom(Li^*i_*\cF|_{V_0\cap Z}, \kappa_*\cG|_{V_0 \cap Z}[r]) = 0$, so
\[
\gHom(Li^*i_*\cF|_V, \kappa_*\cG|_V[r]) = 0
\]
as well.  Note that $\tau^{\ge 0}Li^*i_*\cF|_V \simeq i^*i_*\cF|_V \simeq \cF|_V$, so we have a distinguished triangle
\[
\tau^{\le -1}Li^*i_*\cF|_V \to Li^*i_*\cF|_V \to \cF|_V \to.
\]
From the long exact sequence of $\gHom$-groups associated to this distinguished triangle and the two vanishing statements established above, we see that
\[
\gHom(\cF|_V, \kappa_*\cG|_V[r]) \simeq \gExt^r(\cF|_V, \kappa_*\cG|_V) = 0
\]
for any $\cG \in \tcgg Y{w+1}$, as desired. 
\end{proof}

We conclude with a basic fact about the structure sheaf of a scheme with
an $s$-structure.

\begin{prop}\label{prop:Ox-zero}
If $X$ is endowed with an $s$-structure, then $\cO_X \in
\cgl X0$, but $\cO_X \notin \cgl X{-1}$.  If $X$ is reduced,
then $\cO_X$ is pure of step $0$.
\end{prop}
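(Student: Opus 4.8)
The plan is to derive everything from two elementary facts. First, for any nonempty open subscheme $V$ (of $X$, or of a closed subscheme of $X$), the identity morphism of $\cO_V$ is nonzero, so $\gHom(\cO_V,\cO_V)\ne0$; hence, by axiom~\sref{ax:hom}, $\cO_V$ cannot lie in $\cgl Vw\cap\cgg V{w+1}$ for any $w\in\Z$. Second, $\cO_X$ is $\otimes$-idempotent: $\cO_X^{\otimes n}\simeq\cO_X$ for every $n\ge1$. Combined with axiom~\sref{ax:tensorl} this shows that $\cO_X\in\cgl Xv$ implies $\cO_X\in\cgl X{nv}$ for all $n\ge1$, and combined with axiom~\sref{ax:tensorr} (applied to a closed subscheme $Y$) it shows that $\cO_Y\in\tcgg Yw$ implies $\cO_Y\in\tcgg Y{nw}$ for all $n\ge1$. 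We assume $X\ne\emptyset$ throughout.

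I would first show that $\cO_X\notin\cgl X{-1}$, and more: that $\cO_X\in\cgl Xv$ forces $v\ge0$. Indeed, if $\cO_X\in\cgl Xv$ with $v<0$, then $\cO_X\in\cgl X{nv}$ for all $n$; choosing $u$ with $\cO_X\in\cgg Xu$ (axiom~\sref{ax:bdd}) and then $n$ with $nv\le u-1$, axiom~\sref{ax:inc} gives $\cO_X\in\cgl X{u-1}\cap\cgg Xu$, contradicting the first fact. In particular $\cO_X\notin\cgl X{-1}$, and, using axiom~\sref{ax:bdd} once more, the set $\{v\mid\cO_X\in\cgl Xv\}$ has a minimum $v_0$, with $v_0\ge0$.

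The heart of the argument is to prove $v_0=0$. Suppose $v_0\ge1$, and set $\cI:=\sigma_{\le v_0-1}\cO_X$ (an ideal sheaf) and $\cG:=\sigma_{\ge v_0}\cO_X\simeq\cO_X/\cI$. By minimality of $v_0$, $\cG\ne0$; moreover $\cG\in\cgg X{v_0}$, and $\cG\in\cgl X{v_0}$ because $\cG$ is a quotient of $\cO_X$. Let $i:Z\hto X$ be the closed subscheme cut out by $\cI$, so that $\cG\simeq i_*\cO_Z$ and $Z\ne\emptyset$; then $\cO_Z\in\cgl Z{v_0}\cap\cgg Z{v_0}$ by Proposition~\ref{prop:res-closed}. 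The point of replacing $\cG$ by $\cO_Z$ is that $\cO_Z$ has full support on $Z$, hence remains nonzero after restriction to any dense open subscheme of $Z$, whereas $\cG$ itself may be supported on a proper closed subset of $X$. Now $\cO_Z^{\otimes2}\simeq\cO_Z$, so $\cO_Z\in\cgg Z{v_0}\subseteq\tcgg Z{v_0}$ yields $\cO_Z\in\tcgg Z{2v_0}$ by axiom~\sref{ax:tensorr}; unwinding the definition of $\tcgg$, there is $V\Subset Z$ with $\cO_V\in\cgg V{2v_0}$, while $\cO_V\in\cgl V{v_0}$ by Proposition~\ref{prop:res-open}. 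Since $2v_0\ge v_0+1$, axiom~\sref{ax:inc} puts $\cO_V$ in $\cgl V{v_0}\cap\cgg V{v_0+1}$, contradicting the first fact (as $V\ne\emptyset$). Hence $v_0=0$, i.e.\ $\cO_X\in\cgl X0$.

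Finally, assume $X$ is reduced; it remains to show $\cI:=\sigma_{\le-1}\cO_X=0$, for this together with $\cO_X\in\cgl X0$ says $\cO_X$ is pure of step $0$. If $\cI\ne0$, pick an associated point $\eta$ of the subsheaf $\cI\subseteq\cO_X$; since $X$ is reduced, $\eta$ is the generic point of an irreducible component of $X$ and $\cO_{X,\eta}$ is a field, so the nonzero ideal $\cI_\eta$ equals $\cO_{X,\eta}$. Thus $\cO_X/\cI$ is supported on a proper, hence $G$-invariant, closed subset, and on its dense open complement $V$ we have $\cO_V\simeq\cI|_V\in\cgl V{-1}$ by Proposition~\ref{prop:res-open}, contradicting the statement $\cO_V\notin\cgl V{-1}$ from the second paragraph. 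So $\cI=0$. I expect the assertion $\cO_X\in\cgl X0$ to be the only real obstacle; the crucial points there are that one must bring in axiom~\sref{ax:tensorr} rather than only axiom~\sref{ax:tensorl}, and that one must pass from the quotient $\sigma_{\ge v_0}\cO_X$ to the structure sheaf of the closed subscheme it is supported on, so that restricting to a dense open subscheme does not annihilate it.
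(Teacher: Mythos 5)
Your proof is correct and follows essentially the same route as the paper's: the lower bound $\cO_X\notin\cgl X{-1}$ comes from $\otimes$-idempotence of $\cO_X$ together with \sref{ax:tensorl}, the upper bound comes from pushing the quotient $\sigma_{\ge v_0}\cO_X$ onto the closed subscheme it defines and applying \sref{ax:tensorr} with idempotence of $\cO_Z$ on a dense open, and the reduced case amounts to the same observation as the paper's (a nonzero $\sigma_{\le -1}\cO_X$ would equal $\cO_V$ on some nonempty invariant open $V$, contradicting $\cO_V\notin\cgl V{-1}$). The only cosmetic point is that the statement $\cO_V\notin\cgl V{-1}$ invoked at the end should be noted to follow by running your second paragraph on $V$ with its induced $s$-structure, which is immediate.
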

\begin{proof}
By axiom~\sref{ax:bdd}, there must be some integer $v$
such that $\cO_X \in \cgl Xv$.  It follows that $\cO_X|_U \in \cgl Uv$ for
every open subscheme $U \subset X$.  Now, suppose that $\cO_X \in \cgg X1$.
Then $v \ge 1$, so by axiom~\sref{ax:tensorr}, we have that $\cO_X^{\otimes v+1} \in
\tcgg X{v+1}$, so there is some dense open subscheme $U$ with $\cO_X^{\otimes
v+1}|_U \in \cgg U{v+1}$.  But of course $\cO_X^{\otimes v+1} \simeq
\cO_X$, so this is a contradiction, and $\cO_X \notin \cgg X1$.

Next, consider $\sigma_{\ge 1}\cO_X$.  This is a quotient of $\cO_X$, and
thus
isomorphic to $i_*\cO_Z$ for some closed subscheme $i: Z \hookrightarrow
X$. Since $i_*$ is $s$-exact, we must have $\cO_Z \in \cgg Z{1}$, but this
is impossible by the previous paragraph, so it follows that $\sigma_{\ge
1}\cO_X = 0$.  Thus, $\cO_X \in \cgl X0$. 

Now, let $v$ be the smallest integer such that $\cO_X \in \cgl Xv$. 
We have just seen that $v \le 0$.  On the other hand, we cannot have $v <
0$, for in that case, axiom~\sref{ax:tensorl} would give us
that 
\[
\cO_X \simeq \cO_X \otimes \cO_X \in \cgl X{2v}.
\]
So $v = 0$, and $\cO_X \notin \cgl X{-1}$.

Finally, consider the subsheaf $\sigma_{\le -1}\cO_X \subset \cO_X$.  Over any open subscheme $U \subset X$, the restriction $\sigma_{\le -1}\cO_X|_U$ is a proper subsheaf of $\cO_U$, since (by the above argument) $\cO_U \notin \cgl U{-1}$.  This condition implies that $\sigma_{\le -1}\cO_X$ is a
nilpotent ideal sheaf.  In particular, if $X$ is reduced, $\sigma_{\le
-1}\cO_X = 0$.
\end{proof}

%%%%%%%%%%%%%%%%%%%%%%%%%%%%%%%%%%%%%%%%%%%%%%%%%%%%%%%%%%%%%%%%%%%%%%%%%%%
\section{$s$-structures on Closed Subschemes}
\label{sect:closed}
%%%%%%%%%%%%%%%%%%%%%%%%%%%%%%%%%%%%%%%%%%%%%%%%%%%%%%%%%%%%%%%%%%%%%%%%%%%

Throughout this section, we fix the following conventions: let
\[
i: Z \hto X
\]
denote the inclusion of a closed subscheme, and let $\cI_Z \subset \cO_X$ denote the corresponding ideal sheaf.  We assume that $Z$ is endowed with a pre-$s$-structure.  (We do not assume that $X$ has a pre-$s$-structure.)  In addition, we impose the assumption ({\it cf.}~axiom~\aref{ax:ideal} in Definition~\ref{defn:adh}) that
\begin{equation}\label{eqn:pre-adh}
i^*\cI_Z \in \cgl Z0.
\end{equation}
We will frequently consider a second closed subscheme $Z' \subset X$ with the same underlying topological space as $Z$, such that there is an inclusion map
\[
t: Z \hto Z'.
\]
In this setting, we let $\cI \subset \cO_{Z'}$ denote the ideal sheaf of $Z \subset Z'$.  Note that $t^*\cI$ is naturally a quotient of $i^*\cI_Z$, so it follows from~\eqref{eqn:pre-adh} that $t^*\cI \in \cgl Z0$ as well.

One goal of this section is to show that the given pre-$s$-structure on $Z$ induces pre-$s$-structures on all such ``larger'' subschemes $Z'$.  This allows us to define a full subcategory of $\cg X$ as follows:
\[
\czsupp_{\ge w} = \left\{ \cF \,\bigg|
\begin{array}{c}
\text{there is a subscheme structure $i': Z' \hto X$ on $\uZ$} \\
\text{and a sheaf $\cF_1 \in \cgg {Z'}w$ such that $\cF \simeq i'_*\cF_1$}
\end{array} \right\}
\]
Recall also the definition of $\czloc w$ from~\eqref{eqn:cloc-defn}.  Another
result of this section is that $\czloc w$ is a Serre subcategory of $\cg
X$.  These two categories play important and complementary roles in the gluing theorem of Section~\ref{sect:glue}.  Indeed, in the special case where $X$ is simply another scheme structure on $\uZ$, most of the gluing theorem is contained in the results of this section.

\begin{prop}\label{prop:res-nonred-pre}
There is a unique pre-$s$-structure on $Z'$ such that
$t_*$ is $s$-exact.  It is given by: 
\begin{equation}\label{eqn:res-nonred}
\begin{aligned}
\cgl{Z'}w &= \{ \cF \mid t^*\cF \in \cgl Zw \}, \\
\cgg{Z'}w &= \{ \cG \mid \text{$\Hom(\cF,\cG) = 0$ for all $\cF \in
\cgl{Z'}w$} \}.
\end{aligned}
\end{equation}
Moreover, $t^*$ is right $s$-exact, and $t^\flat$ is left $s$-exact.
\end{prop}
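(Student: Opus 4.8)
The plan is to follow the template of Proposition~\ref{prop:res-closed}, with the pushforward $t_*$ now playing the role that $i_*$ played there; the one genuinely new feature is that $t^*$ is only right exact, so that closure under subobjects needs an extra argument. The engine of every step will be dévissage along the $\cI$-adic filtration $\cF \supseteq \cI\cF \supseteq \cI^2\cF \supseteq \cdots$ of a sheaf $\cF \in \cg{Z'}$. This filtration is finite: since $Z$ and $Z'$ have the same underlying topological space, $\cI$ lies in the nilradical of $\cO_{Z'}$, so $\cI^n = 0$ for some $n$ (we are on a noetherian scheme), and each subquotient $\cI^k\cF/\cI^{k+1}\cF$ is annihilated by $\cI$, hence is $t_*$ of a sheaf on $Z$.

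The heart of the argument is a lemma asserting that, for $\cF \in \cg{Z'}$, the following are equivalent: (i) $t^*\cF \in \cgl Zw$; (ii) $\cI^k\cF/\cI^{k+1}\cF \in \cgl Zw$ for every $k$; (iii) $\cF$ admits some finite filtration whose subquotients are sheaves on $Z$ lying in $\cgl Zw$. The only nonformal implication is (i)$\Rightarrow$(ii): there is a natural surjection
\[
(t^*\cI)^{\otimes k} \otimes t^*\cF \twoheadrightarrow \cI^k\cF/\cI^{k+1}\cF
\]
(all tensor products over $\cO_Z$), and since $t^*\cI \in \cgl Z0$ by~\eqref{eqn:pre-adh}, $k$ applications of axiom~\sref{ax:tensorl} on $Z$ place the left-hand side in $\cgl Zw$, whence the quotient lies in $\cgl Zw$ because the latter is a Serre subcategory. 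Implication (ii)$\Rightarrow$(iii) uses the $\cI$-adic filtration itself, and (iii)$\Rightarrow$(i) is a short induction on the length of the filtration using only that $t^*$ is right exact, that $t^*$ is the identity on sheaves pulled back from $Z$, and that $\cgl Zw$ is Serre.

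Granting the lemma, I would verify the pre-$s$-structure axioms for the displayed categories on $Z'$. Axioms~\sref{ax:inc} and~\sref{ax:tensorl} (via $t^*(\cF\otimes\cG)\simeq t^*\cF\otimes t^*\cG$), and the $\cgl{Z'}v$ half of axiom~\sref{ax:bdd}, are immediate from the corresponding facts on $Z$ applied to $t^*\cF$. For axiom~\sref{ax:serre}: the category $\cgg{Z'}w$, being an orthogonal complement, is automatically closed under subobjects and extensions; $\cgl{Z'}w$ is closed under quotients and extensions by right exactness of $t^*$ and the Serre property on $Z$, and it is closed under subobjects by the lemma---given $\cF' \subseteq \cF$ with $\cF \in \cgl{Z'}w$, intersecting the $\cI$-adic filtration of $\cF$ with $\cF'$ exhibits $\cF'$ as an iterated extension of $\cO_Z$-modules that inject into the $\cI^k\cF/\cI^{k+1}\cF$, hence lie in $\cgl Zw$, so $\cF' \in \cgl{Z'}w$ by (iii)$\Rightarrow$(i). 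Axiom~\sref{ax:ses} then follows from Propositions~\ref{prop:rt-adj} and~\ref{prop:noeth-orth}: since $\cgl{Z'}w$ is closed under extensions and quotients in the noetherian category $\cg{Z'}$, its inclusion has a right adjoint $\sigma_{\le w}$, and $\cF/\sigma_{\le w}\cF$ lies in the subcategory right-orthogonal to $\cgl{Z'}w$, i.e.\ in $\cgg{Z'}{w+1}$, which provides the required short exact sequence.

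It remains to treat axiom~\sref{ax:hom} and the claims about functors. First, $t^*$ is right $s$-exact by the very definition of $\cgl{Z'}w$. Using the adjunctions $\Hom_{Z'}(\cF,t_*\cG) \simeq \Hom_Z(t^*\cF,\cG)$ and $\Hom_{Z'}(t_*\cH,\cG) \simeq \Hom_Z(\cH,t^\flat\cG)$ together with $t^*t_* \simeq \id$, one obtains that $t_*$ is $s$-exact and that $t^\flat$ sends $\cgg{Z'}{w+1}$ into $\cgg Z{w+1}$, i.e.\ is left $s$-exact; the $\cgg{Z'}w$ half of axiom~\sref{ax:bdd} comes out the same way, applying~\sref{ax:bdd} on $Z$ to $t^\flat\cF$. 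For axiom~\sref{ax:hom}: given $\cF \in \cgl{Z'}w$ and $\cG \in \cgg{Z'}{w+1}$ one has $t^\flat\cG \in \cgg Z{w+1}$, and restricting the $\cI$-adic filtration of $\cF$ to an arbitrary open subscheme (noting that $t_*$ and $t^\flat$ commute with open restriction) reduces $\gHom(\cF,\cG) = 0$ to axiom~\sref{ax:hom} on $Z$ applied to each pair $(\cI^k\cF/\cI^{k+1}\cF,\, t^\flat\cG)$, via the adjunction above. Finally, uniqueness is argued as in Proposition~\ref{prop:res-closed}: for the categories of any other pre-$s$-structure on $Z'$ for which $t_*$ is $s$-exact, faithfulness of $t_*$ together with the orthogonality (Proposition~\ref{prop:noeth-orth}) of $\cgg{Z'}{w+1}$ to $\cgl{Z'}w$ forces $t_*\cH \in \cgl{Z'}w \Rightarrow \cH \in \cgl Zw$; feeding this through the $\cI$-adic filtration of a general sheaf (whose graded pieces are pushed forward from $Z$) recovers the displayed $\cgl{Z'}w$, and then the displayed $\cgg{Z'}w$ by Proposition~\ref{prop:noeth-orth}. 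The main obstacle is the lemma---specifically the implication (i)$\Rightarrow$(ii), which is the sole place where hypothesis~\eqref{eqn:pre-adh} and the tensor axiom enter; the rest is formal dévissage and appeals to Propositions~\ref{prop:rt-adj} and~\ref{prop:noeth-orth}.
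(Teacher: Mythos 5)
Your proposal is correct and takes essentially the same route as the paper: the engine in both is dévissage along the nilpotent ideal $\cI$ (the paper organizes it as induction on the least $n$ with $\cI^n\cF=0$), using $t^*\cI \in \cgl Z0$ together with axiom~\sref{ax:tensorl} on $Z$, the adjunctions among $t^*$, $t_*$, $t^\flat$, and Propositions~\ref{prop:rt-adj} and~\ref{prop:noeth-orth} to manufacture $\sigma_{\le w}$ and the orthogonal category. Your filtration-characterization lemma and the direct monoidality argument for axiom~\sref{ax:tensorl} are only a repackaging of the paper's $\ell(\cF)$-induction, not a different method.
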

\begin{proof}
We begin by proving that $\cgl{Z'}w$ is a Serre subcategory of $\cg{Z'}$. 
It is obviously stable under extensions and quotients, because $\cgl Zw$
is, and $t^*$ is a right exact functor.  It remains to show that it is
closed under subobjects. 

Note that $\cI \subset \cO_{Z'}$ is a nilpotent ideal sheaf, so for any $\cF \in \cg {Z'}$, there is an integer
$n$ such that $\cI^n\cF = 0$.  Let $\ell(\cF)$ be the smallest such
integer.  Now, suppose $\cF \in \cgl {Z'}w$.  We will prove by induction on
$\ell(\cF)$ that all subsheaves of $\cF$ are also in $\cgl{Z'}w$.  If
$\ell(\cF) \le 1$, then $\cF$ is already supported on $Z$: {\it i.e.}, $\cF
\simeq t_*\cF_1$ for some $\cF_1 \in \cg Z$.  Moreover, we necessarily have
$\cF_1 \in \cgl Zw$, since $t^*\cF \simeq \cF_1$.  Every subsheaf of $\cF$
is in $\cgl {Z'}w$ because every subsheaf of $\cF_1$ is in $\cgl Zw$. 

Next, if $\ell(\cF) > 1$, form the short exact sequence
\begin{equation}\label{eqn:ell-ind}
0 \to \cI\cF \to \cF \to \cF/\cI\cF \to 0.
\end{equation}
It is clear that $\ell(\cI\cF) = \ell(\cF) - 1$, and that $\ell(\cF/\cI\cF)
= 1$.  Note that $\cF/\cI\cF \simeq t_*t^*\cF$ lies in $\cgl {Z'}w$ by assumption.  On the other hand, $\cI\cF$ is a quotient of $\cI \otimes \cF$, and we have $t^*(\cI \otimes \cF) \simeq t^*\cI \otimes t^*\cF \in \cgl Zw$ by condition~\eqref{eqn:pre-adh}.  Any subsheaf of $\cF$ is an extension of a subsheaf of $\cF/\cI\cF$ by a subsheaf of $\cI\cF$.  The latter two are in $\cgl {Z'}w$,
and since we already know that $\cgl{Z'}w$ is stable under extensions, we
conclude that any subsheaf of $\cF$ is in $\cgl{Z'}w$.  It is clear that
$\cgg{Z'}w$ is stable under subobjects and extensions, so
axiom~\sref{ax:serre} holds.  We have $\cgg{Z'}w = \cgl{Z'}w^\perp$ by
definition, so by Proposition~\ref{prop:noeth-orth}, axiom~\sref{ax:ses}
holds.  Axiom~\sref{ax:inc} is obvious. 

The idea of the induction argument above will be reused several times.  The
categories $\cg{Z'}$ and $\cgl{Z'}w$ (and later, in Proposition~\ref{prop:res-nonred}, $\tcgg{Z'}w$ as well) are Serre
subcategories of $\cg{Z'}$, so given a sheaf $\cF$ in any of these categories, we can
always form the short exact sequence~\eqref{eqn:ell-ind}, and the sheaves
$\cI\cF$ and $\cF/\cI\cF$ both lie in the same category.  Given any
property that is stable under extensions, we can prove that it holds for
all sheaves in $\cg{Z'}$ (resp.~$\cgl{Z'}w$, $\tcgg{Z'}w$) simply by
proving it for sheaves with $\ell(\cF) = 1$, {\it i.e.}, for sheaves of the
form $\cF = t_*\cF_1$ with $\cF_1 \in \cg Z$.  (Note that this method
cannot be used to prove statements about $\cgg{Z'}w$, because for $\cF \in
\cgg{Z'}w$, it may not be true that $\cF/\cI\cF \in \cgg{Z'}w$.)

Now, fix a sheaf $\cG \in \cgg{Z'}w$.  To show that $\gHom(\cF,\cG) = 0$
for all $\cF \in \cgl{Z'}w$, it suffices to show, by the previous
paragraph, that $\gHom(t_*\cF_1,\cG) = 0$ for all $\cF_1 \in \cgl Zw$. 
Since $\Hom(t_*\cF_1,\cG) \simeq \Hom(\cF_1, t^\flat\cG) = 0$ for all
$\cF_1 \in \cgl Zw$, we see that $t^\flat\cG \in \cgg Zw$.  Now, for any
open subscheme $V \subset Z'$, we have $\Hom((t_*\cF_1)|_V, \cG|_V) \simeq
\Hom(\cF_1|_{V_1 \cap Z}, (t^\flat\cG)|_{V_1 \cap Z})$, and the latter is
$0$ because $\gHom(\cF_1,t^\flat\cG) = 0$.  Therefore, $\gHom(t_*\cF_1,\cG)
= 0$, and axiom~\sref{ax:hom} holds.

Given $\cG \in \cg{Z'}$, consider the sheaf $t^\flat\cG \in
\cg Z$.  By axiom~\sref{ax:bdd} for $Z$, we have $t^\flat\cG \in \cgg Zw$
for some $w$, so $\Hom(\cF_1,t^\flat\cG) = 0$ for all $\cF_1 \in \cgl
Z{w-1}$. It follows that $\Hom(t_*\cF_1, \cG) = 0$, and then, by induction
on $\ell(\cF)$, that $\Hom(\cF,\cG) = 0$ for all $\cF \in \cgl {Z'}{w-1}$. 
Therefore, $\cG \in \cgg {Z'}w$.  On the other hand, there is some $v \in
\Z$ such that $t^*\cG \in \cgl Zv$, and it follows that $\cG \in \cgl
{Z'}v$.  Thus, axiom~\sref{ax:bdd} holds for $Z'$.

Next, fix a sheaf $\cG \in \cgl{Z'}v$.  To verify axiom~\sref{ax:tensorl},
it suffices to show that $t_*\cF_1 \otimes \cG \in \cgl{Z'}{w+v}$ for all
$\cF' \in \cgl Zw$.  This is straightforward: we have $t_*\cF_1 \otimes \cG
\simeq t_*(\cF_1 \otimes t^*\cG)$, and since $t^*\cG \in \cgl Zv$, we know
that $\cF' \otimes t^*\cG \in \cgl Z{w+v}$.  This completes the proof that the categories above constitute a pre-$s$-structure.

We saw earlier that $\cG \in \cgg{Z'}w$ implies $t^\flat\cG \in \cgg Zw$, so $t^\flat$ is left $s$-exact.  It is obvious, from the definition of $\cgl{Z'}w$, that $t^*$ is right $s$-exact.  Finally, we consider the uniqueness statement: suppose we had another collection of subcategories $(\{\cC'_{\le w}\}, \{\cC'_{\ge w}\})$ that formed a pre-$s$-structure and with respect to which $t_*$ was $s$-exact.  For $\cF \in \cgl{Z'}w$, one sees by induction an $\ell(\cF)$ that $\cF \in \cC'_{\le w}$, so $\cgl {Z'}w \subset \cC'_{\le w}$.  But since $\cC'_{\le w}$ is itself a Serre subcategory of $\cg{Z'}$, the same argument goes through with the roles of $\cgl{Z'}w$ and $\cC'_{\le w}$ reversed: we thus obtain $\cC'_{\le w} \subset \cgl{Z'}w$.  Thus, $\cC'_{\le w} = \cgl{Z'}w$, and then the equality $\cC'_{\ge w} = \cgg{Z'}w$ follows from the fact that $\cC'_{\ge w} = \cC'_{\le w-1}{}^\perp$ and $\cgg{Z'}w = \cgl{Z'}{w-1}^{\perp}$.
\end{proof}

The uniqueness statement in Proposition~\ref{prop:res-nonred-pre} implies that the condition in the definition of $\czsupp_{\ge w}$ may be tested on \emph{any} subscheme structure on which a given sheaf is supported.  That is, given $\cF \in \cg X$, suppose we have two subscheme structures $i': Z' \hto X$ and $i'': Z'' \hto X$ on $\uZ$, and sheaves $\cF' \in \cg{Z'}$, $\cF'' \in \cg{Z''}$ such that $\cF \simeq i'_*\cF' \simeq i''_*\cF''$.  Let $Y = Z' \cup Z''$ (that is, if $Z'$ and $Z''$ correspond to ideal sheaves $\cI'$ and $\cI''$, respectively, then $Y$ is the subscheme structure on $\uZ$ corresponding to $\cI' \cap \cI''$).  Then the pre-$s$-structure on $Y$ induced from $Z'$ coincides with that induced from $Z''$, because both of them must agree with the one induced from $Z$.  By comparing the push-forwards of $\cF'$ and $\cF''$ to $Y$, we see that $\cF' \in \cgg{Z'}w$ if and only if $\cF'' \in \cgg{Z''}w$.

\begin{prop}\label{prop:cloc-serre}
For each $w \in \Z$, $\czloc w$ is a Serre
subcategory of $\cg X$.
\end{prop}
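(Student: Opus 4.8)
\emph{Proof proposal.} The plan is to verify directly that $\czloc w=\{\cF\in\cg X\mid i^*\cF\in\cgl Zw\}$ is stable under quotients, extensions, and subobjects. Two of these are essentially formal, using only that $i^*$ is right exact and that $\cgl Zw\subset\cg Z$ is a Serre subcategory by axiom~\sref{ax:serre}. Indeed, given a short exact sequence $0\to\cF'\to\cF\to\cF''\to0$ in $\cg X$, right exactness yields an exact sequence $i^*\cF'\to i^*\cF\to i^*\cF''\to0$; from it one reads off that $i^*\cF''$ is a quotient of $i^*\cF$, that the image $\cQ$ of $i^*\cF'\to i^*\cF$ is a quotient of $i^*\cF'$, and that $i^*\cF$ is an extension of $i^*\cF''$ by $\cQ$. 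Stability of $\czloc w$ under quotients and under extensions follows immediately.

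The substantive point — and the one real obstacle — is stability under subobjects, where the failure of $i^*$ to be left exact must be confronted. So suppose $\cF'\subseteq\cF$ in $\cg X$ with $i^*\cF\in\cgl Zw$; the kernel of $i^*\cF'\to i^*\cF$ is $(\cF'\cap\cI_Z\cF)/\cI_Z\cF'$, and it is exactly this term that has to be forced into $\cgl Zw$. I would first record that $i^*(\cI_Z^k\cF)\in\cgl Zw$ for every $k\ge0$: the multiplication map exhibits $\cI_Z^k\cF$ as a quotient of $\cI_Z^{\otimes k}\otimes_{\cO_X}\cF$, so $i^*(\cI_Z^k\cF)$ is a quotient of $(i^*\cI_Z)^{\otimes k}\otimes_{\cO_Z}i^*\cF$; since $i^*\cI_Z\in\cgl Z0$ by~\eqref{eqn:pre-adh} and $i^*\cF\in\cgl Zw$, iterated use of axiom~\sref{ax:tensorl} puts $(i^*\cI_Z)^{\otimes k}\otimes i^*\cF$ in $\cgl Zw$, whence so does its quotient $i^*(\cI_Z^k\cF)=\cI_Z^k\cF/\cI_Z^{k+1}\cF$. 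Consequently, for every $m\ge0$ the chain $\cF/\cI_Z^m\cF\supseteq\cI_Z\cF/\cI_Z^m\cF\supseteq\cdots\supseteq0$ is a finite filtration of $\cF/\cI_Z^m\cF$ with all subquotients in $\cgl Zw$, so $\cF/\cI_Z^m\cF\in\cgl Zw$.

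To conclude, I would invoke the Artin--Rees lemma (legitimate as $X$ is noetherian) to obtain an integer $c\ge0$ with $\cF'\cap\cI_Z^{c+1}\cF=\cI_Z(\cF'\cap\cI_Z^c\cF)\subseteq\cI_Z\cF'$. Hence the canonical surjection $\cF'\twoheadrightarrow i^*\cF'$ factors through $\cF'/(\cF'\cap\cI_Z^{c+1}\cF)\cong(\cF'+\cI_Z^{c+1}\cF)/\cI_Z^{c+1}\cF$, a subobject of $\cF/\cI_Z^{c+1}\cF$; as the latter lies in $\cgl Zw$ and $\cgl Zw$ is closed under subobjects and quotients, $i^*\cF'\in\cgl Zw$, finishing the argument. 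It is worth flagging where the hypotheses enter: axiom~\sref{ax:tensorl} together with the standing assumption~\eqref{eqn:pre-adh} is precisely what keeps the associated graded pieces $\cI_Z^k\cF/\cI_Z^{k+1}\cF$ inside $\cgl Zw$, and Artin--Rees (equivalently, the Krull-intersection fact that $\bigcap_k\cI_Z^k\cF$ is supported off $\uZ$) is what allows $\cF'$ to be replaced by a finite truncation. All the sheaves and submodules occurring are $G$-invariant, so the equivariant setting adds no difficulty.
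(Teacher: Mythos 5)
Your overall route is the same as the paper's: reduce to sheaves killed by a power of $\cI_Z$ via the Artin--Rees lemma, and control those using the $\cI_Z$-adic filtration together with axiom~\sref{ax:tensorl} and the standing hypothesis~\eqref{eqn:pre-adh}. The formal part (quotients and extensions) and the computation that $i^*(\cI_Z^k\cF) \simeq \cI_Z^k\cF/\cI_Z^{k+1}\cF$ lies in $\cgl Zw$ are fine. But the last step has a genuine gap, indeed a circularity. The sheaf $\cF/\cI_Z^m\cF$ is not an object of $\cg Z$ (it is only killed by $\cI_Z^m$, not by $\cI_Z$), so the assertion ``$\cF/\cI_Z^m\cF \in \cgl Zw$'' is a category error; the correct conclusion from your filtration is $\cF/\cI_Z^m\cF \in \czloc w$. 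More seriously, your final sentence handles the subobject $\cF'/(\cF'\cap\cI_Z^{c+1}\cF) \subset \cF/\cI_Z^{c+1}\cF$ by invoking ``closure under subobjects'' --- but closure under subobjects of the category in which this sheaf actually lives ($\czloc w$, equivalently $\cgl{Z'}w$ for a thickening $Z'$ of $Z$) is exactly the statement being proved, while closure under subobjects of the genuine $\cgl Zw$ on $Z$ does not apply to a sheaf supported only on a thickening. This is precisely the point the paper routes through Proposition~\ref{prop:res-nonred-pre}, where Serre-ness of $\cgl{Z'}w$ on an arbitrary thickening is established by induction on the nilpotency length.

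The gap is repairable within your framework, but it needs an explicit argument: intersect the $\cI_Z$-adic filtration of $\cF/\cI_Z^{c+1}\cF$ with the subobject $\cH = \cF'/(\cF'\cap\cI_Z^{c+1}\cF)$, so that the successive subquotients of $\cH$ embed into $\cI_Z^k\cF/\cI_Z^{k+1}\cF$; since any subsheaf of a sheaf annihilated by $\cI_Z$ is again annihilated by $\cI_Z$, these subquotients are pushforwards of subsheaves of objects of $\cgl Zw$, hence lie in $\czloc w$ by Serre-ness of $\cgl Zw$ in $\cg Z$; then closure of $\czloc w$ under extensions (your first paragraph) gives $\cH \in \czloc w$, and closure under quotients, applied to the surjection $\cH \twoheadrightarrow \cF'/\cI_Z\cF' \simeq i_*i^*\cF'$ furnished by Artin--Rees, gives $i^*\cF' \in \cgl Zw$. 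With that insertion your argument is essentially the paper's proof, with the appeal to Proposition~\ref{prop:res-nonred-pre} unwound into a direct filtration argument.
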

\begin{proof}
Because $i^*$ is a right exact functor, and $\cgl Zw$ is closed under
quotients and extensions, it is clear that $\czloc w$ is closed under
quotients and extensions as well.  It remains to show that it is closed
under subobjects.  Note first that for any $k \ge 1$, the sheaf $\cF/\cI^k\cF$
is annihilated by some power of $\cI$, and therefore supported on some
subscheme structure $i_{Z'}:Z' \hto X$ on $\uZ$. 
Indeed, if $\cF/\cI^k\cF \simeq i_{Z'*}\cF_1$, then we must have $\cF_1 \in
\cgl {Z'}w$.  We know that every
subsheaf of $\cF_1$ lies in $\cgl{Z'}w$.  It follows that every subsheaf of
$\cF/\cI^k\cF$ lies in $\czloc w$. 

Now, consider a subsheaf $\cF' \subset \cF$.  Let $\cF'' = \cF'/\cI\cF'
\simeq i_*i^*\cF'$.  Clearly, $i^*\cF'' \simeq i^*\cF'$, so it suffices to
show that $\cF'' \in \czloc w$.  By the Artin--Rees lemma for coherent
sheaves on a noetherian scheme, there exists an integer $k \ge 1$ such that
for all $n \ge k$, we have 
\[
\cI^n\cF \cap \cF' = \cI^{n-k}(\cI^k\cF \cap \cF').
\]
Let us take $n = k + 1$.  Obviously
\[
\cI(\cI^k\cF \cap \cF') \subset \cI \cF' \subset \cF',
\]
so $\cF'/\cI\cF'$ is a quotient of the sheaf $\cG = \cF'/(\cI(\cI^k\cF \cap
\cF'))$.  But then $\cG \simeq \cF'/(\cI^{k+1}\cF \cap \cF')$, so $\cG$ can
be identified with a subsheaf of $\cF/\cI^{k+1}\cF$.  By the previous
paragraph, $\cG \in \czloc w$.  Since $\czloc w$ is closed under quotients,
we see that $\cF'' \in \czloc w$ as well. 
\end{proof}

For the remainder of the section, we study almost $s$-structures that obey an additional condition:

\begin{defn}\label{defn:weak-s}
An almost $s$-structure on $X$ is said to be a \emph{weak $s$-structure} if
for every $\cF \in \cgl Xw$ and every $\cG \in \tcgg X{w+1}$, there is an open subscheme $V \Subset
X$ such that $\gExt^1(\cF|_V, \cG|_V) = 0$.
\end{defn}

Obviously, this condition is a special case of axiom~\sref{ax:ext2}, so every $s$-structure is a weak $s$-structure.  However, it turns out that weak $s$-structures are easier to study in the context of the construction of Proposition~\ref{prop:res-nonred-pre}.  In Section~\ref{sect:glue}, additional hypotheses will allow us to obtain $s$-structures, but we will require the following results on weak $s$-structures along the way.

\begin{lem}\label{lem:ext1-nonred}
Assume that $Z$ is endowed with a weak $s$-structure.  Then, for any $\cF \in \cgl Zw$ and any $\cG \in \tcgg Z{w+1}$, there is an
open subscheme $V \Subset Z'$ such that $\gExt^1(t_*\cF|_V, t_*\cG|_V) = 0$. 
\end{lem}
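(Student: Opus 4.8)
The goal is to compare $\gExt^1$-vanishing on $Z$ (which the weak $s$-structure hypothesis gives us) with $\gExt^1$-vanishing on the thickening $Z'$. The natural device is the ideal sheaf $\cI \subset \cO_{Z'}$ with $\cI^N = 0$ for some $N$. Since $t_*$ is exact, we have $\gExt^1(t_*\cF|_V, t_*\cG|_V) = \gExt^1_{\cO_{Z'}}(t_*\cF|_V, t_*\cG|_V)$, and the point is to relate $\Ext^1$-groups over $\cO_{Z'}$ to $\Ext$-groups over $\cO_Z$. My plan is to use the change-of-rings long exact sequence. The adjunction $\Hom_{Z'}(t_*\cF, \cM) \simeq \Hom_Z(\cF, t^\flat\cM)$ for $\cM \in \cg{Z'}$ gives, at the derived level, a spectral sequence (or for low degrees, a long exact sequence) relating $\gExt^r_{Z'}(t_*\cF, t_*\cG)$ to $\gExt^p_Z(\cF, R^qt^\flat(t_*\cG))$. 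So the first task is to understand $R^qt^\flat(t_*\cG)$, i.e.\ the sheaves $\cExt^q_{\cO_{Z'}}(\cO_Z, t_*\cG)$, as objects of $\cg Z$.

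**The key computation.** The sheaves $\cExt^q_{\cO_{Z'}}(\cO_Z, t_*\cG)$ can be analyzed using a locally free resolution of $\cO_Z$ over $\cO_{Z'}$; each term of such a resolution is (locally) a direct sum of copies of $\cO_{Z'}$, and applying $\cHom(-, t_*\cG)$ and taking cohomology produces, in each degree $q$, a subquotient of a finite direct sum of copies of $\cG$. More precisely, $\cExt^q_{\cO_{Z'}}(\cO_Z, t_*\cG)$ is built out of tensor powers of $t^*\cI$ (or its conormal-type pieces) applied to $\cG$. The crucial input is the hypothesis~\eqref{eqn:pre-adh}: $i^*\cI_Z \in \cgl Z0$, hence $t^*\cI \in \cgl Z0$. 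Combined with axiom~\sref{ax:tensorl} and the remark that $\tcgg Z{w+1}$ is stable under tensoring (after suitable restriction, this is where axiom~\sref{ax:tensorr} enters, since $\tcgg Z{w+1-0} \supseteq$ the relevant pieces), one shows that after restricting to a suitable $V \Subset Z'$ (equivalently $V \cap Z \Subset Z$), each cohomology sheaf $R^qt^\flat(t_*\cG)|_{V\cap Z}$ lies in $\tcgg {V\cap Z}{w+1}$. Actually the cleanest route: $R^qt^\flat(t_*\cG)$ is a subquotient of $\cHom(\wedge^q(\text{something}), \cG)$-type sheaves, and using condition~(S\ref{ax:tensorl}$'$) from Remark~\ref{rmk:homl} together with $t^*\cI \in \cgl Z0$, such internal-Hom sheaves stay in $\cgg{}{w+1}$ up to passing to a dense open — so $R^qt^\flat(t_*\cG) \in \tcgg Z{w+1}$.

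**Assembling the vanishing.** Once we know that every $R^qt^\flat(t_*\cG)$, restricted to a suitable dense open, lies in $\tcgg{}{w+1}$, I can run the change-of-rings long exact sequence in low degrees:
\[
\gHom_Z(\cF, R^1t^\flat(t_*\cG)) \to \gExt^1_{Z'}(t_*\cF, t_*\cG) \to \gExt^1_Z(\cF, t^\flat(t_*\cG)).
\]
Note $t^\flat(t_*\cG) = R^0t^\flat(t_*\cG)$ contains $\cG$ as a subsheaf with $\cI$-torsion-type quotient; in fact one checks $t^\flat t_* \cG \simeq \cG$ directly, or at least that it lies in $\tcgg Z{w+1}$ after restriction, so the $\gExt^1_Z$-term is killed on a dense open by the weak $s$-structure hypothesis applied on $Z$ (with $\cF \in \cgl Zw$ and $t^\flat t_*\cG$ — or its relevant pieces — in $\tcgg Z{w+1}$). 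The $\gHom_Z(\cF, R^1t^\flat(t_*\cG))$-term vanishes identically by axiom~\sref{ax:hom}, since $\cF \in \cgl Zw$ and $R^1t^\flat(t_*\cG) \in \tcgg Z{w+1} \subseteq \cgg{}{w+1}$ (on a dense open, using Lemma~\ref{lem:res-open-tcgg}). Intersecting the finitely many dense open subschemes produced and taking the corresponding $V \Subset Z'$ completes the argument. The main obstacle I anticipate is pinning down precisely which subquotients of direct sums of $\cG$ the sheaves $R^qt^\flat(t_*\cG)$ are, and verifying they all land in $\tcgg{}{w+1}$ — this requires careful bookkeeping with the resolution of $\cO_Z$ over $\cO_{Z'}$, the flatness/Gorenstein hypotheses on the ambient setup, and repeated use of~\eqref{eqn:pre-adh} together with axioms~\sref{ax:tensorl},~\sref{ax:tensorr}; everything else is a routine chase through long exact sequences.
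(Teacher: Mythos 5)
Your overall strategy (change of rings along $t_*\dashv t^\flat$, then kill the two outer terms of the low-degree exact sequence) can be made to work, but as written the crucial step is not established, and the justification you offer for it is not valid in this setting. The claim you need is that $R^1t^\flat(t_*\cG)\simeq \cExt^1_{\cO_{Z'}}(t_*\cO_Z,t_*\cG)|_Z$ lies in $\tcgg Z{w+1}$ (after restriction to a dense open). Your argument that the terms of a locally free resolution of $\cO_Z$ over $\cO_{Z'}$ are ``locally direct sums of copies of $\cO_{Z'}$,'' so that the $\cExt$-sheaves are subquotients of direct sums of copies of $\cG$, proves nothing here: the $s$-structure is a global, $G$-equivariant notion (restriction to non-invariant opens is not even available), and an equivariant locally free sheaf can be pure of any step, so $\cHom(t^*\cP_q,\cG)$ need not lie in any $\cgg Z{w+1}$. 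Likewise the Koszul/$\wedge^q$ picture requires $\cI$ to be cut out by a regular sequence, which is not assumed. You yourself flag this bookkeeping as the anticipated obstacle, i.e.\ it is precisely the missing content. The fix is short, and only needs $q\le 1$ (which is all the five-term sequence uses, whose outer terms you have in the wrong order, though that does not affect the vanishing argument): from $0\to\cI\to\cO_{Z'}\to t_*\cO_Z\to 0$ and $\cExt^1(\cO_{Z'},-)=0$, the sheaf $R^1t^\flat(t_*\cG)$ is a quotient of $\cHom_{\cO_{Z'}}(\cI,t_*\cG)|_Z\simeq\cHom(t^*\cI,\cG)$; since $t^*\cI\in\cgl Z0$ by~\eqref{eqn:pre-adh} and $\cG\in\tcgg Z{w+1}$, condition~(S\ref{ax:tensorl}$'$) of Remark~\ref{rmk:homl} puts $\cHom(t^*\cI,\cG)$ in $\tcgg Z{w+1}$, and axiom~\sref{ax:serrer} (available because the weak $s$-structure on $Z$ is in particular an almost $s$-structure) lets you pass to the quotient. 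Then $\gHom(\cF,R^1t^\flat(t_*\cG))$ dies on a dense open by~\sref{ax:hom}, $t^\flat t_*\cG\simeq\cG$ kills the other outer term via the weak-$s$-structure hypothesis, and intersecting finitely many dense opens gives the desired $V\Subset Z'$. Do not try to prove the stronger assertion that \emph{all} $R^qt^\flat(t_*\cG)$ lie in $\tcgg Z{w+1}$; it is unnecessary and not accessible by these means.

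For comparison, the paper argues differently and more directly: over a suitable $V$ it shows that any extension $0\to t_*\cG|_V\to\cH\to t_*\cF|_V\to 0$ automatically satisfies $\cI\cH=0$, because $\cI\cH$ is simultaneously a subsheaf of $t_*\cG|_V$ (hence in $\cgg V{w+1}$) and a quotient of $t_*\cF|_V\otimes\cI$ (hence in $\cgl Vw$, using $\cI\subset\cO_V\in\cgl V0$); so $\cH$ is supported on $Z$ and the splitting follows from the $\gExt^1$-vanishing on $Z$. That route avoids derived functors of $t^\flat$ altogether, while yours, once the $R^1t^\flat$ computation is supplied, trades the explicit extension analysis for standard homological machinery.
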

\begin{proof}
From Definition~\ref{defn:weak-s}, there is a dense open subscheme $V_1
\Subset Z$
such that $\gExt^1(\cF|_{V_1}, \cG|_{V_1}) = 0$.  By replacing $V_1$ by a smaller open subscheme if necessary, we may assume that $\cG|_{V_1} \in \cgg {V_1}{w+1}$.  Let $V$ be the corresponding open subscheme of $Z'$.  

We will show that any short exact sequence $0 \to t_*\cG|_V \to \cH \to t_*\cF|_V \to 0$ in $\cg V$ splits.  The same argument will also be valid when $V$ is replaced by any open subscheme of $V$, so it will follow that $\gExt^1(t_*\cF|_V, t_*\cG|_V) = 0$.

For brevity, we will write $\cI$ instead of $\cI|_V$ for the ideal sheaf in $\cO_V$ corresponding to the closed subscheme $V_1 = V \cap Z$.  Let us identify $t_*\cG|_V$ with its image in
$\cH$.  Because $t_*\cF|_V$ is supported on $V_1$, we see that $\cI\cH$ must be
contained in the kernel of the morphism $\cH \to t_*\cF|_V$; {\it i.e.},
$\cI\cH \subset t_*\cG|_V$.  In particular, we see that $\cI\cH \in
\cgg V{w+1}$.  Consider the diagram 
\[
\xymatrix@=10pt{
t_*\cG|_V \otimes \cI \ar[r] &
\cH \otimes \cI \ar[d]\ar[r] & t_*\cF|_V \otimes \cI \ar@{-->}[dl]\ar[r] & 0
\\
& \cI\cH}
\]
The top row is right exact.  Since $t_*\cG|_V$ is supported on $V_1$, it is annihilated by $\cI$, so the natural map
$t_*\cG|_V \otimes \cI \to \cI\cH$ is $0$.  In other words, the image of
$t_*\cG|_V \otimes \cI$ in $\cH \otimes \cI$ is contained in the kernel of the
map $\cH \otimes \cI \to \cI\cH$, so the latter factors through $t_*\cF|_V
\otimes \cI$.  Now, it is obvious from the definition of $\cgl Vw$ that $\cO_V \in \cgl V0$, so its subsheaf $\cI$ is in $\cgl V0$ as well.  (We cannot yet invoke Proposition~\ref{prop:Ox-zero} on $V$, however.)  Therefore,
$t_*\cF|_V \otimes \cI \in \cgl Vw$, so its quotient
$\cI\cH$ must be in $\cgl Vw$ as well.  Since we also have $\cI\cH \in
\cgg V{w+1}$, we conclude that $\cI\cH = 0$, and hence that $\cH$ is
actually supported on $V_1$: we have $\cH \simeq t_*\cH_1$ for some $\cH_1 \in \cg {V_1}$.  The sequence $0 \to
\cG|_{V_1} \to \cH_1|_{V_1} \to \cF|_{V_1} \to 0$ splits because $\gExt^1(\cF|_{V_1}, \cG|_{V_1}) = 0$, and hence so
does $0 \to t_*\cG|_V \to \cH \to t_*\cF|_V \to 0$. 
\end{proof}

\begin{prop}\label{prop:res-nonred}
Assume that $Z$ is endowed with a weak $s$-structure.  Then the induced pre-$s$-structure on $Z'$ is also a weak $s$-structure. 
\end{prop}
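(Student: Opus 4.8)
The plan is to verify the single defining condition of a weak $s$-structure for $Z'$: given $\cF \in \cgl{Z'}w$ and $\cG \in \tcgg{Z'}{w+1}$, we must produce a dense open subscheme $V \Subset Z'$ with $\gExt^1(\cF|_V, \cG|_V) = 0$. Since $Z$ already carries a weak $s$-structure and $Z'$ an induced pre-$s$-structure by Proposition~\ref{prop:res-nonred-pre}, it only remains to prove this $\gExt^1$-vanishing. The natural tool is the $\ell$-filtration technique developed in the proof of Proposition~\ref{prop:res-nonred-pre}: since $\cgl{Z'}w$ and $\tcgg{Z'}{w+1}$ are Serre subcategories of $\cg{Z'}$ (the former by that proposition, the latter because $\tcgg{Z'}{w+1}$ is Serre for any almost $s$-structure, and we have just noted the induced structure is still almost), each of $\cF$ and $\cG$ is built by finitely many extensions out of sheaves of the form $t_*\cF_1$, $t_*\cG_1$ with $\cF_1 \in \cgl Zw$, $\cG_1 \in \tcgg Z{w+1}$. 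Because $\gExt^1(\cdot,\cdot)$ over a fixed open $V$ is an additive (bi)functor that sends short exact sequences to long exact sequences, it suffices to handle the case $\cF = t_*\cF_1$, $\cG = t_*\cG_1$, possibly shrinking $V$ finitely many times as we climb each filtration.

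But that base case is exactly the content of Lemma~\ref{lem:ext1-nonred}: for $\cF_1 \in \cgl Zw$ and $\cG_1 \in \tcgg Z{w+1}$, there is a dense open $V \Subset Z'$ with $\gExt^1(t_*\cF_1|_V, t_*\cG_1|_V) = 0$. So the argument reduces to (i) invoking Lemma~\ref{lem:ext1-nonred} at the bottom of the filtrations, and (ii) a finite dévissage using the long exact sequences in $\gExt$ — noting that a dense open subscheme of a dense open subscheme is dense, so only finitely many such intersections are taken and $V$ remains dense in $Z'$.

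One point requires care: the $\ell$-filtration dévissage as described in Proposition~\ref{prop:res-nonred-pre} is valid for subobjects, quotients, and extensions inside $\cgl{Z'}w$ or $\tcgg{Z'}{w+1}$, and the remark there explicitly warns that it cannot be run inside $\cgg{Z'}w$. Here we are filtering objects of $\cgl{Z'}w$ and $\tcgg{Z'}{w+1}$ only, so this is fine; but we should double-check that the induced pre-$s$-structure on $Z'$ is still an \emph{almost} $s$-structure, i.e. that axioms~\sref{ax:serrer} and~\sref{ax:tensorr} hold for $Z'$. These, however, concern closed subschemes of $Z'$, all of which are closed subschemes of $X$ (indeed of $Z$'s ambient scheme), so they are inherited automatically exactly as in the proof that the induced structure on a closed subscheme is an $s$-structure. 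Thus the hypotheses of Lemma~\ref{lem:ext1-nonred} are legitimately available.

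The main obstacle is bookkeeping rather than a genuine difficulty: one must track that each step of the two nested dévissages (one over the $\ell$-filtration of $\cF$, one over that of $\cG$, and within each the filtration by $\cI$-powers) only shrinks $V$ to a still-dense open subscheme, and that the $\gExt^1$ long exact sequences line up correctly — the first term and the quotient term each vanishing after a controlled shrinkage forces the middle term to vanish over the intersection. Once the reduction to Lemma~\ref{lem:ext1-nonred} is set up cleanly, there is nothing left to prove.
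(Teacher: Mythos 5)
There is a genuine gap, and it sits exactly at the point you wave through. You claim that axioms~\sref{ax:serrer} and~\sref{ax:tensorr} for $Z'$ are ``inherited automatically'' because closed subschemes of $Z'$ are closed subschemes of $X$. But in Section~\ref{sect:closed} the ambient scheme $X$ carries no $s$-structure (or even pre-$s$-structure) at all; the only structure in sight is the weak $s$-structure on $Z$, and $Z'$ is strictly \emph{larger} than $Z$, so nothing restricts down to it. The argument ``exactly as in the proof that the induced structure on a closed subscheme is an $s$-structure'' needs an $s$-structure upstairs, which does not exist here. In particular, the quotient-closure of $\tcgg{Z'}w$ (axiom~\sref{ax:serrer} for $Z'$ itself) is precisely the nontrivial content of the proposition, not a given.

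This is not a bookkeeping issue, because your dévissage of $\cG$ presupposes it. To write $\cG \in \tcgg{Z'}{w+1}$ as an iterated extension of sheaves $t_*\cG_1$ with $\cG_1 \in \tcgg Z{w+1}$, you must know that $\cG/\cI\cG$, and more generally each $\cI^k\cG/\cI^{k+1}\cG$, lies in $\tcgg{Z'}{w+1}$; these are \emph{quotients} (of subobjects), and only closure under subobjects and extensions is automatic. The proof of Proposition~\ref{prop:res-nonred-pre} explicitly warns that the $\ell$-induction cannot be run on the $\cgg{Z'}w$-side for exactly this reason, and the same obstruction applies to $\tcgg{Z'}w$. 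So your reduction is circular: it uses \sref{ax:serrer} for $Z'$ to prove the weak condition, while \sref{ax:serrer} is part of what must be proved. The paper escapes this by a \emph{simultaneous} induction on $\ell(\cG)$: assuming the $\gExt^1$-vanishing for the subobject $\cI\cG$ (which has smaller $\ell$), one shows by contradiction--using the exact sequence relating $\gHom(\cH,\cG)$, $\gHom(\cH,\cG/\cI\cG)$ and $\gExt^1(\cH,\cI\cG)$ for $\cH = \sigma_{\le w-1}(\cG/\cI\cG)$--that $\cG/\cI\cG \in \tcgg{Z'}w$; this yields quotient-closure and simultaneously unlocks the dévissage step for the $\gExt^1$-condition, after which \sref{ax:tensorr} follows by the $\ell$-induction. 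Your treatment of the $\cF$-side (dévissage through $\cgl{Z'}w$, which genuinely is Serre, with Lemma~\ref{lem:ext1-nonred} at the base and finitely many shrinkings of $V$) is fine and matches the paper; what is missing is the argument that makes the $\cG$-side dévissage legitimate.
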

\begin{proof}
Suppose $\cG \in \tcgg{Z'}w$. We will simultaneously prove
axiom~\sref{ax:serrer} and the condition in Definition~\ref{defn:weak-s} by induction
on the invariant $\ell(\cG)$ that was introduced in the proof of
Proposition~\ref{prop:res-nonred-pre}.  

If $\ell(\cG) = 1$, then we have $\cG \simeq t_*\cG_1$ for some $\cG_1 \in
\tcgg Zw$, so every quotient of $\cG$ is in $\tcgg{Z'}w$ because every
quotient of $\cG_1$ is in $\tcgg Zw$.  Next, observe that for any $\cF \in
\cgl{Z'}{w-1}$, we have a sequence
\[
\gExt^1(\cF/\cI\cF, t_*\cG_1) \to \gExt^1(\cF, t_*\cG_1) \to
\gExt^1(\cI\cF, t_*\cG_1)
\]
that is exact at the middle term.  Recall that $\cgl{Z'}{w-1}$ is a Serre
subcategory of $\cg{Z'}$.  Thus, an argument by induction on $\ell(\cF)$,
combined with Lemma~\ref{lem:ext1-nonred}, shows that there exists an open
subscheme $V \subset Z'$ with $\gExt^1(\cF|_V, t_*\cG_1|_V) = 0$.

Now, suppose $\ell(\cG) > 1$.  We claim that $\cG/\cI\cG \in \tcgg{Z'}w$.  Suppose instead that $\cG/\cI\cG \notin \tcgg{Z'}w$, and let $\cH = \sigma_{\le w-1}(\cG/\cI\cG)$.  The natural map $\cH \to \cG/\cI\cG$ remains nonzero upon restriction to any dense open subset of $Z'$.  Since $\ell(\cI\cG) < \ell(\cG)$, we know inductively that there is an open subscheme $V_1 \subset Z'$ such that
$\gExt^1(\cH|_{V_1}, \cI\cG|_{V_1}) = 0$.  By replacing $V_1$ with a smaller open subscheme if necessary, we may also assume that $\gHom(\cH|_{V_1}, \cG|_{V_1}) = 0$.  Consider the exact sequence
\[
\gHom(\cH|_{V_1}, \cG|_{V_1}) \to
\gHom(\cH|_{V_1}, (\cG/\cI\cG)|_{V_1}) \to \gExt^1(\cH|_{V_1},
\cI\cG|_{V_1}).
\]
We now have a contradiction: the first and last terms vanish, but the middle term does not.  Thus, it must be that $\cG/\cI\cG \in \tcgg{Z'}w$.
Now, any quotient of $\cG$ is an
extension of a quotient of $\cG/\cI\cG$ be a quotient of $\cI\cG$.  The
latter two are now known to be in $\tcgg{Z'}w$, so we see that any
quotient of $\cG$ is in $\tcgg{Z'}w$.

Next, let $\cF \in \cgl{Z'}{w-1}$.  By invoking the inductive assumption twice, we may find a dense open subscheme $V \Subset Z'$ such that the first and last terms of the following exact sequence both vanish:
\[
\gExt^1(\cF|_V, \cI\cG|_V) \to \gExt^1(\cF|_V, \cG|_V) \to \gExt^1(\cF|_V,
(\cG/\cI\cG)|_V).
\]
We then see that $\gExt^1(\cF|_V, \cG|_V) = 0$ as well, as desired.

We now turn to axiom~\sref{ax:tensorr}.  Because
$\tcgg{Z'}w$ is now known to be a Serre subcategory, this axiom can
be deduced by induction on $\ell(\cF)$ from the corresponding statement on $Z$,
using the same argument that was given for axiom~\sref{ax:tensorl} in the
proof of Proposition~\ref{prop:res-nonred-pre}. 
\end{proof}

%%%%%%%%%%%%%%%%%%%%%%%%%%%%%%%%%%%%%%%%%%%%%%%%%%%%%%%%%%%%%%%%%%%%%%%%%%%
\section{The Gluing Theorem for $s$-structures}
\label{sect:glue}
%%%%%%%%%%%%%%%%%%%%%%%%%%%%%%%%%%%%%%%%%%%%%%%%%%%%%%%%%%%%%%%%%%%%%%%%%%%

Given an $s$-structure on a closed subscheme of $X$, and another $s$-structure on its open complement, we show in this section how to produce an $s$-structure on $X$.  The latter is said to be obtained by \emph{gluing} the two given $s$-structures.  Gluing cannot be carried out for arbitrary $s$-structures, however; certain compatibility conditions must hold.

In particular, consider the following condition on a closed subscheme:

\begin{defn}\label{defn:adh}
Let $Z \subset X$ be a closed subscheme with a pre-$s$-structure.
$Z$ is said to be \emph{adhesive} if it satisfies
the following two conditions: 
\begin{enumerate}
\renewcommand{\labelenumi}{(A\arabic{enumi})}
\item The ideal sheaf $\cI \subset \cO_X$ corresponding to $Z$ is in $\czloc 0$.\label{ax:ideal}
\item For any closed subscheme $Y \subset Z$, any $\cF \in \cloc XYw$, and any $r \ge 0$, there is a dense open subscheme $V \Subset_Y X$ such that $\gExt^r(\cF|_V, \cG|_V) = 0$ for all $\cG \in \csupp XY_{\ge w+1}$.\label{ax:adh-ext2} 
\end{enumerate}
\end{defn}

Note that axiom~\aref{ax:ideal} is the same as condition~\eqref{eqn:pre-adh}, so the results of Section~\ref{sect:closed} can be applied to adhesive subschemes.
Here, $X$ is not assumed to have an $s$-structure, but if $X$ does happen to have one, it is easy to see that every closed subscheme (with the induced $s$-structure) is automatically adhesive.  Indeed, it follows from Proposition~\ref{prop:Ox-zero} that $\cI \in \cgl X0$, and hence, from the description of the $s$-structure on $Z$ in Proposition~\ref{prop:res-closed}, that $\cI \in \czloc 0$.  Axiom~\aref{ax:adh-ext2} is merely a special case of axiom~\sref{ax:ext2} for the $s$-structure on $X$.  Thus, adhesiveness of the closed subscheme must be a necessary condition for a gluing theorem to hold.

We will begin with a preliminary version, in which we only produce a pre-$s$-structure.

\begin{prop}\label{prop:glue-pre}
Let $i: Z \hto X$ be a closed subscheme, and let $j: U \to X$ be
the complementary open subscheme.  Suppose that $Z$ is endowed with an $s$-structure, and that $U$ is endowed with a pre-$s$-structure.  Define two full subcategories of $\cg X$ by
\begin{equation}\label{eqn:glue-s-struc}
\begin{aligned}
\cgl Xw &= \{ \cF \in \cg X \mid 
\text{$j^*\cF \in \cgl Uw$ and $\cF \in \czloc w$} \}, \\
\cgg Xw &= \{ \cG \in \cg X \mid 
\text{$j^*\cG \in \cgg Uw$ and $\Gamma_Z\cG \in \czsupp_{\ge w}$}
\}.
\end{aligned}
\end{equation}
Assume that $Z$ is adhesive, and that the following condition holds:
\begin{equation}\label{it:lower-extend}
\text{\rm For every $\cF' \in \cgl Uw$, there exists some $\cF \in \cgl
Xw$ with $j^*\cF \simeq \cF'$.}
\end{equation}
Then, the above categories define the
unique pre-$s$-structure on $X$ whose restrictions to $U$ and $Z$
coincide with the given ones.
\end{prop}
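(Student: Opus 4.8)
The plan is to verify, one at a time, the axioms~\sref{ax:serre}--\sref{ax:tensorl} for the two families in~\eqref{eqn:glue-s-struc}, then to deduce from Proposition~\ref{prop:noeth-orth} that $\cgg Xw = \cgl X{w-1}^\perp$, and finally to check compatibility with restriction and uniqueness. The structural facts come first. The category $\cgl Xw$ is the intersection of $\czloc w$, which is a Serre subcategory by Proposition~\ref{prop:cloc-serre}, with the full preimage $(j^*)^{-1}(\cgl Uw)$, which is Serre because $j^*$ is exact and $\cgl Uw$ is Serre; hence $\cgl Xw$ is a Serre subcategory of the noetherian category $\cg X$, and by Proposition~\ref{prop:rt-adj} the inclusion has a right adjoint $\sigma_{\le w}$ with $\sigma_{\le w}\cF$ the largest subsheaf of $\cF$ lying in $\cgl Xw$. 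Axiom~\sref{ax:inc} is immediate; axiom~\sref{ax:tensorl} follows by applying $j^*$ and $i^*$ to a tensor product and using the corresponding axioms on $U$ and on $Z$; axiom~\sref{ax:bdd} follows because boundedness of $j^*\cF$ on $U$ and of $i^*\cF$ (so $\cF\in\czloc v$ for $v\gg 0$) and of $\Gamma_Z\cF$ on a suitable thickening of $Z$ jointly bound $\cF$ in both directions. Axiom~\sref{ax:hom} is proved by the standard reduction: given $\cF\in\cgl Xw$, $\cG\in\cgg X{w+1}$ and a morphism $\cF\to\cG$, replace $\cF$ by its image (still in the Serre category $\cgl Xw$), so $\cF\hookrightarrow\cG$; then $j^*\cF\hookrightarrow j^*\cG$ forces $j^*\cF=0$ by axiom~\sref{ax:hom} on $U$, so $\cF$ is supported on $\uZ$ and embeds in $\Gamma_Z\cG\in\czsupp_{\ge w+1}$; passing to a common subscheme structure $Z'\supseteq Z$ and invoking axiom~\sref{ax:hom} on $Z'$ forces $\cF=0$. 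This chain of reductions applies verbatim after restriction to any open $V\subseteq X$ — it only invokes axiom~\sref{ax:hom} for the genuine pre-$s$-structures on opens of $U$ and of thickenings of $Z$ — so in fact $\gHom(\cF,\cG)=0$, and there is no circularity.

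The substance is axiom~\sref{ax:ses}. I take the short exact sequence
\[
0 \to \sigma_{\le w}\cF \to \cF \to \cF/\sigma_{\le w}\cF \to 0
\]
and must show $\cF'':=\cF/\sigma_{\le w}\cF$ lies in $\cgg X{w+1}$, i.e.\ that $\Gamma_Z\cF''\in\czsupp_{\ge w+1}$ and $j^*\cF''\in\cgg U{w+1}$. The first is formal: a nonzero subsheaf of $\Gamma_Z\cF''$ lying in $\cgl Xw$ would, pulled back to $\cF$, be an extension of two objects of $\cgl Xw$, hence in $\cgl Xw$ and strictly larger than $\sigma_{\le w}\cF$ — impossible; writing $\Gamma_Z\cF''=i'_*\cG_1$ on a common thickening and using that $\cgg{Z'}{w+1}=\cgl{Z'}w^\perp$, this says exactly $\cG_1\in\cgg{Z'}{w+1}$, i.e.\ $\Gamma_Z\cF''\in\czsupp_{\ge w+1}$ (testing on any thickening, as after Proposition~\ref{prop:res-nonred-pre}). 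For the second, I first reduce the problem. Using condition~\eqref{it:lower-extend}, lift $\sigma_{\le w}(j^*\cF)$ to a sheaf $\cC\in\cgl Xw$ with $j^*\cC\simeq\sigma_{\le w}(j^*\cF)$, and form the fibre product $\cP=\cF\times_{j_*j^*\cF}\cC$; an elementary diagram chase shows that the image $\cB\subseteq\cF$ of $\cP\to\cF$ satisfies $j^*\cB=\sigma_{\le w}(j^*\cF)$, $\sigma_{\le w}\cB=\sigma_{\le w}\cF$, and — crucially — $\cB/\Gamma_Z\cB\in\cgl Xw$ (it is a subquotient of $\cC$). Since $j^*\cF''=j^*\cF/j^*\sigma_{\le w}\cF=j^*\cF/j^*\sigma_{\le w}\cB$, it therefore suffices to prove: \emph{if $\cB\in\cg X$ satisfies $\cB/\Gamma_Z\cB\in\cgl Xw$, then $\cB/\sigma_{\le w}\cB$ is supported on $\uZ$}, equivalently $j^*\sigma_{\le w}\cB=j^*\cB$.

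This last statement is where I expect the real work. The idea is that one wants the extension $0\to\Gamma_Z\cB/\cN\to\cB/\cN\to\cB/\Gamma_Z\cB\to 0$ to split, where $\cN\subseteq\Gamma_Z\cB$ is the ``$\le w$ part'' of $\Gamma_Z\cB$ (so $\Gamma_Z\cB/\cN\in\czsupp_{\ge w+1}$ by Proposition~\ref{prop:res-nonred-pre} and $\cN\in\cgl Xw$): a splitting would produce a subsheaf of $\cB$ lying in $\cgl Xw$ whose complement is supported on $\uZ$, which is what we want. Adhesiveness axiom~\aref{ax:adh-ext2} with $r=1$ makes this extension vanish after restriction to some dense open $V\Subset_Z X$, and such a $V$ automatically contains $U$. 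The resulting subsheaf of $\cB|_V$ lying in $\cgl Vw$ extends, by Lemma~\ref{lem:subsheaf-extend}, to a subsheaf $\cB'\subseteq\cB$ with $\cB/\cB'$ supported on $\uZ$; $\cB'$ need not lie in $\cgl Xw$ globally, but its failure to do so is now confined to the proper closed subset $\uZ\ssm\overline{V\cap Z}$ of $\uZ$, so a noetherian induction on closed subschemes of $Z$ — each of which again carries an $s$-structure, being closed in $Z$, and in each step the ``bad locus'' shrinks — drives $\cB'$ down into $\cgl Xw$ without losing its restriction to $U$, and hence shows $j^*\sigma_{\le w}\cB=j^*\cB$. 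This induction, and the place where the full strength of the hypothesis that $Z$ carries an $s$-structure (rather than merely a weak one) is used, is the main obstacle; everything else is bookkeeping.

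Once axioms~\sref{ax:serre} (for $\cgl Xw$), \sref{ax:inc}, \sref{ax:hom}, \sref{ax:ses}, \sref{ax:bdd}, \sref{ax:tensorl} are in hand, apply Proposition~\ref{prop:noeth-orth} with $\cA'=\cgl X{w-1}$ and $\cA''=\cgg Xw$: the latter lies in $\cgl X{w-1}^\perp$ by axiom~\sref{ax:hom}, and axiom~\sref{ax:ses} at level $w-1$ supplies the required short exact sequences, so $\cgg Xw=\cgl X{w-1}^\perp$. Consequently $\cgg Xw$ is automatically closed under subobjects and extensions, completing axiom~\sref{ax:serre}, and the inclusion $\cgg Xw\hookrightarrow\cg X$ acquires a left adjoint $\sigma_{\ge w}$; thus~\eqref{eqn:glue-s-struc} defines a pre-$s$-structure. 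Its restriction to $U$ is the given one: by construction $j^*\cgl Xw\subseteq\cgl Uw$, the reverse inclusion is exactly condition~\eqref{it:lower-extend}, and Proposition~\ref{prop:res-open} then identifies the whole restriction. Its restriction to $Z$ is the given $s$-structure because $i^*i_*\simeq\id$ makes $\{\cF\mid i_*\cF\in\cgl Xw\}$ equal to the given $\cgl Zw$, and one invokes Proposition~\ref{prop:res-closed}. Finally, uniqueness: if $\cC$ is any pre-$s$-structure on $X$ with the prescribed restrictions, then $\cC_{\le w}\subseteq\cgl Xw$ because $j^*$ and $i^*$ are right $s$-exact for it, while $\cgl Xw\subseteq\cC_{\le w}$ follows by pushing $\cF\in\cgl Xw$ through $\cC$'s short exact sequence at level $w$ and killing the cokernel via axiom~\sref{ax:hom} on $U$ and on a thickening of $Z$; the $\ge w$ families then agree by orthogonality.
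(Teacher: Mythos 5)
Most of your reductions are fine, and in places (the verification that $\cgl Xw$ is Serre, the treatment of axiom~\sref{ax:hom} via images and a common thickening, the identification of the restrictions, and the uniqueness argument) you run parallel to the paper. The reduction of axiom~\sref{ax:ses} to the statement $j^*\sigma_{\le w}\cF=\sigma_{\le w}(j^*\cF)$, via the lift $\cC$ supplied by~\eqref{it:lower-extend} and the fibre product, is also sound (you only ever need $\sigma_{\le w}\cB\subseteq\sigma_{\le w}\cF$, which is automatic; your asserted equality $\sigma_{\le w}\cB=\sigma_{\le w}\cF$ is dubious but unnecessary). The problem is that the italicized statement you reduce to is essentially the whole content of the hard direction of the proposition, and your proof of it stops exactly there: the concluding ``noetherian induction on closed subschemes of $Z$'' is not an argument, and as described its second step fails.

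Concretely: after one application of axiom~\aref{ax:adh-ext2} (with $Y=Z$, $r=1$) you split off $\Gamma_Z\cB/\cN$ over some $V\Subset_Z X$ and then use Lemma~\ref{lem:subsheaf-extend} to extend the resulting subsheaf to $\cB'\subseteq\cB$. This $\cB'$ is controlled only over $V$; along the bad locus $Z_1=\uZ\ssm(V\cap Z)$ it is an arbitrary coherent extension. To repeat the splitting trick relative to $Z_1$ you would need an $\gExt^1$-vanishing statement of type~\aref{ax:adh-ext2} with $Y_1$ a subscheme structure on $Z_1$, and its hypothesis — that the relevant quotient of $\cB'$ lies in $\cloc X{Y_1}w$ — is precisely the control along $Z_1$ that was lost when you replaced $\cB$ by $\cB'$. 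Nor does ``each closed subscheme of $Z$ carries an $s$-structure'' supply an induction scheme: the natural next instance is gluing over the larger open $V=X\ssm Z_1$ from $U$ and $V\cap Z$, and $V\cap Z$ is a dense open of $Z$, not a proper closed subscheme, so nothing has gotten smaller in the noetherian order; you never formulate an inductive statement that both holds at the first step and is re-applicable with the weaker information available afterwards. This is exactly where the paper does its real work, and by a different mechanism: for $\cG\in\cgl Xw^\perp$ it keeps the sheaves fixed and proves the statement $\Hom(\cF|_{U'},\cG|_{U'})=0$ for all $\cF\in\cgl Xw$ by noetherian induction over open subschemes $U'\supseteq U$, using~\aref{ax:adh-ext2} together with Lemma~\ref{lem:open0} to descend from the auxiliary open $U''$ to $U'$ without ever modifying $\cF$ or $\cG$, invoking~\eqref{it:lower-extend} only at the end to convert this into $j^*\cG\in\cgg U{w+1}$, and then obtaining~\sref{ax:ses} from Proposition~\ref{prop:noeth-orth}. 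Until your iteration is replaced by an argument of comparable substance (or by the paper's), axiom~\sref{ax:ses} — and hence the proposition — is not established by your proposal.
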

\begin{proof}
We proceed by noetherian induction: we assume that for any closed subscheme
of $X$, the theorem is already known.  The base cases are those in which
either $Z$ or $U$ is empty.  The theorem is true in both these cases: the
former is trivial, and the latter reduces to
Proposition~\ref{prop:res-nonred-pre}.  Henceforth, assume that $U$ and $Z$ are both nonempty.

It is easy to see that $\cgl Xw$ is a Serre subcategory of $\cg X$, using Proposition~\ref{prop:cloc-serre}, the exactness of $j^*$, and the fact that $\cgl Uw$ is a Serre subcategory of $\cg U$.  Similarly, $\cgg Xw$ is closed under subobjects and extensions, because $\cgg Uw$ and $\czsupp_{\ge w}$ are, and $j^*$ and $\Gamma_Z$ are left exact.

Next, we consider axiom~\sref{ax:hom}. Suppose $\cF \in \cgl Xw$ and $\cG
\in \cgg X{w+1}$, and let $V \subset X$ be an open subscheme.  There is
some closed subscheme structure $i_{Z'}: Z' \hto X$ on the underlying space
of $Z$ and some sheaf $\cG_1 \in \cgg {Z'}{w+1}$ such that $\Gamma_Z\cG
\simeq i_{Z'*}\cG_1$.  Let $t: Z \hto Z'$ be the inclusion map.  Since
$i^*\cF \simeq t^*i_{Z'}^*\cF$ is assumed to be in $\cgl Zw$, we see that
$i_{Z'}^*\cF \in \cgl{Z'}w$, and then we have 
\[
\Hom(\cF|_V, \Gamma_Z\cG|_V) \simeq \Hom(\cF|_V, i_{Z'*}\cG_1|_V) \simeq
\Hom(i_{Z'}^*\cF|_{V \cap Z'}, \cG_1|_{V \cap Z'}) = 0. 
\]
In addition, we clearly have $\gHom(j^*\cF, j^*\cG) = 0$, so from the exact
sequence
\[
0 \to \Hom(\cF|_V, \Gamma_Z\cG|_V) \to \Hom(\cF|_V,\cG|_V) \to
\Hom(j^*\cF|_{V \cap U},j^*\cG|_{V \cap U}), 
\]
we see that $\Hom(\cF|_V,\cG|_V) = 0$, and hence $\gHom(\cF,\cG) = 0$.

Next, suppose $\cG \in \cgl Xw^\perp$.  We will show that if $U' \subset X$ is any open subscheme containing $U$,
then we have $\Hom(\cF|_{U'}, \cG|_{U'}) = 0$ for all $\cF \in \cgl Xw$.  (This fact will be used to establish axiom~\sref{ax:ses}.)
We
proceed by noetherian induction on the complement of $U'$.  In the case $U'
= X$, there is nothing to prove; $\Hom(\cF,\cG) = 0$ by assumption.  Now,
suppose $U' \ne X$.  Let $Y$ be any closed subscheme structure on
the complement $X \ssm U'$ that also makes it into a closed subscheme of $Z$.  Then, let
$i_{Y'}: Y' \hto X$ be a closed subscheme structure on the underlying space
of $Y$ such that $\Gamma_Y\cG$ is supported on $Y'$: we have $\Gamma_Y\cG
\simeq i_{Y'*}\cG_1$ for some $\cG_1 \in \cg {Y'}$.  Now, from the
injective map 
\[
\Hom(\cF, \Gamma_Y\cG) \to \Hom(\cF, \cG),
\]
we see that $\Hom(\cF, i_{Y'*}\cG_1) = 0$ for all $\cF \in
\cgl Xw$.  In particular, considering just those $\cF$ of the form $\cF
\simeq i_{Y'*}\cF_1$ with $\cF_1 \in \cgl {Y'}w$, we see that $\cG_1 \in
\cgg {Y'}{w+1}$, and hence $\Gamma_Y\cG \in \csupp XY_{\ge w+1}$.

Therefore, because $Z$ is adhesive, there is an open subscheme
$U'' \Subset_Y X$ such that
$\Ext^1(\cF|_{U''}, \Gamma_Y\cG|_{U''}) = 0$.  Form the long exact
sequence 
\begin{multline*}
0 \to \Hom(\cF|_{U''}, \Gamma_Y\cG|_{U''}) \to \Hom(\cF|_{U''},\cG|_{U''})
\to \\
\Hom(\cF|_{U''},\cG/\Gamma_Y\cG|_{U''}) \to \Ext^1(\cF|_{U''},
\Gamma_Z\cG|_{U''}) \to \cdots.
\end{multline*}
Recall our inductive assumption: $U''$ is strictly larger than $U'$ (since
$U'' \cap Y$ is dense in $Y$), so we assume that
$\Hom(\cF|_{U''},\cG|_{U''})$ is known to be $0$ for all $\cF \in \cgl Xw$.
 It follows that $\Hom(\cF|_{U''},\cG/\Gamma_Y\cG|_{U''}) = 0$.  The same
holds if we replace $\cF$ by any subsheaf (since $\cgl Xw$ is closed under
subobjects), so by Lemma~\ref{lem:open0}, we have that $\Hom(\cF|_{U'},
\cG|_{U'}) = 0$, as desired. 

Thus, $\Hom(\cF|_{U'}, \cG|_{U'}) = 0$ for any open subscheme
containing $U$ and any $\cF \in \cgl Xw$.  In particular, this holds for
$U' = U$.  By
assumption, every sheaf in $\cgl Uw$ occurs as the restriction to $U$ of
some $\cF \in \cgl Xw$, so the fact that $\Hom(j^*\cF,j^*\cG) = 0$ for all
$\cF \in \cgl Xw$ implies that $j^*\cG \in \cgg U{w+1}$.  On the other
hand, we saw in the preceding paragraph that $\Gamma_Y \cG \in \csupp
XY_{\ge w+1}$ for any closed subscheme $Y \subset Z$.  This holds in
particular for $Y = Z$.  We are able to conclude, at long last, that $\cG
\in \cgl Xw^\perp$ implies that $\cG \in \cgg X{w+1}$. 

Since axiom~\sref{ax:hom} is already established, we see that $\cgl
Xw^\perp = \cgg X{w+1}$, and then by Proposition~\ref{prop:noeth-orth},
axiom~\sref{ax:ses} holds. 

Given $\cF \in \cg X$, there is some $w_1 \in \Z$ such that $j^*\cF \in
\cgg U{w_1}$.  On the other hand, suppose $\Gamma_Z\cF \simeq
i_{Z'*}\cF_1$, where $i_{Z'}: Z' \hto X$ is a closed subscheme structure
on $\uZ$.  There is some $w_2$ such that $\cF_1 \in \cgg{Z'}{w_2}$.  We
then clearly have $\cF \in \cgg X{\min\{w_1,w_2\}}$.  Similarly, we know
that $j^*\cF \in \cgl U{v_1}$ and $i^*\cF \in \cgl Z{v_2}$ for some $v_1,
v_2 \in \Z$, so $\cF \in \cgl X{\max\{v_1,v_2\}}$.  Thus,
axiom~\sref{ax:bdd} holds.

Axiom~\sref{ax:tensorl} is immediate from the fact that $j^*(\cF \otimes
\cG) \simeq j^*\cF \otimes j^*\cG$ and $i^*(\cF \otimes \cG) \simeq i^*\cF
\otimes i^*\cG$. 

To show uniqueness, suppose there were another pre-$s$-structure
$(\{\cC'_{\le w}\}, \{\cC'_{\ge w}\})$ on $X$ whose restrictions to $U$ and
$Z$ were the given $s$-structures on those schemes.  Since $j^*$ and $i^*$
must both be right $s$-exact, we see that $\cF \in \cC'_{\le w}$ implies that $j^*\cF \in
\cgl Uw$ and $i^*\cF \in \cgl Zw$. 
In other words, $\cC'_{\le w} \subset \cgl Xw$.  On the other hand, for any $\cG \in \cC'_{\ge w+1}$, there is some subscheme structure $i_{Z'}: Z' \hto X$ and some sheaf $\cG_1 \in \cg{Z'}$ such that $\Gamma_Z\cG \simeq i_{Z'*}i_{Z'}^\flat \cG$.  Since $i_{Z'}^\flat$ is left $s$-exact, we see that $\Gamma_Z\cG \in \czsupp_{\ge w+1}$.  The functor $j^*$ is also left $s$-exact, so in fact we must have $\cG \in \cgg X{w+1}$.  We deduce that $\cC'_{\ge w}
\subset \cgl Xw$.  Now, the equalities $\cC_{\le w}^{\prime\perp} =
\cC'_{\ge w+1}$ and $\cgl Xw^\perp = \cgg X{w+1}$ together imply that
$\cC'_{\le w} = \cgl Xw$ and $\cC'_{\ge w} = \cgg Xw$.
\end{proof}

The main result of this section is the following.

\begin{thm}\label{thm:glue}
Let $i: Z \hto X$ be a closed subscheme of $X$, and let $j: U \hto
X$ be the complementary open subscheme.  Assume that $U$ and $Z$ are both endowed with $s$-structures, that $Z$ is adhesive, and that condtion~\eqref{it:lower-extend} holds.  Then the pre-$s$-structure on $X$ obtained by gluing is an $s$-structure.
\end{thm}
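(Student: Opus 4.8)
The plan is to build on Proposition~\ref{prop:glue-pre}, which already supplies a pre-$s$-structure on $X$ with the stated description; what remains is to verify axioms~\sref{ax:serrer}, \sref{ax:tensorr}, and~\sref{ax:ext2} for every closed subscheme $\kappa: Y \hto X$. The first step is a bootstrapping observation. For any closed subscheme $Y \hto X$, the subscheme $Y \cap Z$ is closed in $Z$, hence adhesive (a closed subscheme of a scheme carrying an $s$-structure is automatically adhesive, as remarked after Definition~\ref{defn:adh}), while $Y \cap U$ is open in $U$ and so carries an $s$-structure; moreover condition~\eqref{it:lower-extend} persists for the pair $(Y\cap Z,\, Y\cap U)$, since a sheaf in $\cgl{Y\cap U}w$ first extends to $\cgl Uw$ by openness, then lifts to some $\cF \in \cgl Xw$ by~\eqref{it:lower-extend}, and $\kappa^*\cF \in \cgl Yw$ by the right $s$-exactness of $\kappa^*$ (Proposition~\ref{prop:res-closed}), with the correct restriction to $Y\cap U$. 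Hence Proposition~\ref{prop:glue-pre} applies over $Y$, and by its uniqueness clause the glued pre-$s$-structure on $Y$ is exactly the one induced from $X$. Thus the whole argument can be run as a noetherian induction, granting the full theorem for every proper closed subscheme of $X$; the base cases $Z = \emptyset$ and $U = \emptyset$ are handled, respectively, by the hypothesis on $U$ and by Propositions~\ref{prop:res-nonred-pre} and~\ref{prop:res-nonred} together with axiom~\aref{ax:adh-ext2}.

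Axioms~\sref{ax:serrer} and~\sref{ax:tensorr} are \emph{generic} statements: by Lemma~\ref{lem:res-open-tcgg}, membership of a sheaf in $\tcgg Yw$ depends only on its restriction to a dense open, which may be shrunk freely. Given a quotient $\cG \twoheadrightarrow \cG''$ with $\cG \in \tcgg Yw$, I would shrink a dense open $V$ until $\cG|_V \in \cgg Vw$ and, in addition, both $j^*\cG$ (over $V\cap U$) and $\Gamma_Z(\cG|_V)$ visibly lie, over a dense open, in the $\tilde\cC_G$-categories of $U$ and of the subscheme structure on $\uZ$ supporting $\Gamma_Z(\cG|_V)$; closure of those $\tilde\cC_G$-categories under quotients---axiom~\sref{ax:serrer} on $U$, and Proposition~\ref{prop:res-nonred} on the thickening---then forces $\cG''|_V \in \cgg Vw$. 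An induction on the invariant $\ell$ introduced in the proof of Proposition~\ref{prop:res-nonred-pre} reduces the general case to sheaves pushed forward from $\uZ$ in the usual way. Axiom~\sref{ax:tensorr} is handled identically, combining the tensor axioms on $U$ and on the relevant thickening of $\uZ$ with the fact that restriction commutes with $\otimes$.

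The main obstacle is axiom~\sref{ax:ext2}. For $Y = X$ (where $\cloc XXw = \cgl Xw$) it asks, given $\cF \in \cgl Xw$ and $r \ge 0$, for a dense open $V \Subset X$ with $\gExt^r(\cF|_V, \cG|_V) = 0$ for all $\cG \in \tcgg X{w+1}$---the defining condition of a weak $s$-structure (Definition~\ref{defn:weak-s}), now for every $r$ and uniformly in $\cG$---while for proper closed $Y \subsetneq X$ it follows by a further noetherian induction on $\uZ \cap \underline Y$, mimicking the proof that axiom~\sref{ax:ext2} descends to closed subschemes. The idea is to compute $\gExt^r(\cF, \cG)$ through the local-cohomology long exact sequence of Lemma~\ref{lem:found}\eqref{it:f-les}, which relates it to $\gHom(j^*\cF, j^*\cG[r])$ on the $U$-side and to $\varinjlim_{Z'} \gHom(Li^*_{Z'}\cF,\, Ri^\flat_{Z'}\cG[r])$ on the $Z$-side. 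The $U$-term vanishes generically over $U$ by axiom~\sref{ax:ext2} for the $s$-structure on $U$. For the $Z$-term, one first restricts so that $\Gamma_Z\cG$ becomes an object of $\czsupp_{\ge w+1}$; then, since the bounded-above complex $Li^*_{Z'}\cF$ has only finitely many nonzero cohomology sheaves in the relevant range---each of which can be placed in the appropriate $\cC^{\mathrm{loc}}_G$-category over a suitable dense open by Lemma~\ref{lem:li}---one runs an induction on $r$ as in Lemmas~\ref{lem:der-hom} and~\ref{lem:li}, invoking adhesiveness of $Z$, i.e.\ axiom~\aref{ax:adh-ext2}, for each contribution. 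A common dense open refinement of the finitely many open subschemes produced along the way yields the desired $V$. The genuinely delicate point is this last bookkeeping---matching the $\tilde\cC_G$-hypotheses that enter~\sref{ax:ext2} with the $\czsupp_{\ge w+1}$-hypotheses appearing in~\aref{ax:adh-ext2}, and keeping the higher $Li^*$-terms under control---which is precisely the purpose for which Lemmas~\ref{lem:der-hom} and~\ref{lem:li} and the adhesiveness axiom were designed.
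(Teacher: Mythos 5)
Your reduction has a genuine gap at its very first step. To run Proposition~\ref{prop:glue-pre} (or, inductively, the theorem itself) ``over $Y$'' for a closed subscheme $Y\hto X$, you need the closed piece $Y\cap Z$ to be adhesive \emph{as a subscheme of $Y$}, not of $Z$. What the remark after Definition~\ref{defn:adh} gives you for free is adhesiveness of $Y\cap Z$ in $Z$ (because $Z$ carries an $s$-structure); adhesiveness in $Y$ --- in particular axiom~\aref{ax:adh-ext2} for pairs $Y'\subset Y\cap Z$ with the $\gExt$'s computed on open subschemes of $Y$ --- is an Ext-vanishing statement of essentially the same strength as the axiom~\sref{ax:ext2} you are trying to establish, and it is a \emph{hypothesis} of the gluing statement rather than a conclusion, so noetherian induction cannot supply it; deducing it from adhesiveness in $X$ would need the descent lemma for~\sref{ax:ext2}, whose proof uses Lemma~\ref{lem:li} and hence an $s$-structure on the ambient scheme --- exactly what is being proved. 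Since your verification of~\sref{ax:serrer} and~\sref{ax:tensorr} for general $Y$ leans on this bootstrap, it inherits the gap. The paper never re-glues on $Y$: because~\sref{ax:serrer} and~\sref{ax:tensorr} are generic conditions (Lemma~\ref{lem:res-open-tcgg}), it suffices to cover a dense open of $Y$ by $U_1=Y\cap U$ (closed in $U$, hence carrying an $s$-structure) and by $V_1$, the open complement in $Y$ of the closure of $U_1$ (a thickening of the locally closed $V_1\cap Z\subset Z$, hence carrying a weak, in particular almost, $s$-structure by Proposition~\ref{prop:res-nonred}); no induction and no adhesiveness on $Y$ are needed.

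The second gap is in your plan for~\sref{ax:ext2} itself. Running the triangle $R\Gamma_Z\cG\to\cG\to Rj_*j^*\cG$ leaves precisely the decisive step undone: the term $\varinjlim_{Z'}\Hom(Li^*_{Z'}\cF,\,Ri^\flat_{Z'}(\kappa_*\cG)[r])$ involves the derived functors $Ri^\flat_{Z'}$ of pushforwards of objects of $\tcgg Y{w+1}$, and axiom~\aref{ax:adh-ext2} says nothing about these: it is an Ext-vanishing statement against honest sheaves in $\csupp X{Y'}_{\ge w+1}$, and you give no argument (nor is one readily available) that the cohomology sheaves of $Ri^\flat_{Z'}(\kappa_*\cG)$ land, generically and uniformly in $\cG$, in categories to which~\aref{ax:adh-ext2} applies; moreover Lemmas~\ref{lem:der-hom} and~\ref{lem:li}, which you invoke for the bookkeeping, presuppose an $s$-structure on the ambient scheme, which is again circular here. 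The paper's proof avoids the long exact sequence entirely: it takes $V=V'\cup V''$ with $V'$ and $V''$ disjoint, where $V'\Subset_{U_1}U$ is furnished by~\sref{ax:ext2} for the $s$-structure on $U$ applied to the closed subscheme $U_1\subset U$, and $V''\subset X\ssm\overline{U_1}$ is furnished directly by~\aref{ax:adh-ext2} for $\overline{V_1\cap Z}\subset Z$, which applies because over $X\ssm\overline{U_1}$ the sheaf $\kappa_*\cG$ is supported on a thickening of $V_1\cap Z$ inside $\uZ$. Until you either prove adhesiveness of $Y\cap Z$ in $Y$ or replace the local-cohomology mechanism by such a direct splitting of $V$, the proof is incomplete at its two essential points.
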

\begin{proof}
Let $\kappa: Y \hto X$ be a closed subscheme.  Let $U_1 = Y \cap U$, and let $V_1 \subset Y$ be the open subscheme complementary to $Y \ssm \overline U_1$.  Clearly, $U_1 \cup V_1$ is a dense open subscheme of $Y$.  Now, $U_1$ carries an $s$-structure because it is a closed subscheme of $U$.  On the other hand, $V_1$ is a possibly nonreduced scheme whose subscheme $V_1 \cap Z$ has the same underlying topological space.  $V_1 \cap Z$ carries an $s$-structure induced from $Z$, so by Proposition~\ref{prop:res-nonred}, $V_1$ has at least a weak $s$-structure.  Therefore, $U_1 \cup V_1$ carries a weak $s$-structure.

Let $h: U_1 \cup V_1 \hto Y$ be the inclusion map.  We know that $h^*$ is exact, and by Lemma~\ref{lem:res-open-tcgg}, we know that $\cF \in \tcgg Yw$ if and only if $h^*\cF \in \tcgg{U_1 \cup V_1}w$.  These observations, combined with the formula $h^*(\cF \otimes \cG) \simeq h^*\cF \otimes h^*\cG$, imply that axioms~\sref{ax:serrer} and~\sref{ax:tensorr} for $Y$ follow from the fact that $U_1 \cup V_1$ has an almost $s$-structure.

Finally, we consider axiom~\sref{ax:ext2}.  Suppose $\cF \in \cgl Xw$, and fix an integer $r \ge 0$.  Since $U$ has an $s$-structure, we know that there is an open subscheme $V' \Subset_{U_1} U$ such that $\gExt^r(\cF|_{V'}, \kappa_*\cG|_{V'}) = 0$ for all $\cG \in \tcgg Y{w+1}$.  Next, we would like to find an open subscheme $V'' \Subset_{V_1} X \ssm \overline U_1$ such that $\gExt^r(\cF|_{V''}, \kappa_*\cG|_{V''}) = 0$.  If we had such a $V''$, note that it would be open as a subscheme of $X$ and disjoint from $V'$.  Their union $V = V' \cup V''$ (which would satisfy $V \Subset_Y X$) would then be an open subscheme of the sort required by axiom~\sref{ax:ext2} for $Y$.  

Let $V_2 = V_1 \cap Z$.  Note that $V_2$ is a closed subscheme of $V_1$ with the same underlying topological space as $V_1$.  $V_2$ is also a locally closed subscheme of $Z$.  We have $\kappa_*\cG|_{X \ssm \overline U_1} \in \csupp {X\ssm \overline U_1}{V_2}_{\ge w+1}$, so the desired open subscheme $V''$ comes from condition~\aref{ax:adh-ext2} of Definition~\ref{defn:adh} applied to the closed subscheme $\overline V_2 \subset Z$.
\end{proof}

%%%%%%%%%%%%%%%%%%%%%%%%%%%%%%%%%%%%%%%%%%%%%%%%%%%%%%%%%%%%%%%%%%%%%%%%%%%
\section{Duality}
\label{sect:duality}
%%%%%%%%%%%%%%%%%%%%%%%%%%%%%%%%%%%%%%%%%%%%%%%%%%%%%%%%%%%%%%%%%%%%%%%%%%%

This section is devoted to studying the interaction between $s$-structures and Grothendieck--Serre duality.  The results established here will be essential to the construction of the staggered $t$-structure in Sections~\ref{sect:stag-orth} and~\ref{sect:stag-dt}.

Throughout, we assume that $X$ is endowed with an $s$-structure.  Fix an equivariant dualizing complex $\omega_X$ on $X$, and recall that we denote the
Grothendieck--Serre duality functor by $\D = \cRHom(\cdot, \omega_X)$.

For any point $x\in X$, let $k(x)$ be the residue field of the local ring
$\cO_x$, and let $I_x$ be the injective hull of $k(x)$ in the category
$\Oxmod$ of $\cO_x$-modules.  Recall that for any complex of coherent
sheaves $\cF$ on $X$, the local cohomology groups $H^i_x(\cF) =
H^i(R\Gamma_x(\cF))$ are artinian $\cO_x$-modules.  Here, and throughout the remainder of the paper, points (in contrast to subschemes) are not to be thought of in any equivariant sense.  Accordingly, $\cO_x$-modules are modules without any group action, and $R\Gamma_x$ is to be computed after passing to the nonequivariant derived category.

\begin{lem}\label{lem:omega-conc}
At any point $x \in X$, the complex $R\Gamma_x(\omega_X)$ is
concentrated in one degree $d$.  In addition, if $x$ is a generic point of $X$, there is an open subscheme $U$ containing $x$ such that $\omega_X|_U$ is concentrated
in degree $d$.
\end{lem}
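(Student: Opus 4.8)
The plan is to reduce both statements to the local ring $\cO_x$ and then apply Grothendieck's local duality theorem. Since $R\Gamma_x$ is to be computed nonequivariantly, I would first replace $\omega_X$ by $\Forg(\omega_X)$, which is a dualizing complex on $X$ in the sense of \cite[Chap.~V]{har} by \cite[Lemma~4]{bez:pc}. Write $A = \cO_x$, with maximal ideal $\mathfrak m$ and residue field $k(x)$. The first point to recall is the standard fact that the stalk $\omega_{X,x}$ is then a dualizing complex for the Noetherian local ring $A$, and that any two dualizing complexes for $A$ differ by a shift; so I would fix the integer $d$ for which $\omega_{X,x} \simeq \omega_A[-d]$, where $\omega_A$ denotes the normalized dualizing complex of $A$ (so that $R\Hom_A(k(x),\omega_A)$ is $k(x)$ placed in degree $0$).

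For the first assertion, observe that $R\Gamma_x(\omega_X) \simeq R\Gamma_{\mathfrak m}(\omega_{X,x})$, since $R\Gamma_x$ is computed as the derived $\mathfrak m$-torsion functor of the stalk at $x$. Grothendieck local duality then identifies $R\Gamma_{\mathfrak m}(\omega_A)$ with the Matlis dual of $R\Hom_A(\omega_A,\omega_A) \simeq A$, namely the injective hull $I_x$ of $k(x)$, concentrated in degree $0$. Shifting, $R\Gamma_x(\omega_X) \simeq I_x[-d]$ is concentrated in the single degree $d$ (with $H^d_x(\omega_X) \simeq I_x$), which is the first claim.

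For the second assertion, suppose $x$ is a generic point of $X$. Then $\Spec A$ consists of the single point $x$, because the points of $\Spec A$ are the generizations of $x$ in $X$ and, $x$ being generic, it has none other than itself; hence $A$ is artinian local, $\mathfrak m$ is nilpotent, every $A$-module is $\mathfrak m$-torsion, and $R\Gamma_{\mathfrak m} \simeq \id$ on $D^+(A)$. Therefore $\omega_{X,x} \simeq R\Gamma_x(\omega_X)$ is itself concentrated in degree $d$, i.e.\ $H^k(\omega_X)_x \simeq H^k(\omega_{X,x}) = 0$ for every $k \neq d$. Since $\omega_X$ lies in $\dgb X$, only finitely many cohomology sheaves $H^k(\omega_X)$ are nonzero, each coherent, so $\bigcup_{k\neq d}\operatorname{Supp}H^k(\omega_X)$ is a closed subset of $X$ not containing $x$; its complement is the open subscheme $U$ we want, and $\omega_X|_U$ is concentrated in degree $d$.

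I do not expect a genuine obstacle here: the argument is bookkeeping on top of the standard local theory of dualizing complexes. The only points that need a little care are the reduction to the local ring $\cO_x$ — using that being a dualizing complex is a local property compatible with localization — and the observation that $R\Gamma_{\mathfrak m}$ acts as the identity on the derived category of an artinian local ring, which is what makes the second part fall out of the first.
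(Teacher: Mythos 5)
Your proof is correct and is in substance the paper's own argument: the paper simply cites \cite[Proposition~V.3.4]{har} for the first claim and \cite[Lemma~2]{bez:pc} for the second, and your local-duality computation at $\cO_x$ together with the artinian-local-ring/spreading-out argument at a generic point are exactly the standard proofs behind those citations. The only unremarked point is that $U$ must be $G$-invariant under the paper's conventions, which holds automatically since the cohomology sheaves $H^k(\omega_X)$ are equivariant, so their supports are $G$-stable closed subsets.
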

\begin{proof}
The fact that $R\Gamma_x(\omega_X)$ is concentrated in a single degree follows from~\cite[Proposition~V.3.4]{har}.  When $x$ is generic, the existence of the desired open subscheme $U$ is given by~\cite[Lemma~2]{bez:pc}.
\end{proof}

For the remainder of the section, we will usually assume that $\omega_X$ is a sheaf, {\it i.e.}, a complex concentrated in degree $0$.  According to the previous lemma, this situation can always be achieved by replacing $X$ by a suitable open subscheme, and by shifting $\omega_X$.

\begin{lem}\label{lem:wt-defn}
Assume $\omega_X$ is a sheaf.  There is an open subscheme $U
\subset X$ and an integer $e$ such that for all open
subschemes $V \subset U$,
$\omega_X|_V \in \cgg Ve$ and $\omega_X|_V \notin \cgg V{e+1}$. 
\end{lem}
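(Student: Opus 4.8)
The plan is to assign to each nonempty open $V \subseteq X$ the integer
\[
e(V) = \max\{\, w \in \Z \mid \omega_X|_V \in \cgg Vw \,\},
\]
to observe that the assignment $V \mapsto e(V)$ is bounded and non-increasing as $V$ shrinks, and then to let $U$ be an open on which $e(V)$ is as large as possible, with $e$ its value there. To see that $e(V)$ is well defined, note first that restriction to an open is $s$-exact by Proposition~\ref{prop:res-open}, and that restricting twice agrees with restricting once by the uniqueness part of that proposition; so, choosing $w_0 \le v_0$ with $\omega_X \in \cgg X{w_0} \cap \cgl X{v_0}$ via axiom~\sref{ax:bdd}, we get $\omega_X|_V \in \cgg V{w_0} \cap \cgl V{v_0}$ for every nonempty open $V$. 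Hence $\{w : \omega_X|_V \in \cgg Vw\}$ is nonempty and, by axiom~\sref{ax:inc}, downward closed; it cannot contain $v_0+1$, since that together with $\omega_X|_V \in \cgl V{v_0}$ would force $\gHom(\omega_X|_V, \omega_X|_V) = 0$ by axiom~\sref{ax:hom}, whence $\omega_X|_V = 0$ (its identity endomorphism vanishes), contradicting the fact that a dualizing complex has full support. So $e(V)$ exists, lies in $[w_0,v_0] \cap \Z$, and $\omega_X|_V \in \cgg Vw$ holds exactly when $w \le e(V)$.

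For monotonicity, if $\emptyset \ne V' \subseteq V$ then $\omega_X|_V \in \cgg V{e(V)}$ restricts $s$-exactly to $\omega_X|_{V'} \in \cgg{V'}{e(V)}$, so $e(V') \ge e(V)$. Since $e(\cdot)$ takes only finitely many values, it attains a maximum $e$ at some nonempty open $U$; then for every nonempty open $V \subseteq U$ we get $e \le e(V)$ by monotonicity and $e(V) \le e$ by maximality, so $e(V) = e$. By the final remark of the previous paragraph this means precisely that $\omega_X|_V \in \cgg Ve$ while $\omega_X|_V \notin \cgg V{e+1}$, which is the assertion.

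I do not anticipate a real obstacle. The one place that needs care is the boundedness of $e(V)$: axiom~\sref{ax:bdd} bounds it above by $v_0$, and axiom~\sref{ax:hom} (together with the nonvanishing of $\omega_X$ on every nonempty open) excludes the value $v_0+1$; once $e(\cdot)$ is known to take finitely many values, extracting $U$ and $e$ is immediate.
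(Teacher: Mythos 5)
Your argument is correct, but it is not the route the paper takes. The paper's proof localizes at a generic point $x$: it uses the fact that $\Gamma_x\omega_X \simeq I_x$ is an artinian $\cO_x$-module to stabilize the descending chain $\Gamma_x\sigma_{\le w}\omega_X$, identifies the stable value as $0$ via axiom~\sref{ax:bdd}, takes $m$ maximal with $\Gamma_x\sigma_{\le m}\omega_X = 0$, and then spreads out: $\sigma_{\le m}\omega_X$ vanishes on an irreducible neighborhood $U$ of $x$ (giving $\omega_X|_V \simeq (\sigma_{\ge m+1}\omega_X)|_V \in \cgg V{m+1}$), while $\sigma_{\le m+1}\omega_X$ stays nonzero on every open meeting $x$, so $e = m+1$ works. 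You instead introduce the invariant $e(V) = \max\{w \mid \omega_X|_V \in \cgg Vw\}$, bound it uniformly using \sref{ax:bdd}, \sref{ax:inc}, \sref{ax:hom} together with the nonvanishing of $\omega_X$ on every nonempty open (which is legitimate: it follows, e.g., from $\D\circ\D \simeq \id$, or from $\Gamma_x\omega_X \simeq I_x \ne 0$, which the paper itself uses), note that $e(\cdot)$ can only grow under restriction because open restriction is $s$-exact (Proposition~\ref{prop:res-open}, including its uniqueness clause for iterated restriction), and take a maximizer. This buys a more elementary proof — no local cohomology, no artinian modules — and it actually proves the statement for an arbitrary coherent sheaf whose restriction to every nonempty open is nonzero, not just for $\omega_X$; what it gives up is the extra information in the paper's construction that $U$ may be taken irreducible around a chosen generic point (harmless here, and also recoverable by shrinking). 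One small slip: in your opening summary you call $e(V)$ ``non-increasing as $V$ shrinks,'' while your computation correctly shows $e(V') \ge e(V)$ for $V' \subseteq V$, i.e.\ it is non-decreasing under shrinking; the extraction of $U$ uses the correct direction, so nothing breaks. Finally, as in the paper's own proof, the conclusion should be read for nonempty $V \subset U$.
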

\begin{proof}
By axiom~\sref{ax:bdd}, there is a $v$ such that $\omega_X \in
\cgl Xv$.  Choose a generic point $x \in X$.  Consider the
sequence of subobjects 
\[
\cdots \subset \sigma_{\le v-1}\omega_X \subset
\sigma_{\le v}\omega_X = \omega_X.
\]
Applying the left exact functor $\Gamma_x$ gives us a decreasing sequence
of
subobjects of $\Gamma_x\omega_X \simeq I_x$, and the latter is an artinian
$\cO_x$-module.  Thus, there is
some integer $m$ such that $\Gamma_x\sigma_{\le w} \omega_X = \Gamma_x
\sigma_{\le m} \omega_X$ for all $w \le m$.  Indeed, it follows then that 
\[
\Gamma_x\left( \bigcap_{w \le m} \sigma_{\le w}\omega_X\right) = \Gamma_x
\sigma_{\le m}\omega_X, 
\]
so by axiom~\sref{ax:bdd}, both sides of this equation must
be $0$.  Now, assume that $m$ is in fact the largest integer such that
$\Gamma_x\sigma_{\le m}\omega_X = 0$.  Since $\Gamma_x\sigma_{\le
m}\omega_X =
0$, there is an irreducible open subscheme $U \subset X$ containing $x$
such that $(\sigma_{\le m}\omega_X)|_U = 0$, and hence $(\sigma_{\le
w}\omega_X)|_U = 0$ for all $w \le m$.  On the other hand, the fact that
$\Gamma_x\sigma_{\le m+1}\omega_X \ne 0$ implies that for every open
subscheme
$V$
containing $x$ (and in particular every $V \subset U$), we have
$(\sigma_{\le m+1}\omega_X)|_V \ne 0$.  Thus, the open subscheme $U$ and
the
integer $e = m+1$ have the desired properties. 
\end{proof}

Combining the above two lemmas leads to the following definition.

\begin{defn}\label{defn:alt}
Assume $X$ is irreducible.  The \emph{altitude} of $X$, denoted $\alt X$, is
the unique integer such that for any open subscheme $U \subset X$, there is
an open subscheme $V \subset U$ such that $\omega_X|_V$ is concentrated in
a single degree $d$, and $\omega_X[d]|_V \in \cgg V{\alt X}$ but
$\omega_X|_V
\notin\cgg V{\alt X+1}$. 
\end{defn}

Below, we will see that the altitude of a closed subscheme does not depend on the choice of closed subscheme structure.  This result depends on the following useful lemma.

\begin{lem}\label{lem:dc-sum}
Assume that $\omega_X$ is a sheaf.  If $\omega_X$ has a
subsheaf $\cF$ that is a direct sum of two smaller
subsheaves, $\cF \simeq \cF_1 \oplus \cF_2$, then there is an open
subscheme $U \subset X$ such that either $\cF_1|_U = 0$ or
$\cF_2|_U = 0$. 
\end{lem}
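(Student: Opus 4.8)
The plan is to reduce the statement to the local behaviour of $\omega_X$ at a generic point of $X$, where the dualizing sheaf degenerates to an injective hull of a residue field; such a module is \emph{uniform}, hence cannot contain a nonzero submodule that splits as a direct sum of two nonzero submodules.

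First I would choose a generic point $x$ of $X$ and pass to stalks there. Since $\cO_x$ has dimension zero, its maximal ideal is nilpotent, so on $\cO_x$-modules the functor $\Gamma_x$ coincides with the (exact) localization functor; in particular $R\Gamma_x(\omega_X) \simeq \omega_{X,x}$, concentrated in degree $0$. On the other hand, $\Forg(\omega_X)$ is a (nonequivariant) dualizing complex, so by Lemma~\ref{lem:omega-conc} and \cite[Proposition~V.3.4]{har} the complex $R\Gamma_x(\omega_X)$ is the injective hull $I_x$ of $k(x)$, placed in a single degree. Comparing the two descriptions, that degree is $0$ and $\omega_{X,x} \simeq I_x$.

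Next I would record the elementary fact that $I_x$, being the injective hull of the simple module $k(x)$, is uniform: every nonzero $\cO_x$-submodule of $I_x$ contains $k(x)$, so any two nonzero submodules meet nontrivially, and therefore $I_x$ admits no nonzero submodule that decomposes as a direct sum of two nonzero submodules. Now apply the exact stalk functor to the inclusion $\cF \hto \omega_X$: it carries the decomposition $\cF \simeq \cF_1 \oplus \cF_2$ to a decomposition $(\cF_1)_x \oplus (\cF_2)_x$ of the submodule $\cF_x$ of $\omega_{X,x} \simeq I_x$. Uniformity forces $(\cF_1)_x = 0$ or $(\cF_2)_x = 0$; say the former holds.

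Finally, since $\cF_1$ is a coherent ($G$-equivariant) sheaf, its support is a closed $G$-invariant subset of $X$ not containing $x$, and taking $U$ to be the complementary open subscheme (automatically dense when $X$ is irreducible) yields $\cF_1|_U = 0$, as desired. The only ingredient here that is not purely formal is the identification $\omega_{X,x} \simeq I_x$ at the generic point, which is precisely the content of \cite[Proposition~V.3.4]{har}, already invoked in the proof of Lemma~\ref{lem:omega-conc}; so I do not expect a serious obstacle, the only care needed being to confirm that the single degree supporting the local cohomology is indeed $0$ once $\omega_X$ is a sheaf.
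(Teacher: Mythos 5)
Your proof is correct and follows essentially the same route as the paper: pass to a generic point $x$, identify $\Gamma_x\omega_X$ (equivalently, since $\cO_x$ is Artinian at a generic point, the stalk $\omega_{X,x}$) with the injective hull $I_x$ of $k(x)$, use that $I_x$ is an essential extension of $k(x)$ to rule out a nonzero submodule splitting as a direct sum of two nonzero pieces, and spread the resulting vanishing of one stalk to an open subscheme. The only cosmetic difference is that the paper phrases the local step via the left-exact functor $\Gamma_x$ rather than the stalk; at a generic point these coincide, as you observe.
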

\begin{proof}
Let $x$ be a generic point of $X$.  Applying $\Gamma_x$, we get an
injective map 
\[
\Gamma_x\cF_1 \oplus \Gamma_x\cF_2 \to \Gamma_x\omega_X \simeq I_x.
\]
But every submodule of $I_x$ contains the residue field $k(x)$, so no
submodule of $\Gamma_x\omega_X$ is a direct sum of two smaller nonzero
submodules.  Thus, either $\Gamma_x\cF_1$ or $\Gamma_x\cF_2$ must be $0$,
and therefore
there
is an open subscheme $U \subset Y$ on which either $\cF_1$ or $\cF_2$
vanishes. 
\end{proof}

\begin{prop}\label{prop:alt-defn}
Let $X'$ be a nonreduced irreducible scheme, and let $t: X \hto X'$ be a closed subscheme with the same underlying space.  Then $\alt X = \alt X'$. 
\end{prop}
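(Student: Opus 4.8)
The plan is to express both $\alt X$ and $\alt X'$ in terms of the single dualizing sheaf $\omega_{X'}$ and the $s$-structure on $X'$, using Proposition~\ref{prop:res-closed} to identify the induced $s$-structure on the closed subscheme $X$, and then to compare. Since $\alt$ is unchanged by shifting the dualizing complex or by passing to an open subscheme, and since shrinking $X'$ shrinks $X$ to a closed subscheme with the same (irreducible) underlying space, Lemma~\ref{lem:omega-conc} lets us first assume $\omega_{X'}$ is a sheaf. A short computation at the generic point $x$ — where $\cO_{X',x}$ is artinian and $\cI_x$ nilpotent — shows that $H^0(Rt^\flat\omega_{X'})$ is nonzero there, so by Lemma~\ref{lem:omega-conc} applied to $X$ we may, after one more shrinking, assume $\omega_X=Rt^\flat\omega_{X'}$ is also concentrated in degree $0$. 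Then $\omega_X\simeq H^0(Rt^\flat\omega_{X'})=t^\flat\omega_{X'}$, so $t_*\omega_X\simeq\cHom(t_*\cO_X,\omega_{X'})$, which is literally a subsheaf of $\omega_{X'}$. Finally, applying Lemma~\ref{lem:wt-defn} to $X'$ and to $X$ and shrinking once more, we may assume that for every open $V'\subseteq X'$ — writing $V=V'\cap X$ — we have $\omega_{X'}|_{V'}\in\cgg{V'}{\alt X'}\ssm\cgg{V'}{\alt X'+1}$ and $\omega_X|_V\in\cgg V{\alt X}\ssm\cgg V{\alt X+1}$. By Proposition~\ref{prop:res-closed}, $\cgg Vw=\{\cG\mid t_*\cG\in\cgg{V'}w\}$, so $\alt X$ is the largest $w$ with $\cHom(t_*\cO_X,\omega_{X'})|_{V'}\in\cgg{V'}w$, and $\alt X'$ is the largest $w$ with $\omega_{X'}|_{V'}\in\cgg{V'}w$.

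Because $\cHom(t_*\cO_X,\omega_{X'})\subseteq\omega_{X'}$ and each $\cgg{X'}w$ is closed under subobjects (axiom~\sref{ax:serre}), the membership $\omega_{X'}|_{V'}\in\cgg{V'}{\alt X'}$ passes to the subsheaf, giving $\alt X\ge\alt X'$ at once. For the reverse inequality I would filter $\omega_{X'}$ $\cI$-adically: let $\cI\subset\cO_{X'}$ be the ideal of $X$, fix $n$ with $\cI^n=0$, and set $\cF_k=\cHom(\cO_{X'}/\cI^{k+1},\omega_{X'})$ for $0\le k\le n-1$, so that $\cF_0=\cHom(t_*\cO_X,\omega_{X'})\simeq t_*\omega_X$, $\cF_{n-1}=\omega_{X'}$, and $\cF_{k-1}\subseteq\cF_k$. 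Applying $\cHom(\cdot,\omega_{X'})$ to $0\to\cI^k/\cI^{k+1}\to\cO_{X'}/\cI^{k+1}\to\cO_{X'}/\cI^k\to0$ realizes $\cF_k/\cF_{k-1}$ as a subobject of $\cHom(\cI^k/\cI^{k+1},\omega_{X'})$. Since $\cI^k/\cI^{k+1}$ is killed by $\cI$, it equals $t_*\cG_k$ for some $\cG_k\in\cg X$, and the adjunction between $t_*$ and $t^\flat$ gives $\cHom(t_*\cG_k,\omega_{X'})\simeq t_*\cHom_{\cO_X}(\cG_k,t^\flat\omega_{X'})=t_*\cHom_{\cO_X}(\cG_k,\omega_X)$.

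By Proposition~\ref{prop:Ox-zero} we have $\cO_{X'}\in\cgl{X'}0$, hence $\cI\in\cgl{X'}0$; then axiom~\sref{ax:tensorl} shows $\cI^{\otimes k}\in\cgl{X'}0$, and $\cI^k/\cI^{k+1}$, being a quotient of it, lies in $\cgl{X'}0$, so $\cG_k\in\cgl X0$ for the induced $s$-structure. Now condition~(S\ref{ax:tensorl}$'$) of Remark~\ref{rmk:homl}, applied on $X$ with $\omega_X\in\cgg X{\alt X}$, gives $\cHom_{\cO_X}(\cG_k,\omega_X)\in\cgg X{\alt X}$, and since $t_*$ is $s$-exact (Proposition~\ref{prop:res-closed}), $t_*\cHom_{\cO_X}(\cG_k,\omega_X)\in\cgg{X'}{\alt X}$; hence $\cF_k/\cF_{k-1}\in\cgg{X'}{\alt X}$ as a subobject of it. As $\cF_0=t_*\omega_X\in\cgg{X'}{\alt X}$ and $\cgg{X'}{\alt X}$ is closed under extensions, induction on $k$ yields $\omega_{X'}=\cF_{n-1}\in\cgg{X'}{\alt X}$, so $\alt X'\ge\alt X$. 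Combined with the easy inequality, $\alt X=\alt X'$. (All of this is read on a small enough common dense open subscheme of $X'$, as arranged in the reductions; the resulting $\cgg{}{}$-membership statements then hold over $X'$ itself.)

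The hard part is the filtration argument above: everything there rests on the $\cHom$-form~(S\ref{ax:tensorl}$'$) of the tensor axiom together with the $s$-exactness of $t_*$, which force each graded piece of the $\cI$-adic filtration of $\omega_{X'}$ into $\cgg{X'}{\alt X}$. The only other non-routine point is the verification, via the artinian local ring at the generic point, that $\omega_X$ is concentrated in degree $0$, so that $t_*\omega_X$ is honestly a subsheaf of $\omega_{X'}$; the remainder is bookkeeping with induced $s$-structures on locally closed subschemes and with passage to a common dense open.
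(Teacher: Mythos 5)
Your proposal is correct, and its first half coincides with the paper's: the reduction to $\omega_{X'}$, $\omega_X$ being sheaves in degree $0$, the identification $t_*\omega_X\simeq\cHom(t_*\cO_X,\omega_{X'})\subset\omega_{X'}$, and the inequality $\alt X\ge\alt X'$ from closure of $\cgg{X'}{\alt X'}$ under subobjects are exactly the paper's first paragraph (your generic-point computation with the artinian local ring and nilpotent $\cI_x$, showing $H^0(Rt^\flat\omega_{X'})_x\ne 0$ so that the two dualizing sheaves sit in the same degree, spells out a step the paper leaves implicit). Where you genuinely diverge is the reverse inequality. The paper argues by contradiction: if $\alt X>\alt X'$, then $t_*\omega_X$ meets $\sigma_{\le\alt X'}\omega_{X'}$ trivially (axiom~\sref{ax:hom}), so $\omega_{X'}$ would contain the direct sum $\sigma_{\le\alt X'}\omega_{X'}\oplus t_*\omega_X$, contradicting Lemma~\ref{lem:dc-sum} (no subsheaf of $\omega_{X'}$ splits, by the injective-hull argument at the generic point). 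You instead prove directly that $\omega_{X'}\in\cgg{X'}{\alt X}$ by filtering it by $\cF_k=\cHom(\cO_{X'}/\cI^{k+1},\omega_{X'})$ and embedding each graded piece into $t_*\cHom(\cG_k,\omega_X)$, which lies in $\cgg{X'}{\alt X}$ by condition (S\ref{ax:tensorl}$'$) of Remark~\ref{rmk:homl} on $X$ together with the $s$-exactness of $t_*$ from Proposition~\ref{prop:res-closed}; all the ingredients you invoke check out ($\cG_k\in\cgl X0$, the adjunction $\cHom(t_*\cG_k,\omega_{X'})\simeq t_*\cHom(\cG_k,t^\flat\omega_{X'})$, closure of $\cgg{X'}{\alt X}$ under subobjects and extensions), and your appeal to Proposition~\ref{prop:Ox-zero} on $X'$ is legitimate in context, since both subscheme structures carry honest $s$-structures; alternatively one could get $\cG_k\in\cgl X0$ from the standing hypothesis~\eqref{eqn:pre-adh} that $t^*\cI\in\cgl X0$. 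The trade-off: your filtration argument avoids Lemma~\ref{lem:dc-sum} entirely for this direction and yields the explicit, slightly stronger conclusion $\omega_{X'}\in\cgg{X'}{\alt X}$ over the shrunken scheme, using only the axioms; the paper's proof is shorter and reuses Lemma~\ref{lem:dc-sum}, which it needs anyway (e.g.\ in Lemma~\ref{lem:st-dual}).
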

\begin{proof}
By replacing $X'$ by a suitable open subscheme and shifting if necessary, we
may assume that $\omega_{X'}$ and $\omega_X$ are both sheaves, and that
$\omega_{X'} \in \cgg {X'}{\alt X'}$ and $\omega_X \in
\cgg X{\alt X}$.  Let $\cI \subset \cO_{X'}$ be the nilpotent ideal sheaf corresponding to $X \subset X'$.  Then $\cO_{X'}/\cI \simeq \cO_X$, and $\omega_X \simeq
\cHom(\cO_{X'}/\cI,  \omega_{X'})$.  Thus, $\omega_X$ can naturally be
identified with a subsheaf of $\omega_{X'}$.  It follows that $\alt X \ge
\alt X'$.

Now, suppose that $\alt X > \alt X'$.  Then $\omega_X$, regarded
as a subsheaf of $\omega_{X'}$, has trivial intersection with the nonzero
sheaf $\sigma_{\le \alt X'}\omega_{X'}$.  That is, $\omega_{X'}$ contains the
direct sum 
\[
\sigma_{\le \alt X'}\omega_{X'} \oplus \omega_X
\]
as a subsheaf.  According to Lemma~\ref{lem:dc-sum}, we can replace $X'$ by
an open subscheme and then assume that one of $\sigma_{\le \alt X'}\omega_{X'}$
or $\omega_X$ vanishes.  But this is clearly absurd.  We conclude
that $\alt X = \alt X'$. 
\end{proof}

The last few results in this section will play an essential role in the developments of Sections~\ref{sect:stag-orth} and~\ref{sect:stag-dt}.

\begin{lem}\label{lem:dual-conc}
Assume that $X$ is irreducible and that $\omega_X$ is a sheaf.  For any $\cF \in \cg X$, there is an open subscheme $U \subset X$ such that $\D(\cF)|_U$ is a sheaf.
\end{lem}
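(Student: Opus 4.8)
The plan is to verify the conclusion stalkwise at the generic points of $X$ and then spread out to an open subscheme. Since $\omega_X$ is a sheaf and $\D = \cRHom(\cdot,\omega_X)$ commutes with restriction to open subschemes, it suffices to produce an open $U \subset X$ on which all cohomology sheaves $H^k(\D(\cF))$ with $k \ne 0$ vanish; as $\cF \in \cg X \subset \dgb X$ and $\D$ preserves $\dgb X$, there are only finitely many such $k$. Each $H^k(\D(\cF))$ is a coherent sheaf, so its support is a closed and $G$-invariant subscheme. Thus it is enough to check that, for each $k \ne 0$, the sheaf $H^k(\D(\cF))$ has zero stalk at every generic point of $X$; one may then take $U$ to be the complement of the union of the finitely many relevant supports, which is open, $G$-invariant, and dense.

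So let $x$ be a generic point of $X$. Then $\cO_x$ is an artinian local ring, so every $\cO_x$-module is $\mathfrak{m}_x$-power-torsion, $R\Gamma_x$ is the identity functor, and therefore $(\omega_X)_x \simeq R\Gamma_x(\omega_X)$, which is concentrated in degree $0$ since $\omega_X$ is a sheaf. As recorded in the proof of Lemma~\ref{lem:wt-defn}, $(\omega_X)_x \simeq I_x$ (this holds because the localization of a dualizing complex is a dualizing complex, and a dualizing complex concentrated in a single degree over an artinian local ring is an injective module); in particular $(\omega_X)_x$ is an injective object of $\Oxmod$. Now compute $\D(\cF)$ via a bounded-above locally free resolution $\cP^\bullet \to \cF$, which exists because $\cg X$ has enough locally free objects and, by \S\ref{subsect:equiv-der}, computes $\cRHom$ in the first variable. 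Passing to the nonequivariant category via Lemma~\ref{lem:found-dual} and taking stalks at $x$ gives
\[
H^k(\D(\cF))_x \simeq H^k\big(\Hom_{\cO_x}(\cP^\bullet_x, (\omega_X)_x)\big) \simeq \Ext^k_{\cO_x}(\cF_x, (\omega_X)_x),
\]
since $\cHom$ of coherent sheaves localizes and $\cP^\bullet_x \to \cF_x$ is a free resolution of $\cO_x$-modules. Because $(\omega_X)_x$ is injective, the right-hand side vanishes for all $k > 0$, and it vanishes trivially for $k < 0$. Hence $H^k(\D(\cF))_x = 0$ for all $k \ne 0$ and all generic points $x$, which is exactly what was needed.

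The only non-formal ingredient is the identification $(\omega_X)_x \simeq I_x$ — equivalently, that the stalk of the (suitably normalized) dualizing complex at a generic point is an injective module sitting in degree $0$. This is where the hypotheses that $X$ is irreducible and $\omega_X$ is a sheaf enter, and it is already essentially recorded in Lemma~\ref{lem:omega-conc} and its use in the proof of Lemma~\ref{lem:wt-defn}, so I do not expect any real obstacle here. Everything else is bookkeeping: boundedness of $\D(\cF)$, coherence of its cohomology sheaves, and the fact that a closed $G$-invariant subset missing all the generic points of $X$ has dense open complement.
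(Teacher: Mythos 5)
Your proposal is correct and follows essentially the same route as the paper: the paper also reduces to the generic point, where $R\Gamma_x\D(\cF)\simeq\RHom(\cF_x,I_x)$ is concentrated in degree $0$ because $I_x\simeq(\omega_X)_x$ is injective, and then spreads out over an open subscheme; your version merely makes the stalk/Ext computation and the spreading-out step explicit. No gaps.
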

\begin{proof}
Let $x$ be a generic point of $X$, and let $\cG = \D(\cF)$.  Then we have
$R\Gamma_x\cG
\simeq \RHom(\cF_x, I_x)$, but since $I_x$ is injective, this is simply
$\Hom(\cF_x,I_x)$.  Since $R\Gamma_x\cG$ is concentrated in degree $0$,
there is an open subscheme $U \subset X$ such that $\cG|_U$
is a sheaf.
\end{proof}

\begin{lem}\label{lem:st-dual}
Assume that $X$ is irreducible and that $\omega_X$ is a sheaf.  Let $\cF
\in \cg X$ be pure of step $w$.  Then there is an open
subscheme $U \subset X$ such that $\D(\cF)|_U$ is a sheaf, and such that it
is pure of step $\alt X - w$.
\end{lem}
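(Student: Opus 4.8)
The plan is to prove the two inclusions $\D(\cF)|_U \in \cgg U{\alt X - w}$ and $\D(\cF)|_U \in \cgl U{\alt X - w}$ on a suitable open $U$; together these say $\D(\cF)|_U$ is pure of step $\alt X - w$, since by Proposition~\ref{prop:noeth-orth} the condition $\sigma_{\le m-1}\cH = 0$ is equivalent to $\cH \in \cgg Xm$, so that ``pure of step $m$'' means exactly $\cH \in \cgl Xm \cap \cgg Xm$. I am free to shrink $X$ throughout. First, by Lemma~\ref{lem:dual-conc}, arrange that $\D(\cF)$ is a sheaf; by Lemma~\ref{lem:wt-defn} and Definition~\ref{defn:alt}, arrange that $\omega_X$ is a sheaf with $\omega_X \in \cgg X{\alt X}$ and $\omega_X|_V \notin \cgg V{\alt X + 1}$ for every open $V \subset X$. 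Since $\D(\cF)$ is concentrated in degree $0$, $\D(\cF) = \cHom(\cF, \omega_X)$.

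The first inclusion is immediate: $\cF \in \cgl Xw$ and $\omega_X \in \cgg X{\alt X}$, so condition~(S\ref{ax:tensorl}$'$) of Remark~\ref{rmk:homl} gives $\D(\cF) = \cHom(\cF, \omega_X) \in \cgg X{\alt X - w}$. (Only $\cF \in \cgl Xw$ is used here, not the full purity.)

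The inclusion $\D(\cF) \in \cgl X{\alt X - w}$ is the crux. Write $m = \alt X - w$ and let $x$ be the generic point of $X$; since $X$ is irreducible, $\cO_x$ is Artinian local and $(\omega_X)_x \simeq I_x$, so $\D$ acts at $x$ as Matlis duality, and in particular preserves the finite length $\ell := \operatorname{length}_{\cO_x}(\cF_x)$. The desired conclusion $\D(\cF)|_U \in \cgl Um$ for small $U$ is equivalent to $(\sigma_{\ge m+1}\D(\cF))_x = 0$. Suppose not; I argue by induction on $\ell$. Shrinking so that $\D$ sends each of $\sigma_{\le m}\D(\cF)$, $\D(\cF)$, $\sigma_{\ge m+1}\D(\cF)$ to a sheaf, the dual of the canonical sequence $0 \to \sigma_{\le m}\D(\cF) \to \D(\cF) \to \sigma_{\ge m+1}\D(\cF) \to 0$ is again short exact, and by $\D \circ \D \simeq \id$ it reads $0 \to \D(\sigma_{\ge m+1}\D(\cF)) \to \cF \to \D(\sigma_{\le m}\D(\cF)) \to 0$. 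Its left-hand term is a nonzero subsheaf of $\cF$ (nonzero because its stalk at $x$ is the Matlis dual of $(\sigma_{\ge m+1}\D(\cF))_x \ne 0$), so, since $\cgl Xw$ and $\cgg Xw$ are stable under subobjects, it is pure of step $w$. If $(\sigma_{\le m}\D(\cF))_x \ne 0$, then $\D(\sigma_{\ge m+1}\D(\cF))$ has generic length strictly less than $\ell$, so by induction the lemma applies to it and $\sigma_{\ge m+1}\D(\cF) = \D(\D(\sigma_{\ge m+1}\D(\cF)))$ is pure of step $\alt X - w = m$, hence lies in $\cgl Xm$; but it also lies in $\cgg X{m+1} = (\cgl Xm)^\perp$ by Proposition~\ref{prop:noeth-orth}, so it vanishes --- a contradiction.

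The only case the induction does not reach is $(\sigma_{\le m}\D(\cF))_x = 0$, i.e.\ $\D(\cF) \in \cgg X{m+1}$ generically; this is the main obstacle, since together with the first inclusion it would force $\D(\cF)=0$ generically, contradicting $\cF \ne 0$. I would dispose of it by first settling $\cF = \cO_X$: here $\cO_X$ is pure of step $0$ on the (reduced) scheme and $\D(\cO_X) = \omega_X$, and running the same dualization on $\sigma_{\ge \alt X + 1}\omega_X$, together with the defining property $\omega_X|_V \notin \cgg V{\alt X + 1}$, forces $\sigma_{\ge \alt X + 1}\omega_X = 0$ generically, so $\omega_X$ is pure of step $\alt X$. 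One then reduces the general case to this one: over the reduced scheme a coherent sheaf is locally free on a dense open, so $\D(\cF) \simeq \cF^\vee \otimes \omega_X$ with $\cF^\vee = \cHom(\cF,\cO_X)$; condition~(S\ref{ax:tensorl}$'$) applied with $\cO_X$ gives $\cF^\vee \in \cgg X{-w}$, and comparing top exterior powers --- using that $\det\cF \otimes \det\cF^\vee \simeq \cO_X$ is pure of step $0$, that step is additive under $\otimes$ (axioms~(S\ref{ax:tensorl}),~(S\ref{ax:tensorr})), and that the step of a pure sheaf is well-defined --- pins $\cF^\vee$ down to be pure of step exactly $-w$, whence $\D(\cF) = \cF^\vee \otimes \omega_X$ is pure of step $\alt X - w$. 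The non-reduced case is handled by a filtration argument along powers of the nilradical, invoking Proposition~\ref{prop:alt-defn}. The hard part, and where I expect to spend the most effort, is exactly closing this last case: every biduality-symmetric tool (the first inclusion, Proposition~\ref{prop:noeth-orth}, additivity of step) yields only one-sided bounds, so the argument must genuinely use that the altitude is attained, $\omega_X \notin \cgg V{\alt X + 1}$.
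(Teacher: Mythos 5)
Your first inclusion is exactly the paper's, and your inductive branch is essentially sound: when $(\sigma_{\le \alt X-w}\D(\cF))_x\neq 0$, dualizing the canonical short exact sequence, using biduality and the fact that Matlis duality at the generic point preserves length, does give a legitimate induction on generic length (this part is a genuinely different, and workable, device from the paper). The gap is precisely the residual case $(\sigma_{\le \alt X-w}\D(\cF))_x=0$, which you yourself flag as unresolved. Two specific problems there. First, the assertion that this case ``together with the first inclusion would force $\D(\cF)=0$ generically'' is false: $\cgg X{\alt X-w+1}\subset\cgg X{\alt X-w}$, so there is no tension at all between the two memberships. Second, the determinant reduction does not close the case. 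The axioms only propagate bounds in one direction: \sref{ax:tensorl} bounds tensor products from above, and \sref{ax:serrer}--\sref{ax:tensorr} bound them from below generically. Hence from $\det\cF\otimes\det\cF^\vee\simeq\cO_X$ (pure of step $0$ on reduced $X$) and $\det\cF$ generically pure of step $rw$ you can only conclude that every generic lower bound for $\det\cF^\vee$ is $\le -rw$ and every upper bound is $\ge -rw$; nothing in the axioms forces the upper bound $-rw$, and obtaining it is exactly the same ``upper bound on a dual'' problem you started with, now for a line bundle. (Even if $\det\cF^\vee$ were known to be pure of step $-rw$, purity of $\cF^\vee$ itself would not follow without a further argument.) So the reduction to the $\cO_X$ case is circular/incomplete --- note the $\cO_X$ case itself is fine on reduced $X$, since the generic stalk of $\omega_X$ has length one --- and the nilradical-filtration step for nonreduced $X$ inherits the gap.

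What is missing is the mechanism the paper actually uses, which handles all cases at once with no case split and no reduction to $\cO_X$: writing $\cG=\D(\cF)$ and letting $N$ be minimal with $\sigma_{\ge N+1}\cG$ generically zero but $\sigma_{\ge N}\cG$ nonzero on every dense open, suppose $N>\alt X-w$. Dualizing $0\to\sigma_{\le N-1}\cG\to\cG\to\sigma_{\ge N}\cG\to 0$ exhibits $\D(\sigma_{\ge N}\cG)$ as a subsheaf of $\cF$, hence in $\cgg Xw$; then by \sref{ax:serrer}--\sref{ax:tensorr} the image of the evaluation morphism $\cHom(\sigma_{\ge N}\cG,\omega_X)\otimes\sigma_{\ge N}\cG\to\omega_X$ lies generically in $\cgg X{w+N}$ with $w+N>\alt X$, so it meets $\sigma_{\le\alt X}\omega_X$ trivially; Lemma~\ref{lem:dc-sum}, together with the attainment of the altitude ($\sigma_{\le\alt X}\omega_X$ generically nonzero), forces the evaluation to vanish generically, hence $\cHom(\sigma_{\ge N}\cG,\omega_X)=0$ and, by biduality, $\sigma_{\ge N}\cG=0$ generically --- a contradiction. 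Your proposal supplies no substitute for this step, so the crux of the lemma remains unproved.
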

(Recall that $\cF$ is pure of step $w$ if it lies in $\cgl Xw \cap
\cgg Xw$.)
\begin{proof}
Let $x$ be a generic point of $X$, and let $\cG = \D(\cF)$.  Then we have
$R\Gamma_x\cG
\simeq \RHom(\cF_x, I_x)$, but since $I_x$ is injective, this is simply
$\cHom(\cF_x,I_x)$.  Since $R\Gamma_x\cG$ is concentrated in degree $0$,
there is an open subscheme $U \subset X$ such that $\cG|_U$
is a sheaf.  Replacing $U$ by a smaller open subscheme if necessary, we may
assume that $\omega_X|_U \in \cgg U{\alt X}$.  By Remark~\ref{rmk:homl}, we then know that $\cG|_U \simeq \cHom(\cF|_U, \omega_X|_U)$
lies in $\cgg U{\alt X - w}$.

Now, consider the sequence
\[
\Gamma_x \sigma_{\le \alt X - w}\cG|_U \subset \Gamma_x\sigma_{\le \alt X - w
+1}\cG|_U \subset \cdots. 
\]
This sequence is eventually constant; let $N \ge \alt X - w$ be the smallest
integer such that 
\[
\Gamma_x \sigma_{\le N}\cG|_U = \Gamma_x \sigma_{\le v}\cG|_U
\]
for all $v \ge N$.  By replacing
$U$ by a smaller open subscheme if necessary, we may assume that
$\sigma_{\ge N+1}\cG|_U = 0$, but $\sigma_{\ge N}\cG|_V \ne 0$ for all open
subschemes $V \subset U$. 

If $N = \alt X - w$, we are done, so assume $N > \alt X - w$.  Now, form the
exact sequence 
\[
0 \to \sigma_{\le N-1}\cG|_U \to \cG|_U \to \sigma_{\ge N}\cG|_U \to 0.
\]
By invoking Lemma~\ref{lem:dual-conc} three times and replacing $U$ by a smaller open
subscheme as necessary, we can assume that the duality functor takes each
term of the above sequence to a sheaf.  (Note that $N$ has been chosen in
such a way that $\sigma_{\ge N}\cG|_U$ remains nonzero as we shrink $U$.)  Thus, $\D$
transforms the exact sequence above into 
\[
0 \to \D(\sigma_{\ge N}\cG)|_U \to \cF|_U \to \D(\sigma_{\le N-1}\cG)|_U
\to 0. 
\]
Now, $\D(\sigma_{\ge N}\cG)|_U$ is a subsheaf of $\cF|_U$, so clearly
$\D(\sigma_{\ge N}\cG)|_U \in \cgg Uw$.  By axioms~\sref{ax:serrer} and~\sref{ax:tensorr}, we
may replace $U$ by a smaller open subscheme and assume that the tensor
product $\D(\sigma_{\ge N}\cG)|_U \otimes \sigma_{\ge N}\cG|_U \simeq \cHom(\sigma_{\ge N}\cG|_U, \omega_X|_U) \otimes \sigma_{\ge N}\cG|_U$ lies in
$\cgg U{w + N}$, as does the image of the evaluation morphism 
\[
\ev: \cHom(\sigma_{\ge N}\cG|_U, \omega_X|_U) \otimes \sigma_{\ge N}\cG|_U
\to \omega_X|_U. 
\]
By assumption, $w + N > \alt X$, so $\omega_X|_U$ contains the direct sum
$\sigma_{\le \alt X}\omega_X|_U \oplus \im(\ev)$.  By Lemma~\ref{lem:dc-sum},
we can replace $U$ by a smaller subscheme and assume that $\im(\ev) = 0$
(clearly, $\sigma_{\le \alt X}\omega_X|_U$ cannot be $0$).  But $\ev = 0$
implies that $\cHom(\sigma_{\ge N}\cG|_U, \omega_X|_U) = 0$, and
$\D(\sigma_{\ge N}\cG)|_U = 0$ in turn implies that $\sigma_{\ge N}\cG|_U =
0$.  This is a contradiction, so we conclude that $N = \alt X - w$, and
$\cG|_U \in \cgl U{\alt X - w} \cap \cgg U{\alt X - w}$. 
\end{proof}

\begin{prop}\label{prop:st-dual}
Assume that $X$ is irreducible and that $\omega_X$ is a sheaf.  Let $\cF
\in \cgg Xw$.  Then there is an open
subscheme $U \subset X$ such that $\D(\cF)|_U$ is a sheaf in $\cgl U{\alt X - w}$.
\end{prop}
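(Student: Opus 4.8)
The plan is to reduce the general statement to Lemma~\ref{lem:st-dual} by an induction on how far $\cF$ is from being pure. First I would record the elementary observation that if $\cF \in \cgg Xw$ then $\sigma_{\le w-1}\cF = 0$: indeed $\sigma_{\le w-1}\cF$ is a subobject of $\cF$, hence lies in $\cgg Xw$ (closure under subobjects), while it also lies in $\cgl X{w-1}$, so axiom~\sref{ax:hom} forces its identity endomorphism to be zero. Consequently $\sigma_{\le w}\cF$ is either zero or pure of step $w$ (one checks $\sigma_{\le w-1}\sigma_{\le w}\cF = \sigma_{\le w-1}\cF = 0$). By axiom~\sref{ax:bdd} pick the least $v$ with $\cF \in \cgl Xv$; if $v < w$ then $\cF$ lies in both $\cgl X{w-1}$ and $\cgg Xw$, so $\cF = 0$ by axiom~\sref{ax:hom} again and there is nothing to prove. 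So assume $v \ge w$ and induct on $v - w$.

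The base case $v = w$ is immediate: $\cF$ is then pure of step $w$ (or zero), and Lemma~\ref{lem:st-dual} produces an open subscheme $U$ on which $\D(\cF)$ is a sheaf, pure of step $\alt X - w$, in particular lying in $\cgl U{\alt X - w}$. For the inductive step I would use the short exact sequence
\[
0 \to \sigma_{\le w}\cF \to \cF \to \sigma_{\ge w+1}\cF \to 0.
\]
Here $\sigma_{\le w}\cF$ is pure of step $w$ (or zero), and $\sigma_{\ge w+1}\cF$ lies in $\cgg X{w+1}$ and, being a quotient of $\cF$, also in $\cgl Xv$; since $v - (w+1) < v - w$, the inductive hypothesis applies to it. Applying Lemma~\ref{lem:st-dual} to $\sigma_{\le w}\cF$ and the inductive hypothesis to $\sigma_{\ge w+1}\cF$, then intersecting the two resulting dense open subschemes (using irreducibility of $X$), I obtain an open $U$ on which $\D(\sigma_{\le w}\cF)$ is a sheaf in $\cgl U{\alt X - w}$ and $\D(\sigma_{\ge w+1}\cF)$ is a sheaf in $\cgl U{\alt X - w - 1}$.

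Finally I would apply $\D$ to the short exact sequence. Since $\D$ is a contravariant triangulated functor commuting with restriction to open subschemes, it sends the associated distinguished triangle to a distinguished triangle on $U$; its long exact cohomology sequence, combined with the fact that the duals of the two outer terms are concentrated in degree $0$ on $U$, shows at once that $\D(\cF)|_U$ is itself a sheaf and fits into a short exact sequence
\[
0 \to \D(\sigma_{\ge w+1}\cF)|_U \to \D(\cF)|_U \to \D(\sigma_{\le w}\cF)|_U \to 0.
\]
Both outer terms lie in $\cgl U{\alt X - w}$ (using axiom~\sref{ax:inc} for the first), and since $\cgl U{\alt X - w}$ is a Serre subcategory it is closed under extensions, so $\D(\cF)|_U \in \cgl U{\alt X - w}$, completing the induction.

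The only step needing genuine care — the closest thing to an obstacle — is the exactness of the sequence of duals: this relies on the two outer duals being \emph{honest sheaves} after restriction, which is precisely why Lemma~\ref{lem:st-dual} is stated so as to output a sheaf and why the induction must carry the ``sheaf'' conclusion along with the containment. Everything else (bookkeeping of dense open subschemes, compatibility of $\D$ with open restriction) is routine given the results already established.
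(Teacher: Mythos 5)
Your proof is correct and follows essentially the same route as the paper's: induction on $v-w$, the short exact sequence $0 \to \sigma_{\le w}\cF \to \cF \to \sigma_{\ge w+1}\cF \to 0$, Lemma~\ref{lem:st-dual} on the pure piece, the inductive hypothesis on the quotient, and closure of $\cgl U{\alt X - w}$ under extensions. The only differences are cosmetic: you make explicit that $\sigma_{\le w}\cF$ is pure of step $w$ (which the paper leaves implicit), and you obtain sheaf-ness of the dualized outer terms directly from Lemma~\ref{lem:st-dual} and the inductive statement rather than via a preliminary appeal to Lemma~\ref{lem:dual-conc} as the paper does.
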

\begin{proof}
By axiom~\sref{ax:bdd}, there is a $v \in \Z$ such that $\cF \in \cgl
Xv$. Obviously, we have $v \ge w$.  We proceed by induction on $v - w$.  If
$v - w = 0$, then we are in the situation of Lemma~\ref{lem:st-dual}, so
the statement is already known.  Otherwise, consider the short exact
sequence
\[
0 \to \sigma_{\le w}\cF \to \cF \to \sigma_{\ge w+1}\cF \to 0.
\]
Apply Lemma~\ref{lem:dual-conc} to each term of this sequence: let $U$ be an open subscheme on which $\D$ takes all three terms to sheaves.  Thus, the long exact sequence
\[
0 \to H^0(\D(\sigma_{\ge w+1}\cF)) \to H^0(\D(\cF)) \to H^0(\D(\sigma_{\le w}\cF)) \to H^1(\D(\sigma_{\ge w+1}\cF)) \to \cdots,
\]
when restricted to $U$, gives to the short exact sequence
\[
0 \to \D(\sigma_{\ge w+1}\cF)|_U \to \D(\cF)|_U \to \D(\sigma_{\le w}\cF)|_U \to 0.
\]
Now, by the inductive assumption, we can replace $U$ by a smaller open subscheme so that the first term lies in $\cgl U{\alt X - w -1}$, and by Lemma~\ref{lem:st-dual}, we can similarly assume that the last term lies in $\cgl U{\alt X - w}$.  Since $\cgl U{\alt X-w}$ is stable under extensions, we have $\D(\cF)|_U \in \cgl U{\alt X - w}$, as desired.
\end{proof}

%%%%%%%%%%%%%%%%%%%%%%%%%%%%%%%%%%%%%%%%%%%%%%%%%%%%%%%%%%%%%%%%%%%%%%%%%%%
\section{The Staggered $t$-structure: Orthogonality}
\label{sect:stag-orth}
%%%%%%%%%%%%%%%%%%%%%%%%%%%%%%%%%%%%%%%%%%%%%%%%%%%%%%%%%%%%%%%%%%%%%%%%%%%

In this section, we define the subcategories of $\dgb X$ that will constitute the staggered $t$-structure, and we establish a number of their basic properties.  The proof that they actually give a $t$-structure will be completed in the next section.

Let $X^\gen$ be the set of generic points of ($G$-invariant) subschemes of
$X$.  For each $x \in X^\gen$, let $\barGx$ denote the smallest ($G$-invariant) closed subset of $X$ containing $x$.  ($\barGx$ is not endowed with a fixed subscheme structure.)
We continue to assume, as in the previous section, that $X$ is endowed with an $s$-structure.

Let us fix, once and for all, an equivariant dualizing complex $\omega_X$ on $X$.  For each $x \in X^\gen$, let $\cod \barGx$ be the degree in which the complex $R\Gamma_x(\omega_X)$ is concentrated.  This is a ``codimension function'' in the sense of~\cite[\S V.7]{har}: for any other point $y \in X^\gen$, we have
\[
\cod \barGx - \cod \barGy = \dim \barGy - \dim \barGx.
\]
(In~\cite{bez:pc}, $\omega_X$ was assumed to be normalized so that $\cod \barGx = -\dim \barGx$, but we do not make that assumption.)  More generally, for any closed subscheme $Z \subset X$, we define
\[
\cod Z = \min \{ \cod \barGx \mid x \in Z^\gen \}.
\]

If $j: U \hto X$ is an open subscheme, then $j^*\omega_X$ is a dualizing complex on $U$, and if $i: Z \hto X$ is a closed subscheme, then $Ri^\flat \omega_X$ is a dualizing complex on $Z$.  Thus, the choice of $\omega_X$ determines a choice of dualizing complex on all locally closed subschemes of $X$.  The notation $\D$, when applied to complexes of sheaves on a subscheme of $X$, should be understood to be defined with respect to the dualizing complex obtained from $\omega_X$ in this way.

\begin{defn}
The \emph{staggered codimension} of an irreducible closed
$G$-invariant subscheme $Y \subset X$, denoted $\scod Y$, is defined to be
$\cod Y + \alt Y$.
\end{defn}

\begin{defn}\label{defn:perv}
A \emph{perversity} is a function $p: X^\gen \to \Z$ such that for any $x,
y \in X^\gen$ with $x \in \barGy$, we have
\[
\begin{aligned}
&\text{(monotonicity)}\qquad & p(x) &\ge p(y) \\
&\text{(comonotonicity)}\qquad& \scod \barGx - p(x) &\ge
 \scod \barGy - p(y)
\end{aligned}
\]
If $p$ is a perversity, then the function $\bar p: X^\gen \to \Z$ defined
by
\[
\bar p(x) = \scod \barGx - p(x)
\]
is clearly also a perversity.  It is called the \emph{dual perversity} to
$p$.
\end{defn}
Note that if $y$ is another generic point of $\barGx$, then the monotonicity condition implies that $p(x) = p(y)$.

Define two full subcategories of $\dgm X$ as follows:
\begin{align*}
\dgml Xn_\st &= \{ \cF \in \dgm X \mid \text{for all $k \in \Z$, $H^k(\cF)
\in \cgl X{n-k}$} \} \\
\dgml Xn_\bt &= \left\{ \cF \in \dgm X \,\bigg|
\begin{array}{c}
\text{for all $k \in \Z$, there is a dense open subscheme} \\
\text{$V \Subset X$ such that $H^k(\cF)|_V \in \cgl V{n-k}$}
\end{array}
\right\}
\end{align*}
Since the various $\cgl X{n-k}$ are stable under extensions, it is clear that $\dgml Xn_\st$ and $\dgml Xn_\bt$ are stable under extensions as well.  It is also clear that both of these categories are stable under all truncation functors (with respect to the standard $t$-structure), since they are defined by conditions on cohomology sheaves.

\begin{rmk}\label{rmk:stag-deg}
The notion of ``staggered degree'' that was informally introduced in Section~\ref{sect:intro} can be formalized as follows: an object $\cF \in \dgb X$ is said to be \emph{concentrated in staggered degree $n$} if for all $k \in \Z$, the sheaf $H^k(\cF)$ is pure of step $n - k$.  This notion is not useful for the general developments of this section or the following one, but it will come up when we study simple objects in Section~\ref{sect:ic}.
\end{rmk}

Given a perversity function $p$, define a full subcategory of $\dgm X$ by
\begin{equation}\label{eqn:stagl}
\p \dgml X0 = \left\{ \cF \,\Bigg| 
\begin{array}{c}
\text{for any $x \in X^\gen$ and any closed} \\
\text{subscheme structure $i: Z\hto X$} \\
\text{on $\barGx$, we have $Li^*\cF \in \dgml Z{p(x)}_\bt$}
\end{array} \right\}.
\end{equation}
Next, we define a full subcategory of $\dgp X$ by
\begin{equation}\label{eqn:stagr}
\p \dgpg X0 = \D(\barp \dgml X0).
\end{equation}
The fact that the various $\dgml Z{p(x)}_\bt$ are stable under extensions implies that $\p \dgml X0$ and $\p \dgpg X0$ are as well.  We also define
\[
\p \dgml Xn = \p \dgml X0[-n]
\qquad\text{and}\qquad
\p \dgpg Xn = \p \dgpg X0[-n],
\]
as well as
\[
\p\dgbl Xn = \p\dgml Xn \cap \dgb X
\qquad\text{and}\qquad
\p\dgbg Xn = \p\dgpg Xn \cap \dgb X.
\]

The rest of this section and the following one will be devoted to the proof of the following theorem, which is one of the main results of the paper.

\begin{thm}\label{thm:stag}
$(\p\dgbl X0, \p\dgbg X0)$ is a nondegenerate,
bounded $t$-structure on $\dgb X$.
\end{thm}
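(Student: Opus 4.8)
The plan is to verify, for the pair $(\p\dgbl X0,\p\dgbg X1)$, the three defining properties of a $t$-structure — the shift inclusions $\p\dgbl X{-1}\subset\p\dgbl X0$ and $\p\dgbg X1\subset\p\dgbg X0$; the vanishing $\Hom(\cF,\cG)=0$ for $\cF\in\p\dgbl X0$ and $\cG\in\p\dgbg X1$; and the existence, for each $\cF\in\dgb X$, of a distinguished triangle $\cF'\to\cF\to\cF''\to$ with $\cF'\in\p\dgbl X0$ and $\cF''\in\p\dgbg X1$ — and then nondegeneracy and boundedness. A useful first remark is that the whole package is self-dual: from $\overline{\bar p}=p$ and $\p\dgpg X0=\D(\barp\dgml X0)$ one gets $\D(\p\dgbl X0)=\barp\dgbg X0$ and $\D(\p\dgbg X1)=\barp\dgbl X{-1}$, so (using $\D\circ\D\simeq\id$ and $\Hom(\cA,\D\cB)\simeq\Hom(\cB,\D\cA)$) each $\ge$-statement for $p$ is equivalent to a $\le$-statement for $\bar p$, and the $\Hom$-vanishing for $p$ is equivalent to that for $\bar p$. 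Thus it is enough to treat the $\le$-side conditions and the $\Hom$-vanishing. Comonotonicity enters here, passively but essentially: it is what makes $\bar p$, and all restrictions of $p$ to closed subschemes, again perversities, so that the duality reduction and the inductive hypotheses below are legitimate.

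\textbf{The easy axioms.} The inclusion $\p\dgbl X{-1}\subset\p\dgbl X0$ follows from axiom~\sref{ax:inc}: $\cgl V{m-1}\subset\cgl Vm$ gives $\dgml Z{n-1}_\bt\subset\dgml Zn_\bt$, and $Li^*$ commutes with shifts. For boundedness ($\bigcup_n\p\dgbl Xn=\dgb X$): given $\cF\in\dgb X$ with $H^k(\cF)=0$ for $k>b$, and a closed subscheme structure $i\colon Z\hto X$ on some $\barGx$, use that $\cg W$ has enough locally free objects to represent $\cF$ by a bounded-above complex of locally free sheaves over a dense open $W\subset X$ meeting $Z$; then every $H^k(Li^*_Z\cF)$ is, on $V=W\cap Z$, a subquotient of a free sheaf, hence in $\cgl V0$ by Proposition~\ref{prop:Ox-zero} and axiom~\sref{ax:serre}, with one $V$ serving all $k$. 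As $p$ is monotone it attains its minimum $m_0$ over the finite set of generic points of $X$, whence $\cF\in\p\dgbl Xn$ for $n\ge b-m_0$; apply $\D$ for the dual statement. For nondegeneracy ($\bigcap_n\p\dgbl Xn=0$): if $\cF\ne0$, take a generic point $x$ of $\mathrm{supp}\,\cF$ and a scheme structure $Z$ on $\barGx$; by Nakayama some $H^k(Li^*_Z\cF)$ is nonzero at $x$ hence on every dense open of $Z$, and a nonzero sheaf cannot lie in $\cgl Vm$ for all $m$ (axioms~\sref{ax:hom},~\sref{ax:bdd}), contradicting $\cF\in\p\dgbl Xn$ for $n\ll0$; apply $\D$ for the dual.

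\textbf{Orthogonality — the heart of the argument.} I would prove the $\Hom$-vanishing by Noetherian induction on $X$, assuming the full theorem known on every proper closed subscheme. Given $\cF\in\p\dgbl X0$ and $\cG\in\p\dgbg X1$, choose a dense open $j\colon U\hto X$ with closed complement $i\colon Z\hto X$ (with some scheme structure) small enough that over $U$ the picture is completely generic: $U$ a disjoint union of irreducibles with $\omega_X|_U$ a sheaf, each cohomology sheaf of $\cF$, $\D\cF$, $\cG$, $\D\cG$ locally free, and each of these four complexes splitting as a sum of shifts of its cohomology sheaves. The long exact sequence of Lemma~\ref{lem:found},
\[
\varinjlim_{Z'}\Hom(Li^*_{Z'}\cF,Ri^\flat_{Z'}\cG)\to\Hom(\cF,\cG)\to\Hom(j^*\cF,j^*\cG),
\]
reduces the claim to the vanishing of the two outer terms. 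For the left term: since the definition of $\p\dgbl X0$ quantifies over all closed subscheme structures, $Li^*_{Z'}\cF$ already satisfies the defining $\le$-conditions on $Z'$ for the restricted perversity, and by Lemma~\ref{lem:found-dual} one has $Ri^\flat_{Z'}\cG=\D_{Z'}(Li^*_{Z'}\D\cG)\in\p\dgpg{Z'}1$; as $Z'$ is a proper closed subscheme, the inductive hypothesis yields the vanishing — a truncation argument, using that these $\Hom$-groups see only finitely many cohomology sheaves of the (possibly unbounded) $Li^*_{Z'}\cF$ and $Ri^\flat_{Z'}\cG$, together with Lemma~\ref{lem:li}, reduces this to the bounded setting where induction applies. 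For the right term $\Hom(j^*\cF,j^*\cG)$: peel off a proper closed subset of $U$ outside which $\cF$ and $\cG$ are fully generic, by a further application of Lemma~\ref{lem:found} (absorbed into the outer induction), reducing to the generic case, where by the explicit description just arranged $\Hom$ is a direct sum of $\Ext^{k-l}_{\cg U}(H^k(\cF),H^l(\cG))$. Since over $U$ the dualizing complex is a sheaf, $\cod$ vanishes at the generic points, so Proposition~\ref{prop:st-dual} converts the membership of $\cF$ in $\p\dgbl U0$ and $\cG$ in $\p\dgbg U1$ into step inequalities $\step H^l(\cG)\ge\step H^k(\cF)+1+(k-l)$; these kill the $\Ext^0$-terms by axiom~\sref{ax:hom} and the higher ones by axiom~\sref{ax:ext2} (accessed through Lemmas~\ref{lem:der-hom} and~\ref{lem:li}), so $\Hom(j^*\cF,j^*\cG)=0$. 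The delicate point, which I expect to be the main obstacle, is extracting a usable ``$Ri^\flat$-flavored'' description of $\p\dgpg X0$ from its definition as $\D(\barp\dgml X0)$ and tracking the arithmetic of codimension functions, altitudes, steps and perversities through the induction so that these vanishing axioms really do apply at every stage.

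\textbf{Truncation triangles.} With orthogonality established, the triangles are built by induction on $\dim X$. If $\dim X=0$, $\dgb X$ decomposes over the finitely many points of $X$ and the staggered $t$-structure is a reindexed shift of the standard one, so truncation is immediate. If $\dim X>0$, pick a dense open $j\colon U\hto X$ whose complement $i\colon Z_0\hto X$ (with some scheme structure) satisfies $\dim Z_0<\dim X$, shrinking $U$ further so that — by the $s$-exactness of $i^*$ from Proposition~\ref{prop:res-closed} and monotonicity — the defining conditions of $\p\dgbl U0$ and $\p\dgbg U1$ reduce to conditions on the cohomology sheaves alone, allowing the staggered truncation on $U$ to be constructed directly out of the $s$-structure truncations $\sigma_{\le w}$. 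The definitions are recollement-compatible — $\cF\in\p\dgbl X0$ iff $j^*\cF\in\p\dgbl U0$ and $Li^*_{Z_0}\cF\in\p\dgml{Z_0}0$, with the dual statement for $\p\dgbg X1$ via Lemma~\ref{lem:found-dual} — so one glues: truncate $j^*\cF$ on $U$, extend the resulting triangle to a triangle on $X$ using Lemma~\ref{lem:subcplx-extend} (which compensates for the lack of well-behaved $j_!$, $Rj_*$ in the coherent setting), then correct the two terms over $Z_0$ using the truncations available there by induction. Orthogonality ensures the corrected triangle lands in the prescribed subcategories and is the unique such triangle. Assembling all of this gives Theorem~\ref{thm:stag}.
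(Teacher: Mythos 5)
Your outline of the orthogonality step is essentially the paper's argument (noetherian induction via the long exact sequence of Lemma~\ref{lem:found}\eqref{it:f-les}, plus a generic computation on a small open $U$ using duality, altitude/step arithmetic, and axiom~\sref{ax:ext2}), and your nondegeneracy argument is fine. But two of your steps contain genuine gaps. First, a smaller one: in the boundedness argument you claim that the cohomology sheaves of $Li^*_Z\cF$ are ``subquotients of a free sheaf, hence in $\cgl V0$.'' Equivariant locally free sheaves do \emph{not} lie in $\cgl V0$ in general --- the whole point of an $s$-structure is that irreducible vector bundles on orbits carry arbitrary steps (e.g.\ $V_n$, $\cO_X(n)$ in the examples), and there are not enough genuinely free objects $\cO_X^{\oplus n}$ in $\cg X$. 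Boundedness has to go through axiom~\sref{ax:bdd} applied to the finitely many $H^k(\cF)$ together with a statement like Proposition~\ref{prop:st-perv} (whose proof uses Lemma~\ref{lem:li}, hence axiom~\sref{ax:ext2}) to control the steps of \emph{all} cohomology sheaves of $Li^*_{Z}\cF$ on dense opens; your route as written fails. Similarly, in the orthogonality step the assertion that after shrinking $U$ every relevant complex ``splits as a sum of shifts of its cohomology sheaves'' is unjustified (equivariant $\Ext^{\ge 2}$ between vector bundles need not vanish generically); this is repairable by the standard truncation d\'evissage you use elsewhere, but the key arithmetic converting $\cG\in\p\dgpg U1$ into step bounds on $H^l(\cG)$ --- which you explicitly defer --- is exactly the content of the paper's Proposition~\ref{prop:stag-hom} and is not optional.

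The serious gap is in the construction of truncation triangles. Your plan rests on an unproven ``recollement compatibility'': that $\cF\in\p\dgbl X0$ iff $j^*\cF\in\p\dgbl U0$ and $Li^*_{Z_0}\cF\in\p\dgml{Z_0}0$ for one chosen structure $Z_0$, with a dual statement for $\p\dgbg X1$ via $Ri^\flat_{Z_0}$. Only the forward implications are available (Lemma~\ref{lem:perv-res}); the converses are not established anywhere, and for the $\ge$ side there is no $Ri^\flat$-style characterization at all --- $\p\dgpg X0$ is defined purely as $\D(\barp\dgml X0)$, and (as you yourself note) extracting such a description is the delicate point. Even the $\le$-side converse is unclear, since the definition quantifies over \emph{all} thickenings of $\barGx$, not just subschemes of the chosen $Z_0$. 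Moreover ``correct the two terms over $Z_0$\dots orthogonality ensures the corrected triangle lands in the prescribed subcategories'' is not an argument: orthogonality gives uniqueness of a truncation triangle, not existence, and nothing in your gluing step controls the Grothendieck dual of the error term, which is precisely what must lie in $\barp\dgbl X{-1}$. In the paper this is the hard content: Lemma~\ref{lem:stag-dt1} builds the approximation from the $\sigma_{\le w}$-truncations of cohomology sheaves, uses Proposition~\ref{prop:st-dual} to show $\D(\cF'')|_V\in\dgbl V{\scod\barGx-n-1}_\st$, kills the splicing obstructions with axiom~\sref{ax:ext2} so the $9$-lemma rearrangement works, extends by Lemma~\ref{lem:subcplx-extend}, and then Proposition~\ref{prop:stag-t} runs noetherian induction on supports with the $*$-operation, using Proposition~\ref{prop:st-perv} and the \emph{minimal} value of $p$ on the support to certify membership in $\p\dgbl X0$ and $\p\dgbg X1$. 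None of these ingredients appears in your sketch, and your base case is also not ``immediate'': even on a scheme that is a single orbit, the staggered $t$-structure mixes $\tau$ with $\sigma_{\le w}$, so constructing its truncations already requires the Ext-vanishing and the duality control above, not a reindexed standard truncation.
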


\begin{defn}
The $t$-structure $(\p\dgbl X0, \p\dgbg X0)$ is called the \emph{staggered $t$-structure} on $\dgb X$.  Its heart, denoted $\p\sm X$, is the category of \emph{staggered sheaves}.
\end{defn}

Let $Y \subset X$ be a locally closed subscheme.  Any perversity $p: X^\gen \to \Z$ determines, by restriction, a perversity $Y^\gen \to \Z$ as well.  By an abuse of notation, we denote the latter by $p$ as well.  Then, the categories $\p\dgml Y0$ and $\p\dgpg Y0$ are defined just as in~\eqref{eqn:stagl}--\eqref{eqn:stagr}.

\begin{lem}\label{lem:perv-res}
\begin{enumerate}
\item Let $j: U \hto X$ be an open subscheme.  The functor $j^*$ takes $\p\dgml X0$ to $\p\dgml U0$, and $\p\dgpg X0$ to $\p\dgpg U0$.
\item Let $i: Z \hto X$ be a closed subscheme.  Then $Li^*$ takes $\p\dgml X0$ to $\p\dgml Z0$, and $Ri^\flat$ takes $\p\dgpg X0$ to $\p\dgpg Z0$.
\item Let $i: Z \hto X$ be a closed subscheme.  Then $i_*$ takes $\dgml Zn_\st$ to $\dgml Xn_\st$.
\end{enumerate}
\end{lem}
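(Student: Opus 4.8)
The plan is to deduce all six assertions from three ingredients: the $s$-exactness of $i_*$ and $j^*$ (Propositions~\ref{prop:res-closed} and~\ref{prop:res-open}), the duality compatibilities $j^*\D \simeq \D j^*$ (which holds because $\omega_U = \omega_X|_U$ and $\cRHom$ commutes with open restriction) and $Ri^\flat \D \simeq \D Li^*$ (Lemma~\ref{lem:found-dual}), and the associativity of derived pullback. I would dispatch the formal statements first. For part~(3): since $i_*$ is exact, $H^k(i_*\cF) \simeq i_*H^k(\cF)$, and $i_*$ is $s$-exact, so $H^k(i_*\cF) \in \cgl X{n-k}$ whenever $H^k(\cF) \in \cgl Z{n-k}$; hence $i_*\cF \in \dgml Xn_\st$. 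For the $Li^*$ part of~(2): a ($G$-invariant) closed subscheme $i_W: W \hto Z$ with underlying space $\barGx$, where $x \in Z^\gen \subseteq X^\gen$, is also a closed subscheme structure on $\barGx$ inside $X$, and $Li_W^*(Li^*\cF) \simeq L(i \circ i_W)^*\cF$; so for $\cF \in \p\dgml X0$ the defining condition~\eqref{eqn:stagl} for $X$, applied to this subscheme of $X$, gives $L(i\circ i_W)^*\cF \in \dgml W{p(x)}_\bt$ immediately. Since every closed subscheme structure on $\barGx$ in $Z$ arises in this way, $Li^*\cF \in \p\dgml Z0$.

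The $j^*$ part of~(1) for $\p\dgml{}$ is where the real content lies. Given $\cF \in \p\dgml X0$, a point $x \in U^\gen$, and a closed subscheme structure $i_W: W \hto U$ on $\barGx \cap U$, I would let $\overline{W} \hto X$ be the scheme-theoretic closure of $W$ in $X$. Because $x \in \barGx \cap U$, the latter set is dense in $\barGx$, so $\overline{W}$ is a closed subscheme structure on $\barGx$, and $W \simeq \overline{W} \times_X U$; write $j_W: W \hto \overline{W}$ for the resulting (dense) open inclusion. By associativity of pullback, $Li_W^*(j^*\cF) \simeq j_W^*(Li_{\overline{W}}^*\cF)$, and condition~\eqref{eqn:stagl} for $X$ gives $Li_{\overline{W}}^*\cF \in \dgml{\overline{W}}{p(x)}_\bt$. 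It then remains to check that $j_W^*$ carries $\dgml{\overline{W}}{m}_\bt$ into $\dgml Wm_\bt$: if $V \Subset \overline{W}$ is dense open with $H^k(Li_{\overline{W}}^*\cF)|_V \in \cgl V{m-k}$, then $V \cap W$ is dense open in $\overline{W}$, hence dense in $W$, and restriction from $V$ to $V \cap W$ is $s$-exact by Proposition~\ref{prop:res-open}, so $H^k(Li_W^*(j^*\cF))|_{V\cap W} \in \cgl{V\cap W}{m-k}$. Thus $j^*\cF \in \p\dgml U0$.

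The remaining two assertions are obtained by transporting through $\D$, using that $\p\dgpg Y0 = \D(\barp\dgml Y0)$ on each locally closed $Y$. For the $\p\dgpg{}$ part of~(1): writing $\cF = \D(\cG)$ with $\cG \in \barp\dgml X0$, we get $j^*\cF \simeq \D(j^*\cG)$, and $j^*\cG \in \barp\dgml U0$ by the paragraph above, so $j^*\cF \in \D(\barp\dgml U0) = \p\dgpg U0$. For the $Ri^\flat$ part of~(2): with $\cF = \D(\cG)$ and $\cG \in \barp\dgml X0$, Lemma~\ref{lem:found-dual} gives $Ri^\flat\cF \simeq \D(Li^*\cG)$, and $Li^*\cG \in \barp\dgml Z0$ by the $Li^*$ part of~(2) already proved, hence $Ri^\flat\cF \in \p\dgpg Z0$.

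The only step that is not purely formal is the passage, via scheme-theoretic closure, from an arbitrary closed subscheme structure on $\barGx \cap U$ to a closed subscheme structure on $\barGx$, together with the elementary verification that pullback along a dense open immersion preserves the $\bt$-condition; I do not expect either to pose a serious difficulty. The rest is bookkeeping with the definitions~\eqref{eqn:stagl}--\eqref{eqn:stagr} and with adjunction, and one should also note that the perversity called ``$p$'' on each subscheme is the restriction of the one on $X$, so the degrees $p(x)$ that appear match up without further comment.
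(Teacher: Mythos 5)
Your proposal is correct and follows essentially the same route as the paper: part (3) from exactness and $s$-exactness of $i_*$, the $Li^*$ part of (2) directly from the definition via composition of pullbacks, and the $\p\dgpg{}{}$ statements by transport through $\D$ using $\D\circ Li^*\simeq Ri^\flat\circ\D$ and the compatibility of $\D$ with $j^*$. Your scheme-theoretic-closure argument for the $j^*$ part of (1) simply spells out in detail the step the paper dispatches in one line (that restriction along a dense open immersion preserves the $\bt$-condition), so there is nothing substantively different to compare.
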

Corollary~\ref{cor:perv-closed} is another statement that is similar in spirit to this lemma, but we will be able to prove after establishing Theorem~\ref{thm:stag}.
\begin{proof}
(1)~Since $j^*$ is exact and $s$-exact, it is clear that it takes $\dgml Xn_\st$ to $\dgml Un_\st$.  It follows that $j^*$ takes $\p\dgml X0$ to $\p\dgml U0$.  Because $\D$ commutes with $j^*$, we see that $j^*$ takes $\p\dgpg X0$ to $\p\dgpg U0$ as well.

(2)~It is immediate from the definition of $\p\dgml X0$ that $Li^*$ takes $\p\dgml X0$ to $\p\dgml Z0$.  We obtain the statement on $Ri^\flat$ from the fact that $\D \circ Li^* \simeq Ri^\flat \circ \D$.

(3)~This follows from the fact that $i_*$ is exact and $s$-exact.
\end{proof}

The following lemma is an immediate consequence of
Lemma~\ref{lem:omega-conc}.

\begin{lem}
For any $x \in X^\gen$ and any closed subscheme structure $i: Z \hto X$ on $\barGx$,
there is an open subscheme $V \subset Z$ such that $Ri^\flat\omega_X|_V$ is
concentrated in degree $\cod \barGx$.\qed
\end{lem}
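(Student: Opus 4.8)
The plan is to apply Lemma~\ref{lem:omega-conc} not to $\omega_X$ on $X$, but to the scheme $Z$ equipped with the dualizing complex $\omega_Z := Ri^\flat\omega_X$. Since $\uZ = \barGx$, the point $x$ is a generic point of $Z$, and $\omega_Z = Ri^\flat\omega_X$ is a dualizing complex on $Z$ lying in $\dgb Z$, as recorded above. Lemma~\ref{lem:omega-conc}, applied to $Z$, to $\omega_Z$, and to the generic point $x$, then yields at once: (i) $R\Gamma_x(\omega_Z)$ is concentrated in a single degree $d$; and (ii) there is an open subscheme $V \subset Z$ containing $x$ on which $\omega_Z = Ri^\flat\omega_X$ is concentrated in degree $d$. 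This is exactly the assertion of the lemma, except that we must still check that $d = \cod \barGx$.

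To identify $d$, recall that $\cod \barGx$ is by definition the degree in which $R\Gamma_x(\omega_X)$ is concentrated; so the point is that $R\Gamma_x(Ri^\flat\omega_X)$ and $R\Gamma_x(\omega_X)$ are concentrated in the same degree, i.e., that $\cod$ restricts, along $i$, to the codimension function attached to $\omega_Z$. This is the compatibility of Grothendieck--Serre duality with codimension functions in~\cite[Ch.~V]{har}; it may also be seen directly on stalks over the local ring $A = \cO_x$. Indeed, from $i_*Ri^\flat\omega_X \simeq \cRHom(i_*\cO_Z,\omega_X)$ one gets $(\omega_Z)_x \simeq \RHom_A(\cO_{Z,x}, (\omega_X)_x)$; since $x$ is generic in $Z$, the finitely generated $A$-module $\cO_{Z,x}$ is supported only at the maximal ideal of $A$, so $R\Gamma_x$ passes through $\RHom_A(\cO_{Z,x}, \cdot)$ and $R\Gamma_x(\omega_Z) \simeq \RHom_A(\cO_{Z,x}, R\Gamma_x(\omega_X))$. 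As $R\Gamma_x(\omega_X)$ is a shift by $\cod \barGx$ of an injective $A$-module (the injective hull of $k(x)$), this last complex is a single $A$-module placed in degree $\cod \barGx$. Hence $d = \cod \barGx$, and the open subscheme $V$ of step (ii) is the one sought.

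The main obstacle is precisely this degree bookkeeping: Lemma~\ref{lem:omega-conc} is invoked on $Z$ and returns a degree $d$ intrinsic to $\omega_Z$, whereas $\cod \barGx$ is defined through $\omega_X$ on $X$, so matching the two requires feeding in the behaviour of $Ri^\flat$ on dualizing complexes. Everything else is an immediate appeal to Lemma~\ref{lem:omega-conc} together with the remark that $x$ is a generic point of $Z$.
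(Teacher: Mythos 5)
Your proof is correct and follows the same route the paper intends: the paper gives no argument beyond declaring the lemma an immediate consequence of Lemma~\ref{lem:omega-conc} applied to $Z$ with its dualizing complex $\omega_Z = Ri^\flat\omega_X$, which is exactly your step (i)--(ii). Your local computation $R\Gamma_x(\omega_Z) \simeq \RHom_A(\cO_{Z,x}, R\Gamma_x(\omega_X)) \simeq \Hom_A(\cO_{Z,x}, I_x)[-\cod\barGx]$ (nonzero, since $\cO_{Z,x}$ surjects onto $k(x) \subset I_x$) simply makes explicit the degree identification that the paper treats as standard compatibility of codimension functions with $Ri^\flat$.
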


\begin{lem}\label{lem:pl-trunc}
If $\cF \in \p \dgml X0$, then for any $n \in \Z$, we have $\tau^{\ge n}\cF \in \p \dgbl X0$ and $\tau^{\le n}\cF \in \p \dgml X0$.
\end{lem}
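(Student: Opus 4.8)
The plan is to extract from the hypothesis a term-by-term description of how the cohomology sheaves of $\cF$ behave under derived pullback, and then to rebuild the two truncations from those sheaves by finitely many distinguished triangles. Throughout I will say that a membership statement holds ``generically'' to mean it holds after replacing the relevant scheme by a dense open subscheme, and I shrink such dense opens freely; whenever finitely many generic statements occur together they are to be intersected to a common dense open. Note first that $\tau^{\ge n}\cF$ lies in $\dgb X$ automatically, being bounded above (a truncation of $\cF \in \dgm X$) and bounded below by $n$; since $\p\dgbl X0 = \p\dgml X0 \cap \dgb X$, it therefore suffices to prove that both $\tau^{\le n}\cF$ and $\tau^{\ge n}\cF$ lie in $\p\dgml X0$.

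The key ingredient is the following auxiliary claim: \emph{if $\cG \in \dgm X$, if $x \in X^\gen$, and if $i : Z \hto X$ is a closed subscheme structure on the (irreducible) set $\barGx$ with $Li^*\cG \in \dgml Z{p(x)}_\bt$, then for each $j$ the sheaf $i^*H^j(\cG)$ lies generically in $\cgl Z{p(x)-j}$.} I would prove this by descending induction on $j$; the induction is well-founded since $\cG$, being bounded above, has a largest $j = b$ with $H^j(\cG) \neq 0$. For $j = b$ one has $H^b(Li^*\cG) \simeq i^*H^b(\cG)$ by right-exactness of $Li^*$, so the hypothesis on $Li^*\cG$ gives the claim immediately. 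For $j < b$: by the inductive hypothesis $i^*H^{j'}(\cG)$ lies generically in $\cgl Z{p(x)-j'}$ for every $j' > j$, so Lemma~\ref{lem:li} shows that all the higher derived pullbacks $L_r i^*H^{j'}(\cG)$ with $j' > j$ lie generically in $\cgl Z{p(x)-j'} \subseteq \cgl Z{p(x)-j-1}$; since $\tau^{\ge j+1}\cG$ is \emph{bounded}, a finite dévissage then puts every cohomology sheaf of $Li^*(\tau^{\ge j+1}\cG)$ generically in $\cgl Z{p(x)-j}$ as well. Now run the degree-$j$ portion of the cohomology long exact sequence of the triangle $Li^*(H^j(\cG)[-j]) \to Li^*(\tau^{\ge j}\cG) \to Li^*(\tau^{\ge j+1}\cG) \to$: using that $H^j(Li^*(H^j(\cG)[-j])) = i^*H^j(\cG)$, that $H^{j+1}(Li^*(H^j(\cG)[-j])) = 0$, that $H^j(Li^*\tau^{\ge j}\cG) \simeq H^j(Li^*\cG)$ (the usual consequence of right-exactness applied to $\tau^{\le j-1}\cG \to \cG \to \tau^{\ge j}\cG \to$), and that $Li^*\cG \in \dgml Z{p(x)}_\bt$, one finds that $i^*H^j(\cG)$ is generically an extension of a subsheaf of $H^j(Li^*\cG)$ by a quotient of $H^{j-1}(Li^*\tau^{\ge j+1}\cG)$, both generically in $\cgl Z{p(x)-j}$; since $\cgl Z{p(x)-j}$ is a Serre subcategory, so is $i^*H^j(\cG)$. \textbf{This auxiliary claim is the main obstacle}: $Li^*$ does not commute with standard truncation, so the individual sheaves $H^j(\cG)$ are not directly visible inside $Li^*\cG$, and Lemma~\ref{lem:li} is exactly the tool needed to keep the higher Tor contributions from different cohomological degrees under control.

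Granting the claim, the lemma follows quickly. Applying it to $\cF$ gives, for every $x$, every closed structure $i : Z \hto X$ on $\barGx$, and every $j$, that $i^*H^j(\cF)$ lies generically in $\cgl Z{p(x)-j}$. Let $\cG$ be either $\tau^{\le n}\cF$ or $\tau^{\ge n}\cF$; its cohomology sheaves are among the $H^j(\cF)$ (the rest being $0$), hence obey the same generic bound. Fix $x$, $Z$, and $k$. Since $Li^*$ is right-exact, $H^k(Li^*\cG) \simeq H^k(Li^*\tau^{\ge k}\cG)$, and $\tau^{\ge k}\cG$ is \emph{bounded}; a finite dévissage of $\tau^{\ge k}\cG$ together with Lemma~\ref{lem:li} exhibits $H^k(Li^*\cG)$, generically, as built by finitely many extensions from subquotients of the sheaves $L_{j-k}i^*H^j(\cG)$ for $j \ge k$, each of which lies generically in $\cgl Z{p(x)-j} \subseteq \cgl Z{p(x)-k}$; as $\cgl Z{p(x)-k}$ is a Serre subcategory, $H^k(Li^*\cG)$ lies generically in $\cgl Z{p(x)-k}$. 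This is precisely the condition $Li^*\cG \in \dgml Z{p(x)}_\bt$, and since $x$ and $Z$ were arbitrary, $\cG \in \p\dgml X0$. Thus $\tau^{\le n}\cF \in \p\dgml X0$ and $\tau^{\ge n}\cF \in \p\dgml X0$; combined with $\tau^{\ge n}\cF \in \dgb X$ noted above, this gives $\tau^{\ge n}\cF \in \p\dgbl X0$, as desired.
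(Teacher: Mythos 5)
Your proof is correct and follows essentially the same route as the paper's: both arguments proceed by a descending induction over cohomological degrees, use right-exactness of $Li^*$ to identify the top cohomology of the pullback, invoke Lemma~\ref{lem:li} to control the lower derived pullbacks of each cohomology sheaf generically, and reassemble the truncations by d\'evissage using that the categories $\cgl Zw$ are Serre and that the relevant subcategories are stable under extensions. The only difference is organizational: you isolate the generic bound on each $i^*H^j(\cF)$ as an auxiliary claim and then rebuild both truncations uniformly, whereas the paper peels off the top cohomology sheaf and proves $H^k(\cF)[-k] \in \p\dgbl X0$ and $\tau^{\le k}\cF \in \p\dgml X0$ together along the way.
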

This really is a one-sided statement: in general, $\p \dgpg X0$ is not stable under standard truncation functors.
\begin{proof}
Let $a$ be the largest integer such that $H^a(\cF) \ne 0$, and form the distinguished triangle
\begin{equation}\label{eqn:pl-trunc}
\tau^{\le a-1}\cF \to \cF \to \tau^{\ge a}\cF \to.
\end{equation}
Let $\cF_0 = H^a(\cF)$, so that $\tau^{\ge a}\cF \simeq \cF_0[-a]$.
For any point $x \in X^\gen$ and any subscheme structure $i: Z \hto X$ on $\barGx$, this gives rise to a distinguished triangle
\[
Li^*\tau^{\le a-1}\cF \to Li^*\cF \to Li^*\cF_0[-a] \to.
\]
Since the first term belongs to $\dgml Z{a-1}$, we see from the long exact cohomology sequence that $H^a(Li^*\cF) \simeq H^a(Li^*\cF_0[-a]) \simeq H^0(Li^*\cF_0) \simeq i^*\cF_0$.  Next, from the definition of $\p \dgml X0$, we know that there is an open subscheme $V \Subset Z$ such that $i^*\cF_0|_V \in \cgl V{p(x)-a}$.  Let $U \Subset X$ be any dense open subscheme such that $U \cap Z = V$.  Then $\cF_0|_U \in \cloc UV{p(x)-a}$.  By Lemma~\ref{lem:li}, for any $k \ge 0$, we can find another open set $V' \Subset V$ such that $H^{-k}(Li^*\cF_0)|_{V'} \in \cgl {V'}{p(x)-a} \subset \cgl {V'}{p(x)+k-a}$, or, equivalently, $H^{a-k}(Li^*\cF_0[-a])|_{V'} \in \cgl {V'}{p(x)+k-a}$.  We have thus shown that $\cF_0[-a] \in \p \dgbl X0$.

Next, for any $k \ge 1$, consider the following exact sequence:
\[
H^{a-k-1}(Li^*\cF_0[-a]) \to H^{a-k}(Li^*\tau^{\le a-1}\cF) \to H^{a-k}(Li^*\cF).
\]
Upon restriction to a suitable open subscheme $V \Subset Z$, we know that the last term yields an object in $\cgl V{p(x)+k-a}$, and the first term yields an object of $\cgl V{p(x)-a} \subset \cgl V{p(x)+k-a}$.  Therefore, we have that $H^{a-k}(Li^*\tau^{\le a-1}\cF)|_V \in \cgl V{p(x)+k-a}$ as well.  This shows that that $\tau^{\le a-1}\cF \in \p \dgml X0$.

We have seen that all three terms of~\eqref{eqn:pl-trunc} lie in $\p \dgml X0$.  Applying this result to $\tau^{\le a-1}\cF$ yields in particular that $\tau^{\ge a-1}(\tau^{\le a-1}\cF) \simeq H^{a-1}(\cF)[1-a] \in \p \dgml X0$.  Indeed, by induction, one finds that for all $k$, we have $\tau^{\le k}\cF \in \p \dgml X0$ and $H^k(\cF)[-k] \in \p \dgbl X0$.  By a further induction argument, using the fact that $\p \dgbl X0$ is stable extensions, the latter fact implies that $\tau^{\ge k}\cF \in \p \dgbl X0$, as desired.
\end{proof}

\begin{prop}\label{prop:stag-hom}
Suppose $\cF \in \p \dgml X0$ and $\cG \in \p \dgpg X1$.  Then we have $\Hom(\cF,\cG) = 0$.
\end{prop}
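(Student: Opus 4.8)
The plan is to argue by Noetherian induction on the closed subsets of $X$, assuming the statement holds for every proper closed subscheme of $X$ (the case $X = \emptyset$ being trivial). The key reduction is the long exact sequence of Lemma~\ref{lem:found}: for a dense open $j : U \hto X$ with complementary closed subscheme $i : Z \hto X$,
\[
\to \lim_{\substack{\to \\ Z'}} \Hom(Li^*_{Z'}\cF, Ri^\flat_{Z'}\cG) \to \Hom(\cF,\cG) \to \Hom(j^*\cF,j^*\cG) \to
\]
with $Z'$ ranging over subscheme structures on $\uZ$. Each $Z'$ is a proper closed subscheme, and by Lemma~\ref{lem:perv-res}(2) we have $Li^*_{Z'}\cF \in \p\dgml{Z'}0$ and $Ri^\flat_{Z'}\cG \in \p\dgpg{Z'}1$, so every term of the colimit vanishes by the inductive hypothesis. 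Thus it suffices to exhibit a dense open $U \subseteq X$ with $\Hom(\cF|_U,\cG|_U) = 0$: if this already holds for $U = X$ we are done, and otherwise we feed the chosen $U$ into the displayed sequence.

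To produce such a $U$ I would first reduce to $X$ irreducible: replacing $X$ by a neighbourhood of a single maximal generic point costs nothing, because over a disjoint union of opens $\Hom$ is the product of the Homs over the pieces, and the union of the opens obtained for the various components is again dense in $X$. So take $X$ irreducible with generic point $x$, and shrink (Lemma~\ref{lem:omega-conc}) so that $\omega_X$ is concentrated in the single degree $d = \cod X$, say $\omega_X = \omega_0[-d]$ with $\omega_0$ a sheaf; then $\D = \D_0[-d]$ with $\D_0 = \cRHom(\cdot,\omega_0)$, and $\alt X$ is the generic step of $\omega_0$. Now perform a double dévissage. On the $\cF$ side: since $\cG \in \dgp X$, replacing $\cF$ by a standard truncation $\tau^{\ge n}\cF$ with $n$ below the bottom cohomological degree of $\cG$ does not change $\Hom(\cF,\cG)$, and $\tau^{\ge n}\cF \in \p\dgbl X0$ by Lemma~\ref{lem:pl-trunc}; peeling off cohomology sheaves along truncation triangles (each $H^k\cF[-k]$ again lies in $\p\dgbl X0$, as in the proof of Lemma~\ref{lem:pl-trunc}) reduces us to $\cF = \cF_0[-a]$ with $\cF_0 = H^a(\cF) \in \cg X$ and, after shrinking, $\cF_0 \in \cgl X{p(x)-a}$; dévissaging $\cF_0$ along its $\sigma_{\le\bullet}$-filtration reduces further to $\cF_0$ pure of some step $\le p(x)-a$, so that Lemma~\ref{lem:st-dual} makes $\D_0\cF_0|_U$ a sheaf lying in $\cgg U{\alt X-(p(x)-a)}$. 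On the $\cG$ side: $\D\cG \in \barp\dgml X{-1}$ by the definition of $\p\dgpg X1$; applying the biduality isomorphism $\Hom(\cF_0[-a],\cG) \simeq \Hom(\D\cG[-a],\D\cF_0)$ (valid since $\D$ restricts to mutually inverse antiequivalences $\dgm X \leftrightarrow \dgp X$) and then the same truncation-and-dévissage to $\D\cG[-a] \in \barp\dgml X{-1-a}$, the Hom group becomes a finite sum of groups $\Hom(\cH_1[-c],\D\cF_0) = \Ext^{c-d}(\cH_1,\D_0\cF_0)$, where $\cH_1 = H^c$ of a dévissage piece of $\D\cG[-a]$ and, after shrinking, $\cH_1 \in \cgl X{\bar p(x)-1-c+a}$.

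It then remains to show each $\Ext^{c-d}(\cH_1,\D_0\cF_0)$ dies after one more shrink. If $c < d$ it is an $\Ext$ of negative degree between sheaves, hence zero. If $c \ge d$, observe that the inequality $\alt X-(p(x)-a) \ge \bar p(x)-c+a$ is, after substituting $\bar p(x) = \scod\barGx - p(x) = \cod\barGx + \alt\barGx - p(x)$, precisely the inequality $c \ge \cod X = d$; hence $\D_0\cF_0 \in \tcgg X{\bar p(x)-c+a}$. Axiom~\sref{ax:ext2}, applied to the closed subscheme $Y = X$ (for which $\cloc XXw = \cgl Xw$) with the sheaf $\cH_1 \in \cgl X{\bar p(x)-1-c+a}$ and $r = c-d \ge 0$, then provides a dense open on which $\gExt^{c-d}(\cH_1,\D_0\cF_0) = 0$. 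Intersecting the finitely many dense opens used along the way yields the desired $U$.

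The step I expect to be the main obstacle is precisely this bookkeeping: organizing the double dévissage so that exactly one residual $\Ext$-vanishing remains, carrying all the genericity reductions through without ever losing density, applying the $s$-exactness and duality lemmas on mutually compatible opens, and---the crux---checking that the shift $d = \cod X$ coming from $\omega_X$, the step-shift $w \mapsto \alt X - w$ coming from duality, and the perversity values $p(x)$ and $\bar p(x)$ conspire to collapse the entire computation to the single inequality $c \ge \cod X$. (Beyond what is already packaged into Lemma~\ref{lem:perv-res}, the monotonicity and comonotonicity of $p$ are not needed for this statement; they become essential only for the other $t$-structure axiom.)
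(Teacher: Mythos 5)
Your proposal is correct, and its skeleton is the same as the paper's: noetherian induction driven by the long exact sequence of Lemma~\ref{lem:found}\eqref{it:f-les} together with Lemma~\ref{lem:perv-res}, reduction to producing one (non)empty open $U$ with $\Hom(\cF|_U,\cG|_U)=0$, d\'evissage to single cohomological degrees near a generic point, and a single application of axiom~\sref{ax:ext2} in degree $a+b-\cod X$, with negative-degree $\Ext$'s between sheaves vanishing for free. The one genuine (if mild) difference is in how the single-degree case is organized: the paper keeps $\cF'$ undualized and rewrites the Hom as $\Ext^{-\cod X+a+b}(\cF'|_U,\cHom(\cG'_1|_U,\omega_X[\cod X]|_U))$, computing the step of the target via Remark~\ref{rmk:homl}, whereas you dualize the $\cF$-side (hence your extra reduction of $\cF_0$ to pure pieces so that Lemma~\ref{lem:st-dual} applies) and d\'evisse $\D\cG[-a]$ instead; the two computations are mirror images and collapse to the same inequality $c\ge\cod X$. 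Two small points of hygiene: with the paper's convention $\p\dgml Xn=\p\dgml X0[-n]$, one has $\D\cG[-a]\in\barp\dgml X{a-1}$ rather than $\barp\dgml X{-1-a}$ (the step bound $\cH_1\in\cgl{}{\bar p(x)-1-c+a}$ you actually use is the correct one); and since $\cH_1$ and $\D_0\cF_0$ satisfy their step bounds only after restriction to a dense open, axiom~\sref{ax:ext2} should be invoked on that open subscheme with its induced $s$-structure (exactly as the paper works over $U$), not literally on $X$ with $Y=X$. Your closing parenthetical is also accurate: the paper's proof of this proposition likewise makes no use of monotonicity or comonotonicity; those enter only in the decomposition axiom of Section~\ref{sect:stag-dt}.
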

\begin{proof}
Suppose $\cG \in \dgpg Xa$.  Then we have $\Hom(\cF,\cG) \simeq
\Hom(\tau^{\ge a}\cF,\cG)$, and by the preceding lemma, $\tau^{\ge a}\cF \in \p\dgbl X0$.  By replacing $\cF$ by $\tau^{\ge a}\cF$ if
necessary, we may assume without loss of generality that $\cF$ is actually
a bounded complex.

By definition, there is an object $\cG_1 \in \barp \dgml X{-1}$ such
that $\D(\cG_1) \simeq \cG$.  Now, suppose $\cF \in \dgml Xb$.  Then, as
above, we have $\Hom(\cF, \cG) \simeq \Hom(\cF, \tau^{\le b}\cG)$.  
Consider applying the functor $\D$ to the distinguished
triangle
\[
\tau^{\le \cod X -b-1}\cG_1 \to \cG_1 \to \tau^{\ge \cod X - b}\cG_1 \to.
\]
Recall that $\omega_X \in \dgpg X{\cod
X}$.  We therefore have
\[
\D(\tau^{\le \cod X -b-1}\cG_1) \simeq \cRHom(\tau^{\le \cod X
-b-1}\cG_1, \omega_X) \in \dgpg X{b+1}.
\]
In particular, it follows $\tau^{\le b}\D(\tau^{\ge \cod X - b}\cG_1)
\simeq \tau^{\le b}\D(\cG_1) \simeq \tau^{\le b}\cG$.  That is, we may
replace $\cG_1$ by $\tau^{\ge \cod X - b}\cG_1$ without affecting the
calculation of $\Hom(\cF,\cG)$.  Thus, we henceforth assume that $\cG_1$
is a bounded complex as well.  

Choose a generic point $x \in X$, and let $U \subset X$ be an irreducible open subscheme containing $x$ such that
\begin{equation}\label{eqn:stag-hom-u}
\cF|_U \in \dgml U{p(x)}_\bt
\qquad\text{and}\qquad
\cG_1|_U \in \dgml U{\bar p(x)-1}_\bt.
\end{equation}
We will first show by induction on the number of nonzero cohomology sheaves of $\cF$ and $\cG_1$ that, possibly after replacing $U$ by a smaller open subscheme, we may assume that $\Hom(\cF|_U,\cG|_U) = 0$.

Assume for the moment that $\cF$ and $\cG_1$ are each actually concentrated in a single degree, say in degrees $a$ and $b$, respectively.  Then $\cF
\simeq \cF'[-a]$ and $\cG_1 \simeq \cG'_1[-b]$ for some sheaves $\cF', \cG'_1
\in \cg X$.  Replacing $U$ be a smaller open subscheme if necessary, we may assume that $H^a(\cF)|_U \simeq \cF'|_U \in \cgl U{p(x)-a}$ and $H^b(\cG)|_U \simeq \cG'_1|_U \in \cgl U{\bar p(x)-1-b}$.  By Lemma~\ref{lem:st-dual}, we may further assume, after perhaps replacing $U$ by a yet smaller subscheme, that $\D(\cG_1)|_U$ is concentrated in a single degree.  That degree must be $\cod \barGx -b$.  Thus, on $U$, we
have
\[
\D(\cG_1)|_U \simeq \cRHom(\cG'_1[-b],\omega_X|_U) \simeq \cHom(\cG'_1|_U,
\omega_X[\cod \barGx]|_U)[-\cod \barGx + b].
\]
Now,
\begin{align*}
\Hom(\cF|_U, \cG|_U) &\simeq \Hom(\cF'[-a]|_U, \cD(\cG_1)|_U) \\
&\simeq \Hom(\cF'[-a]|_U, \cHom(\cG'_1|_U, \omega_X[\cod \barGx]|_U)[-\cod \barGx +b]) \\
&\simeq \Ext^{-\cod X +a+b}(\cF'|_U, \cHom(\cG'_1|_U, \omega_X[\cod \barGx]|_U)).
\end{align*}
If $-\cod \barGx +a+b < 0$, then we evidently have $\Hom(\cF|_U,\cG|_U)
= 0$.  Assume instead now that $-\cod \barGx +a+b \ge 0$.  Next, by replacing
$U$ by a smaller open subscheme if necessary, we may assume that
$\omega_X[\cod \barGx]|_U \in \cgg U{\alt \barGx}$ (see
Definition~\ref{defn:alt}).  Since $\cG'_1|_U \in \cgl U{\bar p(x)-1-b}$,
we see that
\begin{multline*}
\cHom(\cG'_1|_U, \omega_X[\cod \barGx]|_U) \in \cgg U{\alt \barGx - \bar p(x) +1+b}
\\
= \cgg U{-\cod \barGx + p(x) +b+1} \subset \cgg U{-a + p(x)+1},
\end{multline*}
where the last inclusion comes from the fact that $-\cod \barGx +b \ge -a$. 
Since $\cF'|_U \in \cgl U{p(x)-a}$,  we may invoke axiom~\sref{ax:ext2}:
after replacing $U$ by a smaller open subscheme, we may assume that 
\[
\Ext^{-\cod \barGx +a+b}(\cF'|_U, \cHom(\cG'_1|_U, \omega_X[\cod \barGx]|_U)) = 0,
\]
and hence that $\Hom(\cF|_U, \cG|_U) = 0$, as desired.

Now, we return to the case where $\cF$ is a bounded complex, not necessarily concentrated in a single degree.  $\cG_1$ is still assumed to be concentrated in degree $b$.  Choose $U$ afresh as we originally did: it should be an irreducible open subscheme containing $x$ and satisfying the conditions~\eqref{eqn:stag-hom-u}.  Any distinguished triangle of the form
\[
\tau^{\le k}\cF|_U \to \cF|_U \to \tau^{\ge k+1}\cF|_U \to
\]
gives rise to a sequence
\[
\Hom(\tau^{\ge k+1}\cF|_U, \cG|_U) \to \Hom(\cF|_U,\cG|_U) \to \Hom(\tau^{\le k}\cF|_U,
\cG|_U)
\]
that is exact at the middle term.  The categories $\dgml U{p(x)}_\bt$ and
$\dgml U{\bar p(x)-1}_\bt$ are stable under all truncation functors: we have 
$\tau^{\ge k+1}\cF|_U \in \dgml U{p(x)}_\bt$ and $\tau^{\le k}\cF|_U \in \dgml U{p(x)}_\bt$.  By choosing $k$ such that $\tau^{\ge k+1}\cF$ and $\tau^{\le k}\cF$ each have fewer nonzero cohomology sheaves than $\cF$, we may assume inductively that $U$ has been chosen so that the first and last $\Hom$-groups above vanish.  Obviously, this implies that $\Hom(\cF|_U, \cG|_U) = 0$, as desired.

An analogous argument allows us to generalize to the case where $\cG_1$ is a bounded complex not necessarily in a single degree.

Thus, in all cases, we have now constructed an open subscheme $U \subset
X$ such that $\Hom(\cF|_U,\cG|_U) = 0$.  Let $Z$ be a closed
subscheme complementary to $U$, and consider the exact sequence ({\it cf.} Lemma~\ref{lem:found}\eqref{it:f-les})
\[
\lim_{\substack{\to \\ Z'}} \Hom(Li^*_{Z'}\cF, Ri^\flat_{Z'}\cG) \to
\Hom(\cF,\cG) \to \Hom(\cF|_U,\cG|_U).
\]
We now know that the last term vanishes.  The first term vanishes by the
noetherian induction assumption, using Lemma~\ref{lem:perv-res}.  We
conclude that $\Hom(\cF,\cG) = 0$, as desired.
\end{proof}

%%%%%%%%%%%%%%%%%%%%%%%%%%%%%%%%%%%%%%%%%%%%%%%%%%%%%%%%%%%%%%%%%%%%%%%%%%%
\section{The Staggered $t$-structure: Distinguished Triangles}
\label{sect:stag-dt}
%%%%%%%%%%%%%%%%%%%%%%%%%%%%%%%%%%%%%%%%%%%%%%%%%%%%%%%%%%%%%%%%%%%%%%%%%%%

We will complete the proof of Theorem~\ref{thm:stag} in this section.  We retain all the notation and assumptions of the preceding section.

\begin{prop}\label{prop:st-perv}
Suppose $\cF \in \dzsupp \cap \dgml Xn_\st$.  If $n \le p(x)$ for all $x \in Z^\gen$, then $\cF \in \p\dgml X0$.
\end{prop}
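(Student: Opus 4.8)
The plan is to argue by induction on the number $N$ of nonzero cohomology sheaves of $\cF$, which is finite since $\cF \in \dgb X$. The case $N = 0$ is vacuous. For $N \ge 2$, I would let $m$ be the largest integer with $H^m(\cF) \ne 0$ and use the truncation triangle
\[
\tau^{\le m-1}\cF \to \cF \to H^m(\cF)[-m] \to.
\]
Both outer terms again lie in $\dzsupp \cap \dgml Xn_\st$ --- their cohomology sheaves are among the $H^k(\cF)$ together with zero sheaves, and the support of a complex is the union of the supports of its cohomology sheaves --- and each has strictly fewer nonzero cohomology sheaves than $\cF$, so by induction both lie in $\p\dgml X0$. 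Since $\p\dgml X0$ is stable under extensions (observed just after~\eqref{eqn:stagr}), it follows that $\cF \in \p\dgml X0$. Thus everything reduces to the base case $N = 1$.

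So assume $\cF \simeq \cF_0[-m]$ with $\cF_0 = H^m(\cF)$ a sheaf in $\cgl X{n-m}$ supported on $\uZ$, fix $x \in X^\gen$, and fix a closed subscheme structure $i : Y \hto X$ on $\barGx$ (with its $s$-structure induced from $X$); the goal is $Li^*\cF \in \dgml Y{p(x)}_\bt$. First I would dispose of the case $\barGx \not\subset \uZ$: then $x \notin \uZ$, since $x$ is the generic point of the irreducible set $\barGx$, so $W := X \ssm \uZ$ is an open subscheme containing $x$, hence $Y \cap W$ is a dense open subscheme of $Y$ on which $\cF$, and therefore $Li^*\cF$, vanishes; taking this open subscheme for each $k$ shows $Li^*\cF \in \dgml Y{p(x)}_\bt$. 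In the remaining case $\barGx \subset \uZ$, the set $\barGx$ is an irreducible $G$-invariant closed subset of $\uZ$ with generic point $x$, so $x \in Z^\gen$ and hence $n \le p(x)$ by hypothesis. Since $i^*$ is right $s$-exact (Proposition~\ref{prop:res-closed}), the hypothesis $\cF_0 \in \cgl X{n-m}$ gives $i^*\cF_0 \in \cgl Y{n-m}$, that is, $\cF_0 \in \cloc XY{n-m}$; then Lemma~\ref{lem:li}, applied with the closed subschemes $i : Y \hto X$ and $\id : Y \hto Y$, produces for each $r \ge 0$ a dense open subscheme $V \Subset Y$ with $H^{-r}(Li^*\cF_0)|_V \in \cgl V{n-m}$. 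Now $H^k(Li^*\cF) \simeq H^{-(m-k)}(Li^*\cF_0)$, which vanishes for $k > m$ since $Li^*$ of a sheaf is concentrated in nonpositive degrees; and for $k \le m$, putting $r = m - k$ and using axiom~\sref{ax:inc} together with $n \le p(x)$ and $r \ge 0$ gives $\cgl V{n-m} \subset \cgl V{p(x) - k}$. Hence $H^k(Li^*\cF)|_V \in \cgl V{p(x)-k}$ for every $k$, so $Li^*\cF \in \dgml Y{p(x)}_\bt$ and $\cF \in \p\dgml X0$.

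The argument is essentially bookkeeping, and I expect the one delicate point to be the degree/step arithmetic in the last case: the bound $n - m$ that Lemma~\ref{lem:li} gives for $H^{-r}(Li^*\cF_0)$ must be reconciled with the target bound $p(x) - k = p(x) - m + r$ for $H^k(Li^*\cF)$, and this succeeds exactly because $r \ge 0$ and $n \le p(x)$ --- so the hypothesis involving $Z^\gen$ enters precisely here. The other points to verify are routine: that the two outer terms of the truncation triangle stay in $\dzsupp \cap \dgml Xn_\st$, and that restriction to an open subscheme commutes with $Li^*$ (both computed by locally free resolutions), so that $Li^*\cF$ indeed vanishes on $Y \cap W$ in the first case.
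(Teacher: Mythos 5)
Your argument is correct and is essentially the paper's own proof: both reduce to the case of a single nonzero cohomology sheaf via truncation triangles and the extension-stability of $\p\dgml X0$, dispose of the case $x \notin \uZ$ by restricting to a dense open subscheme of $\barGx$ missing $\uZ$, and in the remaining case apply Lemma~\ref{lem:li} (with $Y$ viewed as a closed subscheme of itself) together with the hypothesis $n \le p(x)$ to get $H^k(Li^*\cF)|_V \in \cgl V{p(x)-k}$. The only difference is the immaterial order in which the single-degree reduction and the case split on $x$ are carried out.
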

\begin{proof}
Let $x \in X^\gen$, and let $\kappa: Y \hto X$ be a closed subscheme structure on $\barGx$.  We must show that $L\kappa^*\cF \in \dgml Y{p(x)}_\bt$.  If $x \notin Z$, this is trivially true: then $Y$ contains an open subscheme $V$ not meeting $Z$, necessarily dense since $Y$ is irreducible, and $H^k(L\kappa^*\cF)|_V = 0$ for all $k$.  Assume now that $x \in Z$.  Since $\cF$ is a bounded complex and $\p\dgml X0$ is stable under extensions, we can use distinguished triangles of the form
\[
\tau^{\le k}\cF \to \cF \to \tau^{\ge k+1}\cF \to
\]
and induction on the number of nonzero cohomology sheaves of $\cF$ to reduce to the case where $\cF$ is concentrated in a single degree.  (Here, we also use the fact that $\dgml Xn_\st$ and $\dzsupp$ are stable under truncation functors.)

Suppose, then, that $\cF$ is concentrated in degree $a$: we have $\cF \simeq \cF_1[-a]$ for some sheaf $\cF_1 \in \cgl X{n-a}$.  By Lemma~\ref{lem:li}, for all $k \le a$, there is an open subscheme $V$ such that
\[
H^k(L\kappa^*\cF)|_V \simeq H^{k-a}(L\kappa^*\cF_1)|_V \in \cgl V{n-a} \subset \cgl V{p(x) - k},
\]
where we have made use of the assumption that $n \le p(x)$.  Of course, $H^k(L\kappa^*\cF) = 0$ if $k > a$.  Thus, we see that $L\kappa^*\cF \in \dgml Y{p(x)}_\bt$, as desired.
\end{proof}

\begin{lem}\label{lem:stag-dt1}
Let $U \subset X$ be an open subscheme, and let $Z \subset X$ be the
complementary closed subscheme.  Let $x$ be a generic point of $Z$, and let $\cF \in \dgb X^{[a,b]}$ be such that $\cF|_U \in \dgbl Un_\st$.  Then there is an open subscheme $V \subset X$ with $U \subset V$ and $x \in V$, and a distinguished triangle
\[
\cF' \to \cF \to \cF'' \to
\]
with $\cF' \in \dgb X^{[a,b]}$, $\cF'|_V \in \dgbl Vn_\st$, and $\cF'' \in
\dzsupp$.  In addition, we have $\D(\cF'')|_V \in \dgbl V{d-n-1}_\st$, where $d = \scod \barGx$. 
\end{lem}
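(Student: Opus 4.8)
The plan is to treat first the case where $\cF$ is a shift of a single sheaf, and then to run a dévissage reducing the general statement to that case.

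\emph{Single-sheaf case.} Suppose $\cF \simeq \cF_1[-m]$ with $\cF_1 \in \cg X$ and $\cF_1|_U \in \cgl{U}{n-m}$ (automatic if $m\in[a,b]$ and $\cF|_U\in\dgbl{U}{n}_\st$). I would set $\cF' = (\sigma_{\le n-m}\cF_1)[-m]$ and $\cF'' = (\sigma_{\ge n-m+1}\cF_1)[-m]$, with the distinguished triangle coming from $0 \to \sigma_{\le n-m}\cF_1 \to \cF_1 \to \sigma_{\ge n-m+1}\cF_1 \to 0$. Since $j^*$ is exact and $s$-exact, $j^*\sigma_{\ge n-m+1}\cF_1$ is a quotient of $\cF_1|_U\in\cgl{U}{n-m}$ lying in $\cgg{U}{n-m+1}$, hence zero by axiom~\sref{ax:hom}; so $\cF''\in\dzsupp$ and $\cF'|_U\simeq\cF|_U$, and $\cF'$ is concentrated in degree $m$ with $H^m(\cF')\in\cgl{X}{n-m}$, whence $\cF'|_W\in\dgbl{W}{n}_\st$ for every open $W$. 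The content is the bound on $\D(\cF'')$. I would choose $V$ with $U\subseteq V$, $x\in V$, and $V\cap\uZ\subseteq\barGx$ (delete from $X$ the irreducible components of $Z$ other than $\barGx$). Writing $\cG=\sigma_{\ge n-m+1}\cF_1$, on $V$ we then have $\cG|_V=\tilde\kappa_*\cG_1$ for a closed subscheme structure $\tilde\kappa:Y\hto V$ on the irreducible set $\barGx\cap V$, with $\cG_1\in\cgg{Y}{n-m+1}$ by Proposition~\ref{prop:res-closed}. Shrinking $V$, Lemma~\ref{lem:omega-conc} (applied on $Y$, together with the fact that the dualizing complex on a locally closed subscheme through $x$ is concentrated near $x$ in degree $\cod\barGx$) lets me assume $\omega_Y=\omega_Y'[-\cod\barGx]$ for a sheaf $\omega_Y'$; applying Proposition~\ref{prop:st-dual} on $Y$ with the dualizing sheaf $\omega_Y'$ yields, on a still smaller $V$, a sheaf $\cRHom(\cG_1,\omega_Y')\in\cgl{}{\alt\barGx-(n-m+1)}$, where $\alt Y=\alt\barGx$ by Proposition~\ref{prop:alt-defn} and the locality of altitude near the generic point. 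Since Grothendieck--Serre duality commutes with $\tilde\kappa_*$ and with restriction to opens, and $\tilde\kappa_*$ is $s$-exact (Lemma~\ref{lem:perv-res}(3)), one gets that $\D(\cF'')|_V=\tilde\kappa_*\bigl(\cRHom(\cG_1,\omega_Y')\bigr)[m-\cod\barGx]|_V$ is concentrated in degree $\cod\barGx-m$ with $H^{\cod\barGx-m}$ in $\cgl{V}{\alt\barGx-(n-m+1)}$; since $\alt\barGx-(n-m+1)=(\scod\barGx-n-1)-(\cod\barGx-m)$, this says exactly $\D(\cF'')|_V\in\dgbl{V}{d-n-1}_\st$ with $d=\scod\barGx$. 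Because $Y$ is irreducible with generic point $x$, every nonempty open produced along the way is dense and contains $x$.

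\emph{Dévissage.} For $a-1\le m\le b$, let $P(m)$ be the assertion of the lemma for those $\cF\in\dgb{X}^{[a,b]}$ with $\cF|_U\in\dgbl{U}{n}_\st$ that in addition satisfy $H^k(\cF)\in\cgl{X}{n-k}$ for all $k>m$. The lemma is $P(b)$, and $P(a-1)$ is trivial: then $\cF\in\dgbl{X}{n}_\st$, so one takes $\cF'=\cF$, $\cF''=0$, $V=X$. I would prove $P(m-1)\Rightarrow P(m)$. Apply the single-sheaf case to $H^m(\cF)[-m]$ to obtain $\cF'_2=(\sigma_{\le n-m}H^m(\cF))[-m]$, $\cF''_2=(\sigma_{\ge n-m+1}H^m(\cF))[-m]$, an open $V_2\supseteq U$ with $x\in V_2$, a triangle $\cF'_2\to H^m(\cF)[-m]\to\cF''_2\to$, and $\D(\cF''_2)|_{V_2}\in\dgbl{V_2}{d-n-1}_\st$. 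The canonical map $\tau^{\le m}\cF\to H^m(\cF)[-m]\to\cF''_2$ extends to a map $\cF\to\cF''_2$: the obstruction lies in $\Hom(\tau^{\ge m+1}\cF,\cF''_2[1])$, which vanishes since $\tau^{\ge m+1}\cF\in\dgbg{X}{m+1}$ and $\cF''_2[1]\in\dgbl{X}{m-1}$. Let $\tilde W$ be given by the triangle $\tilde W\to\cF\to\cF''_2\to$. Chasing cohomology sheaves, and using that $H^m(\cF)\to H^m(\cF''_2)$ is the canonical surjection onto $\sigma_{\ge n-m+1}H^m(\cF)$, gives $H^m(\tilde W)=\sigma_{\le n-m}H^m(\cF)$ and $H^k(\tilde W)=H^k(\cF)$ for $k\ne m$; hence $\tilde W\in\dgb{X}^{[a,b]}$, $\tilde W|_U\in\dgbl{U}{n}_\st$, and $H^k(\tilde W)\in\cgl{X}{n-k}$ for all $k\ge m$, so $P(m-1)$ applies to $\tilde W$. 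It yields $V_1\supseteq U$ with $x\in V_1$ and a triangle $\cF'\to\tilde W\to\cF''_1\to$ with $\cF'\in\dgb{X}^{[a,b]}$, $\cF'|_{V_1}\in\dgbl{V_1}{n}_\st$, $\cF''_1\in\dzsupp$, $\D(\cF''_1)|_{V_1}\in\dgbl{V_1}{d-n-1}_\st$. Applying the octahedral axiom to $\cF'\to\tilde W\to\cF$ produces a triangle $\cF'\to\cF\to\cF''\to$ with $\cF''$ sitting in a triangle $\cF''_1\to\cF''\to\cF''_2\to$. Then $\cF''\in\dzsupp$, that subcategory being triangulated; on $V:=V_1\cap V_2$ (which contains $U$ and $x$) dualizing the triangle $\cF''_1\to\cF''\to\cF''_2\to$ and using that $\dgbl{V}{d-n-1}_\st$ is closed under extensions gives $\D(\cF'')|_V\in\dgbl{V}{d-n-1}_\st$, while $\cF'|_V\in\dgbl{V}{n}_\st$ by restriction. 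This closes the induction.

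I expect the single-sheaf case to be the main obstacle --- not the construction of $\cF'$ and $\cF''$, which is immediate, but the verification that $\D(\cF'')|_V$ lands in precisely $\dgbl{V}{d-n-1}_\st$. That forces one to pass to an irreducible subscheme on which the dualizing complex is a sheaf, to identify its altitude and codimension function with those of $\barGx$, and to carry out the shift bookkeeping so that the degree $\cod\barGx$ in which the dual is concentrated and the step bound $\alt\barGx-(n-m+1)$ from Proposition~\ref{prop:st-dual} combine to give the index $d-n-1$. The dévissage is then comparatively routine, its one delicate point being the $\Hom$-vanishing that allows $\tau^{\le m}\cF\to\cF''_2$ to be lifted to all of $\cF$.
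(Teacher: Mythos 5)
Your single-sheaf case is essentially the paper's treatment of the degree-$a$ piece: split $H^m(\cF)$ by $\sigma_{\le n-m}/\sigma_{\ge n-m+1}$, observe the top piece is supported on $\uZ$, and apply Proposition~\ref{prop:st-dual} on a subscheme structure on $\uZ$ near $x$ to get the bound $\D(\cF'')|_V \in \dgbl V{d-n-1}_\st$; that part is sound.

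The dévissage, however, has a genuine gap at the lifting step. You claim the obstruction $\Hom(\tau^{\ge m+1}\cF, \cF''_2[1])$ vanishes ``since $\tau^{\ge m+1}\cF \in \dgbg X{m+1}$ and $\cF''_2[1] \in \dgbl X{m-1}$,'' but the orthogonality for the standard $t$-structure goes the other way: maps go trivially from low degrees to high degrees, not from high to low. Here the source sits in degrees $\ge m+1$ and the target is a sheaf placed in degree $m-1$, so the obstruction group is built from $\Ext^{k-m+1}\bigl(H^k(\cF), \sigma_{\ge n-m+1}H^m(\cF)\bigr)$ with $k \ge m+1$, i.e.\ from $\Ext^{\ge 2}$-groups of coherent sheaves, which do not vanish in general. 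Killing exactly these groups is the hard point of the lemma: in the paper it is done by invoking axiom~\sref{ax:ext2} (using that $H^k \in \cgl{}{n-a}$ while the torsion piece lies in $\csupp{}{}_{\ge n-a+1}$), and that axiom only produces vanishing after restriction to a suitable dense open subscheme $V \Subset_Z X$ --- never globally on $X$. Consequently your global morphism $\cF \to \cF''_2$, and hence the cone $\tilde W$ on which the induction $P(m-1)\Rightarrow P(m)$ rests, need not exist; the argument cannot be repaired merely by citing~\sref{ax:ext2}, because one must then work over $V$, assemble the triangle there (the paper does this with the $9$-lemma), and extend it back to $X$ by Lemma~\ref{lem:subcplx-extend} --- which is precisely the structure of the paper's proof and the reason the conclusion of the lemma is only asserted over an open $V$ rather than on all of $X$.
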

\begin{proof}
We proceed by induction on $b - a$.  (The base case $a = b$ will be treated
as part of the general case.)  Form the distinguished triangle
\[
\tau^{\le a}\cF \to \cF \to \tau^{\ge a+1}\cF \to.
\]
Note that $\tau^{\le a}\cF|_U$ and $\tau^{\ge a+1}\cF|_U$ both belong to $\dgbl Un_\st$, since that category is stable under truncation.  Evidently, $\tau^{\le a}\cF \in \dgb X^{[a,a]}$.  Let $\cF_1 = H^a(\cF) \simeq (\tau^{\le
a}\cF)[a]$, and form the short
exact sequence 
\[
0 \to \sigma_{\le n-a}\cF_1 \to \cF_1 \to \sigma_{\ge n-a+1}\cF_1 \to
0.
\]
Now, the definition of $\dgbl Un_\st$ implies that $\cF_1|_U \in \cgl U{n-a}$.  It follows
that $(\sigma_{\ge n-a+1}\cF_1)|_U = 0$, or in other words, that
$\sigma_{\ge n-a+1}\cF_1$ is supported on $\uZ$.  Therefore, there is some subscheme structure $i_{Z'}: Z' \hto X$ on $\uZ$ and some sheaf $\cG_1 \in \cgg{Z'}{n-a+1}$ such that $\sigma_{\ge n-a+1}\cF_1 \simeq i_{Z'*}\cG_1$.  Recall  that $x$ is a generic point of $Z$.  There certainly exist open irreducible subschemes $V'_1 \subset Z'$ containing $x$.  By Proposition~\ref{prop:st-dual}, we can choose a $V'_1$ with the property that $\D(\cG_1)|_{V'_1}$ is concentrated in degree $\cod \barGx$, and such that the sheaf $\D(\cG_1)[\cod \barGx]|_{V'_1}$ is in $\cgl {V'_1}{\alt \barGx - (n-a+1)}$.

Now, let $V' =  X \ssm (Z' \ssm V'_1)$.  This is an open subscheme of $X$ containing $U$, and containing $V'_1$ as a closed subscheme.  Let $\cG' = \sigma_{\le n-a}\cF_1[-a]$, and let $\cG'' = \sigma_{\ge n-a+1}\cF_1[-a] \simeq i_{Z'*}\cG_1[-a]$.  The
short exact sequence above gives rise to a distinguished triangle 
\begin{equation}\label{eqn:stag1}
\cG' \to \tau^{\le a}\cF \to \cG'' \to,
\end{equation}
with $\cG' \in \dgb X^{[a,a]}$, $\cG'|_{V'} \in \dgbl {V'}n_\st$ and $\cG'' \in
\dzsupp$.  Moreover, the calculations above with $\cG_1$ imply that $\D(\cG'')|_{V'}$ is concentrated in degree $\cod \barGx - a$, and that
\begin{multline*}
H^{\cod \barGx - a}(\D(\cG'')|_{V'}) \in \cgl V{\alt \barGx - (n-a+1)} \\
= \cgl V{(\alt\barGx + \cod\barGx -n -1)-(\cod\barGx - a)}.
\end{multline*}
Thus, $\D(\cG'')|_{V'} \in \dgbl {V'}{d-n-1}_\st$, where $d = \scod
\barGx$.
If $b = a$, then $\cF \simeq \tau^{\le a}\cF$, and we are finished: the distinguished triangle in~\eqref{eqn:stag1} is the one we seek.

Now, suppose $b > a$.  Then $\tau^{\ge a+1}\cF \in \dgb X^{[a+1,b]}$. By 
the inductive assumption, we can find an open subscheme $V'' \subset X$
with $U \subset V''$ and $x \in V'' \cap Z$, and a distinguished triangle
\[
\cH' \to \tau^{\ge a+1}\cF \to \cH'' \to
\]
with $\cH' \in \dgb X^{[a+1,b]}$, $\cH'|_{V''} \in \dgbl{V''}n_\st$, $\cH'' \in
\dzsupp$, and $\D(\cH'')|_{V''} \in \dgbl{V''}{d-n-1}_\st$.  Let $W = V'
\cap V''$.  This is an open subscheme with $U \subset W$ and $x \in W$.  Note that
\[
\cG''[a]|_{W} \simeq i_*\cG_1|_{W} \in \csupp{W}{W \cap V'_1}_{\ge n-a+1}.
\]
On the other hand, we also have $\cH'|_W \in \dgbl Wn_\st$, so whenever $a < k \le b$, we have $H^k(\cH')|_W \in \cgl W{n-k} \subset \cgl W{n-a}$.  Thus, by invoking axiom~\sref{ax:ext2} finitely many times, we may find a subscheme $V \Subset_{W \cap V'_1} W$ such that
\[
\gExt^{1+k-a}(H^k(\cH')|_V, \cG''[a]|_V) = 0
\]
for all $k$ such that $a < k \le b$.  Note that $V \cap Z$ must contain $x$.

We claim that $\Hom(\cH'|_V, \cG''[1]|_V) = 0$.  Indeed, we will show by
induction on $k$ that $\Hom(\tau^{\le k}\cH'|_V, \cG''|_V) = 0$ for all $k$ with $a \le k \le b$.  It is trivially true for $k = a$, since $\tau^{\le a}\cH' = 0$. 
Now, if $k \ge a+1$, form the distinguished triangle 
\[
\tau^{\le k-1}\cH' \to \tau^{\le k}\cH' \to \tau^{[k,k]}\cH' \to.
\]
This gives rise to a sequence
\[
\Hom(\tau^{[k,k]}\cH'|_V, \cG''[1]|_V) \to \Hom(\tau^{\le k}\cH'|_V,
\cG''[1]|_V) \to  \Hom(\tau^{\le k-1}\cH'|_V,\cG''[1]|_V).
\] 
Here, the last term vanishes by assumption.  On the other hand, we have
$\tau^{[k,k]}\cH' \simeq H^k(\cH')[-k]$, and
\[
\Hom(H^k(\cH')[-k]|_V, \cG''[1]|_V) \simeq \Ext^{1+k-a}(H^k(\cH')|_V,
\cG''[a]|_V) = 0. 
\]
We now see from the sequence above that $\Hom(\tau^{\le k}\cH'|_V,
\cG''[1]|_V) = 0$.  Since $\tau^{\le b}\cH' \simeq \cH'$, we
see that $\Hom(\cH'|_V, \cG''[1]|_V) = 0$, as desired. 

Next, consider the sequence
\[
\Hom(\cH'|_V, \cG'[1]|_V) \to \Hom(\cH'|_V, (\tau^{\le a}\cF)[1]|_V) \to
\Hom(\cH'|_V,\cG''[1]|_V).
\]
Since the last term vanishes, we see that every morphism $\cH'|_V \to
(\tau^{\le a}\cF)[1]|_V$ must factor through the map $\cG'[1]|_V \to
(\tau^{\le a}\cF)[1]|_V$.  In particular, consider the composition 
\[
\cH'|_V \to \tau^{\ge a+1}\cF|_V \to (\tau^{\le a}\cF)[1]|_V.
\]
This gives rise to a commutative square
\[
\xymatrix@=10pt{
\cH'|_V \ar[r]\ar[d] & \tau^{\ge a+1}\cF|_V \ar[d] \\
\cG'[1]|_V \ar[r] & (\tau^{\le a}\cF)[1]|_V}
\]
Let us complete this diagram using the $9$-lemma, and then rearrange it:
\[
\xymatrix@=10pt{
\cG'|_V \ar[r]\ar[d] & \tau^{\le a}\cF|_V \ar[r]\ar[d]
& \cG''|_V \ar[r]\ar[d] &\\
\cF'_V \ar[r]\ar[d] & \cF|_V \ar[r]\ar[d] & \cF''_V \ar[r]\ar[d] &\\
\cH'|_V \ar[r]\ar[d] & \tau^{\ge a+1}\cF|_V \ar[r]\ar[d] & \cH''|_V \ar[r]\ar[d] &\\
&&&}
\]
Recall that all rows and columns in this diagram are distinguished
triangles.  The categories $\dgb V^{[a,b]}$ and $\dgbl Vn_\st$ are stable
under extensions, so we clearly have $\cF'_V \in \dgbl Vn_\st \cap
\dgb V^{[a,b]}$.  It is also evident that $\cF''_V \in \dsupp{V}{V \cap
Z}$.  Since $\dgbl V{d-n-1}_\st$ is stable under extensions, we have $\D(\cF''_V) \in \dgbl V{d-n-1}$ as well.  Now, by Lemma~\ref{lem:subcplx-extend}, we
can find a distinguished triangle
\[
\cF' \to \cF \to \cF'' \to
\]
with $\cF' \in \dgb X^{[a,b]}$, $\cF'|_V \simeq \cF'_V$, and $\cF''|_V
\simeq \cF''_V$, and such that the restriction to $V$ of this distinguished
triangle is isomorphic to the middle row of the diagram obtained from the
$9$-lemma above.  This distinguished triangle has all the properties we seek.
\end{proof}

Below, we will make use of the ``$*$''
operation on a triangulated category introduced in~\cite[\S 1.3]{bbd}. 
Recall that if $\cA_1$ and $\cA_2$ are classes of objects in a triangulated
category, then
\[
\cA_1 * \cA_2
\]
is the class of all objects $A$ that can be embedded in a distinguished
triangle
\[
A_1 \to A \to A_2 \to
\]
with $A_1 \in \cA_1$ and $A_2 \in \cA_2$.  According
to~\cite[Lemme~1.3.10]{bbd}, the $*$ operation is associative, so
expressions like $\cA_1 * \cA_2 * \cA_3$ are well-defined.  This fact will
be used freely below.

\begin{lem}\label{lem:stag-dt2}
Let $U \subset X$ be an open subscheme, and let $Z \subset X$ be the
complementary closed subscheme.  Let $\cF \in \dgb X^{[a,b]}$ be such that $\cF|_U \in \dgbl Un_\st$.  Then $\cF \in \dgbl Xn_\st * \dzsupp$.
\end{lem}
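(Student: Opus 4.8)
The plan is to peel off the irreducible components of $Z$ one at a time with Lemma~\ref{lem:stag-dt1}, organizing the repetition as a noetherian induction. Precisely, I would prove by noetherian induction on the closed subset $\underline Z$ the assertion: for every closed subscheme $Z \subset X$ with complementary open $U = X \ssm Z$ and every bounded complex $\cF$ with $\cF|_U \in \dgbl Un_\st$, one has $\cF \in \dgbl Xn_\st * \dsupp XZ$. Since $X$ is noetherian, closed subsets satisfy the descending chain condition, so it is legitimate to assume this known whenever the underlying space of the closed subscheme is properly contained in $\underline Z$. The base case $\underline Z = \emptyset$ is trivial: then $U = X$, $\cF \in \dgbl Xn_\st$, and the distinguished triangle $\cF \to \cF \to 0 \to$ exhibits $\cF \in \dgbl Xn_\st * \dsupp XZ$.

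For the inductive step, I would fix a generic point $x$ of $Z$ and apply Lemma~\ref{lem:stag-dt1}: this produces an open $V$ with $U \subset V$, $x \in V$, and a distinguished triangle $\cF' \to \cF \to \cF'' \to$ in which $\cF'$ is bounded with $\cF'|_V \in \dgbl Vn_\st$, and $\cF'' \in \dzsupp$. The crucial point is that if $Z_1$ denotes any closed subscheme structure on $X \ssm V$, then $\underline{Z_1} = X \ssm V$ is a closed subset of $\underline Z = X \ssm U$ that does \emph{not} contain $x$, hence is properly contained in $\underline Z$. So the inductive hypothesis applies to $\cF'$ and $Z_1$, giving $\cF' \in \dgbl Xn_\st * \dsupp X{Z_1}$. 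Since $\underline{Z_1} \subset \underline Z$, every object supported on $\underline{Z_1}$ is supported on $\underline Z$, so $\dsupp X{Z_1} \subset \dzsupp$, and therefore $\cF' \in \dgbl Xn_\st * \dzsupp$.

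It then remains to combine the pieces through the $*$-calculus of~\cite[\S1.3]{bbd}. From the triangle $\cF' \to \cF \to \cF'' \to$ we get $\cF \in \{\cF'\} * \{\cF''\} \subset (\dgbl Xn_\st * \dzsupp) * \dzsupp$, and by associativity of $*$ this equals $\dgbl Xn_\st * (\dzsupp * \dzsupp)$. One checks that $\dzsupp$ is closed under extensions — the cohomology sheaves of an extension of two objects supported on $\underline Z$ are again supported on $\underline Z$ — so $\dzsupp * \dzsupp = \dzsupp$, and the desired conclusion $\cF \in \dgbl Xn_\st * \dzsupp$ follows. I do not expect a genuine obstacle here, since the substantive content is already carried by Lemma~\ref{lem:stag-dt1}; the only points that need care are checking that the complement of $V$ shrinks \emph{strictly} (so the noetherian induction actually terminates) and observing that $\dsupp X{(-)}$ depends only on the underlying topological space, so the choice of scheme structure on $X \ssm V$ is immaterial.
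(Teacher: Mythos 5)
Your proposal is correct and follows essentially the same route as the paper: a noetherian induction in which Lemma~\ref{lem:stag-dt1} supplies the triangle $\cF' \to \cF \to \cF''$, the inductive hypothesis is applied to $\cF'$ with the strictly smaller closed set $X \ssm V$ (the paper phrases this as a proper closed subscheme $Y \subset Z$, which has the same underlying space), and the conclusion is assembled via associativity of $*$ and $\dzsupp * \dzsupp = \dzsupp$. The points you flag for care (strict shrinking of the complement, independence of the scheme structure) are exactly the ones implicit in the paper's argument.
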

\begin{proof}
This lemma follows by noetherian induction from Lemma~\ref{lem:stag-dt1}. 
Specifically, let us assume that the lemma is known if $Z$ is replaced by
any proper closed subscheme.  (In the base case, where $U = X$ and $Z =
\varnothing$, the
lemma is trivial.)  Let $\cF \in \dgb X^{[a,b]}$ with $\cF|_U \in \dgbl Un_\st$, and choose a generic point $x \in Z$.  Lemma~\ref{lem:stag-dt1} tells us that
\[
\cF \in \{\cF'\} * \dzsupp,
\]
for some $\cF' \in \dgb X^{[a,b]}$ with $\cF'|_V \in \dgbl Vn_\st$, 
where $V \supset U$.  Let $Y \subset Z$ be a closed subscheme structure
on the complement of $V \cap Z$.  Then $Y$ is a proper closed subscheme of $Z$, so we
know by assumption that
\[
\cF' \in \dgbl Xn_\st * \dzsupp.
\]
It is clear that $\dzsupp * \dzsupp = \dzsupp$, and since $*$ is
associative, the lemma follows.
\end{proof}

\begin{prop}\label{prop:stag-t}
We have $\dgb X = \p\dgb X^{\le 0} * \p\dgb X^{\ge 1}$.
\end{prop}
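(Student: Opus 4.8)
The plan is to argue by noetherian induction, assuming the statement of the proposition---hence, together with Proposition~\ref{prop:stag-hom} and Lemma~\ref{lem:pl-trunc}, the full conclusion of Theorem~\ref{thm:stag} and its staggered truncation functors---on every closed subscheme structure on every proper closed subset of the underlying space of $X$. Fix $\cF \in \dgb X^{[a,b]}$; we want a distinguished triangle $\cF' \to \cF \to \cF'' \to$ with $\cF' \in \p\dgbl X0$ and $\cF'' \in \p\dgbg X1$. First I would fix a generic point $x_0$ of $X$ and, using Lemmas~\ref{lem:omega-conc} and~\ref{lem:wt-defn}, a dense open \emph{irreducible} subscheme $U \ni x_0$ disjoint from the other components of $X$, small enough that $\omega_X|_U$ is a sheaf in degree $d = \cod\overline{Gx_0}$ with $\omega_X[d]|_U \in \cgg U{\alt\overline{Gx_0}}$; set $p = p(x_0)$ and $Z = X \ssm U$, so $\bar p(x_0) = \scod\overline{Gx_0} - p$, and monotonicity yields $p(y) \ge p$ and $\bar p(y) \ge \bar p(x_0)$ for all $y \in U^\gen$.

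\emph{Over $U$.} I would build the triangle over $U$ by $s$-truncating cohomology sheaves. After finitely many further shrinkings of $U$ (absorbed into the choice of $U$, using Lemma~\ref{lem:dual-conc}), one may assume $\D$ carries each $H^k(\cF)|_U$ and each of $\sigma_{\le p-k}H^k(\cF)|_U$, $\sigma_{\ge p-k+1}H^k(\cF)|_U$ to a sheaf. Splicing the truncation triangles $\tau^{\le k-1}\cF \to \tau^{\le k}\cF \to H^k(\cF)[-k] \to$ with the short exact sequences $0 \to \sigma_{\le p-k}H^k(\cF) \to H^k(\cF) \to \sigma_{\ge p-k+1}H^k(\cF) \to 0$ and using associativity of $*$, an induction on the number of nonzero cohomology sheaves gives $\cF|_U \in \dgbl Up_\st * \mathcal{B}$ with $\mathcal{B} = \{\cH \in \dgb U \mid H^k(\cH) \in \cgg U{p-k+1}\ \forall k\}$. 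A devissage via Lemma~\ref{lem:li} (using $p(y)\ge p$) shows $\dgbl Up_\st \subseteq \p\dgbl U0$; and Proposition~\ref{prop:st-dual} shows $\D$ carries each $H^k(\cH)[-k]$, for $\cH \in \mathcal{B}$, to a sheaf in degree $d-k$ lying generically in $\cgl U{\alt\overline{Gx_0}-(p-k+1)}$, so $\D(\cH)$ lies generically in $\dgbl U{\bar p(x_0)-1}_\st$; rerunning the first devissage with $\bar p$ in place of $p$ then gives $\D(\mathcal{B}) \subseteq \barp\dgbl U{-1}$, i.e.\ $\mathcal{B} \subseteq \D(\barp\dgbl U{-1}) = \p\dgbg U1$.

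\emph{Transfer to $X$.} Next I would extend the resulting triangle $\cF'_U \to \cF|_U \to \cF''_U \to$ (with $\cF'_U \in \dgbl Up_\st$, $\cF''_U \in \mathcal{B}$) to $\cF'_X \to \cF \to \cF''_X \to$ in $\dgb X$ by Lemma~\ref{lem:subcplx-extend}. Since $\cF'_X|_U \in \dgbl Up_\st$, Lemma~\ref{lem:stag-dt2} gives $\cF'_X \in \dgbl Xp_\st * \dzsupp$; since $\D(\cF''_X)|_U = \D(\cF''_U)$ lies generically in $\dgbl U{\bar p(x_0)-1}_\st$, applying Lemma~\ref{lem:stag-dt2} to $\D(\cF''_X)$ and dualizing gives $\cF''_X \in \dzsupp * \D(\dgbl X{\bar p(x_0)-1}_\st)$. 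Splicing and using associativity and $\dzsupp * \dzsupp = \dzsupp$:
\[
\cF \in \dgbl Xp_\st * \dzsupp * \D(\dgbl X{\bar p(x_0)-1}_\st),
\]
where the leftmost factor restricts over $U$ to $\cF'_U \in \p\dgbl U0$ and the rightmost to $\cF''_U \in \p\dgbg U1$.

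\emph{The hard part.} It remains to replace the two outer factors, up to a $\uZ$-supported correction, by genuine objects of $\p\dgbl X0$ and $\p\dgbg X1$, and then to dispose of all $\uZ$-supported factors. One checks directly from the definition that $\cK \in \p\dgbl X0$ iff $\cK|_U \in \p\dgbl U0$ and $Li^*_{Z'}\cK \in \p\dgbl{Z'}0$ for every subscheme structure $Z'$ on $\uZ$ (and dually, via $\D$ and $Ri^\flat$, for $\p\dgbg X1$); so a factor $\cG \in \dgbl Xp_\st$ with $\cG|_U \in \p\dgbl U0$ can fail to lie in $\p\dgbl X0$ only through the $Li^*_{Z'}\cG$. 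I would correct this by a second noetherian induction on $\uZ$: applying the inductively available staggered truncation on $\dgb{Z'}$ to $Li^*_{Z'}\cG$ produces a $\uZ$-supported sheaf that can be split off $\cG$, after which Proposition~\ref{prop:st-perv} (and its $\D$-dual) recognizes the corrected factor as perversely $\le 0$ and the $\uZ$-supported residue---which has controlled staggered degrees---as again decomposable by the induction hypothesis on $\uZ$; pushing forward via Lemma~\ref{lem:perv-res} and the exactness of $i_*$ (which, as there, carries $\p\dgbl{Z'}0$ into $\p\dgbl X0$), and collapsing adjacent $\dzsupp$ factors, finally rewrites $\cF$ as an object of $\p\dgbl X0 * \p\dgbg X1$. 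I expect the main obstacle to be exactly this coordination---making the $s$-structure truncations produced by Lemma~\ref{lem:stag-dt2} cohere with the staggered $t$-structures on all (possibly nonreduced) subscheme structures on $\uZ$; the remaining steps are routine devissage or are already packaged in the lemmas of this section and of Section~\ref{sect:stag-orth}.
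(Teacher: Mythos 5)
Your first two stages are essentially sound and run parallel to the paper's own route (they reproduce, over $U$, what Lemma~\ref{lem:stag-dt1} does, and the transfer via Lemma~\ref{lem:subcplx-extend} and Lemma~\ref{lem:stag-dt2} matches the paper), but the step you yourself flag as the hard part is a genuine gap, not a routine devissage. Because you anchor everything at a generic point $x_0$ of one component of $X$, the bounds $p(y)\ge p(x_0)$ and $\bar p(y)\ge \bar p(x_0)$ hold only for $y\in U^\gen$; at generic points lying in $\uZ$ (including the generic points of any other irreducible components, which are swallowed whole by $Z$) nothing constrains $p$ or $\bar p$, so the outer factors $\dgbl X{p(x_0)}_\st$ and $\D(\dgbl X{\bar p(x_0)-1}_\st)$ really can fail to lie in $\p\dgbl X0$ and $\p\dgbg X1$, and some repair is unavoidable in your setup. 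The repair you sketch does not work as described: (i) ``splitting off'' $\p\tau^{\ge 1}Li^*_{Z'}\cG$ from $\cG$ (say, by taking the cocone of $\cG\to i_{Z'*}Li^*_{Z'}\cG\to i_{Z'*}\,\p\tau^{\ge 1}Li^*_{Z'}\cG$) does not produce an object whose pullback to $Z'$ is the complementary truncation, because $Li^*_{Z'}i_{Z'*}\ne\id$ (the Tor terms of the non-flat immersion intervene), so even the condition at the single structure $Z'$ is not achieved; (ii) membership in $\p\dgbl X0$ imposes conditions at every $x\in X^\gen$ with $\overline{Gx}\subset\uZ$ and for \emph{every} subscheme structure, so one correction per structure is an infinite process with no evident stabilization; and (iii) Proposition~\ref{prop:st-perv} cannot ``recognize'' the corrected factor, since that proposition applies only to objects supported on $\uZ$ --- its proof uses the support precisely to make the conditions away from $\uZ$ vacuous --- whereas your corrected factor has full support.

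The paper avoids needing any correction by organizing the induction differently: it is a noetherian induction on the \emph{support of $\cF$}, not on $X$. One writes $\cF\simeq\kappa_*\cF_1$ with $\kappa\colon Y\hto X$ a structure on the support, chooses a generic point $x\in Y^\gen$ at which $p$ is \emph{minimal on $Y^\gen$}, applies Lemma~\ref{lem:stag-dt1} (with empty $U$) and Lemma~\ref{lem:stag-dt2} on $Y$, and pushes forward by $\kappa_*$. The outer factors are then supported on $Y$, so Proposition~\ref{prop:st-perv}, together with the minimality of $p(x)$ on $Y^\gen$ (and the corresponding statement for $\bar p$ on the dual side), places them directly in $\p\dgbl X0$ and $\p\dgbg X1$; the only leftovers are $\dzsupp$-factors with $Z$ a proper closed subset of the support, which the induction hypothesis absorbs. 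To make your write-up correct, replace the choice of $x_0$ and the correction scheme by this device; as it stands, the argument is incomplete at its decisive step.
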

\begin{proof}
Let $\cF \in \dgb X$.  We proceed by noetherian induction on the support of $\cF$.  Suppose $\cF$ is supported on a closed subscheme $\kappa: Y \to X$.  Then there is some $\cF_1 \in \dgb Y$ such that $\cF \simeq \kappa_*\cF_1$.  Choose a generic point $x \in Y^\gen$ such that $p(x) \le p(y)$ for all $y \in Y^\gen$.  (This minimum value of $p$ must be achieved on one of the finitely many generic points of $Y$.)  Now, apply Lemma~\ref{lem:stag-dt1} to $\cF_1$.  (In the notation of that statement, we are taking $U = \varnothing$.)  We obtain an open subscheme $V \subset Y$ containing $x$, and a distinguished triangle
\begin{equation}\label{eqn:t1}
\cF'_1 \to \cF_1 \to \cF''_1 \to
\end{equation}
where $\cF'_1|_V \in \dgbl V{p(x)}_\st$ and $\D(\cF''_1)|_V \in \dgbl
V{\scod \barGx - p(x) - 1}_\st = \dgbl V{\bar p(x) - 1}_\st$.  Next, let $Z
\subset Y$ be a closed subscheme complementary to $V$.  By
Lemma~\ref{lem:stag-dt2}, we know that
\[
\cF'_1 \in \dgbl Y{p(x)}_\st * \dsupp YZ
\qquad\text{and}\qquad
\D(\cF''_1) \in \dgbl Y{\bar p(x)-1}_\st * \dsupp YZ.
\]
From the distinguished triangle~\eqref{eqn:t1}, we see now that
\begin{align*}
\cF_1 \in {}&(\dgbl Y{p(x)}_\st * \dsupp YZ) * \D(\dgbl Y{\bar p(x)-1}_\st
* \dsupp YZ) \\
&= \dgbl Y{p(x)}_\st * \dsupp YZ * \D(\dsupp YZ) * \D(\dgbl Y{\bar p(x)-1}_\st) \\
&= \dgbl Y{p(x)}_\st * \dsupp YZ * \D(\dgbl Y{\bar p(x)-1}_\st).
\end{align*}
Recall from Lemma~\ref{lem:perv-res} that $\kappa_*$ takes $\dgbl Yn_\st$ to $\dgbl Xn_\st$.  This functor also commutes with $\D$, so we find that
\[
\cF \in (\dsupp XY \cap \dgbl X{p(x)}_\st) * \dzsupp * \D(\dsupp XY \cap \dgbl X{\bar p(x)-1}_\st).
\]
Now, since $p(x)$ is the minimum value of $p$ on $Y^\gen$, Proposition~\ref{prop:st-perv} allows us to deduce that
\begin{multline*}
\cF \in \p\dgbl X0 * \dzsupp * \D(\barp \dgbl X{-1}) \\
= \p\dgbl X0 * \dzsupp * \p\dgbg X1.
\end{multline*}
We have assumed inductively that $\dzsupp \subset \p\dgbl X0 * \p\dgbg X1$, so we can conclude that $\cF \in \p\dgbl X0 * \p\dgbg X1$.
\end{proof}

\begin{proof}[Proof of Theorem~\ref{thm:stag}]
It is obvious from the definitions that
\[
\p\dgbl X{-1} \subset \p\dgbl X0
\qquad\text{and}\qquad
\p\dgbg X0 \subset \p\dgbg X1. 
\]
The other axioms for a $t$-structure have
been checked in Propositions~\ref{prop:stag-hom} and~\ref{prop:stag-t}.  Thus, the
categories $(\p\dgbl X0, \p\dgbg X0)$ define a $t$-structure on $\dgb X$.

Next, we show that it is nondegenerate.  We first prove that there is no
nonzero object belonging to $\p\dgml Xn$ for all $n \in \Z$.  We proceed by
noetherian induction: assume that for any proper closed subscheme $Z \subset X$,
we have $\bigcap_{n \in \Z} \p\dgml Zn = \{0\}$.  The base case is that in
which $X$ contains no nonempty proper closed
($G$-invariant) subschemes.  In that case, we simply have $\p\dgml X0 =
\dgml X{p(x)}_\bt = \dgml X{p(x)}_\st$, where $x$ is any generic point of
$X$.  Now, if we had an object $\cF$ that was in $\p\dgml Xn$ for all $n
\in \Z$, that would imply that for every $k \in \Z$, $H^k(\cF) \in \cgl
X{p(x)+n-k}$ for all $n \in \Z$.  Using axiom~\sref{ax:bdd}, we see
that that implies that $H^k(\cF) = 0$ for all $k$, so in fact $\cF = 0$, as
desired.

In the general case, let $\cF \in \dgm X$ be a nonzero object.  Then there certainly exists some proper closed subscheme $i: Z \hto X$ such that $Li^*\cF \in \dgm Z$ is nonzero.  If $\cF \in \p\dgml Xn$ for all $n$, then by Lemma~\ref{lem:perv-res}, $Li^*\cF \in \p\dgml Zn$ for all $n$, but that is a contradiction.  We conclude that $\bigcap_{n \in \Z} \p\dgml Xn = \{0\}$.

Applying $\D$ lets us conclude that there is no nonzero object belonging to all $\p\dgpg Xn$ either.  Thus, the $t$-structure on $X$ is nondegenerate.

Finally, we show that it is bounded.  Any $\cF \in \dgb X$ has finitely
many nonzero cohomology sheaves, of course.  For each $k \in \Z$ such that
$H^k(\cF) \ne 0$, let $v_k \in \Z$ be such that $H^k(\cF) \in \cgl X{v_k}$.
(Such a $v_k$ exists by axiom~\sref{ax:bdd}.)  Then, let $d$ be the
minimal value of the perversity $p$, and define
\[
n = \max \{v_k + k \mid H^k(\cF) \ne 0\} - d.
\]
We then have $v_k \le n+d-k$ for each $k$, so we see that $H^k(\cF) \in
\cgl X{n+d-k}$.  Thus, $\cF \in \dgbl X{n+d}_\st$, or equivalently, $\cF[n]
\in \dgbl Xd_\st$.  Propostion~\ref{prop:st-perv} now tells us that $\cF[n]
\in \p\dgbl X0$, or $\cF \in \p\dgbl Xn$.  A similar argument allows to
find an integer $m$ such that $\D(\cF) \in \barp\dgbl Xm$, and from that we
deduce that $\cF \in \p\dgbg X{-m}$.  Thus, the staggered $t$-structure is
bounded.
\end{proof}

\begin{cor}\label{cor:perv-closed}
Let $i: Z \hto X$ be a closed subscheme.  Then $i_*$ takes $\p\dgbl Z0$ to $\p\dgbl X0$ and $\p\dgbg Z0$ to $\p\dgbg X0$.
\end{cor}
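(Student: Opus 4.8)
The plan is to deduce everything formally from Theorem~\ref{thm:stag}, once we rephrase membership in $\p\dgbl X0$ and $\p\dgbg X0$ as orthogonality conditions and then feed in the adjunctions $Li^*\dashv i_*\dashv Ri^\flat$ of Lemma~\ref{lem:found}\eqref{it:f-adj}. For any $t$-structure on a triangulated category one has the standard descriptions of the aisles
\[
\p\dgbl X0=\{\cH\in\dgb X\mid \Hom_X(\cH,\cG)=0\ \text{for all}\ \cG\in\p\dgbg X1\},
\qquad
\p\dgbg X0=\{\cH\in\dgb X\mid \Hom_X(\cF,\cH)=0\ \text{for all}\ \cF\in\p\dgbl X{-1}\}.
\]
(One inclusion in each is the Hom-vanishing axiom; for the reverse, if $\Hom_X(\cH,\p\dgbg X1)=0$ then the map $\cH\to\tau^{\ge 1}\cH$ is zero, so $\tau^{\ge 1}\cH$ is a direct summand of $(\tau^{\le 0}\cH)[1]$ with zero structure map, hence $\tau^{\ge 1}\cH=0$; dually for the right aisle.) Note also that $i_*$ is exact and carries $\dgb Z$ into $\dgb X$, so there is no issue about $i_*\cF$ being an object of $\dgb X$.

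For the first assertion, let $\cF\in\p\dgbl Z0$ and let $\cG\in\p\dgbg X1\subset\p\dgpg X1$. The adjunction between $i_*$ and $Ri^\flat$ gives $\Hom_X(i_*\cF,\cG)\simeq\Hom_Z(\cF,Ri^\flat\cG)$. By Lemma~\ref{lem:perv-res}(2), $Ri^\flat$ carries $\p\dgpg X0$ into $\p\dgpg Z0$, hence (shifting) $\p\dgpg X1$ into $\p\dgpg Z1$; and $\cF\in\p\dgbl Z0\subset\p\dgml Z0$, so Proposition~\ref{prop:stag-hom} applied on $Z$ yields $\Hom_Z(\cF,Ri^\flat\cG)=0$. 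Thus $i_*\cF$ lies in the left aisle, $i_*\cF\in\p\dgbl X0$. For the second assertion, let $\cG\in\p\dgbg Z0$ and let $\cF\in\p\dgbl X{-1}$. The adjunction between $Li^*$ and $i_*$ gives $\Hom_X(\cF,i_*\cG)\simeq\Hom_Z(Li^*\cF,\cG)$. Since $\cF[-1]\in\p\dgbl X0$, Lemma~\ref{lem:perv-res}(2) gives $(Li^*\cF)[-1]=Li^*(\cF[-1])\in\p\dgml Z0$, while $\cG[-1]\in\p\dgpg Z1$; so Proposition~\ref{prop:stag-hom} on $Z$ gives $\Hom_Z(Li^*\cF,\cG)\simeq\Hom_Z\bigl((Li^*\cF)[-1],\cG[-1]\bigr)=0$, and therefore $i_*\cG\in\p\dgbg X0$.

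I do not expect a serious obstacle: given Theorem~\ref{thm:stag}, the corollary is a formal consequence of the three adjoint functors together with Lemma~\ref{lem:perv-res} and Proposition~\ref{prop:stag-hom}. The one point to keep an eye on is that $Li^*\cF$ and $Ri^\flat\cG$ are in general unbounded; but they lie respectively in $\dgm Z$ and $\dgp Z$, which is exactly the setting in which $\p\dgml Z0$, $\p\dgpg Z1$, and the vanishing statement of Proposition~\ref{prop:stag-hom} are formulated, so no boundedness hypothesis is needed for those inputs — only $i_*\cF$ and $i_*\cG$ themselves, which are genuinely bounded, need to land in $\dgb X$. (As a sanity check, the two halves are dual to one another: applying $\D$ and using $\D\circ Li^*\simeq Ri^\flat\circ\D$ from Lemma~\ref{lem:found-dual}, with $\barp{}$ in place of $p$, recovers the second statement from the first; the direct argument above simply avoids invoking the identity $i_*\circ\D_Z\simeq\D_X\circ i_*$.)
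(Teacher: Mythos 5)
Your proof is correct and takes essentially the same route as the paper: with Theorem~\ref{thm:stag} established, membership in the staggered aisles is tested by orthogonality, and the adjunctions $Li^* \dashv i_* \dashv Ri^\flat$ combined with Lemma~\ref{lem:perv-res}(2) and Proposition~\ref{prop:stag-hom} (applied on $Z$) give the required Hom-vanishing. The only difference is that you write out explicitly the ``similar argument'' for $\p\dgbg Z0 \to \p\dgbg X0$ via the $Li^*\dashv i_*$ adjunction and the standard orthogonality characterization of the aisles, both of which the paper leaves implicit.
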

\begin{proof}.
Let $\cF \in \p\dgbl Z0$.  Since we now know that $(\p\dgbl X0, \p\dgbg X0)$ is a $t$-structure on $\dgb X$, we know that $i_*\cF \in \p\dgbl X0$ if and only if $\Hom(i_*\cF,\cG) = 0$ for all $\cG \in \p\dgbg X1$.  But $\Hom(i_*\cF,\cG) \simeq \Hom(\cF, Ri^\flat\cG)$, and $Ri^\flat\cG \in \dgbg Z1$ by Lemma~\ref{lem:perv-res}.  We see that $\Hom(\cF, Ri^\flat\cG) = 0$ for all $\cG \in \dgbg X1$, so $i_*\cF \in \dgbl X0$.  A similar argument shows that $i_*$ takes $\p\dgbg Z0$ to $\p\dgbg X0$.
\end{proof}

%%%%%%%%%%%%%%%%%%%%%%%%%%%%%%%%%%%%%%%%%%%%%%%%%%%%%%%%%%%%%%%%%%%%%%%%%%%
\section{Simple Staggered Sheaves}
\label{sect:ic}
%%%%%%%%%%%%%%%%%%%%%%%%%%%%%%%%%%%%%%%%%%%%%%%%%%%%%%%%%%%%%%%%%%%%%%%%%%%

Recall from Section~\ref{sect:intro} that under certain circumstances, simple objects in the category of perverse coherent sheaves have a remarkably easy description (in terms of ``middle-extension functors''), and that all objects in the category have finite length.  In this section, we develop analogues of these results for staggered sheaves.  The proofs follow those given in~\cite[\S 3.2]{bez:pc} for perverse coherent sheaves quite closely, and in many cases we have omitted most details.

In this section and the following one, we assume that the base scheme over which we are working is $\Spec \Bbbk$ for some field $\Bbbk$, and we assume that $G$ is a linear algebraic group over $\Bbbk$.  An \emph{orbit} in $X$ will mean a reduced, locally closed subscheme isomorphic to a homogeneous space for $G$.

Let $j: U \hto X$ be a dense open subscheme, and let $Z \subset X$ be its complement.  Given a perversity $p$, define new functions $p^-, p^+: X^\gen \to \Z$ by
\[
p^-(x) = 
\begin{cases}
p(x) & \text{if $x \in U^\gen$,} \\
p(x)-1 & \text{if $x \notin U^\gen$,}
\end{cases}
\qquad
p^+(x) = 
\begin{cases}
p(x) & \text{if $x \in U^\gen$,} \\
p(x)+1 & \text{if $x \notin U^\gen$.}
\end{cases}
\]
These are not necessarily perversities: they need not, in general, have the monotonicity and comonotonicity properties of Definition~\ref{defn:perv}, so they may not give rise to $t$-structures on $\dgb X$.  Nevertheless, the definitions of the categories ${}^{p^-}\dgml X0$ and ${}^{p^+}\dgpg X0$ still make sense.

\begin{lem}\label{lem:ic-subquot}
Let $\cF \in \p\sm X$.  Then:
\begin{enumerate}
\item $\cF \in {}^{p^-}\dgbl X0$ if and only if $\Hom(\cF,\cG) = 0$ for all $\cG \in \p\sm X \cap \dzsupp$.
\item $\cF \in {}^{p^+}\dgbg X0$ if and only if $\Hom(\cG,\cF) = 0$ for all $\cG \in \p\sm X \cap \dzsupp$.
\end{enumerate}
\end{lem}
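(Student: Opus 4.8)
The plan is to prove statement~(1) and to obtain statement~(2) from it by Grothendieck--Serre duality. For the reduction, I would first record the bookkeeping identity $\overline{p^+} = (\bar p)^-$, which follows at once from $\bar p(x) = \scod\barGx - p(x)$ together with the definitions of $p^\pm$; combined with $\p\dgpg X0 = \D(\barp\dgml X0)$, this gives $\D\bigl({}^{p^+}\dgbg X0\bigr) = {}^{(\bar p)^-}\dgbl X0$. Since $\D$ interchanges the $p$-staggered and $\bar p$-staggered $t$-structures and hence sends $\p\sm X$ onto $\barp\sm X$, since $\D$ does not enlarge supports (being computed stalkwise) and hence preserves $\dzsupp$, and since $\Hom(\cG,\cF)\simeq\Hom(\D\cF,\D\cG)$, the assertion~(2) for $(\cF,p)$ is precisely the assertion~(1) for $(\D\cF,\bar p)$. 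So it suffices to prove~(1).

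The main step is to reinterpret both sides of~(1) via the staggered $t$-structure on the closed complement $Z$. Let $\cF\in\p\sm X$; then $\cF\in\p\dgbl X0$, so by Lemma~\ref{lem:perv-res}, for each closed subscheme $i'\colon Z'\hto X$ whose underlying set is a $G$-invariant closed subset of $\underline Z$ one has $L(i')^*\cF\in\p\dgml{Z'}0$, so that ${}^pH^k(L(i')^*\cF)=0$ for $k>0$ and ${}^pH^0(L(i')^*\cF)$ is a well-defined object of $\p\sm{Z'}$. Now, unwinding the definition of ${}^{p^-}\dgbl X0$, the conditions it imposes beyond those already guaranteed by $\cF\in\p\dgbl X0$ are exactly the conditions at points $x\in Z^\gen$, namely $L(i')^*\cF\in\dgml{Z'}{p(x)-1}_\bt$ for subscheme structures $i'$ on $\barGx$. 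Using the shift identity $\dgml{Z'}{p(x)-1}_\bt=\dgml{Z'}{p(x)}_\bt[1]$ and the definition of the staggered $t$-structure, each such condition (given $L(i')^*\cF\in\p\dgml{Z'}0$) is equivalent to ${}^pH^0(L(i')^*\cF)=0$. Hence, for $\cF\in\p\sm X$, we have $\cF\in{}^{p^-}\dgbl X0$ if and only if ${}^pH^0(L(i')^*\cF)=0$ for every subscheme structure $i'$ on every $G$-invariant closed subset of $\underline Z$.

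On the other side, every $\cG\in\p\sm X\cap\dzsupp$ has the form $\cG\simeq i'_*\cG_1$ for such an $i'$ and some $\cG_1\in\dgb{Z'}$; since $i'_*$ is $t$-exact for the staggered $t$-structures (Corollary~\ref{cor:perv-closed}) and faithful, $\cG\in\p\sm X$ forces $\cG_1\in\p\sm{Z'}$, and conversely all such pushforwards lie in $\p\sm X\cap\dzsupp$. The adjunction $L(i')^*\dashv i'_*$ of Lemma~\ref{lem:found} gives $\Hom(\cF,\cG)\simeq\Hom(L(i')^*\cF,\cG_1)$, and since $L(i')^*\cF\in\p\dgml{Z'}0$ while $\cG_1\in\p\dgbg{Z'}0$, truncating $L(i')^*\cF$ with respect to the staggered $t$-structure identifies this group with $\Hom_{\p\sm{Z'}}\bigl({}^pH^0(L(i')^*\cF),\cG_1\bigr)$. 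Combining the two reinterpretations proves~(1): if $\cF\in{}^{p^-}\dgbl X0$ then ${}^pH^0(L(i')^*\cF)=0$ for all relevant $i'$, whence $\Hom(\cF,\cG)=0$ for every $\cG\in\p\sm X\cap\dzsupp$; conversely, if $\cF\notin{}^{p^-}\dgbl X0$, pick $x\in Z^\gen$ and a subscheme structure $i'$ on $\barGx$ with ${}^pH^0(L(i')^*\cF)\ne0$, and put $\cG=i'_*\bigl({}^pH^0(L(i')^*\cF)\bigr)\in\p\sm X\cap\dzsupp$; then $\Hom(\cF,\cG)\simeq\Hom_{\p\sm{Z'}}\bigl({}^pH^0(L(i')^*\cF),{}^pH^0(L(i')^*\cF)\bigr)$ contains the identity and is nonzero. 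Statement~(2) now follows by the duality reduction of the first paragraph.

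The step I expect to cost the most effort is the equivalence invoked at the end of the second paragraph: that the vanishing of ${}^pH^0(L(i')^*\cF)$ over subscheme structures on \emph{arbitrary} $G$-invariant closed subsets of $\underline Z$ (such as the support of a given $\cG$) follows from its vanishing over subscheme structures on the irreducible subsets $\barGx$ that appear in the definition of ${}^{p^-}\dgml X0$. Reducing one to the other requires the compatibility of $L(i')^*$ and of the staggered cohomology functors with restriction to open subschemes (Lemma~\ref{lem:perv-res}), together with their insensitivity, in the $\bt$-sense, to passage to irreducible components and to nilpotent thickenings over dense open subschemes (Lemma~\ref{lem:li} and the $\bt$-flexibility built into the definitions), in the spirit of the manipulations of Section~\ref{sect:closed}. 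These verifications are routine but somewhat lengthy, so I would carry out the proof by closely following the treatment of the analogous facts for perverse coherent sheaves in \cite[\S 3.2]{bez:pc}.
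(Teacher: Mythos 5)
Your overall route is the paper's own: the paper proves this lemma by combining Lemma~\ref{lem:perv-res}, Corollary~\ref{cor:perv-closed}, the adjunction $Li'^*\dashv i'_*$ and general $t$-structure yoga, following the proof of Lemma~6 of~\cite{bez:pc}, and part~(2) by duality via $\overline{p^+}=(\bar p)^-$ --- exactly your plan. One intermediate assertion is misstated, however: for a fixed subscheme structure $i'\colon Z'\hto X$ on $\barGx$, the single condition $Li'^*\cF\in\dgml{Z'}{p(x)-1}_\bt$ is \emph{not} equivalent to ${}^pH^0(Li'^*\cF)=0$; the latter means $Li'^*\cF\in\p\dgml{Z'}{-1}$, which imposes conditions at \emph{every} $y\in Z'^{\gen}$ and every subscheme structure on $\barGy$ inside $Z'$, whereas the former only constrains cohomology sheaves over dense open subschemes of $Z'$ and says nothing about the smaller strata. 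What is true, and is all your argument actually uses, is the aggregated statement: for $\cF\in\p\dgbl X0$, one has $\cF\in{}^{p^-}\dgbl X0$ if and only if $Li'^*\cF\in\p\dgml{Z'}{-1}$ for every subscheme structure $i'$ on every closed $G$-invariant subset of $\uZ$. This holds for the simple reason that a closed subscheme structure on $\barGy$ inside such a $Z'$ is also a closed subscheme structure on $\barGy$ inside $X$ (and conversely any such structure is itself an admissible $Z'$), so the two families of conditions coincide once quantified; the reduction you flag at the end as potentially lengthy therefore needs no appeal to Lemma~\ref{lem:li} or to $\bt$-flexibility. Finally, since $Li'^*\cF$ lies only in $\dgm{Z'}$, the object ``${}^pH^0(Li'^*\cF)$'' should be introduced via a truncation: for $\cG_1\in\p\sm{Z'}$ and $a\ll 0$ one has $\Hom(Li'^*\cF,\cG_1)\simeq\Hom(\tau^{\ge a}Li'^*\cF,\cG_1)$, and Lemma~\ref{lem:pl-trunc} places $\tau^{\ge a}Li'^*\cF$ in $\p\dgbl{Z'}0$, where ${}^pH^0$ is defined and the usual identification $\Hom(\tau^{\ge a}Li'^*\cF,\cG_1)\simeq\Hom({}^pH^0(\tau^{\ge a}Li'^*\cF),\cG_1)$ applies. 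With these repairs your proof is correct and coincides with the paper's.
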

\begin{proof}
This follows from Lemma~\ref{lem:perv-res}, Corollary~\ref{cor:perv-closed}, and general properties of $t$-structures, as in the proof of~\cite[Lemma~6]{bez:pc}.
\end{proof}

\begin{prop}\label{prop:ic}
Assume that $p^-$ and $p^+$ are themselves perversities, and define a full subcategory $\p\smpm X \subset \p\sm X$ by $\p\smpm X = {}^{p^-}\dgbl X0 \cap {}^{p^+}\dgbg X0$.  The functor $j^*$ induces an equivalence of categories $\p\smpm X \to \p\sm U$.
\end{prop}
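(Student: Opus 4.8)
The plan is to show that $j^*$ carries $\p\smpm X$ fully faithfully and essentially onto $\p\sm U$, following the classical argument for intermediate extensions in~\cite[\S1.4]{bbd} and~\cite[\S3.2]{bez:pc}. Recall first that $j^*$ is exact for each of the staggered $t$-structures attached to $p$, to $p^-$, and to $p^+$ (Lemma~\ref{lem:perv-res}, using $p^-|_U = p|_U = p^+|_U$), that by hypothesis $p^\pm$ are perversities so that Theorem~\ref{thm:stag} makes $({}^{p^\pm}\dgbl X0, {}^{p^\pm}\dgbg X0)$ into $t$-structures on $\dgb X$, and that $\p\smpm X \subseteq \p\sm X$ since $p^- \le p \le p^+$. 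A basic bookkeeping fact will be used throughout: if $Z'$ is a closed subscheme with the same underlying space as $Z = X \ssm U$, then $p^-$ and $p^+$ restrict to $p|_{Z'} - 1$ and $p|_{Z'} + 1$ on $(Z')^\gen$, so---using Proposition~\ref{prop:alt-defn}, which makes $\scod$ and hence $p|_{Z'}$ independent of the subscheme structure---membership in ${}^{p^\pm}\dgml{Z'}0$ and ${}^{p^\pm}\dgpg{Z'}0$ translates into membership in the intrinsic $p|_{Z'}$-aisles $\dgml{Z'}{\mp1}$ and $\dgpg{Z'}{\pm1}$; the same holds after applying $Li^*$, $Ri^\flat$, or $i_*$ (Lemma~\ref{lem:perv-res}, Corollary~\ref{cor:perv-closed}).

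\emph{Full faithfulness.} I would establish the stronger statement that the restriction map $\Hom(\cF,\cF') \to \Hom(j^*\cF, j^*\cF')$ is bijective for all $\cF \in {}^{p^-}\dgml X0$ and $\cF' \in {}^{p^+}\dgpg X0$; since morphisms in the heart of a $t$-structure coincide with morphisms in the ambient derived category, this yields full faithfulness of $j^*$ on $\p\smpm X$. One plugs $\cF$ and $\cF'$ into the long exact sequence of Lemma~\ref{lem:found}\eqref{it:f-les}; it then suffices to see that the correction groups
\[
\lim_{\substack{\to \\ Z'}} \Hom(Li^*_{Z'}\cF, Ri^\flat_{Z'}\cF')
\qquad\text{and}\qquad
\lim_{\substack{\to \\ Z'}} \Hom(Li^*_{Z'}\cF, Ri^\flat_{Z'}\cF'[1])
\]
vanish. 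By Lemma~\ref{lem:perv-res}, $Li^*_{Z'}\cF \in {}^{p^-}\dgml{Z'}0$ and $Ri^\flat_{Z'}\cF' \in {}^{p^+}\dgpg{Z'}0$; in terms of $p|_{Z'}$ this says $Li^*_{Z'}\cF$ lies in $\dgml{Z'}{-1}$ and $Ri^\flat_{Z'}\cF'$ in $\dgpg{Z'}1$, so that both groups above (the second after rewriting $\Hom(A,B[1]) = \Hom(A[-1],B)$) are exactly of the shape annihilated by Proposition~\ref{prop:stag-hom}, applied on $Z'$ with perversity $p|_{Z'}$. Making this precise---tracking the $\pm1$ shifts on every subscheme structure on $\uZ$ and checking that the ``gap of one'' in Proposition~\ref{prop:stag-hom} is always present---is the step I expect to be the main obstacle; the rest is formal.

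\emph{Essential surjectivity.} Given $\cG \in \p\sm U$, first choose an extension $\tilde\cG \in \dgb X$ with $j^*\tilde\cG \simeq \cG$ (possible since $j^*\colon \dgb X \to \dgb U$ is essentially surjective, being the localization $\dgb X \to \dgb X / \dzsupp$), and set $\cF_{!} = {}^{p^-}H^0(\tilde\cG)$ and $\cF_{*} = {}^{p^+}H^0(\tilde\cG)$, the cohomologies taken with respect to the $p^-$- and $p^+$-$t$-structures. Both restrict to $\cG$ on $U$, and $\cF_{!} \in {}^{p^-}\dgbl X0 \subseteq \p\dgbl X0$, $\cF_{*} \in {}^{p^+}\dgbg X0 \subseteq \p\dgbg X0$. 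By the full-faithfulness statement already proved there is a unique morphism $\phi\colon \cF_{!} \to \cF_{*}$ with $j^*\phi = \id_{\cG}$; let $\cF$ be the image, formed in the abelian category $\p\sm X$, of ${}^pH^0(\phi)\colon {}^pH^0(\cF_{!}) \to {}^pH^0(\cF_{*})$. Exactness of $j^*$ for the $p$-$t$-structure and the fact that $\cG$ lies in its heart over $U$ give $j^*\cF \simeq \cG$. Finally, to see $\cF \in \p\smpm X$, note that in the triangles ${}^p\tau^{\le-1}\cF_{!} \to \cF_{!} \to {}^pH^0(\cF_{!}) \to$ and ${}^pH^0(\cF_{*}) \to \cF_{*} \to {}^p\tau^{\ge1}\cF_{*} \to$ the outer terms are supported on $\uZ$, and---by the same perversity-shift matching as above, applied on a subscheme structure carrying them---lie in ${}^{p^-}\dgbl X0$ and ${}^{p^+}\dgbg X0$ respectively; since those aisles are closed under extensions it follows that ${}^pH^0(\cF_{!}) \in {}^{p^-}\dgbl X0 \cap \p\sm X$ and ${}^pH^0(\cF_{*}) \in {}^{p^+}\dgbg X0 \cap \p\sm X$. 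Lemma~\ref{lem:ic-subquot} then shows that ${}^pH^0(\cF_{!})$ has no nonzero quotient supported on $\uZ$ and ${}^pH^0(\cF_{*})$ no nonzero subobject supported on $\uZ$, hence neither does the quotient $\cF$ of the former nor the subobject $\cF$ of the latter; so $\cF \in \p\smpm X$ by Lemma~\ref{lem:ic-subquot} again. Thus $j^*\colon \p\smpm X \to \p\sm U$ is fully faithful and essentially surjective, hence an equivalence; a quasi-inverse sends $\cG$ to the ``middle extension'' $\cF$ just constructed, and its functoriality is automatic.
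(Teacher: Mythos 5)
Your proposal is correct and follows essentially the same route as the paper, whose proof simply consists of repeating the argument of~\cite[Theorem~2]{bez:pc} (or~\cite[Proposition~2.3]{as}) with Lemma~\ref{lem:ic-subquot} in place of~\cite[Lemma~6]{bez:pc} --- exactly the full-faithfulness-via-Lemma~\ref{lem:found}\eqref{it:f-les}-and-Proposition~\ref{prop:stag-hom} plus image-of-${}^pH^0(\cF_!)\to{}^pH^0(\cF_*)$ construction you reconstruct. The only blemish is the sign convention in your preliminary bookkeeping sentence (the ``$\mp1$/$\pm1$'' pairing as written is off), but the shifts you actually use later, namely ${}^{p^-}\dgml{Z'}0={}^{p}\dgml{Z'}{-1}$ and ${}^{p^+}\dgpg{Z'}0={}^{p}\dgpg{Z'}{1}$, are the correct ones, so the argument is unaffected.
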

\begin{proof}
The arguments given in either~\cite[Theorem~2]{bez:pc} or~\cite[Proposition~2.3]{as} can be repeated verbatim, with one minor change: references to~\cite[Lemma~6]{bez:pc} should be replaced by references to Lemma~\ref{lem:ic-subquot} above.  We omit further details.
\end{proof}

\begin{rmk}
The assumption that $p^-$ and $p^+$ are themselves perversities has as its main consequence the requirement that for any $x \in U^\gen$ and $y \in Z^\gen$ with $y \in \barGx$, we must have $\scod \barGy \ge \scod \barGx + 2$.  (This follows easily from the monotonicity and comonotonicity requirements and the fact that $p^+(y) = p^-(y)+2$.)  A similar restriction appears in~\cite[\S 3.2]{bez:pc}.

A different approach, introduced in~\cite[Propostions~2.3, 2.6]{as}, allows one to prove a version of this result in which it is not necessary to assume that $p^-$ and $p^+$ are perversities.  (To be more precise, their definitions are changed in such a manner that they are guaranteed to be perversities.)  That approach could certainly be duplicated here.

However, the resulting equivalence of categories has the disadvantage that it relates $\p\smpm X$ to a certain full (but not Serre!) subcategory of $\p\sm U$, not $\p\sm U$ itself.  We will not require the generalized form of the middle-extension functor, and indeed, the generalized version cannot be used to prove an analogue of Theorem~\ref{thm:artin}.  We will therefore content ourselves with the version stated above.
\end{rmk}

\begin{defn}
The inverse equivalence to the one described in Proposition~\ref{prop:ic}, denoted $j_{!*}: \p\sm U \to \p\smpm X$, is called the \emph{middle-extension functor}.
\end{defn}

\begin{defn}
Let $\kappa: Y \hto X$ be a closed subscheme, and let $h: V \hto Y$ be an open subscheme of $Y$.  If $h_{!*}$ is defined, then given any $\cF \in \p\sm V$, we define an object of $\p\sm X$ by
\[
\cIC(Y,\cF) = \kappa_*(h_{!*}\cF).
\]
This is called the (\emph{staggered}) \emph{intersection cohomology complex} associated to $\cF$.
\end{defn}

\begin{prop}\label{prop:simple-orbit}
Suppose $X$ has the property that its associated reduced scheme $X_\red$ is a single $G$-orbit.  Let $d$ be the value of the perversity $p$ at any generic point of $X$.  Then
\[
\p\sm X = \{ \cF \in \dgb X \mid \text{$\cF$ is concentrated in staggered degree $d$} \}.
\]
In addition, let $t: X_\red \to X$ be the inclusion map.  For any irreducible vector bundle $\cL \in \cg {X_\red}$, $t_*\cL[-d+\step\cL]$ is a simple object of $\p\sm X$, and every simple object arises in this way.
\end{prop}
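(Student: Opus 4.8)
The plan is to reduce the proposition to an explicit description of the heart $\p\sm X$ in terms of the $s$-structure, in the spirit of~\cite[\S 3.2]{bez:pc}, and then read off the simple objects.

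\textbf{Reduction and single-orbit simplifications.} Since $X_\red$ is a single $G$-orbit, the only nonempty $G$-invariant open subscheme of $X$ is $X$ itself and $X^\gen = \{x\}$, with $p(x) = d$. Hence every ``$\bt$''-condition of Section~\ref{sect:stag-orth} collapses to the corresponding ``$\st$''-condition; taking the subscheme structure to be $X$ itself in~\eqref{eqn:stagl}, and using Proposition~\ref{prop:st-perv} for the reverse inclusion, gives $\p\dgbl X0 = \dgbl Xd_\st$, and likewise $\barp\dgbl X0 = \dgbl X{\bar d}_\st$ with $\bar d = \scod X - d$. Also, by Lemma~\ref{lem:omega-conc} and transitivity of the $G$-action on $X_\red$, $\omega_X$ is concentrated in the single degree $\cod X$; set $\tilde\omega = \omega_X[\cod X]$, a sheaf, so that Lemma~\ref{lem:st-dual} and Proposition~\ref{prop:st-dual} apply with ``$U$'' $= X$. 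With these reductions, the first assertion becomes the equality $\p\sm X = \p\dgbl X0 \cap \p\dgbg X0 = \{\cF : H^k(\cF) \text{ is pure of step } d - k \text{ for all } k\}$, i.e.\ $\p\sm X$ is exactly the class of objects concentrated in staggered degree $d$.

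\textbf{The heart.} First, this class is stable under extensions: in a triangle $\cF' \to \cF \to \cF'' \to$ with $\cF', \cF''$ concentrated in staggered degree $d$, each connecting morphism $H^{k-1}(\cF'') \to H^k(\cF')$ is a map from a sheaf pure of step $d-k+1$ to one pure of step $d-k$, and a short computation with the $\sigma$-functors (using $\cgl Xw \cap \cgg X{w+1} = 0$, from axiom~\sref{ax:hom}) shows any such map vanishes; the long exact sequence then exhibits $H^k(\cF)$ as an extension of two sheaves pure of step $d-k$, hence pure of step $d-k$. For ``$\supseteq$'': if $\cF$ is concentrated in staggered degree $d$ then $\cF \in \dgbl Xd_\st = \p\dgbl X0$, and writing $\cF$ as an iterated extension of the $H^k(\cF)[-k]$ and applying the exact contravariant functor $\D$, Lemma~\ref{lem:st-dual} shows each $\D(H^k(\cF)[-k])$ is concentrated in staggered degree $\bar d$, so by extension-stability $\D\cF \in \dgbl X{\bar d}_\st = \barp\dgbl X0$ and $\cF \in \D(\barp\dgbl X0) = \p\dgbg X0$. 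The reverse inclusion is the main obstacle: I would argue it by induction on cohomological amplitude, the crux being that the top nonzero cohomology sheaf $H^a(\cF)$ of an object $\cF \in \p\sm X$ is pure of step $d-a$. Granting this, $H^a(\cF)[-a]$ lies in $\p\sm X$ by the ``$\supseteq$'' half, and the triangle $\tau^{\le a-1}\cF \to \cF \to H^a(\cF)[-a] \to$ together with the inclusion $\p\dgbg X0[-1] \subset \p\dgbg X0$ and extension-stability of $\p\dgbg X0$ (both from Theorem~\ref{thm:stag}) places $\tau^{\le a-1}\cF$ in $\p\sm X$, closing the induction. To prove purity of $H^a(\cF)$: dualizing that triangle identifies $\cHom(H^a(\cF), \tilde\omega)$ with $H^{\cod X - a}(\D\cF)$, which lies in $\cgl X{\alt X - d + a}$ because $\D\cF \in \dgbl X{\bar d}_\st$; Remark~\ref{rmk:homl} gives $\cHom(H^a(\cF), \tilde\omega) \in \cgg X{\alt X - d + a}$, so this sheaf is pure; biduality for $\tilde\omega$, with a second use of Remark~\ref{rmk:homl}, then yields $H^a(\cF) \simeq \cHom(\cHom(H^a(\cF), \tilde\omega), \tilde\omega) \in \cgg X{d-a}$, which together with $H^a(\cF) \in \cgl X{d-a}$ gives purity of step $d-a$. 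Keeping the shifts of $\tilde\omega$ straight, and verifying that it is precisely the cohomology sheaf $H^{\cod X - a}(\D\cF)$ that the hypothesis controls, is where I expect the real work to lie.

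\textbf{Simple objects.} Granting the heart description, $t_*\cL[-d+\step\cL]$ is concentrated in the single cohomological degree $d - \step\cL$, where it equals $t_*\cL$; since $\cL$ is a simple (hence pure, of step $\step\cL$) object of $\cg{X_\red}$ and $t_*$ is $s$-exact, $t_*\cL$ is pure of step $\step\cL = d - (d - \step\cL)$, so the complex lies in $\p\sm X$. To identify the simples, I would first show any simple $\cF \in \p\sm X$ is concentrated in a single cohomological degree: by the heart description, its truncations $H^{k_0}(\cF)[-k_0]$ and $H^{k_1}(\cF)[-k_1]$ (top and bottom nonzero degrees) also lie in $\p\sm X$, and since $\cF$ is simple the nonzero maps $\cF \to H^{k_0}(\cF)[-k_0]$ and $H^{k_1}(\cF)[-k_1] \to \cF$ are respectively a monomorphism and an epimorphism in $\p\sm X$; their composite is nonzero but lies in $\Hom_{\dgb X}(H^{k_1}(\cF)[-k_1], H^{k_0}(\cF)[-k_0]) \simeq \Ext^{k_1 - k_0}(H^{k_1}(\cF), H^{k_0}(\cF))$, which vanishes unless $k_0 = k_1$. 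Thus $\cF \simeq \cF_1[-k_0]$ for a sheaf $\cF_1$ pure of step $d-k_0$, and a standard long-exact-sequence argument shows subobjects of $\cF$ in $\p\sm X$ correspond to subsheaves of $\cF_1$ in $\cg X$, so $\cF$ is simple in $\p\sm X$ iff $\cF_1$ is simple in $\cg X$. Finally a simple object of $\cg X$ is killed by the nilradical, hence is $t_*\cL$ for a simple object $\cL$ of $\cg{X_\red}$---equivalently an irreducible equivariant vector bundle, as $X_\red$ is a homogeneous space---and tracking the step through $t_*$ forces $k_0 = d - \step\cL$, i.e.\ $\cF \simeq t_*\cL[-d+\step\cL]$.
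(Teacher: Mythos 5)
Your argument is correct and follows essentially the same route as the paper's (which is much terser): collapse the $\bt$-conditions to $\st$-conditions on the single orbit, obtain $\p\dgbl X0 = \dgbl Xd_\st$ from Proposition~\ref{prop:st-perv}, use duality (Remark~\ref{rmk:homl}, Lemmas~\ref{lem:dual-conc} and~\ref{lem:st-dual}) together with induction on cohomological amplitude for the $\cgg$-half of the heart description, and then, after showing simple objects are concentrated in a single cohomological degree, reduce them to irreducible vector bundles on $X_\red$ exactly as in the paper. One justification needs repair: the vanishing of a map from a sheaf pure of step $d-k+1$ to one pure of step $d-k$ (and, in your subobject correspondence, the purity of the quotient $\cF_1/\cG_1$ for a subsheaf $\cG_1 \subset \cF_1$) does not follow from $\cgl Xw \cap \cgg X{w+1} = 0$ alone --- the paper's own remark in the $\C^\times$-on-$\C$ example gives a nonzero map from an object of $\cgg X0$ to an object of $\cgl X{-2}$ --- but it does hold here because on a single orbit there are no proper dense open invariant subschemes, so $\cgg Xw = \tcgg Xw$ is closed under quotients by axiom~\sref{ax:serrer}, and the image of such a map then lies in $\cgl X{d-k} \cap \cgg X{d-k+1} = 0$. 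This is a one-line fix (and the preliminary extension-stability claim it supports is not actually load-bearing for your main induction), so the proof stands.
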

(See Remark~\ref{rmk:stag-deg} for the notion of staggered degree.)
\begin{proof}
Since $X_\red$ is a single $G$-orbit, $X$ obviously has no nonempty proper open subschemes, and no nonempty proper closed subschemes except those whose underlying topological space is all of $X$.  We see, therefore, that $\dgml Xn_\bt = \dgml Xn_\st$.  It follows then from Proposition~\ref{prop:st-perv} that
$\p\dgml X0 = \dgml Xd_\st$.  Now, suppose $\cF \in \p\dgpg X0$, so that $\D(\cF) \in \barp\dgml X0 = \dgml X{\scod X - d}_\st$.  A straightforward induction argument on the number of nonzero cohomology sheaves of $\cF$, combined with Remark~\ref{rmk:homl} and Lemma~\ref{lem:dual-conc}, shows that $H^k(\cF) \in \cgg X{d-k}$ for all $k$.  We conclude, therefore, that $\cF \in \p\sm X$ if and only if $H^k(\cF) \in \cgl X{d-k} \cap \cgg X{d-k}$, as desired.

Under the circumstances of this proposition, we see that $\p\sm X$ is stable under all (standard) truncation functors.  (This is certainly not true in general.)  It is then obvious that a simple object of $\p\sm X$ must be concentrated in a single degree.  Determining the simple objects in $\p\sm X$ then essentially reduces to determining the simple objects in $\cg X$ (up to shifting, to give objects the correct staggered degree).  Of course, every simple object of $\cg X$ is supported on $X_\red$.  Since $X_\red$ is a single $G$-orbit, all objects of $\cg {X_\red}$ are locally free; {\it i.e.}, they are vector bundles on $X_\red$.  Thus, the simple objects of $\p\sm X$ are precisely the objects in $\dgb X$ of the form $t_*\cL[-d+\step \cL]$, with $\cL \in \cg {X_\red}$ an irreducible vector bundle.
\end{proof}

If $C \subset X$ is a single $G$-orbit, we write $p(C)$ for the value of the perversity function $p$ at any generic point of $C$.  They are necessarily all equal, so $p(C)$ is well-defined.

\begin{thm}\label{thm:simple}
Let $C \subset X$ be a $G$-orbit.  Assume that for any point $y \in
\overline C{}^\gen \ssm C^\gen$, we have $p(y) > p(C)$ and $\bar p(y) > \bar
p(C)$. Then, for any object $\cF \in \p\sm X \cap \dsupp X{\overline C}$,
the following conditions are equivalent:
\begin{enumerate}
\item $\cF$ is a simple staggered sheaf.\label{it:simple}
\item $\cF
\simeq \cIC(\overline C, \cL[-p(C)+\step \cL])$, where $\cL
\in \cg C$ is some irreducible vector bundle.\label{it:ic}
\end{enumerate} 
\end{thm}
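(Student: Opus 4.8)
The plan is to realize every simple staggered sheaf supported on $\overline C$ through the middle-extension functor of Proposition~\ref{prop:ic}, following~\cite[\S 3.2]{bez:pc}.  First I would reduce to the case $X=\overline C$: since $\cF$ is supported on $\overline C$, write $\cF\simeq\kappa_*\cF_1$ for a closed subscheme structure $\kappa:Y\hto X$ on $\overline C$ and some $\cF_1\in\dgb Y$.  Using Corollary~\ref{cor:perv-closed} together with Proposition~\ref{prop:stag-hom} (which makes the staggered-truncation triangle of $\cF_1$ split after applying the fully faithful functor $\kappa_*$, since $\Hom({}^p\dgbg X1,{}^p\dgbl X0)=0$), one checks that $\cF_1\in\p\sm Y$, that $\cF_1$ is simple in $\p\sm Y$ iff $\cF$ is simple in $\p\sm X$, and that $\cIC$ on $X$ is $\kappa_*$ of $\cIC$ on $Y$; so it suffices to treat $\cF_1$, and I relabel $Y$ as $X$.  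Now $j:C\hto X$ is a dense open immersion with $C$ a reduced orbit, $i:Z\hto X$ is a complement, and I would check that $p^-$ and $p^+$ of Section~\ref{sect:ic} are perversities.  Since $C$ is a single orbit, $C^\gen=\{\eta\}$ for the generic point $\eta$ of $C$, so $p^\pm$ agree with $p$ except for a shift by $\mp1$ at every point of $X^\gen\ssm\{\eta\}$; the only monotonicity/comonotonicity inequalities not inherited from $p$ are those relating $\eta$ to a point $x\in\overline C{}^\gen\ssm C^\gen$, and there the hypothesis $p(x)>p(C)$ gives monotonicity of $p^-$ while $\bar p(x)>\bar p(C)$ gives comonotonicity of $p^+$.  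Hence Proposition~\ref{prop:ic} applies: $j_{!*}:\p\sm C\xrightarrow{\sim}\p\smpm X$ is an equivalence, $\p\smpm X={}^{p^-}\dgbl X0\cap{}^{p^+}\dgbg X0$, and $\cIC(\overline C,-)=j_{!*}(-)$; and by Proposition~\ref{prop:simple-orbit} the simple objects of $\p\sm C$ are exactly the $\cL[-p(C)+\step\cL]$ with $\cL$ an irreducible vector bundle on $C$.

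For $(2)\Rightarrow(1)$ I would show that an object $\cF=\cIC(\overline C,\cL[-p(C)+\step\cL])=j_{!*}(\cL[-p(C)+\step\cL])$, being the image under the equivalence $j_{!*}$ of a simple object, is simple not merely in $\p\smpm X$ but in $\p\sm X$.  Given a short exact sequence $0\to\cF'\to\cF\to\cF''\to0$ in $\p\sm X$, apply the $t$-exact functor $j^*$ (Lemma~\ref{lem:perv-res}); since $j^*\cF$ is a simple object of $\p\sm C$, one of $j^*\cF'$, $j^*\cF''$ vanishes, i.e.\ one of $\cF'$, $\cF''$ lies in $\p\sm X\cap\dzsupp$.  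If $\cF''\in\p\sm X\cap\dzsupp$, then $\cF\in{}^{p^-}\dgbl X0$ and Lemma~\ref{lem:ic-subquot}(1) give $\Hom(\cF,\cF'')=0$, so the epimorphism $\cF\to\cF''$ vanishes and $\cF''=0$; if $\cF'\in\p\sm X\cap\dzsupp$, then $\cF\in{}^{p^+}\dgbg X0$ and Lemma~\ref{lem:ic-subquot}(2) give $\Hom(\cF',\cF)=0$, so $\cF'=0$.  Thus $\cF$ is simple.

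For $(1)\Rightarrow(2)$, let $\cF$ be simple in $\p\sm X$.  By noetherian induction on the support of $\cF$ I would reduce to the case $j^*\cF\neq0$ (if $j^*\cF=0$ then $\cF$ is the pushforward of a simple staggered sheaf from a proper closed subscheme of $X$, of strictly smaller support).  Simplicity of $\cF$ makes any nonzero morphism out of $\cF$ epic and any nonzero morphism into $\cF$ monic, so a nonzero element of $\Hom(\cF,\cG)$ or $\Hom(\cG,\cF)$ with $\cG\in\p\sm X\cap\dzsupp$ would exhibit $\cF$ as a quotient or sub of an object supported on $\uZ$, forcing $j^*\cF=0$, contrary to assumption.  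By Lemma~\ref{lem:ic-subquot} this places $\cF$ in ${}^{p^-}\dgbl X0\cap{}^{p^+}\dgbg X0=\p\smpm X$, so $\cF\simeq j_{!*}(j^*\cF)$.  Finally $j^*\cF$ is simple in $\p\sm C$: a nonzero proper subobject $\cG_0\subsetneq j^*\cF$ would give, through the equivalence $j_{!*}$, a morphism $j_{!*}\cG_0\to\cF$ in $\p\sm X$ whose kernel $\cK$ satisfies $j^*\cK=0$ and embeds in $j_{!*}\cG_0\in{}^{p^+}\dgbg X0$, hence $\cK=0$ by Lemma~\ref{lem:ic-subquot}(2); the resulting proper monomorphism into $\cF$ contradicts simplicity.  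By Proposition~\ref{prop:simple-orbit}, $j^*\cF\simeq\cL[-p(C)+\step\cL]$ for an irreducible vector bundle $\cL$ on $C$, whence $\cF\simeq\cIC(\overline C,\cL[-p(C)+\step\cL])$.

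The genuinely delicate point, in both directions, is the passage between simplicity inside the \emph{non-Serre} subcategory $\p\smpm X$ and simplicity in the ambient heart $\p\sm X$, together with the reduction guaranteeing that a simple object supported on $\overline C$ actually restricts to a nonzero object on $C$; both rest squarely on Lemma~\ref{lem:ic-subquot} and the $t$-exactness of $j^*$, exactly as in~\cite[\S 3.2]{bez:pc}.  The perversity check for $p^\pm$, though routine, is what isolates the role of the two inequalities in the hypothesis, so I would be careful to record it explicitly; once it is in place the theorem is a formal consequence of Propositions~\ref{prop:ic} and~\ref{prop:simple-orbit}.
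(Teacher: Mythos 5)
Your overall route is the paper's: check that $p^{\pm}$ restrict to perversities using the two hypotheses on $p$ and $\bar p$, invoke the equivalence of Proposition~\ref{prop:ic}, use Lemma~\ref{lem:ic-subquot} to pass between membership in $\p\smpm X$ and simplicity, and identify the simple objects over the orbit by Proposition~\ref{prop:simple-orbit}.  Your variant of (1)$\Rightarrow$(2) --- deducing from simplicity that all Homs to and from heart objects supported on the boundary vanish, hence $\cF$ lies in $\p\smpm{}$ of the relevant scheme --- is a legitimate substitute for the paper's observation that $h_{!*}(\cF_1|_V)$ is a subquotient of $\cF_1$.  (Minor slip: a nonzero map out of a simple object is monic and a nonzero map into it is epic, not the reverse; the conclusion $j^*\cF=0$ is unaffected.)

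The genuine soft spots are in the reduction ``relabel $Y$ as $X$.''  First, $\kappa_*\colon\dgb Y\to\dgb X$ is \emph{not} fully faithful (pushforward along a closed immersion already fails to preserve $\Ext^1$, e.g.\ for skyscrapers on a point in a line), and even using only that $\kappa_*$ is $t$-exact (Corollary~\ref{cor:perv-closed}), the implication your (2)$\Rightarrow$(1) rests on --- $\cF_1$ simple in $\p\sm Y$ implies $\kappa_*\cF_1$ simple in $\p\sm X$ --- is not automatic: a subobject of $\kappa_*\cF_1$ in $\p\sm X$ is only \emph{topologically} supported on $\uY$ and may be scheme-theoretically supported only on a nilpotent thickening of $Y$, so it need not be $\kappa_*$ of anything in $\p\sm Y$.  ($t$-exactness and conservativity of $\kappa_*$ give the opposite, easy transfer, which is all that (1)$\Rightarrow$(2) uses.)  The paper's converse direction is therefore run on the ambient scheme, showing via Lemma~\ref{lem:ic-subquot} that the middle extension has no nontrivial subobject or quotient supported on the boundary; you need to do the same, or else supply an argument for the descent of simplicity along $\kappa_*$.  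Second, after replacing $X$ by a possibly non-reduced structure $Y$ on $\overline C$, the orbit $C$ is in general \emph{not} an open subscheme of $Y$: the open subscheme with underlying space $C$ is a thickening $V$, so Proposition~\ref{prop:ic} must be applied to $h\colon V\hto Y$ and Proposition~\ref{prop:simple-orbit} to $V$ (whose reduced scheme is $C$), exactly as in the paper's proof.  Finally, the appeal to noetherian induction to dispose of the case $j^*\cF=0$ does not prove the stated equivalence for such $\cF$, since the statement concerns the fixed orbit $C$; like the paper's proof, which tacitly assumes $\cF_1|_V\neq 0$, you should simply make explicit that the support of $\cF$ is all of $\overline C$.
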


\begin{rmk}
The analogue of this statement for perverse coherent sheaves, given
in~\cite[Corollary~4]{bez:pc}, is significantly stronger: the conditions on
the perversity are not imposed at the outset, but deduced from the
assumption that $\cF$ is simple, using Nakayama's Lemma; furthermore, the
fact that the support of any simple object is the closure a single
$G$-orbit is obtained from Rosenlicht's Theorem.  This stronger statement
is not true in general for staggered sheaves, in part because there is no
analogue of Nakayama's Lemma for $s$-structures: for a sheaf $\cF \in \cg
X$, knowing that $\cF|_U \notin \cgl Uw$ for some $w$ and some
open subscheme $U$ imposes no restrictions at all on the behavior of $\cF$
with respect to the $s$-structure on the complementary closed subscheme $i:
Z \hto X$.  Similarly, for $\cF \in \p\sm X$, knowing that $\cF|_U \notin
\dgml Un_\st$ for some $n$ does not imply that $Li^*\cF \notin \dgml
Zn_\st$.  Moreover, we cannot invoke Rosenlicht's Theorem, because
the perversity only gives us information about the \emph{staggered}
codimensions of closed subschemes, not their actual codimensions.
\end{rmk}
 
\begin{proof}
We reserve the notation $\overline C$ for the reduced closed subscheme
structure on the closure of $C$.  Let $\cF \in \dsupp X{\overline C}$ be a simple object of $\p\sm X$.  Let $\kappa: Y \hto X$ be a closed subscheme structure on the underlying topological space of $\overline C$ such that $\cF$ is supported on $Y$; suppose $\cF \simeq \kappa_*\cF_1$.  Let $h: V \hto Y$ be the open subscheme with the same underlying topological space as $C$.  It is clear from the hypotheses on the perversity that the middle-extension functor $h_{!*}$ is defined, and then from the construction of $h_{!*}$ in the proof of Proposition~\ref{prop:ic}, we see that $h_{!*}(\cF_1|_V)$ is a subquotient of $\cF_1$.  Since the latter is simple, we see that $\cF_1 \simeq h_{!*}(\cF_1|_V)$, and hence that $\cF \simeq \cIC(Y,\cF_1|_V)$.  Now, because $h_{!*}$ gives an equivalence of categories between $\p\sm V$ and $\p\smpm Y$, we know that $\cF_1|_V$ must be a simple object of $\p\sm V$, and then by Proposition~\ref{prop:simple-orbit}, it is a shift of an irreducible vector bundle on $C$.  Thus, $\cF$ must be of the form $\cIC(\overline C, \cL[-p(C) + \step\cL])$, where $\cL \in \cg C$ is an irreducible vector bundle.

Conversely, if $\cL$ is an irreducible vector bundle on $C$, then it
follows from Lemma~\ref{lem:ic-subquot} that $\cIC(\overline C,
\cL[-p(C)+\step \cL])$ is a simple object in $\p\sm X$.
\end{proof}

\begin{thm}\label{thm:artin}
Suppose $G$ acts on $X$ with finitely many orbits.  In addition, assume
that for any two orbits $C$, $C'$ with $C' \subset \overline C$ and $C' \ne
C$, we have $p(C') > p(C)$ and $\bar p(C') > \bar p(C)$.  Then $\p\sm X$ is a finite-length category.
\end{thm}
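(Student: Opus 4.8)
The plan is to prove that every object $\cF\in\p\sm X$ has finite length, arguing by noetherian induction on $\operatorname{supp}\cF$ (a closed $G$-invariant subset, hence a finite union of orbit closures). For the base case I take $X$ with $X_\red$ a single $G$-orbit. By Proposition~\ref{prop:simple-orbit}, $\p\sm X$ then consists of the objects whose cohomology sheaves are pure of the appropriate step, it is stable under the standard truncation functors, and its simple objects are shifts of irreducible vector bundles on the orbit. Since $\cg X$ is a finite-length category when $X_\red$ is a single orbit (filter by powers of the nilradical of $\cO_X$; the graded pieces are $G$-equivariant sheaves on the orbit, i.e.\ finite-dimensional representations of a stabilizer subgroup), an induction on the $\cg X$-lengths of the cohomology sheaves of an object of $\p\sm X$, using stability under standard truncation and axiom~\sref{ax:hom} to control subobjects of a single pure sheaf inside the heart, shows that every object of $\p\sm X$ has finite length. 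This part uses nothing about the perversity; the analogue for perverse coherent sheaves is treated in~\cite[\S3.2]{bez:pc}, which I follow.

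For the inductive step, pick an orbit $C$ that is open in $Y:=\operatorname{supp}\cF$, let $Z=Y\smallsetminus C$ with its reduced subscheme structure (a strictly smaller closed $G$-invariant set), and let $j:U\hto X$ be the complement of $Z$, so that $Y\cap U=C$. The hypothesis that $p$ and $\bar p$ are \emph{strictly} increasing along the orbit order is exactly what guarantees that $p^-$ and $p^+$, formed relative to this $U$, are again perversities: if $y$ is a generic point of an orbit $C_y\subseteq Z$ with $y\in\overline{C_x}$ for some orbit $C_x\subseteq U$, then $C_x\neq C_y$, so $p(C_y)>p(C_x)$ and $\bar p(C_y)>\bar p(C_x)$, which (as in the remark following Proposition~\ref{prop:ic}) is precisely the inequality needed for monotonicity and comonotonicity. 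Consequently Proposition~\ref{prop:ic} applies and the middle-extension functor $j_{!*}:\p\sm U\to\p\smpm X$ is an equivalence of categories.

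It then remains to bound the length of $\cF$ using data on $U$ and on $Z$. Following~\cite[Theorem~5]{bez:pc}, one exhibits a finite filtration of $\cF$ in $\p\sm X$ whose successive subquotients are: a subobject supported on $\uZ$; the object $j_{!*}(j^*\cF)$; and a quotient supported on $\uZ$. The two outer pieces have finite length by the inductive hypothesis, using that the staggered sheaves supported on $\uZ$ form a Serre subcategory of $\p\sm X$ which may be identified with the staggered sheaves on subscheme structures on $\uZ$ (via Corollary~\ref{cor:perv-closed} and Lemma~\ref{lem:perv-res}). The middle piece has finite length because $j^*\cF\in\p\sm U$ is supported on the single orbit $C$, hence of finite length by the base case applied to a subscheme structure on $C$, and $j_{!*}$ is an equivalence onto the full subcategory $\p\smpm X\subseteq\p\sm X$. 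Since ``finite length'' is stable under extensions, $\cF$ has finite length, completing the induction.

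The main obstacle is the construction of this filtration. In the coherent setting one cannot use $j_!$ or $Rj_*$ freely (extension by zero of a coherent sheaf need not be coherent, and $Rj_*$ need not preserve boundedness), so the recollement-style manipulations familiar from~\cite{bbd} must be carried out with care, exactly as in~\cite[\S3.2]{bez:pc}; a related subtlety, already present in the base case and in the identification of the ``supported on $\uZ$'' subcategory, is that a subobject of a staggered sheaf supported on a thickening of $\uZ$ need not itself be supported on that thickening, so one must check that this does not obstruct the bookkeeping. Finally, it is worth noting that Grothendieck--Serre duality exchanges $\p\sm X$ with $\barp\sm X$ and preserves the hypothesis (the condition on $p$ is symmetric in $p$ and $\bar p$), so it would be enough to establish either the ascending or the descending chain condition and then invoke the other by duality.
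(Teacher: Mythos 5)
Your proposal is correct and follows essentially the same route as the paper: the paper's entire proof is the citation ``Identical to~\cite[Corollary~5]{bez:pc},'' and your argument (noetherian induction on the support, the single-orbit base case via Proposition~\ref{prop:simple-orbit}, and the inductive step using Lemma~\ref{lem:ic-subquot}, Proposition~\ref{prop:ic} and a filtration with outer pieces supported on $\uZ$ and middle piece $j_{!*}(j^*\cF)$) is precisely Bezrukavnikov's argument transplanted to staggered sheaves, with the relevant subtleties (no $j_!$ or $Rj_*$, thickenings of $\uZ$) correctly flagged. No genuinely different decomposition or key lemma is introduced, so there is nothing to compare beyond noting that you have supplied details the paper leaves to the reference.
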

\begin{proof}
Identical to~\cite[Corollary~5]{bez:pc}.
\end{proof}

A perversity satisfying the conditions of this theorem can be found if
$\scod C' \ge \scod C + 2$ whenever $C' \subset \overline C$, $C' \ne C$.
In particular, if every orbit closure $\overline C$ has even staggered
codimension in $X$, then the perversity defined by
\[
p(C) = \textstyle \frac{1}{2} \scod C,
\]
called the \emph{middle perversity}, gives rise to a self-dual, finite-length category $\p\sm X$.

%%%%%%%%%%%%%%%%%%%%%%%%%%%%%%%%%%%%%%%%%%%%%%%%%%%%%%%%%%%%%%%%%%%%%%%%%%%
\section{Schemes with Finitely Many $G$-orbits}
\label{sect:finite}
%%%%%%%%%%%%%%%%%%%%%%%%%%%%%%%%%%%%%%%%%%%%%%%%%%%%%%%%%%%%%%%%%%%%%%%%%%%

In order to study examples of staggered sheaves, one must of course be able to produce explicit $s$-structures.  Most of the axioms are quite easy to check, but axiom~\sref{ax:ext2} appears quite intimidating at first glance.  Fortunately, as we will see in this section, if $G$ acts on $X$ with finitely many orbits, it is not necessary to check this axiom.  (We retain the assumption from Section~\ref{sect:ic} that $X$ is a scheme of finite type over a field $\Bbbk$, and that $G$ is a linear algebraic group over $\Bbbk$.)

We first prove a criterion for adhesiveness in a particularly easy case.

\begin{prop}\label{prop:adhesive}
Let $Z \subset X$ be a reduced closed subscheme with an almost $s$-structure, and assume that axiom~\aref{ax:ideal} of Definition~\ref{defn:adh} holds.  If $Z$ is a union of finitely many closed $G$-orbits, then $Z$ is adhesive, and its almost $s$-structure is automatically an $s$-structure.
\end{prop}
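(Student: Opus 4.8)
The plan is to reduce everything to a single assertion — that axiom~\aref{ax:adh-ext2} holds for $Z\subseteq X$ — and then to prove it. Granting this, adhesiveness of $Z$ is immediate (axiom~\aref{ax:ideal} is assumed), and axiom~\sref{ax:ext2} for an arbitrary closed subscheme $\kappa\colon Y\hto Z$ follows by applying the same assertion with $X$ replaced by $Z$: such a $Y$ is again a reduced union of finitely many closed orbits (see below), axiom~\aref{ax:ideal} for $Y\subseteq Z$ holds automatically because $Y$ will be open as well as closed in $Z$, and there $\tcgg Y{w+1}=\cgg Y{w+1}$, so each $\kappa_*\cG$ with $\cG\in\tcgg Y{w+1}$ lies in $\csupp ZY_{\ge w+1}$.

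First I would set up the geometry. Distinct $G$-orbits are disjoint, so $\uZ=C_1\sqcup\cdots\sqcup C_n$ with each $C_i$ a closed orbit; since $Z$ is reduced each $C_i$ is open and closed in $Z$, hence closed in $X$. Any $G$-invariant closed subscheme $Y\subseteq Z$ is automatically reduced: its space is $\bigsqcup_{i\in S}C_i$, and on $C_i\simeq G/H_i$ a $G$-invariant subsheaf of $\cO_{C_i}$ is $0$ or all of $\cO_{C_i}$ (the stalks are local rings with residue field the base field, permuted transitively by $G$), so the ideal of $Y$ in $Z$ restricts to $0$ on each $C_i$ with $i\in S$; thus $Y=\bigsqcup_{i\in S}C_i$ with its reduced structure, clopen in $Z$. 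It follows that for such a $Y$ any $V$ with $V\Subset_Y X$ must be all of $X$ ($V$ is $G$-invariant, and $V\cap\uY$ is a $G$-invariant dense open of $\uY$, so it contains every irreducible component $C_i$, whence $\uY\subseteq V$). So axiom~\aref{ax:adh-ext2} reduces to: for every closed $G$-subscheme $Y\subseteq Z$, every $\cF\in\cloc XYw$, and every $r\ge0$, $\gExt^r(\cF,\cG)=0$ for all $\cG\in\csupp XY_{\ge w+1}$. Splitting along the clopen decomposition $\uY=\bigsqcup_{i\in S}C_i$ — under which all the sheaves in question decompose, $\gExt$ between summands on distinct $C_i$ vanishing — I may take $Y=C$ a single closed orbit.

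Now fix $\cG\simeq i'_*\cG_1$ with $i'\colon C'\hto X$ a scheme structure on $\uC$, $\cG_1\in\cgg{C'}{w+1}$, and write $t\colon C\hto C'$ and $i_C\colon C\hto X$, $i_Z\colon Z\hto X$ for the inclusions. Axiom~\aref{ax:ideal} propagates to $C$: the ideal $\cI_C$ of $C$ in $X$ satisfies $i_C^*\cI_C\in\cgl C0$, since $i_C^*\cI_C$ is a quotient of $i_C^*\cI_Z=t^*(i_Z^*\cI_Z)$, with $i_Z^*\cI_Z\in\cgl Z0$ and $t^*\colon\cg Z\to\cg C$ right $s$-exact (Proposition~\ref{prop:res-closed}). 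Hence Proposition~\ref{prop:res-nonred-pre} applies to $t$, identifying $\cgl{C'}w$ with $\{\cH\mid t^*\cH\in\cgl Cw\}$; in particular $i'^*\cF\in\cgl{C'}w$. By the adjunction of Lemma~\ref{lem:found}\eqref{it:f-adj}, $\gExt^r(\cF,\cG)\simeq\gHom(Li'^*\cF,\cG_1[r])$, and a dévissage of $Li'^*\cF$ along its truncation triangles (as in the argument following Lemma~\ref{lem:li}) reduces the problem to showing $\gExt^k_{C'}(H^{-p}(Li'^*\cF),\cG_1)=0$ for all $p,k\ge0$. For $p=0$ this is $\gExt^k_{C'}(i'^*\cF,\cG_1)$ with $i'^*\cF\in\cgl{C'}w$; the case $k=0$ is axiom~\sref{ax:hom}, and for $k\ge1$, filtering $i'^*\cF$ by powers of the nilpotent ideal of $C$ in $C'$ (whose pullback to $C$ lies in $\cgl C0$, by the discussion opening Section~\ref{sect:closed}) and using that every coherent sheaf on the homogeneous space $C$ is locally free reduces it to the vanishing, over \emph{every} open subscheme of $C$, of the higher cohomology of internal Hom-sheaves $\cHom(\bar\cA,\bar\cG)$ with $\bar\cA\in\cgl Cw$ locally free and $\bar\cG\in\cgg C{w+1}$ — equivalently, by Remark~\ref{rmk:homl}, of higher cohomology of objects of $\cgg C1$ over opens of $C$.

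The main obstacle is exactly this last vanishing over the closed orbit $C$, together with the related point that the terms $H^{-p}(Li'^*\cF)$ with $p\ge1$ must be shown to lie in $\cgl{C'}w$ as well — which, since we do not have an $s$-structure on $X$, requires redoing the dévissage of Lemma~\ref{lem:li} by hand in the present, more primitive, setting, using that $C$ is a closed orbit. For a $0$-dimensional orbit — and more generally whenever $C$ is affine — both points are immediate, since then no non-affine opens are in play. In the remaining case $C$ is complete, and I would deduce the cohomology vanishing from the positivity encoded in membership in $\cgg C1$: such sheaves should be globally generated with vanishing higher cohomology, a property inherited by all restrictions. This is the one place where closedness of the orbit is genuinely used.
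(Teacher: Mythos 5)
Your reduction scheme (splitting $Z$ into clopen closed orbits, passing to a thickened structure $C'$ via Proposition~\ref{prop:res-nonred-pre}, and d\'evissage of $Li'^*\cF$ through the adjunction) is coherent, but the proof has a genuine gap at exactly the two points you flag and then only gesture at. First, the membership $H^{-p}(Li'^*\cF)\in\cgl{C'}w$ for $p\ge 1$ is not established: the paper's tool for such statements, Lemma~\ref{lem:li}, assumes an $s$-structure on the ambient scheme, i.e.\ axiom~\sref{ax:ext2}, which is precisely what is being proved here, and you supply no substitute argument. Second, and more seriously, the terminal vanishing claim is unsupported and its sketch is incorrect. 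The groups $\gExt^k$ in this paper are \emph{equivariant} Ext groups; for locally free $\bar\cA$ they are not computed by the sheaf cohomology of $\cHom(\bar\cA,\bar\cG)$ alone, because the (derived) invariants of the stabilizer intervene. Hence the assertion that the affine (e.g.\ zero-dimensional) case is ``immediate'' is false: in the paper's own example of $B$-equivariant sheaves on $\CP^1$, the point orbit has stabilizer $B$ and carries plenty of nonzero higher equivariant Exts (e.g.\ $\Ext^1(V_{-1},V_1)\ne 0$ coming from the standard representation of $B$); the required vanishing is a statement about the specific step ranges $\cgl{C'}w$ versus $\cgg{C'}{w+1}$ and needs an actual argument. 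Likewise, nothing in axioms~\sref{ax:serre}--\sref{ax:tensorr} implies that objects of $\cgg C1$ are globally generated with vanishing higher cohomology---an $s$-structure is an abstract filtration of $\cg C$ with no positivity built in---so the complete-orbit case is also unproven.

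For comparison, the paper proves exactly these vanishing statements by a different mechanism that avoids both gaps: since $Z$ has no nonempty proper dense invariant open subschemes, $\tcgg{Z'}w=\cgg{Z'}w$ for every structure $Z'$ on $\uZ$, so $\czsupp_{\ge w}$ is closed under quotients; then, working in $\qg X$, one checks that the injective hull of a quasicoherent sheaf all of whose coherent subsheaves lie in $\czsupp_{\ge w}$ again has that property (an essentiality argument), so any $\cG\in\czsupp_{\ge w}$ has an injective resolution into which every $\cF\in\czloc{w-1}$ has zero Hom, giving $\Ext^r(\cF,\cG)=0$ for all $r$ at once---with no d\'evissage, no local freeness on the orbit, and no positivity input. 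Some argument of this kind, or another proof of your two missing vanishing statements, is needed before your outline becomes a proof.
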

\begin{proof}
Because $Z$ is a union of finitely many closed $G$-orbits, it has no nonempty proper dense open ($G$-invariant) subschemes.  It follows that $\tcgg Zw = \cgg Zw$ for each $w$, and hence that $\cgg Zw$ is closed under quotients.  The same applies to any other subscheme structure on $\uZ$, so we conclude that $\czsupp_{\ge w}$ is closed under quotients, and is therefore a Serre subcategory of $\cg X$.
  
Objects in the category $\qg X$ of $G$-equivariant quasicoherent sheaves
have injective hulls, by~\cite[\S II.7]{har}.  Moreover, the injective hull
of any sheaf supported on $\uZ$ must
itself be supported on $\uZ$.  Consider the
category 
\[
\qzsupp_{\ge w} = \left\{ \cF \in \qg X \,\bigg|
\begin{array}{c}
\text{every coherent subsheaf} \\
\text{of $\cF$ is in $\czsupp_{\ge w}$}
\end{array}
\right\}.
\]
Note that all sheaves in $\qzsupp_{\ge w}$ are supported on $\uZ$. 

We show now that $\qzsupp_{\ge w}$ is stable under quotients.  Given a
sheaf
$\cF \in \qzsupp_{\ge w}$, let $\cG$ be a quotient of $\cF$.  Every
coherent subsheaf of $\cG$ is a quotient of some coherent subsheaf of
$\cF$.  The latter is in $\czsupp_{\ge w}$, and since that category
is closed under quotients, we see that $\cG \in \qzsupp_{\ge w}$. 

Next, given $\cF \in \qzsupp_{\ge w}$, let $\cI$ be its injective
hull.  We claim that $\cI \in \qzsupp_{\ge w}$.  Indeed, suppose
$\cI$ had coherent subsheaf $\cH$ not in $\czsupp_{\ge w}$.  $\cH$
is supported on some subscheme structure $i': Z' \hto X$ on $\uZ$.  Say $\cH \simeq i'_*\cH_1$.  Then $\cH_1
\notin \cgg{Z'}{w}$, so the sheaf $\sigma_{\le w-1}\cH_1 \in \cgl{Z'}{w-1}$
must be nonzero.  Consider the sheaf $\cH_2 = i'_*(\sigma_{\le
w-1}\cH_1)$.  We may regard $\cH_2$ as a subsheaf of $\cI$.  Because $\cI$ is
an injective hull of $\cF$, we know that $\cH_2 \cap \cF$ is nonzero.  But
then $\cH_2 \cap \cF$ is a coherent subsheaf of $\cF$ that is not in
$\czsupp_{\ge w}$, contradicting the assumption that $\cF \in
\qzsupp_{\ge w}$. 

From the two preceding paragraphs, we see that every sheaf in $\qzsupp_{\ge w}$ admits an injective resolution all of whose terms are in
$\qzsupp_{\ge w}$.  In particular, given a coherent sheaf $\cG \in
\czsupp_{\ge w}$, we see that $\cG$ has an injective resolution by
sheaves in $\qzsupp_{\ge w}$.  Now, suppose $\cF \in \czloc {w-1}$. 
Given a sheaf $\cI \in \qzsupp_{\ge w}$, we see that
$\Hom(\cF,\cI) = 0$ (because the image of any nonzero morphism $\cF
\to \cI$ would be a coherent subsheaf of $\cI$ not in $\czsupp_{\ge
w}$).  It follows that $\Ext^r(\cF,\cG) = 0$ for all $r \ge 0$.  The same
argument shows that $\Ext^r(\cF|_V,\cG|_V) = 0$ for any open subscheme
$V$.  (Note that any such $V$ must either contain $Z$ or be disjoint from
$Z$, since $Z$ is a single $G$-orbit.)  Thus, $\gExt(\cF,\cG) = 0$.  This
establishes axiom~\aref{ax:adh-ext2} for $Z$ as a
subscheme of $X$, and it also establishes axiom~\sref{ax:ext2} for $Z$ itself.
\end{proof}

The following result gives the conditions that we will actually use in examples to build $s$-structures.

\begin{thm}\label{thm:adhesive}
Suppose $G$ acts on $X$ with finitely many orbits.  For each orbit $C \subset X$, let $\cI_C \subset \cO_X$ denote the ideal sheaf corresponding to the closed subscheme $i_C: \overline C \hto X$.  Suppose each orbit is endowed with an almost $s$-structure, and that the following two conditions hold:
\begin{enumerate}
\renewcommand{\labelenumi}{(F\arabic{enumi})}
\item For each orbit $C$, $i_C^*\cI_C|_C \in \cgl C0$.\label{f:ideal}
\item Each $\cF \in \cgl Cw$ admits an extension $\cF_1 \in \cg {\overline
C}$ whose restriction to any smaller orbit $C' \subset \overline C$ is in
$\cgl{C'}w$.\label{f:extend}
\end{enumerate}
Then there is a unique $s$-structure on $X$ whose restriction to each orbit is the given almost $s$-structure.
\end{thm}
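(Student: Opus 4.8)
The plan is to construct the $s$-structure by induction on the number $n$ of $G$-orbits in $X$, assembling it one orbit at a time with the gluing theorem, Theorem~\ref{thm:glue}; nonreducedness of $X$ will be absorbed using Propositions~\ref{prop:res-nonred-pre} and~\ref{prop:res-nonred}. For the base case $n=1$, the scheme $X_\red$ is a single orbit $C$, necessarily closed in $X$. Condition~\fref{f:ideal} is precisely axiom~\aref{ax:ideal} for $C\hto X$, so Proposition~\ref{prop:adhesive} shows that $C$ is adhesive in $X$ and that the given almost $s$-structure on $C$ is in fact an $s$-structure. Theorem~\ref{thm:glue}, applied with closed part $C$ and empty open complement, then yields an $s$-structure on $X$, which by Proposition~\ref{prop:glue-pre} restricts to the given one on $C$ and is unique with that property.

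For the inductive step ($n>1$), I would pick an orbit $C$ of maximal dimension. Such a $C$ is open in $X$: its complement is the union of the closures of the remaining orbits, none of which can contain $C$ by a dimension count; moreover $\overline C$ is an irreducible component of $X$ and is disjoint from all other components. Let $j\colon U\hto X$ be the open subscheme supported on $C$ and $i\colon Z\hto X$ the complementary reduced closed subscheme. Then $U$ is a one-orbit scheme, so the base case gives it a unique $s$-structure restricting to the given almost $s$-structure (which is thereby an $s$-structure); and $Z$ has $n-1$ orbits and still satisfies~\fref{f:ideal} and~\fref{f:extend}, since ideal sheaves and the extensions produced by~\fref{f:extend} restrict to $Z$, so by induction $Z$ carries a unique $s$-structure restricting to the given almost $s$-structures on its orbits. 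To invoke Theorem~\ref{thm:glue} it then remains to check that $Z$ is adhesive in $X$ and that condition~\eqref{it:lower-extend} holds; the uniqueness of the resulting $s$-structure on $X$ will follow from the uniqueness clause of the gluing theorem.

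Condition~\eqref{it:lower-extend} is handled by~\fref{f:extend}. Given $\cF'\in\cgl Uw$, we have $\cF'\otimes_{\cO_X}\cO_C\in\cgl Cw$, so~\fref{f:extend} produces $\cG\in\cg{\overline C}$ whose restriction to each orbit $C'$ properly contained in $\overline C$ lies in $\cgl{C'}w$; put $\cF=i_{\overline C *}\cG\in\cg X$. Any orbit $C'$ not contained in $\overline C$ meets the $G$-invariant closed set $\overline C$ in a proper, hence empty, $G$-invariant closed subset of $C'$, so $\cF\otimes_{\cO_X}\cO_{C'}=0$; and for $C'\subseteq\overline C$ this tensor product is the corresponding restriction of $\cG$, hence lies in $\cgl{C'}w$. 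Unwinding the formula~\eqref{eqn:glue-s-struc} shows that $\cF\in\cgl Xw$ is equivalent to $\cF\otimes_{\cO_X}\cO_{C'}\in\cgl{C'}w$ for every orbit $C'$, which we have just arranged, while $j^*\cF\simeq\cF'$ because $C$ avoids every component other than $\overline C$. Thus~\eqref{it:lower-extend} holds.

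The hard part is the adhesiveness of $Z$ in $X$. Axiom~\aref{ax:ideal} reduces, by the same unwinding, to showing that $i_Z^*\cI_Z$ restricts on each orbit $C'\subseteq Z$ to an object of $\cgl{C'}0$; since $\cI_Z\subseteq\cI_{C'}$ inside $\cO_X$, this follows from~\fref{f:ideal} for $X$ and for $Z$ by a short diagram chase using right-exactness of pullback. Axiom~\aref{ax:adh-ext2}, on the other hand, cannot be obtained from the usual observation that closed subschemes of schemes with an $s$-structure are adhesive, because $X$ is not yet known to possess one; this is where I expect the real work to lie. I would add ``the closed complement is adhesive'' to the inductive hypothesis and prove the required $\gExt$-vanishing by adapting the injective-hull argument of Proposition~\ref{prop:adhesive}: over $U$ the categories $\csupp{X}{Y}_{\ge w+1}$ are Serre because $U$ has no proper dense open subscheme, over $Z$ the inductively available $s$-structure makes Lemma~\ref{lem:li} applicable, and the two vanishing statements are spliced along the decomposition $X=U\sqcup Z$ by a truncation and long-exact-sequence argument of the kind used in the proof of Lemma~\ref{lem:stag-dt1}. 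That spliced statement is exactly axiom~\aref{ax:adh-ext2}, and with it the hypotheses of Theorem~\ref{thm:glue} are satisfied, completing the induction.
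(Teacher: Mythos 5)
Your overall architecture (induction on the number of orbits, gluing an open orbit against the rest via Theorem~\ref{thm:glue}, with Proposition~\ref{prop:adhesive} handling unions of closed orbits) is the same as the paper's, but the proposal has a genuine gap exactly at the point you flag: axiom~\aref{ax:adh-ext2} for the closed complement $Z\subset X$ is never proved, only sketched, and the sketch is aimed in the wrong direction. No splicing of $\gExt$-vanishing statements ``along $X=U\sqcup Z$'' in the style of Lemma~\ref{lem:stag-dt1} is needed (nor is it clear such a splice can be made to work, since the injective-hull argument of Proposition~\ref{prop:adhesive} really does require the closed orbits to be closed in the ambient scheme). The missing idea is much simpler: \aref{ax:adh-ext2} only demands vanishing over \emph{some} dense open $V\Subset_Y X$, so one may shrink at the outset to $V=(X\ssm Z)\cup\{\text{orbits open in }Z\}$; then $V\cap Z$ is a finite union of orbits that are \emph{closed in $V$}, and Proposition~\ref{prop:adhesive} applies verbatim to $V\cap Z\subset V$, giving \aref{ax:adh-ext2} for $Z\subset X$. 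Relatedly, to have ``adhesive in $X$'' available inductively one must organize the induction over (reduced) closed subschemes of the \emph{fixed} ambient $X$ — proving simultaneously that such a $Z$ carries a unique compatible $s$-structure and that $Z$ is adhesive in $X$, and performing the gluing \emph{inside} $Z$ (open orbit of $Z$ against its complement in $Z$). Your induction is on abstract schemes with fewer orbits, so ``the closed complement is adhesive'' is not a statement your inductive hypothesis can supply in the form needed (adhesive in $X$, not in $Z$); and gluing directly over a possibly nonreduced $X$ also breaks your verification of~\eqref{it:lower-extend}, since when $U$ is a nonreduced neighborhood of $C$ your construction only extends $t^*\cF'$ from the reduced orbit, not $\cF'$ itself, whereas in the paper's arrangement the open piece of every gluing is a genuine (reduced) orbit and \eqref{it:lower-extend} is literally \fref{f:extend}.

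A second, smaller error: your verification of \aref{ax:ideal} for $Z\subset X$ via ``$\cI_Z\subseteq\cI_{C'}$ and a short diagram chase using right-exactness of pullback'' does not work, because pullback is not left exact — a subsheaf relation in $\cg X$ gives only a map $\cI_Z|_{C'}\to\cI_{C'}|_{C'}$ with uncontrolled kernel, so \fref{f:ideal} controls only the image. The correct route (the paper's) uses the inductive adhesiveness of the closed part $Y$: its ideal lies in $\cloc XY0$, which is a \emph{Serre} subcategory by Proposition~\ref{prop:cloc-serre} (an Artin--Rees argument), hence closed under subobjects, so $\cI_Z\subseteq\cI_Y$ gives $\cI_Z\in\cloc XY0$; on the open orbit one has $\cI_Z|_{C_0}\simeq\cI_{C_0}|_{C_0}$ and \fref{f:ideal} finishes. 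Finally, your assertion that $\overline C$ is disjoint from the other irreducible components is false in general (components may meet along smaller orbits); the weaker fact you actually use — an orbit not contained in $\overline C$ is disjoint from $\overline C$ — is what is true.
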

\begin{proof}
Let $i: Z \hto X$ be a reduced closed subscheme.  By induction on the number of orbits in $Z$, we will simultaneously prove that $Z$ admits a unique $s$-structure compatible with those on the orbits it contains, and that $Z$ is adhesive as a subscheme of $X$.  Let $\cI_Z \subset \cO_X$ be its ideal sheaf.  If $Z$ consists of a single orbit, then condtion~\fref{f:ideal} is equivalent to condition~\aref{ax:ideal} of Definition~\ref{defn:adh}, and then Proposition~\ref{prop:adhesive} tells us that $Z$ is adhesive.  Otherwise, choose an open orbit $C_0 \subset Z$. Let $\kappa: Y \hto Z$ be the complementary reduced closed subscheme.  Since $Y$ contains fewer orbits, it has by assumption a unique $s$-structure compatible with those on its orbits, and it is adhesive as a subscheme of $Z$ or $X$.  Condition~\fref{f:extend} for $C_0$ is none other than the condition~\eqref{it:lower-extend} in the gluing theorem, Theorem~\ref{thm:glue}.  Invoking that theorem, we find that $Z$ admits a unique $s$-structure compatible with those on $C_0$ and on $Y$, and hence with the $s$-structures on the various orbits in $Y$. 

Next, because $Y$ is adhesive as a subscheme of $X$, its ideal sheaf $\cI_Y \subset \cO_X$ lies in $\cloc XY0$.  The latter category is Serre by Proposition~\ref{prop:cloc-serre}, and since $\cI_Z \subset \cI_Y$, we have $\cI_Z \in \cloc XY0$, or $\kappa^*i^*\cI_Z \in \cgl Y0$.  On the other hand, because $C_0$ is open in $Z$, we have $i^*\cI_Z|_{C_0} \simeq i_{C_0}^*\cI_{C_0}|_{C_0} \in \cgl {C_0}0$.  It then follows from Theorem~\ref{thm:glue} that $i^*\cI_Z \in \cgl Z0$, so condition~\aref{ax:ideal} holds for $Z$.  Next, let $V \subset X$ be the open set obtained by taking the union of $X \ssm Z$ and all orbits that are open in $Z$.  Then $V \Subset_Z X$, and $V \cap Z$ is a union of orbits that are closed in $V$, so by Proposition~\ref{prop:adhesive}, $V \cap Z$ is adhesive in $V$.  Since~\aref{ax:adh-ext2} is a condition on dense open subschemes, the fact that it holds for the closed subscheme $V \cap Z \subset V$ implies that it holds for $Z \subset X$.  Thus, $Z$ is adhesive.
\end{proof}

%%%%%%%%%%%%%%%%%%%%%%%%%%%%%%%%%%%%%%%%%%%%%%%%%%%%%%%%%%%%%%%%%%%%%%%%%%%
\section{Examples}
\label{sect:examples}
%%%%%%%%%%%%%%%%%%%%%%%%%%%%%%%%%%%%%%%%%%%%%%%%%%%%%%%%%%%%%%%%%%%%%%%%%%%

%--------------------------------------------------------------------------
\subsection{Trivial $s$-structure}
%--------------------------------------------------------------------------

One can always define an $s$-structure on $X$ by declaring all coherent
sheaves to be pure of step $0$.  In other words,
\[
\cgl Xw =
\begin{cases}
\cg X & \text{if $w \ge 0$,} \\
\{0\} & \text{if $w < 0$,}
\end{cases}
\qquad
\cgg Xw =
\begin{cases}
\cg X & \text{if $w \le 0$,} \\
\{0\} & \text{if $w > 0$.}
\end{cases}
\]
It is trivial to check that these categories satisfy the axioms for an
$s$-structure.  Evidently, the altitude of any irreducible closed subscheme $Z \subset X$
is $0$, so $\scod Z = \cod Z$.  The staggered $t$-structure then reduces to Deligne's perverse coherent
$t$-structure as described in~\cite{bez:pc}.

%--------------------------------------------------------------------------
\subsection{$\C^\times$-equivariant sheaves on $\C$}
%--------------------------------------------------------------------------

Let $X = \C$, and let $G = \C^\times$ act on $X$ by multiplication.  This
action has two orbits: $U = \C \ssm \{0\}$ and $Z = \{0\}$.  Let $j: U
\hto X$ and $i: Z \hto X$ be the inclusion maps.  On $U$, the
$G$-stabilizer of any point is trivial, so any $G$-equivariant sheaf is
free.  Let us endow $U$ with the trivial $s$-structure.

On $Z$, on the other hand, the category of equivariant coherent sheaves is
equivalent to the category of finite-dimensional representations of $G$. 
Henceforth, we will freely pass back and forth between these categories,
regarding representations as sheaves and vice versa.  Let $V_n$ denote the
$1$-dimensional representation of $G$ in which each $t \in G$ acts by the
formula $t \cdot v = t^nv$ for all $v \in V_n$. The category of
finite-dimensional $G$-representations is semisimple, and we have
\begin{equation}\label{eqn:cx-rep}
V_n \otimes V_m \simeq V_{n+m}
\qquad\text{and}\qquad
\cHom(V_n,V_m) \simeq V_{m-n}.
\end{equation}
Now, every representation $V$ has a decomposition
\[
V \simeq \bigoplus_{n \in \Z} V_n \otimes E_n,
\qquad\text{where $E_n = \Hom_G(V_n,V)$,}
\]
and in which finitely many $E_n$ are nonzero.  We define an $s$-structure
by
\begin{align*}
V \in \cgl Zw &\qquad\text{if $E_n = 0$ for all $n > w$,} \\
V \in \cgg Zw &\qquad\text{if $E_n = 0$ for all $n < w$.}
\end{align*}
It is readily seen that $\cgl Zw$ and $\cgg Zw$ are both Serre
subcategories of $\cg Z$.  Note also that all higher $\Ext$'s vanish in
$\cg Z$.  In view of the isomorphisms~\eqref{eqn:cx-rep}, we see that
these categories do indeed define an $s$-structure.

We would now like to invoke Theorem~\ref{thm:adhesive} to obtain an $s$-structure
on all of $X$.  Let $A$ be the ring $\C[x]$.  We henceforth identify $X$
with $\Spec A$, and we will freely regard $A$-modules as coherent sheaves
on $X$.  Now, the ideal $I \subset A$ corresponding to $Z$ is the
principal ideal $(x)$, and the sheaf $i^*I$ can be identified with the
vector space $(x)/(x^2)$.  An element $t \in G$ acts on $A$ by the
formula $t \cdot f(x) = f(t^{-1} x)$, so we have $i^*I \simeq V_{-1}$. 
Thus, condition~\fref{f:ideal} of Theorem~\ref{thm:adhesive} is satisfied.  Condition~\fref{f:extend} holds as well: since every object of $\cg U$ is free
and pure of step $0$, it suffices to observe that $i^*A \simeq V_0 \in
\cgl Z0$.  So that theorem applies, and we get an $s$-structure on
$X$.

$A$ itself is a dualizing object in $\cg X$.  Let us determine the
altitudes of $U$ and $Z$ with respect to this choice of dualizing complex. 
Obviously $\alt U = 0$.  Now, $\omega_Z \simeq Ri^\flat A$ satisfies
\[
i_*\omega_Z \simeq \cRHom(i_*\cO_Z, A) \simeq \cRHom(A/I, A).
\]
There is an obvious free resolution of $A/I$ given by
\[
\cdots \to 0 \to xA \to A \to 0.
\]
$\cHom(xA,A)$ is isomorphic (as an $A$-module with a $G$-action) to
$x^{-1}A$. Thus, $\cRHom(A/I,A)$ is represented by the complex
\[
0 \to A \to x^{-1}A \to 0 \to \cdots.
\]
From this complex, we see that $H^0(\omega_Z) = 0$, and $H^1(\omega_Z)
\simeq x^{-1}A/A \simeq V_1$.  We conclude that $\cod Z = \alt Z = 1$.

We therefore have $\scod U = 0$ and $\scod Z = 2$.  Since these differ by
$2$, we can invoke the results of Section~\ref{sect:ic}.  In particular, the middle perversity, given by
\[
p(U) = 0,
\qquad p(Z) = 1
\]
gives rise to a self-dual, finite-length category $\p\sm X$.  The simple objects are
\[
\cIC(X,\cO_U) \simeq \cO_X
\qquad\text{and}\qquad
\cIC(Z,V_n[n-1]) \simeq i_*V_n[n-1],
\]
where $n$ ranges over all integers.  The sheaf $x^{-1}A$ is an example of a nonsemisimple object of $\p\sm X$.  The following short exact sequence\footnote{This particular short exact sequence was suggested to me by E.~Vasserot as a desideratum for a well-behaved $t$-structure on $\cD_{\C^\times}^{\mathrm{b}}(\C)$.  This entire paper was essentially written in an attempt to give meaning to this sequence.} shows how it is built up out of simple objects:
\[
0 \to \cIC(X,\cO_U) \to x^{-1}A \to \cIC(Z,V_1) \to 0.
\]

\begin{rmk}
The axioms in Definition~\ref{defn:s} all involve conditions that are only required to hold over some dense open subscheme.  Many statements and arguments in this paper could be simplified if one could assume that those conditions held over the whole scheme (as one can if the scheme is a single $G$-orbit).

However, even in the small example considered here, some of those conditions fail over the whole scheme. For instance, $\cgg X0$ is not closed under quotients: the module $x^2A$ belongs to $\cgg X0$, but its quotient $(x^2)/(x^3) \simeq i_*V_{-2}$ does not.  Similarly, it is not true that $\gExt^1(i_*V_{-1},x^2A) = 0$: one can calculate that $Ri^\flat(x^2 A) \simeq V_{-1}[-1]$ much as we calculated $\omega_Z$ above, and then one has $\Ext^1(i_*V_{-1},x^2A) \simeq \Hom(V_{-1}, Ri^\flat x^2A[1]) \simeq \Hom(V_{-1},V_{-1}) \simeq \C$.
\end{rmk}

%--------------------------------------------------------------------------
\subsection{Borel-equivariant sheaves on the flag variety for $SL_2(\C)$}
%--------------------------------------------------------------------------

Now, let $G$ be the group of all matrices of the form
$(\begin{smallmatrix} c & d \\ 0 & c^{-1}\end{smallmatrix})$ with $c \in
\C^\times$, $d \in \C$, and let $G$ act on $X = \CP^1$ by the formula
\[
\begin{pmatrix}
c & d \\ 0 & c^{-1}
\end{pmatrix}
\cdot (a : b) = (ca + db : c^{-1}b).
\]
$G$ is, of course, a Borel subgroup of $SL_2(\C)$, and $\CP^1$ can be
identified with the flag variety of $SL_2(\C)$.  Under this
identification, the action described above coincides with the obvious action of a Borel
subgroup on a flag manifold.  There are two orbits: the singleton
$Z = \{(1:0)\}$, and the open subscheme $U = \{(a:b) \mid a \ne 0\}$.

Let $A$ denote the ring $\C[x,y]$.  Let us make $A$ into a graded ring by
declaring both $x$ and $y$ to be homogeneous elements of degree $1$.  Then
we may identify $X$ with $\Proj A$.  $G$ acts on $A$ as follows:
\[
\begin{pmatrix}
c & d \\ 0 & c^{-1}
\end{pmatrix}
\cdot x = c^{-1}x - dy
\qquad\text{and}\qquad
\begin{pmatrix}
c & d \\ 0 & c^{-1}
\end{pmatrix}
\cdot y = cy
\]
Let $I$ denote the principal homogeneous ideal $(y) \subset A$.  This is a
$G$-stable ideal, corresponding to the closed subscheme $Z$.

The $G$-stabilizer of the point $(0:1) \in U$ is the subgroup $T \subset G$
of diagonal matrices $(\begin{smallmatrix} c & 0 \\ 0 &
c^{-1}\end{smallmatrix})$, so the category $\cg U$ is equivalent to the
category of finite-dimensional $T$-representations.  Let $V_n$ denote the
$1$-dimensional $T$-representation in which elements of $T$ act on $v \in V_n$ by
\[
\begin{pmatrix}
c & 0 \\ 0 & c^{-1}
\end{pmatrix}
\cdot v = c^n v.
\]
We define an $s$-structure on $U$ in the same way we did in the previous
section for the closed orbit of $\C^\times$-action on $\C$: a sheaf $\cF
\in \cg U$ belongs to $\cgl Uw$ (resp.~$\cgg Uw$) if the corresponding
$T$-representation contains no summand $V_n$ with $n > w$ (resp.~$n < w$).

Consider the element $t = x/y$ in the fraction field of $A$.  This element
has degree $0$, and we can identify $U$ with $\Spec \C[t]$.  The
$G$-action on $\C[t]$ is given by
\[
\begin{pmatrix}
c & d \\ 0 & c^{-1}
\end{pmatrix}
\cdot t = \frac{c^{-1}x - dy}{cy} = c^{-2}t - d/c.
\]
The ideal $J$ corresponding to the point $(0:1)$ is the principal ideal
$(t)$.  (Note that this ideal is $T$-stable.)  Given a $\C[t]$-module $M$
with a $G$-action (or in other words, a sheaf in $\cg U$), the
corresponding $T$-representation is given by $M/JM$.

Now, consider the twisted module $\cO_X(n) = A(n)$.  The restriction of
this sheaf to $U$ can be identified with a free $\C[t]$-module $M_n$
generated by $y^n$. The $T$-action on this module is given by
\[
\begin{pmatrix}
c & 0 \\ 0 & c^{-1}
\end{pmatrix}
\cdot y^nt^k = (cy)^n (c^{-2}t)^k = c^{n-2k} y^nt^k
\]
for any $k \ge 0$.  In particular, we see that $M_n/JM_n \simeq V_n$. 
Thus, $\cO_X(n)|_U$ is pure of step $n$.  Moreover, since every
$T$-representation is a direct sum of various $V_n$'s, we see that every
sheaf in $\cg U$ is isomorphic to a direct sum of various $\cO_X(n)|_U$'s.
 
We now turn our attention to $Z$.  The category $\cg Z$ is, of course,
equivalent to the category of finite-dimensional $G$-representations.  We
define an $s$-structure on $\cg Z$ as follows: given a $G$-representation
$V$, we regard it as a $T$-representation, and then we declare that $V \in
\cgl Zw$ if $V$ contains no summand $V_n$ with $n < -w$.  Note that this
is opposite to the convention used on $U$.  Similarly, we declare that $V
\in \cgg Zw$ if $V$ contains no summand $V_n$ with $n > -w$.  As a
$T$-representation, $i^*I \simeq (y)/(y^2)$ is isomorphic to $V_1$, so
$i^*I \in \cgl Z{-1}$.  Thus, condition~\fref{f:ideal} of Theorem~\ref{thm:adhesive} holds.

Next, we check condition~\fref{f:extend}.  Since every
sheaf in $\cg U$ is a direct sum of various $\cO_X(n)|_U$'s, it suffices
to show that $i^*\cO_X(n) \in \cgl Zn$.  The submodule of $A/I$
consisting of all homogeneous elements of degree $n$ can be identified
with the vector space
\[
\C\{x^n, x^{n-1}y, \cdots, y^n\}/\C\{x^{n-1}y, x^{n-2}y^2, \cdots, y^n\}.
\]
This is clearly isomorphic to $V_{-n}$ as a $T$-representation, so we
conclude that $i^*\cO_X(n) \in \cgl Zn$, as desired.  Theorem~\ref{thm:adhesive} now gives us an $s$-structure on all of $X$.

Finally, let us compute the altitudes of $U$ and $Z$.  We take $A$ itself
as the dualizing complex.  Evidently, we have $\alt U = 0$.  A calculation
very similar to that carried out in the preceding section shows that
$H^0(\omega_Z) = 0$ and $H^1(\omega_Z) \simeq yA/A \simeq V_{-1}$.  We
conclude that $\alt Z = 1$.

It can be checked that $\scod U = 0$ and $\scod Z = 3$, so from the results
of Section~\ref{sect:ic}, we find that the staggered $t$-structure on $\dgb X$ with respect to the perversity $p(U) = 0$, $p(Z) = 1$ has a finite-length heart.  The simple objects are
\[
\cIC(X,\cO_X(n)|_U)
\qquad\text{and}\qquad
\cIC(Z, V_n[-n-1]),
\]
where $n$ ranges over all integers.

%%%%%%%%%%%%%%%%%%%%%%%%%%%%%%%%%%%%%%%%%%%%%%%%%%%%%%%%%%%%%%%%%%%%%%%%%%%

\end{document}